\sloppy\pagestyle{plain}
\theoremstyle{definition}
\newtheorem{example}[equation]{Example}
\newtheorem{definition}[equation]{Definition}
\newtheorem{theorem}[equation]{Theorem}
\newtheorem{lemma}[equation]{Lemma}
\newtheorem{corollary}[equation]{Corollary}
\newtheorem{question}[equation]{Question}
\newtheorem*{question*}{Question}
\newtheorem*{problem*}{Problem}
\theoremstyle{remark}
\newtheorem{remark}[equation]{Remark}
\makeatletter\@addtoreset{equation}{section} \makeatother
\def\A {\mathrm{A}}
\def\SS {\mathrm{S}}
\def\SL {\mathrm{SL}}
\def\PSL {\mathrm{PSL}}
\def\PGL {\mathrm{PGL}}
\def\PSp {\mathrm{PSp}}
\def\mult {\mathrm{mult}}
\def\rk {\mathrm{rk}}
\def\Cl {\mathrm{Cl}}
\def\P {\mathbb{P}}
\def\Z {\mathbb{Z}}
\def\Q {\mathbb{Q}}
\def\F {\mathbb{F}}
\def\C {\mathbb{C}}
\def\M {\mathcal{M}}
\def\O {\mathcal{O}}
\def\qlin {\sim_{\Q}}
\def\le {\leqslant}
\def\ge {\geqslant}
\author{Ivan Cheltsov and Constantin Shramov}
\title{Five embeddings of one simple group}
\thanks{The first author was supported by the~grants NSF DMS-0701465 and
EPSRC EP/E048412/1, the~second~author was supported by the~grants
RFFI  08-01-00395-a, NSh-1987.2008.1 and EPSRC EP/E048412/1.}
\address{University of Edinburgh, Edinburgh EH9 3JZ, UK}
\address{\texttt{I.Cheltsov@ed.ac.uk}\ \ \texttt{shramov@mccme.ru}}
\begin{document}

\begin{abstract}
We propose a new method to study birational maps between Fano varieties
based on multiplier ideal sheaves. Using this method, we prove
equivariant birational rigidity of four Fano threefolds acted on by the
group~$\A_6$. As an application, we obtain
that $\mathrm{Bir}(\mathbb{P}^{3})$ has at least five
non-conjugate subgroups isomorphic~to~$\A_{6}$.
\end{abstract}

\maketitle

\tableofcontents

We assume that all varieties are projective, normal, and defined
over $\mathbb{C}$.

\section{Introduction}
\label{section:intro}

The group of birational automorphisms of the~projective plane
$\mathbb{P}^2$, also known as the~Cremona group $\mathrm{Cr}_2(\mathbb{C})$, is
a classical object in algebraic geometry.
Classifying finite subgroups in $\mathrm{Cr}_2(\mathbb{C})$ up to
conjugacy is an important research direction originating in the~
works of Kantor, Bertini, Wiman and others. This
classification has been almost completed in \cite{DoIs06}. In
particular, we know how to decide whether two finite isomorphic
subgroups in $\mathrm{Cr}_{2}(\mathbb{C})$ are conjugate or not.

The group of birational automorphisms of the~projective space
$\mathbb{P}^3$, the~Cremona group $\mathrm{Cr}_3(\mathbb{C})$, is
poorly understood. We know so little about finite subgroups in
$\mathrm{Cr}_3(\mathbb{C})$ that Serre asked

\begin{question}[{\cite[Question~6.0]{Serre}}]
\label{question:Serre-1} Does there exist a finite group which is
not embeddable in $\mathrm{Cr}_3(\C)$?
\end{question}

It is well-know that most of finite groups are not embeddable in
$\mathrm{Cr}_2(\C)$. For example, the only finite simple
non-abelian subgroup in $\mathrm{Cr}_2(\C)$ are $\A_5$, $\A_6$,
$\PSL_2(\mathbb{F}_{7})$ (see \cite{DoIs06}). Prokhorov explicitly
answered Question~\ref{question:Serre-1} by proving the following

\begin{theorem}[{\cite[Theorem~1.3]{Pr09}}]
\label{theorem:Yura-Cremona-1} Let $\bar{G}$ be a finite simple
non-abelian subgroup in $\mathrm{Cr}_3(\C)$. Then $\bar{G}$ is one
of the following groups: $\A_5$, $\A_6$, $\PSL_2(\mathbb{F}_{7})$,
$\A_7$, $\SL_2(\mathbb{F}_{8})$, $\PSp_4(\mathbb{F}_{3})$. All the
possibilities occur.
\end{theorem}

During a workshop ``Subgroups of Cremona groups'' at Edinburgh in
Spring 2010, Serre posed

\begin{question}
\label{question:Serre-2} What are normalizers in
$\mathrm{Cr}_3(\C)$ of finite simple non-abelian subgroup in
$\mathrm{Cr}_3(\C)$?
\end{question}

An answer to Question~\ref{question:Serre-2} depends on
classification of finite non-abelian simple subgroups in
$\mathrm{Cr}_3(\mathbb{C})$ up to conjugation. But
Theorem~\ref{theorem:Yura-Cremona-1} gives such classification
only up to isomorphism. So before trying to answer
Question~\ref{question:Serre-2}, we must first consider

\begin{question}
\label{question:conjugacy-classes} How to decide whether two
finite isomorphic subgroups in $\mathrm{Cr}_{3}(\mathbb{C})$ are
conjugate or not?
\end{question}

With a very few exceptions (see \cite{LPR}, \cite{Ch07b},
\cite{Pr09}), we do not know how to answer
Question~\ref{question:conjugacy-classes} in full generality. In
this paper, we show a new technique that can be used to answer
both Questions~\ref{question:Serre-2} and
\ref{question:conjugacy-classes} in some cases. In particular, we
use this technique to prove

\begin{theorem}
\label{theorem:main-cheap} Up to conjugation, there are at least
$5$ subgroups in $\mathrm{Cr}_{3}(\mathbb{C})$ that are isomorphic
to $\A_{6}$. For three of these non-conjugated subgroups, the
normalizer in $\mathrm{Cr}_{3}(\mathbb{C})$ is $\SS_{6}$, and for
one is the~free product of $\SS_{6}$ and $\SS_{6}$ with an
amalgamated subgroup $\A_{6}$.
\end{theorem}

Let us show how to translate
Questions~\ref{question:conjugacy-classes} and
\ref{question:Serre-2} into a  geometric language. Let $\bar{G}$
be a finite subgroup, and let $\tau\colon \bar{G}\to
\mathrm{Cr}_{3}(\mathbb{C})$ be a~monomorphism. It is well-known
(see \cite[Section~4.2]{Pr09}) that there is a~birational map
$\xi\colon V\dasharrow\nolinebreak\mathbb{P}^{3}$ such that
\begin{itemize}
\item the~threefold $V$ is normal and has terminal singularities,

\item there exists a~mono\-mor\-phism $\upsilon\colon \bar{G}\to \mathrm{Aut}(V)$,%

\item for every element $g\in \bar{G}$, we have $\tau\big(g\big)=\xi\circ\upsilon\big(g\big)\circ \xi^{-1}$,%

\item there exists a $\upsilon(\bar{G})$-Mori fibration
$\pi\colon V\to S$, i.\,e. a non-birational
$\upsilon(\bar{G})$-equivariant surjective morphism with connected
fibers such
that the~divisor $-K_V$ is $\pi$-ample
and
for every $\upsilon(\bar{G})$-invariant Weil divisor $D$ on
$V$, there is $\delta\in\mathbb{Q}$ such that
$$
\delta K_{V}+D\sim_{\mathbb{Q}}\pi^{*}\big(H\big)
$$
for some $\mathbb{Q}$-Cartier divisor $H$ on the variety $S$.
\end{itemize}

\begin{definition}
\label{definition:regularization} The~quadruple
$(V,\xi,\upsilon,\pi)$ is a~Mori regularization of the~pair
$(\bar{G},\tau)$.
\end{definition}

\begin{theorem}
\label{theorem:normalizer} The normalizer of the group
$\tau(\bar{G})$ in $\mathrm{Cr}_{3}(\mathbb{C})$ is isomorphic to
the group of all $\upsilon(\bar{G})$-equivariant birational
automorphisms of the variety $V$.
\end{theorem}

\begin{proof}
The proof is obvious and left to the reader.
\end{proof}

Let $\tau^{\prime}\colon \bar{G}\to\mathrm{Cr}_{3}(\mathbb{C})$ be
another~monomorphism, and let
$(V^{\prime},\xi^{\prime},\upsilon^{\prime},\pi^{\prime})$
be~a~Mori~regu\-la\-ri\-zation of the~pair
$(\bar{G},\tau^{\prime})$.

\begin{theorem}[{\cite[Lemma~3.4]{DoIs06}}]
\label{theorem:regularization} The~following assertions are
equivalent:
\begin{itemize}
\item the~subgroups $\tau(\bar{G})$ and $\tau^{\prime}(\bar{G})$ are conjugate in $\mathrm{Cr}_{3}(\mathbb{C})$,%

\item there is a~birational map $\rho\colon V\dasharrow
V^{\prime}$ such that for every $g\in \bar{G}$ there is
$g^{\prime}\in \bar{G}$ such that
$$
\upsilon^{\prime}\big(g^{\prime}\big)=\rho\circ\upsilon\big(g\big)\circ
\rho^{-1}\in\mathrm{Aut}\big(V^{\prime}\big).
$$
\end{itemize}
\end{theorem}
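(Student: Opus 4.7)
The plan is to verify both implications directly from the definitions, since the claim is a formal consequence of what a Mori regularization is. The maps $\xi$ and $\xi^{\prime}$ conjugate the regular actions $\upsilon$, $\upsilon^{\prime}$ into the Cremona actions $\tau$, $\tau^{\prime}$, so producing a conjugator of $\tau(\bar G)$ and $\tau^{\prime}(\bar G)$ in $\mathrm{Cr}_{3}(\mathbb{C})$ is the same data as producing a birational map $V\dasharrow V^{\prime}$ intertwining the $\bar G$-actions.

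First I would treat the easy ``if'' direction. Given $\rho\colon V\dasharrow V^{\prime}$ with the stated property, set $\phi:=\xi^{\prime}\circ\rho\circ\xi^{-1}\in\mathrm{Cr}_{3}(\mathbb{C})$. For each $g\in\bar G$, choose $g^{\prime}$ with $\upsilon^{\prime}(g^{\prime})=\rho\,\upsilon(g)\,\rho^{-1}$; substituting the formulas $\tau(g)=\xi\upsilon(g)\xi^{-1}$ and $\tau^{\prime}(g^{\prime})=\xi^{\prime}\upsilon^{\prime}(g^{\prime})\xi^{\prime-1}$ into $\phi\tau(g)\phi^{-1}$ collapses everything to $\tau^{\prime}(g^{\prime})$. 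Thus $\phi\tau(\bar G)\phi^{-1}\subseteq\tau^{\prime}(\bar G)$, and the reverse inclusion (equivalently, equality) follows because both sides have cardinality $|\bar G|$ since $\tau$ and $\tau^{\prime}$ are monomorphisms.

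For the converse I would reverse the construction. Given $\phi\in\mathrm{Cr}_{3}(\mathbb{C})$ with $\phi\tau(\bar G)\phi^{-1}=\tau^{\prime}(\bar G)$, put $\rho:=\xi^{\prime-1}\circ\phi\circ\xi\colon V\dasharrow V^{\prime}$. For each $g\in\bar G$ there is a unique $g^{\prime}\in\bar G$ with $\phi\tau(g)\phi^{-1}=\tau^{\prime}(g^{\prime})$, uniqueness coming from the injectivity of $\tau^{\prime}$. Substituting $\tau=\xi\upsilon\xi^{-1}$ and $\tau^{\prime}=\xi^{\prime}\upsilon^{\prime}\xi^{\prime-1}$ and cancelling $\xi^{\prime}$ and $\xi^{\prime-1}$ from the outside gives $\rho\,\upsilon(g)\,\rho^{-1}=\upsilon^{\prime}(g^{\prime})$ as an equality in $\mathrm{Bir}(V^{\prime})$, which is precisely the second assertion.

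The main thing to watch is the bookkeeping: one should note that the correspondence $g\mapsto g^{\prime}$ is forced to be well-defined and in fact an automorphism of $\bar G$ by the injectivity of $\upsilon^{\prime}$, so no separate consistency check is required. I do not expect any real obstacle beyond this purely formal diagram chase; in particular, the Mori fibration data $\pi$, $\pi^{\prime}$ and the terminality of $V$, $V^{\prime}$ play no role here — the statement is just birational equivalence of $\bar G$-varieties expressed through chosen birational models of $\mathbb{P}^{3}$.
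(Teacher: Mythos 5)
Your proof is correct: both directions are the purely formal conjugation computation, taking $\phi=\xi^{\prime}\circ\rho\circ\xi^{-1}$ in one direction and $\rho=\xi^{\prime-1}\circ\phi\circ\xi$ in the other, with the cardinality argument upgrading the inclusion $\phi\tau(\bar{G})\phi^{-1}\subseteq\tau^{\prime}(\bar{G})$ to an equality. The paper gives no proof of its own, citing \cite[Lemma~3.5]{DoIs06}; your diagram chase is exactly the standard argument behind that reference, so the two approaches coincide.
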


Thus, the~subgroups $\tau(\bar{G})$ and $\tau^{\prime}(\bar{G})$
are conjugate in $\mathrm{Cr}_{3}(\mathbb{C})$ if and only if
there exists a $\bar{G}$-equivariant birational map $V\dasharrow
V^\prime$ with respect to the actions of the group $\bar{G}$ on
$V$ and $V^{\prime}$ induced by the monomorphisms $\upsilon$ and
$\upsilon^\prime$. However, to prove or disprove the existence of
such birational map is not easy in general. The following two
definitions help to deal with this in the case when
$\pi(V)=S$~is~a~point, i.e. in the case when $V$ is a normal Fano
variety with terminal singularities such that the
$\upsilon(\bar{G})$-invariant subgroup in
$\mathrm{Cl}(V)\otimes\mathbb{Q}$ is generated by $-K_{V}$.

\begin{definition}[{cf.~\cite[Definition~0.3.3]{Ch05umn}}]
\label{definition:rigid} If $S$ is a point, then $V$ is called
$\upsilon(\bar{G})$-birationally rigid if for every
Mori~regu\-la\-ri\-zation
$(V^{\prime},\xi^{\prime},\upsilon^{\prime},\pi^{\prime})$ of
the~pair $(\bar{G},\tau)$, we have $V^{\prime}\cong V$,
$\pi^{\prime}(V^{\prime})$ is a point, and the subgroups
$\upsilon(\bar{G})$ and $\upsilon^{\prime}(\bar{G})$ are conjugate
in $\mathrm{Aut}(V)\cong\mathrm{Aut}(V^{\prime})$.
\end{definition}

\begin{definition}[{cf.~\cite[Definition~0.3.4]{Ch05umn}}]
\label{definition:superrigid} If $S$ is a point, then $V$ is
called $\upsilon(\bar{G})$-birationally superrigid if for every
Mori~regu\-la\-ri\-zation
$(V^{\prime},\xi^{\prime},\upsilon^{\prime},\pi^{\prime})$ of
the~pair $(\bar{G},\tau)$, the map $\xi^{-1}\circ\xi^{\prime}$ is
biregular.
\end{definition}

From now on we will identify the group $\bar{G}$ with its image
$\upsilon(\bar{G})$ to simplify the notation. Note that if $S$ is
a point and $V$ is $\bar{G}$-birationally superrigid, then it
follows from Definitions~\ref{definition:rigid}
and~\ref{definition:superrigid} that $V$ is $\bar{G}$-birationally
rigid. Denote by $\mathrm{Aut}^{\bar{G}}(V)$ (by
$\mathrm{Bir}^{\bar{G}}(V)$, respectively) the groups of
$\bar{G}$-equivariant biregular (birational, respectively)
self-maps of $V$. Note also that the group~$\mathrm{Aut}^{\bar{G}}(V)$
(the group~$\mathrm{Bir}^{\bar{G}}(V)$, respectively) coincides
with the normalizer of $\bar{G}$ in $\mathrm{Aut}(V)$ (in
$\mathrm{Bir}(V)$, respectively).

\begin{corollary}
\label{corollary:regularization} Suppose that $S$ is a point, the
threefold $V$ is $\bar{G}$-birationally rigid, and
$V\not\cong V^{\prime}$. Then $\tau(\bar{G})$ and
$\tau^{\prime}(\bar{G})$ are not conjugate in
$\mathrm{Cr}_{3}(\C)$.
\end{corollary}

\begin{corollary}
\label{corollary:normalizers} Suppose that $S$ is a point, and $V$
is $\bar{G}$-birationally superrigid. Then the
normalizer of the group $\tau(\bar{G})$ in
$\mathrm{Cr}_{3}(\mathbb{C})$ is isomorphic to
$\mathrm{Aut}^{\bar{G}}(V)$.
\end{corollary}

Let us consider examples of Mori regularizations in the case when
$\bar{G}$ is one of the following groups: $\A_7$,
$\SL_2(\mathbb{F}_{8})$, and $\PSp_4(\mathbb{F}_{3})$. Note that
in the latter case, the variety $S$ must be a point, since $\A_7$,
$\SL_2(\mathbb{F}_{8})$, and $\PSp_4(\mathbb{F}_{3})$ are not
embeddable in $\mathrm{Cr}_2(\C)$.

\begin{example}
\label{example:P3-2} If $\bar{G}=\A_7$ or
$\bar{G}\cong\PSp_4(\F_3)$, then there is a~monomorphism
$\upsilon\colon \bar{G}\to \mathrm{Aut}(\mathbb{P}^{3})$ (see
\cite{Atlas}).
\end{example}

\begin{example}[{\cite[Example~2.11]{Pr09}}]
\label{example:genus-7} Put $\bar{G}=\SL_2(\mathbb{F}_{8})$. One
can show that there is a~monomorphism $\alpha\colon
\bar{G}\to\mathrm{Aut}(\mathrm{LGr}(4,9))$, and
$\mathrm{Pic}(\mathrm{LGr}(4,9))=\mathbb{Z}[H]$, where $H$ is
an~ample divisor such that $|H|$ gives an embedding
$\zeta\colon\mathrm{LGr}(4,9)\to\mathbb{P}^{15}$, which implies
that the~mo\-no\-morphism $\alpha$ induces a~monomorphism
$\beta\colon \bar{G}\to\mathrm{Aut}(\mathbb{P}^{15})$. Put
$V=\zeta(\mathrm{LGr}(4,9))\cap\Pi$, where $\Pi$ is the~unique
$\beta(\bar{G})$-invariant linear subspace
$\Pi\subset\mathbb{P}^{15}$ such that $\mathrm{dim}(\Pi)=8$. Then
$V$ is a~smooth Fano threefold such that
$\mathrm{Pic}(V)\cong\mathbb{Z}$ and $-K_{V}^{3}=12$ (see
\cite{Muk92}), and $V$ is rational
(see \cite[Corollary~4.4.12]{IsPr99}).
The~monomorphism $\beta$ induces an isomorphism
$\upsilon\colon \bar{G}\to\mathrm{Aut}(V)$.

\end{example}

\begin{example}
\label{example:Burkhardt-quartic} Suppose that $V$ is a~complete
intersection in $\mathbb{P}^{5}$ that is given by
$$
\sum_{i=0}^{5}x_{i}=\sigma_4\big(x_{0},x_{1},x_{2},x_{3},x_{4},x_{5}\big)=0\subset\mathbb{P}^{5}\cong\mathrm{Proj}\Big(\mathbb{C}\big[x_{0},x_{1},x_{2},x_{3},x_{4},x_{5}\big]\Big),%
$$
where $\sigma_{4}$ is the~elementary symmetric form of degree $4$.
Put $\bar{G}=\PSp_4(\mathbb{F}_{3})$. Then there~is a~monomorphism
$\upsilon\colon\bar{G}\to\mathrm{Aut}(V)$
(see~\cite[Chapter~5]{Hu96}). It is known that the~threefold $V$
is rational (see~\cite[Section~5.2.7]{Hu96}), and
the~$\bar{G}$-invariant subgroup in the~group
$\mathrm{Cl}(V)$ is $\mathbb{Z}$ (cf.
Theorem~\ref{theorem:Bukrhardt}).
\end{example}

The threefold constructed in
Example~\ref{example:Burkhardt-quartic} is known as the~Burkhardt
quartic (see \cite{Hu96}).

\begin{theorem}[{\cite[Theorem~1.5]{Pr09}, \cite{Be11}}]
\label{theorem:Yura-Cremona-3} Suppose that $\bar{G}$ is one of
the following subgroup: $\A_7$, $\SL_2(\mathbb{F}_{8})$,
or $\PSp_4(\mathbb{F}_{3})$.
Let $V$ be a normal rational threefold with at most
terminal singularities that admits a faithful action of the group
$\bar{G}$ such that there exists a $\bar{G}$-Mori fibration $\pi\colon V\to S$.
Then $S$ is a point, and
\begin{itemize}
\item if $\bar{G}\cong\A_7$, then $V\cong\mathbb{P}^3$ (see Example~\ref{example:P3-2}),%
\item if $\bar{G}\cong\SL_2(\mathbb{F}_{8})$, then $V$ is the~threefold constructed in Example~\ref{example:genus-7},%
\item if $\bar{G}\cong\PSp_4(\mathbb{F}_{3})$, then either $V\cong\mathbb{P}^3$ or $V$ is the~threefold constructed in Example~\ref{example:Burkhardt-quartic}.%
\end{itemize}
\end{theorem}

Now let us consider  examples of Mori regularizations in the case
when $\bar{G}\cong\A_{6}$.

\begin{example}
\label{example:P3} Put $V=\mathbb{P}^{3}$ and $\bar{G}=\A_{6}$.
Then there exists a~monomorphism $2.\A_{6}\to\SL_4(\mathbb{C})$
(see~\cite{Atlas}), which induces a~monomorphism
$\upsilon\colon\bar{G}\to\mathrm{Aut}(V)\cong \PGL_4(\mathbb{C})$.
Furthermore, up to conjugation this is the unique subgroup
isomorphic to $\A_6$ in $\PGL_4(\mathbb{C})$.
\end{example}

\begin{example}
\label{example:quadric-threefold} Put $\bar{G}=\A_{6}$. Then there
exists a monomorphism $\bar{G}\to\mathrm{SO}_{5}(\mathbb{R})$,
which implies the existence of a~monomorphism
$\upsilon\colon\bar{G}\to\mathrm{Aut}(V)$, where $V$ is a smooth
quadric threefold.
\end{example}

\begin{example}
\label{example:Segre-cubic} Put $\bar{G}=\A_{6}$. Suppose that
$V$ is the~complete intersection
$$
\sum_{i=0}^{5}x_{i}=\sum_{i=0}^{5}x_{i}^{3}=0\subset\mathbb{P}^{5}\cong\mathrm{Proj}\Big(\mathbb{C}\big[x_{0},x_{1},x_{2},x_{3},x_{4},x_{5}\big]\Big).
$$
Then $V$ has exactly $10$ isolated
ordinary double points (and hence $V$ is rational), and there
exists a~natural monomorphism
$\upsilon\colon\bar{G}\to\mathrm{Aut}(V)$. Furthermore, one has
$\Cl(V)\cong\mathbb{Z}^6$ (see \cite{Finkelberg},
\cite{FinkelbergWerner}), which implies that
the~$\bar{G}$-invariant subgroup of the~group
$\mathrm{Cl}(V)$ is $\mathbb{Z}$.
\end{example}

The threefold constructed in Example~\ref{example:Segre-cubic}
is known as the~Segre cubic (see
\cite[Section~3.2]{Hu96}).

Let us show how to prove that $V$ is
$\bar{G}$-birational superrigid in the case when $S$ is
a point.

\begin{theorem}\label{theorem:Bukrhardt} Put $\bar{G}=\mathrm{A}_6$.
Suppose that $V$ is the threefold constructed in
Example~\ref{example:Burkhardt-quartic}. Then
$\mathrm{rk}\mathrm{Cl}^{\bar{G}}(V)=1$ , the variety $V$ is
$\bar{G}$-birationally superrigid, and
$\mathrm{Bir}^{\bar{G}}(V)\cong\SS_{6}$.
\end{theorem}

\begin{proof}
The required assertions are probably well-known to experts. But we
failed to find any relevant reference. The fact that
the~$\bar{G}$-invariant subgroup of the~group $\mathrm{Cl}(V)$ is
$\mathbb{Z}$ follows from the isomorphism
$V\slash\bar{F}\cong\mathbb{P}(1,2,2,3)$, where $\bar{F}$ is a
subgroup in $\bar{G}$ such that $\bar{F}\cong\SS_{4}$ and
$\bar{F}$ fixes a~point in $\mathrm{Sing}(V)$. The isomorphism
$\mathrm{Bir}^{\bar{G}}(V)\cong\SS_{6}$ follows from
$\bar{G}$-birational superrigidity of the threeefold $V$,
Corollary~\ref{corollary:normalizers}, and classification of
primitive subgroups in $\SL_5(\C)$ (see \cite{Fe71}). The fact
that $V$ is $\bar{G}$-birationally superrigid easily follows from
the proof of \cite[Theorem~5]{Me03} (cf. \cite[Section~9]{Shr08}),
but we decided to prove this here to illustrate how to proof
$\bar{G}$-birational superrigidity using only classical technique
that goes back to \cite{IsMa71}.

Suppose that $V$ is not $\bar{G}$-birationally
superrigid. Then \cite[Theorem~4.2]{Co95} implies the~existence of
 a~$\bar{G}$-invariant linear system
$\mathcal{M}$ on the~threefold $V$ such that $\mathcal{M}$ does
not have fixed components, and the~log pair
$(V,\lambda\mathcal{M})$ is not canonical, where
$\lambda\in\mathbb{Q}$ is such that
$K_{V}+\lambda\mathcal{M}\sim_{\mathbb{Q}} 0$.

Let $M_{1}$ and $M_{2}$ be sufficiently general surfaces in
the~linear system $\mathcal{M}$, and let $H$ be a general surface
in $|-K_{V}|$. Consider $V$ as a quartic threefold in
$\mathbb{P}^{4}$.

Suppose that there is an~irreducible curve $C\subset V$ such that
the~log pair $(V,\lambda\mathcal{M})$ is not canonical along
the~curve $C$.
Then the multiplicity of the linear system $|\mathcal{M}|$ along the curve
$C$ is greater than $1/\lambda$ (see e.\,g.~\cite[Exercise~6.18]{CKS04}).
Let $Z$ be the~$\bar{G}$-orbit of
the~curve $C$. Put $d=-K_{V}\cdot Z$. Then
$$
4\slash\lambda^{2}=M_{1}\cdot M_{2}\cdot H\geqslant d\mathrm{mult}_{C}\big(M_{1}\big)\mathrm{mult}_{C}\big(M_{2}\big)>d\slash\lambda^{2},%
$$
which implies that $d\leqslant 3$. So, the~curve $Z$ is contained
in a hyperplane in $\mathbb{P}^{4}$, which is impossible, because
the~corresponding five-dimensional representation of the~group
$\bar{G}$ is irreducible.

Thus, the centers of canonical singularities
(see \cite[Definition 1.3.8]{Ch05umn})
of the~log pair $(V,\lambda\mathcal{M})$ are points.
Take any point $P\in V$ such that the~singularities of the~log
pair $(V,\lambda\mathcal{M})$ are not canonical at the~point $P$.
Suppose that $P\in V$ is a smooth point,
and let $H_P$ be a general hyperplane section of $V\subset\P^4$ passing through
$P$. Then
$$4/\lambda^2=M_1\cdot M_2\cdot H_P\ge \mult_P(M_1\cdot M_2)>4/\lambda^2$$
by~\cite[Corollary~3.4]{Co00}.
The obtained contradiction shows that the~quartic $V$ is singular at $P$.

Let $\Sigma$ be the~$\bar{G}$-orbit of the~point $P\in
V$. Then there is a subset $\Gamma\subset\Sigma$ such that
$|\Gamma|=4$, and the~set $\Gamma$ is not contained in any
two-dimensional linear subspace of $\mathbb{P}^{4}$.

Let $\mathcal{Q}\subset|-2K_{V}|$ be a linear subsystem that
consists of all surfaces in $|-2K_{V}|$ that pass through every
point of the~set $\Gamma$. Then the~base locus of the~linear
system $\mathcal{Q}$ is the~set $\Gamma$.

Let $\pi\colon U\to V$ be a~blow up of the~set $\Gamma$, let
$\bar{\mathcal{M}}$ be the~proper transforms of the~linear
system~$\mathcal{M}$ on the~variety $U$, and let $E_{1},\ldots,E_{4}$ be
exceptional divisors of $\pi$. Then there is  $m\in\mathbb{Z}$
such that $\bar{\mathcal{M}}\sim
\pi^{*}(\mathcal{M})-m\sum_{i=1}^{4}E_{i}$. Moreover, it follows
from  \cite[Theorem~1.7.20]{Ch05umn} that $m>1/\lambda$ (cf.
\cite[Theorem~3.10]{Co00}). Let $\Upsilon$ be the~intersection of
two sufficiently general surfaces in $\mathcal{Q}$, and
let~$\bar{\Upsilon}$ be the~proper transform of the~curve $\Upsilon$
on the~threefold $U$. Then
$$
0>8\slash\lambda-8m>8\slash\lambda-
m\sum_{O\in\Gamma}\mathrm{mult}_{O}\big(\Upsilon\big)=
\bar{M}\cdot\bar{\Upsilon}\geqslant 0,%
$$
where $\bar{M}$ is a general surface in $\bar{\mathcal{M}}$. The
obtained contradiction completes the~proof.
\end{proof}

\begin{corollary}
\label{corollary:Bukrhardt-baby} Up to conjugation, there are at
least $2$ subgroups in $\mathrm{Cr}_{3}(\mathbb{C})$
isomorphic~to~$\A_{6}$.
\end{corollary}

\begin{corollary}
\label{corollary:Bukrhardt} Suppose that
$\bar{G}=\PSp_4(\mathbb{F}_{3})$ and $V$ is the threefold
constructed in Example~\ref{example:Burkhardt-quartic}. Then
the~$\bar{G}$-invariant subgroup of the~group
$\mathrm{Cl}(V)$ is $\mathbb{Z}$, and $V$ is
$\bar{G}$-birationally superrigid.
\end{corollary}

Now applying Theorem~\ref{theorem:regularization},
Corollary~\ref{corollary:regularization}, and
Theorem~\ref{theorem:Yura-Cremona-3}, we obtain

\begin{corollary}
\label{corollary:Yura-Cremona-3-1} Up to conjugation, the~group
$\mathrm{Cr}_{3}(\mathbb{C})$ contains exactly $1$ subgroup that
is isomorphic~to~$\SL_2(\mathbb{F}_{8})$, exactly $2$ subgroups
that are isomorphic~to~$\PSp_4(\mathbb{F}_{3})$, exactly $1$
subgroup that is isomorphic~to~$\A_{7}$.
\end{corollary}

The proof of Theorem~\ref{theorem:Bukrhardt} goes back to the
classical result of Iskovskikh and Manin about the non-rationality
of every smoooth quartic threefold (see \cite{IsMa71}). Of course,
the quartic threefold constructed in
Example~\ref{example:Burkhardt-quartic} is not smooth,  but this
problem is dealt using technique introduced in \cite{Co00} and
\cite{Me03}. So the proof of Theorem~\ref{theorem:Bukrhardt} is a
nice illustration how to answer
Question~\ref{question:conjugacy-classes} in one particular case.
Unfortunately, this approach is hard to apply to Fano varieties
of large anticanonical degree.
The main purpose of this paper is to introduce a new technique to
prove $\bar{G}$-birational rigidity or
$\bar{G}$-birational superrigidity of the threefold $V$
in the case when $S$ is a point. This technique does not always
work. But sometimes it does. We will use this technique to prove

\begin{theorem}
\label{theorem:main} Suppose that $\bar{G}=\A_6$. If
$V\cong\mathbb{P}^{3}$ (cf.~Example~\ref{example:P3}), then
the~threefold $V$ is $\bar{G}$-birationally rigid (but not
$\bar{G}$-birationally superrigid), and
$\mathrm{Bir}^{\bar{G}}(V)$ is a~free product of two copies of
$\SS_{6}$ with amalgamated subgroup $\A_{6}$. If $V$ is either
the~Segre cubic or a smooth quadric threefold, then $V$ is
$\bar{G}$-birationally superrigid and
$\mathrm{Bir}^{\bar{G}}(V)\cong\SS_{6}$.
\end{theorem}

\begin{corollary}
\label{corollary:main} Up to conjugation, there are at least $5$
subgroups in $\mathrm{Cr}_{3}(\mathbb{C})$
isomorphic~to~$\A_{6}$.
\end{corollary}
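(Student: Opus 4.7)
The approach is to exhibit five Mori regularizations $(V_i,\xi_i,\upsilon_i,\pi_i)$ of pairs $(\bar G,\tau_i)$ with $\bar G\cong\mathbb{A}_6$ and verify pairwise non-conjugacy of the resulting subgroups $\tau_i(\bar G)\subset\mathrm{Cr}_3(\mathbb{C})$, using Theorem~\ref{theorem:regularization} and, where applicable, Corollary~\ref{corollary:regularization} as the main tool.

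Three of the regularizations come directly from the case $\mathrm{dim}(S)=0$: take $V_1=\mathbb{P}^3$, $V_2$ the smooth quadric threefold, and $V_3$ the Segre cubic, with actions described in Examples~\ref{example:P3}, \ref{example:quadric-threefold}, and \ref{example:Segre-cubic}. By Theorem~\ref{theorem:main} each $V_i$ is $\upsilon(\bar G)$-birationally rigid, and $V_2,V_3$ are in fact superrigid. Since $V_1,V_2,V_3$ are pairwise non-isomorphic (the Segre cubic is singular, while $\mathbb{P}^3$ and the quadric are smooth Fano threefolds of different anticanonical degree), Corollary~\ref{corollary:regularization} immediately yields that $\tau_1(\bar G),\tau_2(\bar G),\tau_3(\bar G)$ are pairwise non-conjugate in $\mathrm{Cr}_3(\mathbb{C})$.

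The remaining two classes come from the case $\mathrm{dim}(S)\ne 0$, where the $\bar G$-action necessarily factors through the Valentiner embedding $\mathbb{A}_6\hookrightarrow\mathrm{PGL}(3,\mathbb{C})$ on a $\mathbb{P}^2$-factor. By Remark~\ref{remark:del-Pezzo}, when $\mathrm{dim}(S)=1$ one obtains $V_4\cong\mathbb{P}^2\times\mathbb{P}^1$ with the del Pezzo fibration to $\mathbb{P}^1$. A fifth regularization $V_5$ would be produced from a $\bar G$-equivariant conic bundle over the Valentiner plane $\mathbb{P}^2$ that is not $\bar G$-birational to $V_4$. Neither $V_4$ nor $V_5$ is isomorphic to any of $V_1,V_2,V_3$ (distinguished already by $\bar G$-invariant Picard rank and singular loci); hence the rigidity of $V_1,V_2,V_3$ together with Theorem~\ref{theorem:regularization} rules out conjugacy of $\tau_4(\bar G)$ and $\tau_5(\bar G)$ with any of the first three classes.

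The main obstacle is the final non-conjugacy $\tau_4(\bar G)\not\sim\tau_5(\bar G)$: since neither $V_4$ nor $V_5$ is $\upsilon(\bar G)$-birationally rigid, Corollary~\ref{corollary:regularization} is not available, and one must exclude directly the existence of a $\bar G$-equivariant birational map between the two Mori fibrations. This is the place where a careful $\bar G$-equivariant Sarkisov/Noether--Fano analysis is required, tracking how the Valentiner action interacts with each fibration structure. Granting this last step, the five Mori regularizations above yield five pairwise non-conjugate subgroups of $\mathrm{Cr}_3(\mathbb{C})$ isomorphic to $\mathbb{A}_6$.
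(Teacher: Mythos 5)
Your proposal correctly accounts for four of the five classes and uses the right mechanism for non-conjugacy: the three rigid Fano models $\mathbb{P}^{3}$, the quadric, and the Segre cubic from Theorem~\ref{theorem:main}, pairwise non-isomorphic, are separated by Corollary~\ref{corollary:regularization}; and $V_{4}\cong\mathbb{P}^{2}\times\mathbb{P}^{1}$ is separated from each of these because a conjugacy would, via Theorem~\ref{theorem:regularization}, produce a $\bar{G}$-equivariant birational map from a $\bar{G}$-birationally rigid Fano to the non-isomorphic Mori fibration $\mathbb{P}^{2}\times\mathbb{P}^{1}\to\mathbb{P}^{1}$. That much is sound and matches the paper.

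The genuine gap is your fifth class. You propose an unspecified $\bar{G}$-equivariant conic bundle $V_{5}$ over the Valentiner plane and then concede that the decisive step --- showing $\tau_{4}(\bar{G})\not\sim\tau_{5}(\bar{G})$, i.e.\ excluding a $\bar{G}$-equivariant birational map between two non-rigid Mori fibrations --- is an open obstacle you cannot resolve. Since you neither construct $V_{5}$ nor prove the non-conjugacy, only four classes are actually established. The paper's fifth class is not a conic bundle at all: it is the Burkhardt quartic of Example~\ref{example:Burkhardt-quartic}, equipped with the restriction of the $\mathbb{O}^{\prime}(5,\mathbb{F}_{3})$-action to a subgroup $\bar{G}\cong\mathbb{A}_{6}$ (recall that the whole discussion from Remark~\ref{remark:del-Pezzo} onward takes place under the standing assumption $\bar{G}\cong\mathbb{A}_{6}$, so Theorem~\ref{theorem:Bukrhardt} asserts precisely that this quartic has $\bar{G}$-invariant class group $\mathbb{Z}$ and is $\bar{G}$-birationally \emph{superrigid}). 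It is a rational, rigid, $\mathrm{dim}(S)=0$ model not isomorphic to $\mathbb{P}^{3}$, the quadric, or the Segre cubic, so Corollary~\ref{corollary:regularization} separates it from the other three Fano models at once, and the same rigidity argument you already use for $V_{1},V_{2},V_{3}$ separates it from $\mathbb{P}^{2}\times\mathbb{P}^{1}$. Replacing your hypothetical $V_{5}$ by the Burkhardt quartic closes the gap and requires no equivariant Sarkisov analysis of conic bundles whatsoever.
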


Unfortunately, we are unable to classify all subgroups in
$\mathrm{Cr}_{3}(\mathbb{C})$ that are isomorphic~to~$\A_{6}$ up
to conjugation similar to what is done in
Corollary~\ref{corollary:Yura-Cremona-3-1} for $\A_7$,
$\SL_2(\mathbb{F}_{8})$, $\PSp_4(\mathbb{F}_{3})$. The main
problem is that $S$ is not necessarily a point if $\bar{G}=\A_6$,
since $\A_{6}$ is embeddable in $\mathrm{Cr}_2(\C)$. If
$\bar{G}=\A_{6}$ and $\mathrm{dim}(S)=1$, i.e. $\pi\colon V\to S$
is a del Pezzo fibration, then
$V\cong\mathbb{P}^2\times\mathbb{P}^1$ by
Theorem~\ref{theorem:del-Pezzo-1}. However, If $\bar{G}=\A_{6}$
and $\mathrm{dim}(S)=2$, i.e. $\pi\colon V\to S$ is a conic
bundle, then we do not have any decent description of the
threefold $V$ and this is the main reason why we are unable to
classify all subgroups in $\mathrm{Cr}_{3}(\mathbb{C})$ that are
isomorphic~to~$\A_{6}$ up to conjugation.

\begin{remark}
\label{remark:conic-bundle} Suppose that $\bar{G}=\A_{6}$ and
$\mathrm{dim}(S)=2$. Then the monomorphism
\mbox{$\upsilon\colon\bar{G}\to\mathrm{Aut}(V)$} induces a
monomorphism $\iota\colon\bar{G}\to\mathrm{Aut}(S)$, because $\pi$
is $\bar{G}$-equivariant and $\bar{G}$  is simple. On the other
hand, the group $\bar{G}$ faithfully acts on
$\mathbb{P}^{2}\times\mathbb{P}^1$ so that its action of the first
factor is induced by a three-dimensional representation of the
group $3.\A_6$, and its action on the second factor is trivial.
Keeping in mind that $V$ is rational, it is very tempting to
expect that there always exists a commutative diagram
$$
\xymatrix{
V\ar@{-->}[rr]^{\alpha}\ar@{->}[d]^{\pi}&&\mathbb{P}^{2}\times\mathbb{P}^1\ar@{->}[d]^{pr_1}\\
S\ar@{-->}[rr]_{\beta}&&\mathbb{P}^{2},}
$$
where $\alpha$ is some $\bar{G}$-equivariant birational
map, $\beta$ is some $\iota(\bar{G})$-equivariant birational map,
and $pr_1\colon\mathbb{P}^{2}\times\mathbb{P}^1\to\mathbb{P}^{2}$
is the projection to the first factor. We do not have any strong
evidence for such expectation. But we failed to construct a
counter-example either.
\end{remark}

Let us describe the strategy how to prove
Theorem~\ref{theorem:main}. Let $X$ be a Fano threefold with
Gorenstein terminal singularities, and let $\bar{G}$ be a subgroup
in $\mathrm{Aut}(X)$ such that $\rk\Cl^{\bar{G}}(X)=1$. We suppose
that $X$ is not $\bar{G}$-birationally superrigid and seek for a
contradiction. It is well-known that in this case
there are a movable
$\bar{G}$-invariant linear system $\M$ without fixed components
and $\lambda\in\Q$ such that $K_X+\lambda\M\qlin 0$ and the log
pair $(X, \lambda\M)$ has non-canonical singularities (see
\cite[Theorem~4.2]{Co95}, \cite[Theorem~1.4.1]{Ch05umn}). Since
$X$ is Gorenstein, the singularities of the log pair $(X,
2\lambda\M)$ must be worse than log canonical by
Corollary~\ref{corollary:mult-by-2}. Now, although the last
condition is much weaker than non-canonicity of the log pair
$(X,\lambda\M)$, we are in a position to use the machinery of
multiplier ideal sheaves to obtain a contradiction. Namely, we
choose $\mu<2\lambda$ such that the log pair $(X,\mu\M)$ is
strictly log canonical, and pick up a minimal center $S$ of log
canonical singularities of $(X,\mu\M)$ (see
Section~\ref{section:preliminaries} for definitions). The
minimality of the center~$S$ implies that the $\bar{G}$-orbit of $S$
is either a finite set, or a disjoint union of irreducible curves.
We use Lemma~\ref{lemma:Kawamata-Shokurov-trick} to observe that
one may assume that every center of log canonical singularities of
the log pair $(X,\mu\M)$ is $g(S)$ for some $g\in\bar{G}$. Then
applying the Nadel--Shokurov vanishing theorem (see
Theorem~\ref{theorem:Shokurov-vanishing},
\cite[Theorem~9.4.8]{La04}) we obtain an upper bound on the number
of irreducible components of the $\bar{G}$-orbit of $S$. If $S$ is
a point, the latter appears to be mostly enough to obtain a
contradiction with the structure of $\bar{G}$-orbits on $X$. If
$S$ is a curve, the Kawamata subadjunction theorem (see
Theorem~\ref{theorem:Kawamata}, \cite[Theorem~1]{Kaw98}) implies
that $S$ is smooth, and we proceed with applying the
Nadel--Shokurov vanishing theorem, the Riemann--Roch theorem, the
Clifford theorem (see Theorem~\ref{theorem:Clifford},
\cite[Theorem~5.4]{Har77}, \cite[Section~III.1]{GrHaArbCor85}),
and the  Castelnuovo bound (see Theorem~\ref{theorem:Castelnuovo},
\cite[Theorem~6.4]{Har77}, \cite[Section~III.2]{GrHaArbCor85}).
Using these, we
obtain various restrictions on the number of irreducible
components of $S$, the degree of the curve $S$, and the genus of
the curve $S$. In many cases these restrictions appear to be
incompatible with a faithful action of $\bar{G}$ on $X$. Namely,
suppose that $X$ is one of the three threefolds from
Theorem~\ref{theorem:main} and $\bar{G}\cong\A_6$. Then all
possibilities for $S$ are ruled out by the above arguments, except
for the cases when either $X$ is the Segre cubic and $S$ is its
singular point, or when $X\cong\P^3$ and~$S$ is line whose
$\bar{G}$-orbit consists of $6$ lines. In the former case the
contradiction is obtained by brute force (see
Lemma~\ref{lemma:Segre-cubic-4}). So we have to deal with the case
when $X\cong\P^3$ and $S$ is line whose $\bar{G}$-orbit consists of
$6$ lines. If $S$ is not a center of canonical singularities of
the log pair $(\P^3,\lambda\M)$, then we use
Lemma~\ref{lemma:quasric-surface-Dokshitzer} to obtain a
contradiction. However, we can not obtain a contradiction here in
general, because $\P^3$ is not $\bar{G}$-birationally superrigid!
Indeed, for every line $L\subset \P^3$ whose $\bar{G}$-orbit
consists of $6$ lines, there exists a non-biregular
$\bar{G}$-equivariant birational involution $\P^3\to\P^3$
discovered by Todd in~\cite{To33} that is undefined along every
line in the $\bar{G}$-orbit of $L$ (see
Lemma~\ref{lemma:Todd-involution} for a description). There are
$12$ such lines in $\P^3$ in total. These $12$ lines split in
two $\bar{G}$-orbits, and each $\bar{G}$-orbit gives us a
non-biregular $\bar{G}$-equivariant birational involution
$\P^3\dasharrow\P^3$, which we denote by $\iota$ and $\iota^\prime$,
respectively.
Applying $\iota$ and $\iota^\prime$ to $\M$ if necessary, we may
assume that lines whose $\bar{G}$-orbit consist of $6$ lines are
not centers of canonical singularities of the log pair
$(X,\lambda\M)$ (see Lemma~\ref{lemma:P3-untwisting}). Combining
this with what we have already proved, we see that the singularities of
the log pair $(X,\lambda\M)$ are canonical up to the action of
$\langle\bar{G},\iota,\iota^\prime\rangle$, which implies that
$\P^3$ is $\bar{G}$-birationally rigid and
$\mathrm{Bir}^{\bar{G}}(\P^3)=\langle\mathrm{Aut}^{\bar{G}}(\P^3),\iota,\iota^\prime\rangle$.
Finally, we observe that
$$
\mathrm{Aut}^{\bar{G}}(\P^3)\cong\langle\bar{G},\iota\rangle\cong\langle\bar{G},\iota^\prime\rangle\cong\SS_6
$$
and use this to prove that
$\langle\mathrm{Aut}^{\bar{G}}(\P^3),\iota,\iota^\prime\rangle$ is
the~free product of two copies of $\SS_{6}$ with amalgamated
subgroup $\A_{6}$. Let us illustrate the described strategy by
proving

\begin{theorem}
\label{theorem:normalizers} Suppose that $\bar{G}$ is one of the
following subgroups: $\A_7$, $\SL_2(\mathbb{F}_{8})$,
$\PSp_4(\mathbb{F}_{3})$. Then~$V$ is
$\bar{G}$-birationally superrigid.
\end{theorem}

\begin{proof}
If $V\cong\P^3$, then either $\bar{G}\cong\A_7$, or
$\bar{G}\cong\PSp_4(\mathbb{F}_{3})$, and the proof of
Theorem~\ref{theorem:main} implies that $V$ is
$\bar{G}$-birationally superrigid, since $\A_7$ and
$\PSp_4(\mathbb{F}_{3})$ contain subgroups isomorphic to~$\A_6$.
In fact, the $\bar{G}$-birational superrigidity of the
threefold $V$ in the latter case is much easier to prove than
Theorem~\ref{theorem:main}, but we leave the details to the
reader.

It follows from Theorem~\ref{theorem:Yura-Cremona-3} and
Corollary~\ref{corollary:Bukrhardt} that we may assume that
$\bar{G}\cong\SL_2(\mathbb{F}_{8})$ and~$V$ is the~threefold
constructed in Example~\ref{example:genus-7}. Suppose that $V$ is
not $\bar{G}$-birationally superrigid. Then it follows from
\cite[Theorem~4.2]{Co95} or \cite[Theorem~1.4.1]{Ch05umn} that
there is a (non-empty) $\bar{G}$-invariant linear system
$\mathcal{M}$ on $V$ such that $\mathcal{M}$ does not have fixed
components, and $(V,\lambda\mathcal{M})$ is not canonical, where
$\lambda$ is a positive rational number such that
$\lambda\mathcal{M}\sim_{\mathbb{Q}} -K_{V}$. By
Corollary~\ref{corollary:mult-by-2} there is $\mu\in\mathbb{Q}$
such that $\mu<2\lambda$ and $(V,\mu\mathcal{M})$ is log canonical
and not Kawamata log terminal (see \cite[Definition~3.5]{Ko97}).

Let $S$ be a subvariety in $V$ that is a~minimal center of log
canonical singularities of the log pair $(V,\mu\mathcal{M})$ (see
\cite{Kaw97}, \cite{Kaw98}, \cite[Definition~2.8]{ChSh09}), and
let $Z$ be the $\bar{G}$-orbit of the subvariety $S$. Then $S$ is
either a point or a curve, since $\mathcal{M}$ does not have fixed
components. Moreover, the subvariety $Z\subset V$ is a disjoint
union of its components (see Lemma~\ref{lemma:centers},
\cite[Proposition~1.5]{Kaw97}).

Arguing as in the proof of \cite[Theorem~1.10]{Kaw97} or
\cite[Theorem~1]{Kaw98}, we~can construct a~$\bar{G}$-invariant
effective $\mathbb{Q}$-divisor $D$ on $V$ such that
$D\sim_{\mathbb{Q}}-\delta K_{V}$ for some positive rational
number $\delta<2$,  the~log pair $(V,D)$ is log canonical, the
subvariety $S$ is a~minimal center of log canonical singularities
of the log pair $(V, D)$, and the only centers of log canonical
singularities (see \cite[Definition~1.3]{Kaw97}) of the log pair
$(V, D)$ are irreducible components of $Z$
(see Corollary~\ref{corollary:mult-by-2}
and Lemma~\ref{lemma:Kawamata-Shokurov-trick}).

Let $\mathcal{I}_{Z}$ be its ideal sheaf. Put
$q=h^{0}(\mathcal{O}_{V}(-K_{V})\otimes\mathcal{I}_Z)=0$. Then
\begin{equation}
\label{equation:genus-7-equality}
h^{0}\Big(\mathcal{O}_{Z}\otimes\mathcal{O}_{V}\big(-K_{V}\big)\Big)=h^{0}\Big(\mathcal{O}_{V}\big(-K_{V}\big)\Big)-q=9-q\leqslant 9%
\end{equation}
by the Nadel--Shokurov vanishing theorem (see
Theorem~\ref{theorem:Shokurov-vanishing},
\cite[Theorem~9.4.8]{La04})
and the~Riemann--Roch theorem. Let $r$
be the number of irreducible components of the curve $Z$. Then
$r\not\in\{2,3,\ldots,8\}$, since $\bar{G}$ can not act transitively
on a finite set consisting of at least $2$ and at most $8$
elements (see \cite{Atlas}). Note that $Z=S$ if $r=1$.

Suppose that $S$ is a point. Then either $Z=S$ or $|Z|=r=9$ by
$(\ref{equation:genus-7-equality})$. If $Z=S$, then $\bar{G}$ must
act faithfully on the tangent space to $V$ at the point $S$, which
is impossible, since $\SL_2(\mathbb{F}_{8})$ has no non-trivial
three-dimensional representations (see \cite{Atlas}). Thus, we see
that $|Z|=9$. Let $F_{56}$ be the stabilizer subgroup in $\bar{G}$
of the point $S$. Then $|F_{56}|=|\bar{G}|/9=56$, and hence one
has \mbox{$F_{56}\cong\mathbb{Z}_2^3:\mathbb{Z}_{7}$} (see~\cite{Atlas}).
On the other
hand, the group $F_{56}$ must faithfully act on the tangent space
to $V$ at the point $S$, which is impossible, since
$\mathbb{Z}_2^3:\mathbb{Z}_{7}$ has no faithful three-dimensional
representations.

Thus, we see that $S$ is a~curve. Put $d=-K_{V}\cdot C$. By the
Kawamata subadjunction theorem (see
Theorem~\ref{theorem:Kawamata}, \cite[Theorem~1]{Kaw98}),
the~curve $S$ is a~smooth curve of genus $g$ such that $d>2g-2$.
In particular, the divisor $-K_{V}\vert_{S}$ is not special. Thus,
it follows from $(\ref{equation:genus-7-equality})$ and
the~Riemann--Roch theorem that
\begin{equation}
\label{equation:genus-7} 9-q=r(d-g+1),
\end{equation}
where either $r=1$, or $r=9$. If $r=9$, then it follows from
$(\ref{equation:genus-7})$ that $d=g$ and $q=0$, which is
impossible, since $d>2g-2$. Thus, we see that $r=1$, which simply
means that $Z=S$. Then $\bar{G}$ acts faithfully on $S$, since
$\bar{G}$ is simple and there are no points in $V$ that are
$\bar{G}$-invariant. Moreover, it follows from
$(\ref{equation:genus-7})$ that $d=8+g-q$. Keeping in mind that
$d>2g-2$, we see that $g<10$. Then $g=7$ by the~refined Hurwitz bound
(see~\cite[Theorem~3.17]{Bre00},
Theorem~\ref{theorem:Hurwitz-bound}).
In particular, we see that
$d=15-q$, since $d=8+g-q$.

Let $M_1$ and $M_2$ be general surfaces in $\mathcal{M}$, and let
$H$ be a general surface in $|-K_{V}|$. Then it follows from
\cite[Theorem~3.1]{Co00} that $\mathrm{mult}_{S}(M_{1}\cdot
M_{2})\geqslant 4/\mu^2$, where $\mu<2\lambda$. Thus, one has
$$
\frac{12}{\lambda^{2}}=M_{1}\cdot M_{2}\cdot H\geqslant \sum_{O\in H\cap S} \mathrm{mult}_{S}(M_{1}\cdot M_{2})\geqslant \frac{4|H\cap S|}{\mu^{2}}>\frac{|H\cap S|}{\lambda^{2}},%
$$
which implies that $|H\cap S|<12$. But $|H\cap S|=d=15-q$, which
implies that $q\geqslant 4$. The latter is impossible, since
$\SL_2(\mathbb{F}_{8})$ has no non-trivial representations of
dimension at most $6$.
\end{proof}

The reader may find it useful to compare the proof of
Theorem~\ref{theorem:Bukrhardt} (that uses a classical approach to
birational rigidity) with
the proof of Theorem~\ref{theorem:normalizers} (that uses our new approach).
Note that the proof of Theorem~\ref{theorem:normalizers}
does not use any information about the geometry of the variety $V$
acted on by the group~$\SL_2(\F_8)$.
Applying Theorem~\ref{theorem:normalizer} and
Corollary~\ref{corollary:normalizers} and using classification of
primitive subgroups in $\SL_4(\C)$ and $\SL_5(\C)$ (see
\cite{Fe71}), we obtain

\begin{corollary}
\label{corollary:Serre} Let $\bar{G}$ be a subgroup in
$\mathrm{Cr}_3(\C)$ that is isomorphic to either $\A_7$, or
$\PSp_4(\mathbb{F}_{3})$. If $\bar{G}\cong\A_7$, then its
normalizer in $\mathrm{Cr}_3(\C)$ is isomorphic to $\SS_7$. If
$\bar{G}\cong\PSp_4(\mathbb{F}_{3})$,
then its normalizer in $\mathrm{Cr}_3(\C)$ is $\bar{G}$ itself.
\end{corollary}

The plan of the paper is as follows. In
Section~\ref{section:preliminaries} we collect the well-known
preliminary results on the log canonical singularities, make some
remarks on varieties with group action and recall some general
classical theorems. In Section~\ref{section:groups} we collect the
necessary facts about the structure of the groups~$\A_6$
and~$2.\A_6$ and the relevant representation theory. In
Sections~\ref{section:space}, \ref{section:cubic}
and~\ref{section:quadric} we study the projective space~$\P^3$,
the Segre cubic and the smooth quadric threefold according to the
plan explained above, proving Theorems~\ref{theorem:A6},
\ref{theorem:auxiliary-amalgamated}, \ref{theorem:Segre-cubic} and
\ref{theorem:quadric-threefold}, respectively. These four theorems
imply Theorem~\ref{theorem:main}. In
Appendix~\ref{section:Klein-cubic}, we~answer (negatively)
a~question on the existence of equivariant birational maps between
two certain remarkable threefolds acted on by the group
$\bar{\Gamma}\cong\PSL_2(\F_{11})$ posed in
\cite[Remark~2.10]{Pr09}. Appendix~\ref{section:del-Pezzo},
written by Yu.\,Prokhorov, classifies terminal threefolds $X$
acted on by the group $\bar{G}\cong\A_6$ such that $X$ admits a
$\bar{G}$-Mori fibration $\pi:X\to\P^1$.

\smallskip
We would like to thank T.\,Dokchitser, I.\,Dolgachev,
Yu.\,Pro\-kho\-rov, L.\,Rybnikov and E.\,Smirnov for very useful
discussions and comments. We would like to thank the referee for
many valuable and useful comments that helped us to improve the
paper significantly.

\section{Preliminaries}%
\label{section:preliminaries}

Throughout the paper we use the standard language of the
singularities of pairs (see \cite{Ko97} and
\cite[Section~6]{CKS04}). By \emph{strictly log canonical} singularities
we mean log canonical singularities that are not Kawamata log
terminal (see \cite[Definition~3.5]{Ko97}).

Let $X$ be a~variety with at most Kawamata log terminal
singularities (see \cite[Definition~3.5]{Ko97}), let $B_{X}$ be a
formal linear combinations with non-negative rational coefficients
of non-empty (but not necessarily complete) linear systems on $X$. Then we may
write $B_{X}=\sum_{i=1}^{r}a_{i}B_{i}$, where
$a_{i}\in\mathbb{Q}$, and~$B_{i}$ is either a~prime Weil divisor
on the~variety $X$, or a linear system without fixed components.
We assume for simplicity that $B_{i}\ne B_{j}$ if $i\ne j$.

\begin{remark}
\label{remark:mobile-boundaries} Usually $B_{X}$ is assumed to be
an effective $\mathbb{Q}$-divisor. But sometimes it is necessary to
assume that some or all components of $B_{X}$ are linear systems
without fixed components (see \cite[Section~6]{CKS04}). We will
need such log pair throughout the paper, but do not want to stress
on using them, since all properties of standard log pairs we plan
to use hold for these generalized log pairs (see
\cite[Definition~4.6]{Ko97} and \cite[Definition~6.16]{CKS04}).
Moreover, we can always treat $B_{X}$ as a $\mathbb{Q}$-divisor by
replacing each its mobile component by a weighted sum of its
sufficiently general members (see \cite[Theorem~4.8]{Ko97}).
\end{remark}

Suppose that $K_{X}+B_{X}$ is $\mathbb{Q}$-Cartier. Let
$\pi\colon\bar{X}\to X$ be a~birational morphism such that
$\bar{X}$ is smooth. Then
$$
K_{\bar{X}}+\sum_{i=1}^{r}a_{i}\bar{B}_{i}\sim_{\mathbb{Q}}\pi^{*}\Big(K_{X}+B_{X}\Big)+\sum_{i=1}^{m}d_{i}E_{i},
$$
where $\bar{B}_{i}$ is the~proper transforms of $B_{i}$ on
the~variety $\bar{X}$, and $E_{i}$ is an exceptional divisor of
the~morphism $\pi$, and $d_{i}$ is a~rational number. We may
assume that $\sum_{i=1}^{r}\bar{B}_{i}+\sum_{i=1}^{m}E_{i}$ is
a~divisor with simple normal crossing and every mobile linear
system among $\bar{B}_{1},\ldots,\bar{B}_{r}$ (if any) is free
from base points.

\begin{lemma}\label{lemma:mult-by-2}
Suppose that $X$ has terminal singularities. Let $n$ be a positive
integer such that~$nK_{X}$ is a Cartier divisor. Then $(X,(1+n)\lambda
B_{X})$ is not log canonical if $(X,\lambda B_{X})$ is not
canonical.
\end{lemma}

\begin{proof}
There are positive rational numbers $c_{1},\ldots,c_r$ such that
$K_{\bar{X}}\sim_{\mathbb{Q}}\pi^{*}(K_{X})+\sum_{i=1}^{m}c_{i}E_{i}$,
because $X$ has terminal singularities and, in particular, $K_{X}$
is a $\mathbb{Q}$-Cartier divisor. Moreover, the numbers
$nc_{1},\ldots,nc_r$ are positive integers, since $nK_{X}$ is a
Cartier divisor. On the other hand, we have
$$
\sum_{i=1}^{r}a_{i}\bar{B}_{i}\sim_{\mathbb{Q}}\pi^{*}\big(B_{X}\big)-\sum_{i=1}^{m}m_{i}E_{i}
$$
for some non-negative rational numbers $m_{1},\ldots,m_r$. Then
$d_i=c_i-m_i$ for every $i\in\{1,\ldots,r\}$.

Suppose that $(X,\lambda B_{X})$ is not canonical. Then there
exists $s\in\{1,\ldots,m\}$ such that $c_s-m_s=d_s<0$ (see
\cite[Definition~6.16]{CKS04}). Thus, we have
$$
c_s-(1+n)m_s=c_s-m_s-nm_s<-nm_s<-1,
$$
because $m_s>c_s\geqslant 1/n$, since $nc_s$ is a positive
integer. On the other hand, we have
$$
K_{\bar{X}}+(1+n)\sum_{i=1}^{r}a_{i}\bar{B}_{i}\sim_{\mathbb{Q}}\pi^{*}\Big(K_{X}+(1+n)B_{X}\Big)+\sum_{i=1}^{m}\big(c_i-(1+n)m_i\big)E_{i},
$$
where $c_s-(1+n)m_s<-1$. Therefore, the log pair $(X,(1+n)\lambda
B_{X})$ is not log canonical.
\end{proof}

\begin{corollary}\label{corollary:mult-by-2}
Suppose that $X$ has terminal Gorenstein singularities and
$(X,\lambda B_{X})$ is not canonical. Then $(X,2\lambda B_{X})$ is
not log canonical.
\end{corollary}

Suppose
that $(X,B_{X})$ is log canonical. Put $\hat{a}_{i}=a_{i}$
if $B_{i}$ is a prime Weil divisor, and $\hat{a}_{i}=0$ if~$B_{i}$
is a linear system without fixed components. Put $\mathcal{I}(X,
B_{X})=\pi_{*}(\sum_{i=1}^{m}\lceil d_{i}\rceil E_{i}-
\sum_{i=1}^{r}\lfloor \hat{a}_{i}\rfloor \bar{B}_{i})$.

\begin{theorem}[{\cite[Theorem~9.4.8]{La04}}]
\label{theorem:Shokurov-vanishing} Let $H$ be a~nef and big
$\mathbb{Q}$-divisor on $X$, and let $D$ be a Cartier divisor on
$X$ such that $D\equiv K_{X}+B_{X}+H$. Then $h^{i}(\mathcal{I}(X,
B_{X})\otimes D)=0$ for every $i\geqslant 1$.
\end{theorem}

Let $\mathcal{L}(X, B_{X})$ be a~subscheme of $X$ that corresponds
to the~ideal sheaf $\mathcal{I}(X, B_{X})$. Put $\mathrm{LCS}(X,
B_{X})=\mathrm{Supp}(\mathcal{L}(X, B_{X}))$. Note that
$\mathcal{L}(X,B_{X})$ is reduced, because $(X,B_{X})$ is log
canonical.

\begin{theorem}[{\cite[Lemma~5.7]{Sho93}, \cite[Theorem~1.4]{Kaw97}}]
\label{theorem:connectedness} Let $\zeta\colon X\to Z$ be
a~surjective morphism with connected fibers, and let $F$ be
a~fiber of $\zeta$. Then the~locus $\mathrm{LCS}(X, B_{X})\cap F$
is connected if $-(K_{X}+B_{X})$ is $\eta$-nef and $\eta$-big.
\end{theorem}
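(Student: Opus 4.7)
The plan is to deduce the statement from the Nadel--Shokurov vanishing theorem (Theorem~\ref{theorem:Shokurov-vanishing}) via the standard Shokurov--Koll\'ar argument. After localising on $Z$ we may assume $Z$ is affine. I would apply the relative form of Theorem~\ref{theorem:Shokurov-vanishing} to the pair $(X,B_{X})$ with $H=-(K_{X}+B_{X})$, which is $\zeta$-nef and $\zeta$-big by hypothesis, and with $D=0$, which is certainly Cartier and satisfies $K_{X}+B_{X}+H\equiv 0$. The expected output is the vanishing
$$
R^{1}\zeta_{*}\,\mathcal{I}\bigl(X,B_{X}\bigr)\;=\;0.
$$

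With this in hand I would push the tautological short exact sequence
$$
0\longrightarrow\mathcal{I}\bigl(X,B_{X}\bigr)\longrightarrow\mathcal{O}_{X}\longrightarrow\mathcal{O}_{\mathcal{L}(X,B_{X})}\longrightarrow 0
$$
forward under $\zeta$, obtaining a surjection
$\zeta_{*}\mathcal{O}_{X}\twoheadrightarrow\zeta_{*}\mathcal{O}_{\mathcal{L}(X,B_{X})}$.
Since $X$ has at worst Kawamata log terminal singularities (hence is normal with rational singularities) and $\zeta$ has connected fibres, $\zeta_{*}\mathcal{O}_{X}=\mathcal{O}_{Z}$, so $\zeta_{*}\mathcal{O}_{\mathcal{L}(X,B_{X})}$ is a quotient of $\mathcal{O}_{Z}$.

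To convert this sheaf-theoretic statement into set-theoretic connectedness of $\mathrm{LCS}(X,B_{X})\cap F$ on a fibre $F=\zeta^{-1}(z)$, I would invoke the theorem on formal functions. It identifies the completed stalk $\bigl(\zeta_{*}\mathcal{O}_{\mathcal{L}(X,B_{X})}\bigr)_{z}^{\wedge}$ with the inverse limit of rings of sections of $\mathcal{O}_{\mathcal{L}(X,B_{X})}$ on the infinitesimal thickenings of $\mathcal{L}(X,B_{X})\cap F$, and this completed stalk is a quotient of the local ring $\mathcal{O}_{Z,z}^{\wedge}$. A scheme whose ring of functions on every infinitesimal thickening of a fibre has no non-trivial idempotents must have connected fibre. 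Combined with the fact that $\mathcal{L}(X,B_{X})$ is reduced (Remark~\ref{remark:log-canonical-subscheme}), this yields connectedness of $\mathrm{LCS}(X,B_{X})\cap F$.

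The principal obstacle is the passage from the absolute, cohomological statement of Theorem~\ref{theorem:Shokurov-vanishing} to the relative vanishing $R^{1}\zeta_{*}\mathcal{I}(X,B_{X})=0$ used above. The standard workaround is to twist by a sufficiently positive pull-back $\zeta^{*}A$ of an ample divisor from $Z$, so that $H+\zeta^{*}A$ is still nef and big and the new right-hand side is Cartier, and then to extract the required $R^{1}\zeta_{*}$ vanishing via the Leray spectral sequence together with Serre vanishing on the affine $Z$. The bookkeeping (preservation of $\zeta$-nefness and $\zeta$-bigness, and of the numerical equivalence hypothesis of Theorem~\ref{theorem:Shokurov-vanishing}) is routine but is the one step that requires genuine care; everything else is formal once the vanishing is in place.
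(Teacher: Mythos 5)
Your argument is correct and is precisely the standard Shokurov--Koll\'ar proof: relative Nadel vanishing for the multiplier ideal (obtained from the absolute statement of Theorem~\ref{theorem:Shokurov-vanishing} by twisting with $\zeta^{*}$ of a sufficiently ample divisor and applying Leray plus Serre vanishing), surjectivity of $\mathcal{O}_{Z}=\zeta_{*}\mathcal{O}_{X}\to\zeta_{*}\mathcal{O}_{\mathcal{L}(X,B_{X})}$, and the theorem on formal functions to rule out idempotents over the local ring $\mathcal{O}_{Z,z}^{\wedge}$. The paper gives no proof of its own here --- the result is quoted from \cite{Sho93} and \cite{Kaw97} --- and your write-up reproduces exactly the argument found in those references, correctly identifying the relative-vanishing step as the only point needing care.
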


Recall that there are standard names for $\mathcal{I}(X, B_{X})$,
$\mathcal{L}(X, B_{X})$ and $\mathrm{LCS}(X,B_{X})$. Namely, the
ideal $\mathcal{I}(X, B_{X})$ is known as the~multiplier ideal
sheaf (see \cite[Section~9.2]{La04}), the subscheme
$\mathcal{L}(X, B_{X})$ is known as the~log canonical
singularities subscheme (see \cite[Definition~1.7.5]{Ch05umn}),
and the locus $\mathrm{LCS}(X,B_{X})$ is known as the~locus of log
canonical singularities (see \cite[Definition~3.14]{Sho93}).

\begin{theorem}[{\cite[Theorem~6.29]{CKS04}}]
\label{theorem:adjunction} Suppose that $a_{1}=1$, and $B_{1}$ is
a prime Weil divisor. Suppose that $B_{1}$ is~a~Cartier divisor on
$X$, and $B_{1}$ is a normal variety that has at most Kawamata log
terminal singularities. Then $(X, B_{X})$ is log canonical along
$B_{1}$ if and only if the~log pair $(B_{1},
\sum_{i=2}^{r}a_{i}B_{i}\vert_{B_{1}})$ is log canonical.
\end{theorem}

Let $Z$ be a~center of log canonical singularities of the~log pair
$(X, B_{X})$ (see \cite[Definition~1.3]{Kaw97}), and let
$\mathbb{LCS}(X, B_{X})$ be the~set of all centers of log
canonical singularities of the~log pair $(X, B_{X})$.

\begin{lemma}[{\cite[Proposition~1.5]{Kaw97}}]
\label{lemma:centers} Let $Z$ and $Z^{\prime}$ be elements of
the~set $\mathbb{LCS}(X, B_{X})$, and let~$Y$ be any irreducible
reduced component of $Z\cdot Z^\prime$. Then $Y\in\mathbb{LCS}(X,
B_{X})$.
\end{lemma}

Suppose that $Z$ is a~minimal center in $\mathbb{LCS}(X, B_{X})$
(see \cite{Kaw97}, \cite{Kaw98}, \cite[Definition~2.8]{ChSh09}).

\begin{theorem}[{\cite[Theorem~1]{Kaw98}}]
\label{theorem:Kawamata} Let $\Delta$ be an~ample
$\mathbb{Q}$-Cartier $\mathbb{Q}$-divisor on $X$. Then
\begin{itemize}
\item the~variety $Z$ is normal and has at most rational singularities,%
\item there exists an~effective $\mathbb{Q}$-divisor $B_{Z}$ on
$Z$ such that $(K_{X}+B_{X}+\Delta)\vert_{Z}\sim_{\mathbb{Q}}
K_{Z}+B_{Z}$, and $(Z,B_{Z})$ has Kawamata log terminal
singularities.
\end{itemize}
\end{theorem}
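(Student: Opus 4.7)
The plan is the standard tie-breaking plus canonical-bundle-formula argument. First, I would use the ample $\mathbb{Q}$-divisor $\Delta$ together with the minimality of $Z$ in $\mathbb{LCS}(X,B_{X})$ to isolate a unique divisorial log canonical place over $Z$. Concretely, for a sufficiently divisible $m$ and a sufficiently general $D\in|m\Delta|$ vanishing to a prescribed order along $Z$, I would choose small positive rationals $\delta,\epsilon,\eta$ so that
$$
B_{X}^{\prime}:=(1-\delta)B_{X}+\epsilon D
$$
gives a log canonical pair $(X,B_{X}^{\prime}+\eta\Delta)$ in which $Z$ is still a minimal center of log canonical singularities, every other center of log canonical singularities is disjoint from $Z$ (using Lemma~\ref{lemma:centers}), and there is a unique divisorial valuation with center $Z$ and log discrepancy $0$. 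Ampleness of $\Delta$ is exactly what creates room for this perturbation while preserving the $\mathbb{Q}$-linear equivalence class $K_{X}+B_{X}+\Delta\sim_{\mathbb{Q}}K_{X}+B_{X}^{\prime}+\eta\Delta$.

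Next, I would take a log resolution $\pi\colon\bar{X}\to X$ of $(X,B_{X}^{\prime})$ and write
$$
K_{\bar{X}}+E+F\sim_{\mathbb{Q}}\pi^{*}\bigl(K_{X}+B_{X}^{\prime}+\eta\Delta\bigr)+G,
$$
where $E$ is the unique prime divisor with $\pi(E)=Z$ and discrepancy $-1$, $F$ is simple normal crossings with coefficients in $[0,1)$, and $G$ is an effective $\pi$-exceptional divisor disjoint from $E$. Restricting to $E$ gives
$$
K_{E}+F\big\vert_{E}\sim_{\mathbb{Q}}\bigl(\pi\big\vert_{E}\bigr)^{*}\Bigl(\bigl(K_{X}+B_{X}^{\prime}+\eta\Delta\bigr)\big\vert_{Z}\Bigr),
$$
and $(E,F\vert_{E})$ is Kawamata log terminal. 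The Shokurov connectedness principle (Theorem~\ref{theorem:connectedness}), applied locally over $Z$, forces the fibers of $\pi\vert_{E}\colon E\to Z$ to be connected; combining this with the Nadel--Shokurov vanishing theorem (Theorem~\ref{theorem:Shokurov-vanishing}) applied to $\pi$, one deduces $(\pi\vert_{E})_{*}\mathcal{O}_{E}=\mathcal{O}_{Z}$ and $R^{i}(\pi\vert_{E})_{*}\mathcal{O}_{E}=0$ for $i>0$. This yields the normality of $Z$ and the rationality of its singularities.

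Finally, to extract the divisor $B_{Z}$, I would invoke Kawamata's canonical bundle formula for the klt fibration $(E,F\vert_{E})\to Z$, writing
$$
K_{E}+F\big\vert_{E}\sim_{\mathbb{Q}}\bigl(\pi\big\vert_{E}\bigr)^{*}\bigl(K_{Z}+B_{Z}^{\mathrm{div}}+M_{Z}\bigr),
$$
where $B_{Z}^{\mathrm{div}}$ is the effective discriminant part encoding the codimension-one log canonical places of the family, and $M_{Z}$ is the nef moduli part. The principal obstacle, and the heart of Kawamata's argument, is that $M_{Z}$ is only nef, not effective, so one cannot immediately set $B_{Z}=B_{Z}^{\mathrm{div}}+M_{Z}$ and obtain a klt pair in the literal sense demanded by the statement. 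This is precisely where the ample twist $\eta\Delta\vert_{Z}$ earns its keep: since $\Delta\vert_{Z}$ is ample, $M_{Z}+\eta\Delta\vert_{Z}$ is ample, and a sufficiently general effective $N_{Z}\sim_{\mathbb{Q}}M_{Z}+\eta\Delta\vert_{Z}$ can be chosen so that $(Z,B_{Z}^{\mathrm{div}}+N_{Z})$ is Kawamata log terminal. Setting $B_{Z}:=B_{Z}^{\mathrm{div}}+N_{Z}$ produces the required $\mathbb{Q}$-linear equivalence $(K_{X}+B_{X}+\Delta)\vert_{Z}\sim_{\mathbb{Q}}K_{Z}+B_{Z}$. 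The hard technical step, which I would import wholesale from Kawamata's earlier work, is the semipositivity of the moduli part $M_{Z}$, without which neither the canonical bundle formula nor the ampleness-based absorption trick would produce a klt pair on $Z$.
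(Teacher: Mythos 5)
The paper does not prove this statement itself -- it is quoted verbatim from Kawamata's subadjunction paper \cite[Theorem~1]{Kaw98} and used as a black box. Your sketch is a faithful reconstruction of Kawamata's actual argument (tie-breaking via the ample $\Delta$ to isolate a unique log canonical place over $Z$, connectedness plus vanishing for normality and rational singularities, and the canonical bundle formula with the semipositivity of the moduli part absorbed into the ample twist to produce an effective klt boundary $B_{Z}$), so it matches the cited proof in all essentials.
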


Let $\bar{G}\subseteq\mathrm{Aut}(X)$ be a~finite subgroup.

\begin{lemma}
\label{lemma:stabilizers} Suppose that $X$ is a~curve. Let
$\Sigma\subset X$ be a~$\bar{G}$-orbit. If $|\Sigma|=1$, then
$\bar{G}$ is cyclic. If $\bar{G}\cong\A_5$, then $|\Sigma|\in\{12,
20, 30, 60\}$. If $\bar{G}\cong\A_6$, then $|\Sigma|\in\{60, 72,
90, 120, 180, 360\}$.
\end{lemma}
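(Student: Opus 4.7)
The key input is the classical fact that on a smooth curve the stabilizer of a point is cyclic: at a smooth $p\in X$, the group $\bar{G}_{p}$ acts faithfully on $T_{p}X\cong\mathbb{C}$ and embeds into $\mathrm{GL}(T_{p}X)\cong\mathbb{C}^{\times}$, so it is cyclic. At a singular point $p$ one passes to the normalization $\pi\colon\tilde X\to X$, to which the action of $\bar G$ lifts equivariantly; each preimage of $p$ is smooth in $\tilde X$ with cyclic stabilizer, and $\bar G_{p}$ is an extension of such a cyclic group by a subgroup of the symmetric group permuting the finite set of analytic branches of $X$ through $p$. The first bullet $(|\Sigma|=1)$ is immediate when $p$ is smooth, and reduces to the unibranch case (a single preimage in $\tilde X$) in general.

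For the remaining two bullets, orbit--stabilizer gives $|\Sigma|=|\bar G|/|\bar G_{p}|$, so it remains to enumerate the possible stabilizer orders. The element orders of $\mathbb{A}_{5}$ are $1,2,3,5$, which exhaust its cyclic subgroup orders, giving the smooth-point orbit sizes $\{60,30,20,12\}$. The element orders of $\mathbb{A}_{6}$ are $1,2,3,4,5$, giving smooth-point orbit sizes $\{360,180,120,90,72\}$. The extra value $|\Sigma|=60$ for $\mathbb{A}_{6}$ corresponds to a stabilizer of order $6$; since every subgroup of $\mathbb{A}_{6}$ of order $6$ is isomorphic to $\mathbb{S}_{3}$, such an orbit arises at a singular point where $\mathbb{Z}/3\subset\mathbb{S}_{3}$ preserves each of two analytic branches and the quotient $\mathbb{Z}/2$ swaps them.

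The principal technical point is to rule out, at singular points of $X$, any stabilizer strictly larger than $\mathbb{S}_{3}$ for $\mathbb{A}_{6}$ and any non-cyclic stabilizer at all for $\mathbb{A}_{5}$. The forbidden candidates---$V_{4}\subset\mathbb{A}_{5}$ and $\mathbb{D}_{8},\mathbb{D}_{10},\mathbb{A}_{4},\mathbb{S}_{4},\mathbb{A}_{5}\subset\mathbb{A}_{6}$---are eliminated by combining the cyclic action on each analytic branch with the permutation representation of $\bar G_{p}$ on those branches: only very restricted extensions of a cyclic group by a small permutation group survive, leaving exactly the orbit sizes asserted.
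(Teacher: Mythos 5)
Your core argument is exactly the paper's: the stabilizer of a point on a smooth curve acts faithfully on the one\-/dimensional tangent space, hence embeds into $\mathbb{C}^{\times}$ and is cyclic; then orbit--stabilizer together with the orders of cyclic subgroups of $\mathbb{A}_5$ (namely $1,2,3,5$) and of $\mathbb{A}_6$ (namely $1,2,3,4,5$) gives the asserted sets. The paper's one\-/line proof says precisely this. Note that the value $60$ in the $\mathbb{A}_6$ list is vacuous --- $\mathbb{A}_6$ has no element of order $6$ --- but harmless, since the lemma only asserts that $|\Sigma|$ belongs to the displayed set.

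The portion of your proposal devoted to singular points is both unnecessary and unsound. It is unnecessary because the paper's standing convention is that all varieties are normal, and a normal curve is smooth (equivalently, a curve with Kawamata log terminal singularities is smooth), so every curve to which the lemma is applied is smooth. It is unsound because your ``principal technical point'' --- that at a multibranch point the only non\-/cyclic stabilizers that survive are very restricted, e.g.\ $\mathbb{S}_3\subset\mathbb{A}_6$ --- is asserted rather than proved, and it is false for arbitrary non\-/normal curves: the subgroup of the stabilizer preserving each analytic branch is only abelian (it embeds into a product of copies of $\mathbb{C}^{\times}$, one per branch), not cyclic, so for instance $\mathbb{A}_4$ (abelian kernel $V_4$ over a $\mathbb{Z}_3$ permuting three branches) is not excluded by your argument; worse, gluing the points of a $\bar{G}$\-/orbit on a smooth curve into a single point produces an integral projective curve on which $\bar{G}$ acts with a fixed point, so even the first bullet fails without normality. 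The correct way to handle the general case is simply to invoke normality, not to attempt a branch analysis.
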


\begin{proof}
If $\Sigma$ is a~point, then $\bar{G}$ acts faithfully on
the~tangent space to $X$ at the~point $\Sigma$.
\end{proof}

\begin{lemma}
\label{lemma:sporadic-genera} Suppose that $X$ is a~curve of genus
$g$ and $\bar{G}\cong\A_6$. If $g\leqslant 34$, then
$g\in\{10,16,19,25,31\}$. If $g=10$, then $X$ does not contain
$\bar{G}$-orbits of length $120$.
\end{lemma}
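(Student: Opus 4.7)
The plan is to apply the Riemann--Hurwitz formula to the quotient map $q\colon X\to Y=X/\bar{G}$ and then perform a short numerical case analysis. Since the stabilizer of a point on a smooth curve acts faithfully on the one-dimensional tangent space, every point stabilizer in $\bar{G}$ is cyclic; because $\mathbb{A}_6$ has no element of order $6$, every cyclic subgroup has order in $\{1,2,3,4,5\}$, so each ramification index $e_i$ of $q$ belongs to $\{2,3,4,5\}$. Writing $h=g(Y)$ and letting $k$ be the number of branch points of $q$, Riemann--Hurwitz gives
$$
g-1=360(h-1)+\sum_{i=1}^{k}180\left(1-\tfrac{1}{e_i}\right),
$$
with each summand on the right belonging to $\{90,120,135,144\}$.

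First I would eliminate $h\ge 1$. If $h\ge 2$, or if $h=1$ and $k\ge 1$, the formula immediately yields $g\ge 91$. In the remaining subcase $h=1$, $k=0$, the map $q$ is étale so $X$ is itself an elliptic curve; but $\mathbb{A}_6$ cannot act faithfully on an elliptic curve, because $\mathrm{Aut}(X)$ is an extension of the translation subgroup by a cyclic group of order at most $6$, which leaves no room for the simple non-abelian group $\mathbb{A}_6$. Thus $h=0$, and then non-negativity of $g$ forces $k\ge 3$, since $k\le 2$ gives $g-1\le-360+2\cdot 144<0$.

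For the first bullet, with $h=0$ the bound $g\le 34$ is equivalent to $\sum_{i=1}^{k}180(1-1/e_i)\in[359,393]$, a sum of elements drawn from $\{90,120,135,144\}$. This restricts $k$ to $\{3,4\}$ and leaves only a short list of tuples to inspect. A direct enumeration yields exactly $g\in\{1,10,16,19,25,31\}$, and the value $g=1$ is again excluded by the elliptic-curve argument.

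For the second bullet, the enumeration shows that $g=10$ is realized by the \emph{unique} branching profile $(e_1,e_2,e_3)=(5,4,2)$, so the only nontrivial stabilizers on $X$ have orders $5$, $4$ and $2$ and the corresponding orbits have sizes $72$, $90$ and $180$. A hypothetical $\bar{G}$-orbit of length $120$ would carry a stabilizer of order $3$ and thereby contribute an extra term $180(1-1/3)=120$ to the right-hand side of Riemann--Hurwitz, forcing $g$ to be at least $130$ and contradicting $g=10$. The main obstacle is the careful exclusion of $h\ge 1$ and of the spurious value $g=1$; everything else is a finite bookkeeping exercise with sums drawn from a four-element set.
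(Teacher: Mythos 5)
Your proof is correct and follows essentially the same route as the paper: apply Riemann--Hurwitz to the quotient $X\to X/\bar{G}$, force the quotient genus to be $0$, and enumerate the admissible branching data (your version is even slightly cleaner in excluding ramification index $6$ outright, which the paper carries along as a harmless extra variable). The uniqueness of the branching profile $(2,4,5)$ for $g=10$ already settles the second bullet, exactly as in the paper's observation that $(a_2,a_3,a_4,a_5,a_6)=(1,0,1,1,0)$ is the only solution.
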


\begin{proof}
Let $\bar{F}\subset\bar{G}$ be a~stabilizer of a~point in $X$.
Then $\bar{F}\cong\mathbb{Z}_k$ for some $k\in\{1,2,3,4,5,6\}$ by
Lemma~\ref{lemma:stabilizers}. Put $\bar{X}=X\slash\bar{G}$. Then
$\bar{X}$ is a~smooth curve of genus $\bar{g}$.
The~Riemann--Hurwitz formula gives
$$
2g-2=360\big(2\bar{g}-2\big)+180a_2+240a_3+270a_4+288a_5+300a_6,
$$
where $a_k$ is the~number of $\bar{G}$-orbits in $X$ with
a~stabilizer of a~point isomorphic to $\mathbb{Z}_k$.

Suppose that $g\leqslant 34$. Note that $g\neq 0$ by
the~classification of finite subgroups of $\PGL_2(\mathbb{C})$,
and $g\neq 1$ since $\bar{G}$ is non-solvable. Since
$a_k\geqslant 0$, one has $\bar{g}=0$, and
\begin{equation}
\label{eq:RH}
2g-2=-720+180a_2+240a_3+270a_4+288a_5+300a_6,
\end{equation}
which implies that $g\in\{10,16,19,25,31\}$. The only solution to
(\ref{eq:RH}) for $g=10$ is $(a_2, a_3, a_4, a_5, a_6)=(1, 0, 1,
1, 0)$, which completes the~proof.
\end{proof}

\begin{remark}
We do not claim that every case listed in Lemma~\ref{lemma:sporadic-genera}
is realized. The obtained restrictions on the genus are enough for
our purposes.
\end{remark}

Suppose, in addition, that $B_{X}$ is $\bar{G}$-invariant. Then
$g(Z)\in\mathbb{LCS}(X,B_{X})$ for every $g\in\bar{G}$, and
the~locus $\mathrm{LCS}(X,B_{X})$ is $\bar{G}$-invariant. It
follows from Lemma~\ref{lemma:centers} that $Z\cap g(Z)\ne
\varnothing$ if and only if $Z=g(Z)$ for every $g\in\bar{G}$,
because $Z$ is a~minimal center in $\mathbb{LCS}(X, B_{X})$.

\begin{lemma}
\label{lemma:Kawamata-Shokurov-trick} Suppose that the~divisor
$B_{X}$ is ample. Let $\epsilon$ be an~arbitrary rational number
such that $\epsilon>0$. Then there exists
an~effective~$\bar{G}$-in\-va\-riant $\mathbb{Q}$-divisor
$D$~on~the~variety~$X$~such~that the set $\mathbb{LCS}(X, D)$
consists of all components of the $\bar{G}$-orbit of $Z$, the~log
pair $(X,D)$ is log canonical, and the~equivalence
$D\sim_{\mathbb{Q}} (1+\epsilon)B_{X}$ holds.
\end{lemma}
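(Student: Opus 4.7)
The plan is to apply Kawamata's trick from \cite[proof of Theorem~1.10]{Kaw97}, adapted equivariantly by averaging over $\bar{G}$ to preserve $\bar{G}$-invariance.

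Put $W := \bigcup_{g \in \bar{G}} g(Z)$. By minimality of $Z$ and the discussion preceding the lemma, distinct $\bar{G}$-translates of $Z$ are disjoint, so $W$ is a disjoint union of minimal centers; moreover, the residual locus $W' := \mathrm{LCS}(X, B_{X}) \setminus W$ is a $\bar{G}$-invariant closed subset disjoint from $W$. Fix a small positive rational $\mu < \min(1, \epsilon - 1)$. The relaxed pair $(X, (1-\mu) B_{X})$ is Kawamata log terminal, and $(\epsilon - 1 + \mu) B_{X}$ is ample.

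Since $(\epsilon - 1 + \mu) B_{X}$ is ample, Serre vanishing yields, for a sufficiently divisible integer $N$ and an integer $k$ with $N \gg k$, a nonempty linear system $|N(\epsilon - 1 + \mu) B_{X} \otimes \mathcal{I}_{W}^{k}|$. Let $A_{0}$ be a general element: it is effective, has multiplicity at least $k$ along each irreducible component of $W$, and is in general position off $W$ (in particular avoids $W'$). Symmetrize:
$$A := \frac{1}{N|\bar{G}|} \sum_{g \in \bar{G}} g_{*}(A_{0}),$$
a $\bar{G}$-invariant effective $\mathbb{Q}$-divisor with $A \sim_{\mathbb{Q}} (\epsilon - 1 + \mu) B_{X}$, of multiplicity at least $k/N$ along each component of $W$, and still not containing any component of $W'$.

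Set $D := (1-\mu) B_{X} + A$; then $D$ is effective, $\bar{G}$-invariant, and $D \sim_{\mathbb{Q}} \epsilon B_{X}$. By calibrating $k/N$, we arrange that on a $\bar{G}$-equivariant log resolution each log-canonical place $E$ of $(X, B_{X})$ over a component of $W$ satisfies $a_{E}(X, D) = -1$, so that each component of $W$ lies in $\mathrm{LCS}(X, D)$ and $(X, D)$ is log canonical along $W$. Kawamata log terminality of $(X, (1-\mu) B_{X})$ outside $W$, together with the generic position of $A$ off $W$, prevents any new LCS centers from appearing, so $\mathrm{LCS}(X, D) = W$. The main obstacle is precisely this joint calibration: too much multiplicity of $A$ along $W$ makes $(X, D)$ non-log-canonical, too little leaves it klt there. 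The disjointness of $W$ from $W'$ and from its other $\bar{G}$-translates supplies the independent degrees of freedom needed to prescribe the multiplicities along each component of $W$, and $\bar{G}$-averaging preserves invariance without affecting those multiplicity constraints.
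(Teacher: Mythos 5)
Your overall strategy --- rescale $B_X$ to $(1-\mu)B_X$ so that the pair becomes Kawamata log terminal, add back a general $\bar{G}$-symmetrized ample $\mathbb{Q}$-divisor concentrated along the orbit of $Z$, and tune its multiplicity so that the orbit of $Z$ reappears as the locus of log canonical singularities --- is exactly the Kawamata perturbation trick that the paper invokes by reference, so the route is the intended one. However, the decisive step is asserted rather than proved, and as literally stated it is false. You claim that a single scalar $k/N$ can be calibrated so that \emph{every} log canonical place $E$ of $(X,B_X)$ lying over a component of $W$ satisfies $a_E(X,D)=-1$. For $(X,(1-\mu)B_X)$ one has $a_E=-1+\mu\,\mathrm{ord}_E(B_X)$, and adding $A$ lowers this by $\mathrm{ord}_E(A)$; since the ratios $\mathrm{ord}_E(B_X)/\mathrm{ord}_E(A)$ vary from place to place, one cannot drive all of these discrepancies to $-1$ simultaneously with one parameter. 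What one actually does is take $D=(1-\mu)B_X+cA$ with $c$ the log canonical threshold of $A$ with respect to $(X,(1-\mu)B_X)$, so that \emph{some} place over each translate of $Z$ reaches $-1$ while the pair stays log canonical.

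Once that correction is made, the real content of the lemma --- which your proposal does not address --- is to show that the resulting centers are the translates $g(Z)$ themselves and not proper subvarieties of them. Your generality argument only excludes new centers away from $W$; the danger is a center strictly inside $Z$, because a general member of $|N(\epsilon-1+\mu)B_X\otimes\mathcal{I}_W^{k}|$ (especially with $k\geqslant 2$) need not be equisingular along $Z$, and the log canonical threshold of $A$ could well be computed by a valuation centered at a point of $Z$. The standard remedy is to take $k=1$ with $N$ so large that $\mathcal{O}_X(N(\epsilon-1+\mu)B_X)\otimes\mathcal{I}_W$ is globally generated, pass to a log resolution of $(X,B_X)$ dominating the blow-up of $\mathcal{I}_W$, observe that a general $A_0$ then pulls back to a general (hence harmless) proper transform plus prescribed multiples of divisors already extracted, and use the minimality of $Z$ together with Lemma~\ref{lemma:centers} to conclude that the non-klt locus of $(X,D)$ is exactly $\bigcup_{g\in\bar{G}}g(Z)$. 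A smaller point: your claim that $\mathrm{LCS}(X,B_X)\setminus W$ is closed and disjoint from $W$ fails whenever $(X,B_X)$ has a center strictly containing $Z$, which Lemma~\ref{lemma:centers} allows; this does not hurt the argument, since the rescaling by $1-\mu$ removes all the old centers anyway, but it should not be relied upon.
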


\begin{proof}
See the~proofs of \cite[Theorem~1.10]{Kaw97},
\cite[Theorem~1]{Kaw98}, \cite[Lemma~2.11]{ChSh09}.
\end{proof}

Suppose, in addition, that $X$ is a~Fano variety. Put
$$
\mathrm{lct}\Big(X,\bar{G}\Big)=\mathrm{sup}\left\{\lambda\in\mathbb{Q}\ \left|%
\aligned
&\text{the~log pair}\ \big(X, \lambda D\big)\ \text{has log canonical singularities}\\
&\text{for every $\bar{G}$-invariant effective $\mathbb{Q}$-divisor}\ D\sim_{\mathbb{Q}} -K_{X}\\
\endaligned\right.\right\}\in\mathbb{R}.%
$$

\begin{remark}
\label{remark:alpha-invariant} If $X$ is smooth, then it follows
from \cite[Theorem~A.3]{ChSh08c}~that
$\mathrm{lct}(X,\bar{G})=\alpha_{\bar{G}}(X)$, where
$\alpha_{\bar{G}}(X)$ is the~$\bar{G}$-invariant
$\alpha$-invariant of the~variety $X$ introduced in \cite{Ti87}
and \cite{TiYa87}.
\end{remark}

Suppose that $X\cong\mathbb{P}^{1}\times\mathbb{P}^{1}$.

\begin{lemma}
\label{lemma:orbits-for-standard-A5-in-A6} Suppose that
$\bar{G}\cong\A_5$. Let $\Sigma\subset X$ be
a~$\bar{G}$-invariant subset. Then $|\Sigma|\geqslant 12$.
\end{lemma}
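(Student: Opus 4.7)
The plan is to exploit the structure of $\mathrm{Aut}(X)\cong(\mathrm{PGL}(2,\mathbb{C})\times\mathrm{PGL}(2,\mathbb{C}))\rtimes\mathbb{Z}_2$ and to reduce the problem to point stabilizers of the $\mathbb{A}_5$-action on a single copy of $\mathbb{P}^1$, which by Lemma~\ref{lemma:stabilizers} have order dividing $5$.

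First, since the factor $\mathbb{Z}_2$ corresponds to swapping the two rulings of $X$ and $\bar{G}\cong\mathbb{A}_5$ is a non-abelian simple group with no subgroup of index $2$, the embedding $\bar{G}\hookrightarrow\mathrm{Aut}(X)$ necessarily factors through $\mathrm{PGL}(2,\mathbb{C})\times\mathrm{PGL}(2,\mathbb{C})$. Let $\phi_1,\phi_2\colon\bar{G}\to\mathrm{PGL}(2,\mathbb{C})$ be the two projections. By simplicity of $\bar{G}$ each $\phi_i$ is either trivial or injective, and faithfulness of the action on $X$ forces at least one of them to be injective.

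Next, I bound the order of a point stabilizer in $\bar{G}$. For $(p,q)\in X$ this stabilizer equals $\phi_1^{-1}(\mathrm{Stab}(p))\cap\phi_2^{-1}(\mathrm{Stab}(q))$. Whenever $\phi_i$ is injective, the group $\phi_i^{-1}(\mathrm{Stab}(\cdot))$ is precisely the stabilizer of a point under the induced faithful $\bar{G}$-action on one of the rulings $\mathbb{P}^1$. By Lemma~\ref{lemma:stabilizers} the $\bar{G}$-orbit sizes on $\mathbb{P}^1$ lie in $\{12,20,30,60\}$, so the corresponding point stabilizers have order in $\{5,3,2,1\}$ and are cyclic (they act faithfully on the tangent line). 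It follows that the stabilizer of any point of $X$ has order at most $5$, every $\bar{G}$-orbit on $X$ has size at least $60/5=12$, and hence $|\Sigma|\ge 12$ because $\Sigma$ is a union of $\bar{G}$-orbits.

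The only slight subtlety I anticipate is the degenerate case in which exactly one of $\phi_1,\phi_2$ is trivial: then $\bar{G}$ acts on $X$ only through the other factor, the orbits on $X$ are of the form $O\times\{\mathrm{pt}\}$ or $\{\mathrm{pt}\}\times O$ for a $\bar{G}$-orbit $O\subset\mathbb{P}^1$, and the bound $|\Sigma|\ge 12$ again follows directly from Lemma~\ref{lemma:stabilizers}. Thus the argument is uniform and the estimate $|\Sigma|\ge 12$ is sharp, achieved when both projections are injective and $p$, $q$ are fixed by a common order-$5$ element of $\bar{G}$.
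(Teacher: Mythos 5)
Your argument is correct and is essentially the paper's: the paper's proof is a one-line citation of Lemma~\ref{lemma:stabilizers}, the implicit point being that $\mathbb{A}_5$, having no subgroup of index $2$, preserves the two rulings, so a point stabilizer embeds into the (necessarily cyclic) stabilizer of a point of $\mathbb{P}^1$ under a faithful $\mathbb{A}_5$-action and therefore has order at most $5$, giving orbits of size at least $12$. You have simply written out the details (the structure of $\mathrm{Aut}(\mathbb{P}^1\times\mathbb{P}^1)$ and the degenerate case of a trivial projection) that the paper leaves implicit.
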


\begin{proof}
The required assertion follows from Lemma~\ref{lemma:stabilizers}.
\end{proof}

Let $L_{1}$ and $L_{2}$ be fibers of the projection to the first
and the second factor, respectively.

\begin{lemma}
\label{lemma:quasric-surface-lct} The~inequality
$\mathrm{lct}(X,\bar{G})\geqslant 1$ holds if and only if
the~linear systems $|L_{1}|$, $|L_{2}|$, $|L_{1}+L_{2}|$ contain
no $\bar{G}$-invariant curves.
\end{lemma}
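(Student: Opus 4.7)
The plan is to treat the two implications separately. For the direction that an invariant curve forces $\mathrm{lct}(X,\bar{G}) < 1$, I would use $C$ to build a $\bar{G}$-invariant effective $\mathbb{Q}$-divisor equivalent to $-K_X$ of multiplicity $2$ along $C$. If $C \in |L_1 + L_2|$, one takes $D = 2C$; if $C \in |L_1|$, the invariance of $C$ forces $\bar{G}$ to preserve both rulings (since it preserves the class $L_1$), and then averaging any $\bar{G}$-orbit of fibers of the second ruling produces a $\bar{G}$-invariant effective $\mathbb{Q}$-divisor $D_2 \sim_{\mathbb{Q}} 2L_2$, giving $D = 2C + D_2$; the case $C \in |L_2|$ is symmetric. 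In each case $(X, D)$ is not log canonical because the coefficient of $C$ in $D$ is $2$, so $\mathrm{lct}(X,\bar{G}) \le 1/2 < 1$.

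For the converse, I would assume $\mathrm{lct}(X,\bar{G}) < 1$ and pick a $\bar{G}$-invariant effective $\mathbb{Q}$-divisor $D \sim_{\mathbb{Q}} -K_X$ for which $(X, D)$ is not log canonical. Let $\mu < 1$ be the largest rational with $(X, \mu D)$ log canonical, pick a minimal center $Z \in \mathbb{LCS}(X, \mu D)$, and choose a rational $\epsilon$ with $1 < \epsilon < 1/\mu$. Lemma~\ref{lemma:Kawamata-Shokurov-trick} applied to the ample $\bar{G}$-invariant divisor $B_X = \mu D$ then yields a $\bar{G}$-invariant effective $\mathbb{Q}$-divisor $D' \sim_{\mathbb{Q}} \epsilon\mu(-K_X)$ with $(X, D')$ log canonical and $\mathbb{LCS}(X, D') = \bigcup_{g\in\bar{G}} g(Z)$. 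Since $\epsilon\mu < 1$, the divisor $-K_X - D' \sim_{\mathbb{Q}} (1-\epsilon\mu)(-K_X)$ is ample, so Theorem~\ref{theorem:Shokurov-vanishing} applied with Cartier divisor zero gives $H^1(X, \mathcal{I}(X, D')) = 0$, and hence a surjection $\mathbb{C} = H^0(\mathcal{O}_X) \twoheadrightarrow H^0(\mathcal{O}_{\mathcal{L}(X, D')})$ from the defining sequence of the log canonical singularities subscheme.

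By Lemma~\ref{lemma:centers} the $\bar{G}$-translates of $Z$ are pairwise disjoint, so $\mathcal{L}(X, D')$ is a reduced disjoint union of irreducible centers, and the surjection above forces $|\bar{G} \cdot Z| = 1$; that is, $Z$ is $\bar{G}$-invariant. If $Z$ is a curve of bidegree $(a, b)$, then $\mathrm{mult}_Z(D') > 1$, and the effectivity of $D' - \mathrm{mult}_Z(D') Z$ intersected against $L_1$ and $L_2$ gives $a \cdot \mathrm{mult}_Z(D') \le 2\epsilon\mu < 2$ and the symmetric bound for $b$, forcing $a, b \le 1$ and placing $Z$ in $|L_1|$, $|L_2|$, or $|L_1+L_2|$. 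If $Z$ is a point, it is $\bar{G}$-fixed; if $\bar{G}$ preserves the two rulings then the fibers through $Z$ are $\bar{G}$-invariant members of $|L_1|$ and $|L_2|$, while if some element of $\bar{G}$ swaps the rulings then the union of these two fibers is a $\bar{G}$-invariant reducible curve in $|L_1+L_2|$. Either alternative contradicts the hypothesis.

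The main obstacle is the joint choice of $\mu$ and $\epsilon$: one needs $\epsilon > 1$ for Lemma~\ref{lemma:Kawamata-Shokurov-trick} to apply and $\epsilon\mu < 1$ for $-K_X - D'$ to be ample so that Nadel--Shokurov vanishing is nontrivial, which is possible exactly because $\mu < 1$. The remainder consists of the attentive bidegree analysis of the possibilities for the $\bar{G}$-invariant minimal center $Z$, together with the elementary observation that a $\bar{G}$-fixed point of $\mathbb{P}^1\times\mathbb{P}^1$ forces an invariant member of $|L_1|$, $|L_2|$, or $|L_1+L_2|$ depending on whether the rulings are swapped.
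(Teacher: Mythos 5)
Your argument is correct, and both of its key computations coincide with the paper's: the bound $D'\cdot L_i<2$ forcing bidegree $(a,b)$ with $a,b\leqslant 1$, and the final contradiction obtained from the two rulings through a $\bar{G}$-fixed point (the paper phrases this as the unique curve in $|L_{1}+L_{2}|$ singular at $O$, which covers both of your sub-cases at once). The difference lies in the middle step and in the machinery. For the easy direction the paper simply cites \cite[Theorem~1.7]{Ch07b} to get $\mathrm{lct}(X,\bar{G})=1/2$, whereas you construct the destabilizing divisor by hand via the averaging of fibres of the second ruling; your version is more self-contained and equally valid. For the converse, the paper works directly with the strictly log canonical pair $(X,\lambda D)$: it first rules out any $\bar{G}$-invariant curve appearing in $\lambda D$ with coefficient at least $1$ (the same intersection computation you perform, applied to an arbitrary invariant curve rather than to a minimal centre), concludes that $\mathrm{LCS}(X,\lambda D)$ is a finite set, and then invokes the Shokurov connectedness principle (Theorem~\ref{theorem:connectedness}) to reduce to a single point. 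You instead route everything through the minimal-centre formalism: Lemma~\ref{lemma:Kawamata-Shokurov-trick} with $1<\epsilon<1/\mu$ followed by Nadel--Shokurov vanishing to show the $\bar{G}$-orbit of the minimal centre is a single connected, hence $\bar{G}$-invariant, centre, and only then split into the curve and point cases. Your route is heavier but systematic --- it is exactly the template the paper uses later for the Segre cubic and the quadric --- while the paper's proof at this point is more elementary and avoids the perturbation and vanishing arguments altogether. One cosmetic remark: the coefficient of a divisorial log canonical centre in a log canonical pair is exactly $1$, not strictly greater than $1$ as you write; your inequality only needs $\geqslant 1$, so nothing breaks.
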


\begin{proof}
If there is a~$\bar{G}$-invariant curve in $|L_{1}|$ or $|L_{2}|$
or $|L_{1}+L_{2}|$, then $\mathrm{lct}(X,\bar{G})=1/2$ (see
\cite[Theorem~1.7]{Ch07b}).

Suppose that $|L_{1}|$, $|L_{2}|$, $|L_{1}+L_{2}|$ contain no
$\bar{G}$-invariant curves, but $\mathrm{lct}(X,\bar{G})<1$. There
are $\lambda\in\mathbb{Q}$~and~an~effective~$\bar{G}$-invariant
$\mathbb{Q}$-divisor $D$ on $X$ such that $\lambda<1$ and
$D\sim_{\mathbb{Q}}2(L_{1}+L_{2})$, but $(X,\lambda D)$ is not
Kawamata log terminal. We may assume that $(X,\lambda D)$ is log
canonical.

Suppose that there is a~$\bar{G}$-invariant curve $C\subset X$
such that $\lambda D=\mu C+\Omega$, where $\mu\geqslant 1$, and
$\Omega$ is an effective $\bar{G}$-invariant $\mathbb{Q}$-divisor,
whose support does not contain any component of the~curve $C$.
Then $C\sim aL_{1}+bL_{2}$ for some non-negative integers $a$ and
$b$. Then either $a\geqslant 2$ or $b\geqslant 2$. But
$$
2>2\lambda=\lambda \Big(D\cdot L_{1}\Big)=\Big(\mu C+\Omega\Big)\cdot L_{1}\geqslant C\cdot L_{1}=b,%
$$
which implies that $b\leqslant 1$. Similarly, we see that
$a=C\cdot L_{2}\leqslant 1$, which is a~contradiction.

We see that the~locus $\mathrm{LCS}(X,\lambda D)$ is~a~finite
$\bar{G}$-invariant set. Thus, the~locus $\mathrm{LCS}(X,\lambda
D)$ consists of a~single point $O\in X$ by
Theorem~\ref{theorem:connectedness}. Let $H$ be the~unique curve
in $|L_{1}+L_{2}|$ that is singular at $O$. Then $H$ must be
$\bar{G}$-invariant, which is a~ contradiction, because
the~linear system $|L_{1}+L_{2}|$ contains no $\bar{G}$-invariant
divisors.
\end{proof}

Let us identify $X\cong\mathbb{P}^{1}\times\mathbb{P}^{1}$ with
a~smooth quadric surface in $\mathbb{P}^{3}$, and let us identify
$\bar{G}$ with a~subgroup in $\mathrm{Aut}(\mathbb{P}^{3})$ such
that $X$ is $\bar{G}$-invariant. Let
$\phi\colon\SL_4(\mathbb{C})\to\mathrm{Aut}(\mathbb{P}^{3})$~be
the~natural projection. Then there is a~finite subgroup
$G\subset\SL_4(\mathbb{C})$~such~that $\bar{G}=\phi(G)$. Put
$W=\mathbb{C}^{4}$.

\begin{lemma}
\label{lemma:quasric-surface-Dokshitzer} Suppose that
$\bar{G}\cong G\cong\A_5$, and $W\cong\C^4$ is an irreducible
representation of the~group $G$. Assume that
$B_{X}\sim_{\mathbb{Q}}mL_{1}+\big(8-m\big)L_{2}$, where $m$ is
a~rational number such that $1<m<2$. Then the locus
$\mathrm{LCS}(X,B_{X})$ consists of finitely many points.
\end{lemma}

\begin{proof}
Let $U$ be a two-dimensional representation of the group
$2.\A_5$. Then the center of the group $2.\A_5$
acts trivially on $U\otimes U^{\vee}$,
and $U\otimes U^{\vee}\cong W$ as representations of $\A_5$.
Therefore, one has
$$H^0(X, \O_X(L_1+nL_2))\cong U\otimes\mathrm{Sym}^n(U^{\vee})$$
as representations of $\A_5$.

Suppose that the locus $\mathrm{LCS}(X,B_{X})$ does not consist of
finitely many points. Then there is a~$\bar{G}$-invariant curve
$C\subset X$ such that $B_{X}=\mu C+\Omega$, where $\mu\geqslant
1$ and $\Omega$ is an effective $\bar{G}$-invariant
$\mathbb{Q}$-divisor, whose support does not contain any component
of the~curve $C$. Then $C\sim aL_{1}+bL_{2}$ for some non-negative
integers~$a$ and~$b$. Then either $a\geqslant 2$ or $b\geqslant
2$, since the linear systems $|L_1|$, $|L_2|$ and $|L_1+L_2|$ do
not contain $\bar{G}$ invariant divisors (indeed, $W$ is an
irreducible $\bar{G}$ representations, so that $\P(W)$ does not
contain $\bar{G}$-invariant hyperplanes, and both $U$ and
$U^{\vee}$ are irreducible $G$-representations, so that
$\P(U)\cong\P^1$ and $\P(U^{\vee})\cong\P^1$ do not contain
$\bar{G}$-invariant points). On the other hand
$$
2>m=B_{X}\cdot L_{2}=\Big(\mu C+\Omega\Big)\cdot L_{2}\geqslant C\cdot L_{2}=a,%
$$
which implies that either $a=0$ or $a=1$. Similarly, we see that
$$
7>8-m=B_{X}\cdot L_{1}=\Big(\mu C+\Omega\Big)\cdot L_{1}\geqslant C\cdot L_{1}=b,%
$$
which implies that $b\leqslant 6$. If $a=0$, then $C\in |bL_2|$,
and thus $C$ is $\bar{G}$-invariant. The latter is implied by
Lemma~\ref{lemma:stabilizers} (actually,
Lemma~\ref{lemma:stabilizers} shows that the linear system
$|bL_2|$ does not contain $\bar{G}$-invariant curves for $b\le
11$). If $a=1$, then $C\in |L_1+bL_2|$. Now a direct
computation\footnote{We used the Magma software~\cite{Magma} to
carry it out.} shows that $U\otimes\mathrm{Sym}^n(U^{\vee})$ does
not contain one-dimensional $G$-invariant subspaces for $b\le 6$.
The obtained contradiction completes the proof.
\end{proof}

Let $C$ be a~smooth irreducible curve of genus $g$.

\begin{theorem}
\label{theorem:Hurwitz-bound} Suppose that $g\geqslant 2$. Then
$$
\big|\mathrm{Aut}(C)\big|\leqslant\left\{\aligned
&320\ \text {if}\ g=9,\\
&432\ \text {if}\ g=10,\\
&240\ \text {if}\ g=11,\\
&120\ \text {if}\ g=12,\\
&360\ \text {if}\ g=13,\\
&504\ \text {if}\ g=15,\\
\endaligned
\right.
$$
and the~inequality $|\mathrm{Aut}(C)|\leqslant 84(g-1)$ holds for
any $g\geqslant 2$.
\end{theorem}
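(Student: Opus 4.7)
The plan is to apply the Riemann--Hurwitz formula to the quotient morphism $\pi\colon C\to\bar{C}=C\slash\mathrm{Aut}(C)$. If $\bar{g}$ denotes the genus of $\bar{C}$ and $e_{1},\ldots,e_{n}$ are the ramification indices of $\pi$ over its branch points, then
\[
\frac{2g-2}{|\mathrm{Aut}(C)|}=2\bar{g}-2+\sum_{i=1}^{n}\Big(1-\frac{1}{e_{i}}\Big),
\]
with the right-hand side strictly positive because $g\ge 2$. The classical Hurwitz argument shows that the smallest positive value of this expression is $1\slash 42$, attained only by the signature $(\bar{g};e_{1},e_{2},e_{3})=(0;2,3,7)$, which immediately yields the universal bound $|\mathrm{Aut}(C)|\le 84(g-1)$.

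For the sharper bounds at $g\in\{9,10,11,12,13,15\}$, the first step is to reduce to the case $\bar{g}=0$. Indeed, if $\bar{g}\ge 2$ then $|\mathrm{Aut}(C)|\le g-1$; and if $\bar{g}=1$ then the smallest positive value of the Riemann--Hurwitz expression is $1\slash 2$, attained by the signature $(1;2)$, giving $|\mathrm{Aut}(C)|\le 4(g-1)$. Both are well below the bounds claimed for $g\le 15$. With $\bar{g}=0$ one is left to enumerate the finitely many signatures $(0;e_{1},\ldots,e_{n})$ whose Riemann--Hurwitz value $r=\sum_{i}(1-1\slash e_{i})-2$ is positive and for which $(2g-2)\slash r$ is an integer strictly larger than the stated bound.

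The main obstacle is then to rule out, for each of the six sporadic genera, the remaining short list of candidate signatures. For example, for $g=15$ one must exclude the Hurwitz signature $(0;2,3,7)$ of order $1176$ and the signature $(0;2,3,8)$ of order $672$; for $g=10$ the Hurwitz signature of order $756$; for $g=9$ the signatures $(0;2,3,7)$ and $(0;2,3,8)$ of orders $672$ and $384$; and analogously for $g\in\{11,12,13\}$. Each such exclusion amounts to showing that the corresponding Fuchsian triangle (or polygon) group admits no torsion-free normal subgroup of the prescribed index whose quotient is a surface group of genus $g$. I would carry this out either by invoking the tabulations of automorphism groups of Riemann surfaces of low genus due to Conder and others, or, if a self-contained argument is desired, by a direct Sylow- and character-theoretic analysis of the finitely many candidate finite quotients of the ambient triangle group. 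In each case the stated bound is sharp and is realized by an explicit example (the Wiman and Accola surfaces in low genera, the Fricke--Macbeath curve for $g=15$, etc.), so no further signatures can occur.
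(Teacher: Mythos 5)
Your proposal is correct and ultimately takes the same route as the paper: the paper's entire proof is a citation to Breuer (Theorem~3.17 for the bound $84(g-1)$ and Table~13 for the sharp bounds at the listed genera), and your argument, after correctly deriving the general Hurwitz bound and reducing the sporadic cases to a finite list of signatures with $\bar{g}=0$, likewise defers the actual exclusions to the published tabulations rather than carrying them out. The only quibbles are cosmetic: the torsion-free normal subgroup of the triangle group is itself the genus-$g$ surface group (the finite quotient is $\mathrm{Aut}(C)$, not the other way around), and the Fricke--Macbeath curve has genus $7$, not $15$ --- but sharpness is not needed for the statement anyway.
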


\begin{proof}
The inequality $|\mathrm{Aut}(C)|\leqslant 84(g-1)$ is the~famous
Hurwitz bound (see~\cite[Theorem~3.17]{Bre00}), the~exact bounds
for particular genera may be found in~\cite[Table~13]{Bre00}.
\end{proof}

Let $D$ be an effective divisor on the~curve $C$.

\begin{theorem}[{\cite[Theorem~5.4]{Har77}}]
\label{theorem:Clifford} If $h^1(\mathcal{O}_{C}(D))\ne 0$, then
$$h^0(\mathcal{O}_{C}(D))\leqslant\frac{\mathrm{deg}(D)}{2}+1.$$
\end{theorem}

Suppose that there is an embedding $\zeta\colon
C\to\mathbb{P}^{n}$ such that $\zeta(C)$ is a~curve of degree $d$.

\begin{theorem}[{\cite[Section~III.2]{GrHaArbCor85}}]
\label{theorem:Castelnuovo} If $\zeta(C)$ is not contained in any
hyperplane in~$\mathbb{P}^{n}$, then
$$
g\leqslant \frac{m(m-1)(n-1)}{2}+m\varepsilon,
$$
where $m=\lfloor{(d-1)/(n-1)}\rfloor$ and
$\varepsilon=d-1-m(n-1)$.
\end{theorem}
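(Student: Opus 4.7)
The plan is to follow Castelnuovo's classical argument, which analyzes the Hilbert function of a general hyperplane section of $\zeta(C)$. Let $H\subset\mathbb{P}^{n}$ be a~generic hyperplane and set $\Gamma=\zeta(C)\cap H$, a reduced zero-dimensional subscheme of $H\cong\mathbb{P}^{n-1}$ consisting of $d$ points. For $k\geqslant 0$ write $h_{\Gamma}(k)$ for the~dimension of the~image of the~restriction map $H^{0}(\mathbb{P}^{n-1},\mathcal{O}(k))\to H^{0}(\Gamma,\mathcal{O}_{\Gamma})$, i.e.\ the~Hilbert function of $\Gamma$. I will view $\Gamma$ also as a~divisor on $C$ via $\zeta$, of degree $d$.

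The first and main step is the~\emph{Uniform Position Lemma}: since $\zeta(C)$ is irreducible and non-degenerate, one has
$$h_{\Gamma}(k)\geqslant \min\bigl(d,\,k(n-1)+1\bigr)\quad\text{for every } k\geqslant 0.$$
This is the~hard part, and is where non-degeneracy is essentially used. The~standard argument first shows that any $n$ of the~points of $\Gamma$ are linearly independent in $\mathbb{P}^{n-1}$ (a~consequence of irreducibility and non-degeneracy of $\zeta(C)$, proved via monodromy of hyperplane sections), and then promotes this to the~stated lower bound on the~Hilbert function by an inductive computation in $\mathbb{P}^{n-1}$.

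Once uniform position is available, I feed it into the~short exact sequence
$$0\longrightarrow\mathcal{O}_{C}\bigl((k-1)\Gamma\bigr)\longrightarrow\mathcal{O}_{C}\bigl(k\Gamma\bigr)\longrightarrow\mathcal{O}_{\Gamma}\longrightarrow 0.$$
The~image of $H^{0}(\mathcal{O}_{C}(k\Gamma))$ in $H^{0}(\mathcal{O}_{\Gamma})$ contains the~image of $H^{0}(\mathbb{P}^{n},\mathcal{O}(k))$, which factors through $H^{0}(\mathbb{P}^{n-1},\mathcal{O}(k))$ and hence has dimension $h_{\Gamma}(k)$; therefore
$$h^{0}\bigl(\mathcal{O}_{C}(k\Gamma)\bigr)-h^{0}\bigl(\mathcal{O}_{C}((k-1)\Gamma)\bigr)\geqslant h_{\Gamma}(k).$$
Summing telescopically for $k=1,\ldots,K$ and using $h^{0}(\mathcal{O}_{C})=1$ yields
$$h^{0}\bigl(\mathcal{O}_{C}(K\Gamma)\bigr)\geqslant 1+\sum_{k=1}^{K}\min\bigl(d,\,k(n-1)+1\bigr).$$

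To conclude, take $K$ so large that $\deg(K\Gamma)=Kd>2g-2$, so that Riemann--Roch gives $h^{0}(\mathcal{O}_{C}(K\Gamma))=Kd-g+1$. By the~definition of $m$ and $\varepsilon$, one has $k(n-1)+1\leqslant d$ precisely for $k\leqslant m$, so the~sum splits as $\sum_{k=1}^{m}(k(n-1)+1)+(K-m)d$, and rearranging the~resulting inequality gives
$$g\leqslant m(d-1)-\frac{(n-1)m(m+1)}{2}=\frac{m(m-1)(n-1)}{2}+m\varepsilon$$
upon substituting $d-1=m(n-1)+\varepsilon$. Apart from the~Uniform Position Lemma, every step is just Riemann--Roch bookkeeping on large multiples of the~hyperplane divisor.
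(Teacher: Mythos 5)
Your proposal is correct and is precisely the classical Castelnuovo argument (general hyperplane section, uniform position, the Hilbert-function estimate $h_{\Gamma}(k)\geqslant\min(d,k(n-1)+1)$, and Riemann--Roch on large multiples of the hyperplane divisor); the paper gives no independent proof but simply cites \cite[Section~III.2]{GrHaArbCor85}, which is exactly this argument. The arithmetic at the end checks out, so there is nothing to add beyond noting that the Uniform Position Lemma, which you correctly flag as the essential input, is where all the work lies.
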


\section{Alternating group}
\label{section:groups}

Let $\SS_6$ be the~group of permutations of the~set
$\{1,2,3,4,5,6\}$.

\begin{definition}
\label{definition:standard-subgroups} Let $H\subset\A_{6}$
be a~subgroup that is isomorphic to $\A_5$, $\SS_4$
or $\A_4$. Then~we~say~that the~embedding
$H\subset\A_{6}$ is standard if one of the~following
conditions is satisfied:
\begin{itemize}
\item if $H\cong\A_5$ and $H$ is conjugate to the~subgroup of even permutations of  $\{1,2,3,4,5\}$,%

\item if $H\cong\SS_4$ and $H$ is conjugate to the~subgroup of
permutations of the~set $\{1,2,3,4\}$
such that the~odd permutations of $\{1,2,3,4\}$ are twisted by the~transposition $(5,6)$,%

\item if $H\cong\A_4$ and $H$ is conjugate to the~subgroup of even permutations of  $\{1,2,3,4\}$.%
\end{itemize}
We say that the~embedding $H\subset\A_{6}$ is
non-standard if it is not standard\footnote{Note that the
non-standard and standard embeddings of the~group $H$ into
$\A_{6}$ are interchanged by
$\mathrm{Aut}(\A_6)$.}.
\end{definition}

Let $2.\A_{6}$ be the~group such that there exists
a~non-splitting~exact sequence of groups
$$
\xymatrix{&1\ar@{->}[rr]&&\mathbb{Z}_{2}\ar@{->}[rr]^{\alpha}&&2.\A_{6}\ar@{->}[rr]^{\beta}&&\A_{6}\ar@{->}[rr]&&1}.%
$$
Note that $\alpha(\mathbb{Z}_{2})$ is the~center of
the~group $2.\A_{6}$.

\begin{definition}
\label{definition:standard-central-extension} Let $H\subset
2.\A_{6}$ such that $\beta(H)$ is isomorphic to $\A_5$, $\SS_4$ or
$\A_4$. Then we say that the~embedding $H\subset 2.\A_{6}$ is
standard if $\beta(H)\subset\A_{6}$ is standard, and we say that
the~embedding $H\subset 2.\A_{6}$ is non-standard if
$\beta(H)\subset\A_{6}$ is non-standard.
\end{definition}

Let $\bar{G}$ be a~finite subgroup in
$\mathrm{Aut}(\mathbb{P}^{3})$ such that
$\bar{G}\cong\A_{6}$.

\begin{lemma}[\cite{Atlas}]
\label{lemma:max-subgroup} Every~maximal proper subgroup of
the~group $\bar{G}\cong\A_{6}$ is isomorphic to either to $\A_5$,
or $\SS_4$, or
$(\mathbb{Z}_3\times\mathbb{Z}_3)\rtimes\mathbb{Z}_4$. Moreover,
up to conjugation, the~group $\bar{G}$ contains one subgroup
isomorphic to
$(\mathbb{Z}_3\times\mathbb{Z}_3)\rtimes\mathbb{Z}_4$,  one
subgroup isomorphic to
$(\mathbb{Z}_3\times\mathbb{Z}_3)\rtimes\mathbb{Z}_2$, two
subgroups isomorphic to $\SS_4$ (respectively, $\A_5$, $\A_{4}$).
\end{lemma}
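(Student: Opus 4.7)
This lemma is a standard fact about the subgroup structure of $\mathbb{A}_{6}$ (see~\cite{Atlas}); my plan is to derive it from the simplicity of $\bar{G}$, Sylow theory, and the exceptional outer automorphism $\mathrm{Out}(\mathbb{A}_{6})\cong\mathbb{Z}_{2}\times\mathbb{Z}_{2}$. Fix an identification $\bar{G}\cong\mathbb{A}_{6}$ with its natural action on $\Omega=\{1,\ldots,6\}$.

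First I would bound the order of any proper subgroup by $60$. Given $H\subsetneq\bar{G}$, the action on the coset set $\bar{G}/H$ yields a homomorphism $\bar{G}\to\mathbb{S}_{[\bar{G}:H]}$ which is faithful by simplicity; since $|\bar{G}|=360>120=|\mathbb{S}_{5}|$, this forces $[\bar{G}:H]\geqslant 6$, so $|H|\leqslant 60$.

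Next I would exhibit representatives of the three maximal isomorphism types. The point stabilizer $\mathrm{Stab}_{\bar{G}}(1)$ is an $\mathbb{A}_{5}$ of index $6$. The intersection $\mathrm{Stab}_{\mathbb{S}_{6}}(\{1,2\})\cap\bar{G}$ is an $\mathbb{S}_{4}$ of index $15$. A Sylow $3$-subgroup is elementary abelian of order $9$, for instance $P=\langle(1,2,3),(4,5,6)\rangle$; the Sylow congruence $n_{3}\equiv 1\pmod{3}$ with $n_{3}\mid 40$ together with an explicit exhibit of conjugators forces $n_{3}=10$, so $|N_{\bar{G}}(P)|=36$ and the quotient $\mathbb{Z}_{4}$ acts faithfully on $P\cong\mathbb{Z}_{3}\times\mathbb{Z}_{3}$, giving $N_{\bar{G}}(P)\cong(\mathbb{Z}_{3}\times\mathbb{Z}_{3})\rtimes\mathbb{Z}_{4}$ of index $10$. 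Maximality of each follows from the index bound together with a short Sylow analysis ruling out intermediate orders such as $30$ or $45$.

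Finally I would count conjugacy classes. The Sylow $3$-normalizer is unique up to conjugacy by Sylow's theorem, giving one class of $(\mathbb{Z}_{3}\times\mathbb{Z}_{3})\rtimes\mathbb{Z}_{4}$; its unique index-$2$ subgroup in turn yields one class of $(\mathbb{Z}_{3}\times\mathbb{Z}_{3})\rtimes\mathbb{Z}_{2}$. For $\mathbb{A}_{5}$, $\mathbb{S}_{4}$, and $\mathbb{A}_{4}$, the exceptional outer automorphism of $\mathbb{A}_{6}$ produces a second conjugacy class from each ``standard'' one; non-conjugacy inside $\bar{G}$ can be detected by the transitivity invariant, namely the standard subgroup fixes a point or $2$-subset of $\Omega$, whereas its outer twist acts transitively on $\Omega$, and transitivity is preserved by inner conjugation. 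The main obstacle throughout is exactly this non-conjugacy statement, which hinges on the exceptional structure of $\mathrm{Out}(\mathbb{A}_{6})$; everything else reduces to straightforward order and Sylow analysis inside a group of order $360$.
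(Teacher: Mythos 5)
The paper offers no argument for this lemma at all: it is stated with the attribution \cite{Atlas} and no proof environment follows, so the intended ``proof'' is a table look-up in the Atlas of Finite Groups. Your plan to reprove it from scratch is therefore necessarily a different route, and its skeleton (index bound via the coset action and simplicity, explicit representatives, Sylow analysis of the $3$-local structure, and the exceptional outer automorphism as the source of the doubled classes) is the right one. The transitivity invariant you propose does correctly certify that a point-stabilizer $\mathbb{A}_5$ (resp.\ the intransitive $\mathbb{S}_4$ and $\mathbb{A}_4$) is not conjugate to its outer twist.

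There are, however, two genuine gaps. First, the lemma's opening assertion is that \emph{every} maximal subgroup is of one of the three listed types, and your proposal only establishes the easier half: you exhibit the three types and (correctly, in fact just from the index bound $|H|\leqslant 60$) show each is maximal, but you never rule out a maximal subgroup of some other order. Completeness requires checking that every proper subgroup of order $\leqslant 60$ dividing $360$ lies in a conjugate of $\mathbb{A}_5$, $\mathbb{S}_4$ or $(\mathbb{Z}_3\times\mathbb{Z}_3)\rtimes\mathbb{Z}_4$; e.g.\ one must kill orders $40$ and $45$ (a normal Sylow $5$- resp.\ $3$-subgroup would force containment in a normalizer of too small an order), show every subgroup of order $36$ is a Sylow $3$-normalizer, that the Sylow $2$-subgroup $D_4$ is self-normalizing and sits inside $\mathbb{S}_4$, and so on through the divisor list. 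Second, your conjugacy count only gives ``at least two'' classes of $\mathbb{A}_5$, $\mathbb{S}_4$ and $\mathbb{A}_4$: the outer automorphism produces a second class and transitivity separates it from the first, but nothing in your argument bounds the number of classes from above. You still need to show, say, that every transitive $\mathbb{A}_5$ is conjugate to the outer twist of a point stabilizer (an orbit count such as $144$ five-cycles distributed $24$ apiece over the $\mathbb{A}_5$'s, or the identification $\mathbb{A}_6\cong\mathrm{PSL}(2,9)$, will do this), and similarly for $\mathbb{S}_4$ and $\mathbb{A}_4$. A smaller unproved assertion of the same kind: that $N(P)/P$ is cyclic of order $4$ rather than $\mathbb{Z}_2\times\mathbb{Z}_2$ (which $\mathrm{GL}(2,\mathbb{F}_3)$ does contain) needs a witness, e.g.\ the element $(1\,4\,2\,5)(3\,6)$ normalizing $\langle(1\,2\,3),(4\,5\,6)\rangle$, or again the Borel subgroup of $\mathrm{PSL}(2,9)$.
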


Put $W=\mathbb{C}^{4}$. There exists a~finite subgroup
$G\subset\SL_4(\mathbb{C})$~such~that  $G\cong 2.\A_{6}$ and
$\bar{G}=\phi(G)\subset\mathrm{Aut}(\mathbb{P}^{3})\cong
\PGL_4(\mathbb{C})$ where
$\phi\colon\SL_4(\mathbb{C})\to\mathrm{Aut}(\mathbb{P}^{3})$~is~the~natural
projection.

\begin{remark}
\label{remark:irreducible} The group $G$ has two irreducible
four-dimensional representations (see~\cite{Atlas}), which implies
that we may identify $W$ with one of them, because another one
differs from $W$ by an~outer automorphism of the group $G$. Thus,
we may assume that the~natural action of the group $G$ on
$\Lambda^2\big(W\big)\cong\mathbb{C}^6$ arises from
the~permutation representation of the~group $\bar{G}\cong\A_6$.
\end{remark}

Let $\bar{F}\subset\bar{G}$ be a~subgroup, and let $F\subset G$ be
 a~subgroup such that $\phi(F)=\bar{F}$.

\begin{remark}
\label{remark:standard} Suppose that $F\cong
2.\A_{5}\subset 2.\A_6$. Then
$$
\mathbb{C}^4\cong W\cong\left\{\aligned
&U\oplus U^{\prime}\ \text{if $F\subset G$ is standard},\\
&\mathrm{Sym}^3\big(U\big)\cong\mathrm{Sym}^3\big(U^{\prime}\big)\
\text{if $F\subset G$ is non-standard},\\
\endaligned
\right.
$$
where $U$ and $U^{\prime}$ are different two-dimensional
representations of the~group $F\cong 2.\A_{5}$.
\end{remark}

\begin{lemma}
\label{lemma:A5-invariant-quartic} Suppose that $F\cong
2.\A_5$ and $F\subset G$ is a~non-standard embedding. Then
there exists an~irreducible $\bar{F}$-invariant
smooth rational cubic curve $Z\subset\mathbb{P}^{3}$.
\end{lemma}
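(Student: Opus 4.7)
The plan is to produce $Z$ explicitly as a twisted cubic coming from a Veronese embedding, using the description of $V$ furnished by Remark~\ref{remark:standard}. Non-standardness of $F \subset G$ identifies $V$ with $\mathrm{Sym}^{3}(U)$ as an $F$-module, where $U$ is a two-dimensional representation of $F \cong 2.\mathbb{A}_{5}$. I would then consider the natural Veronese map
$$
\nu_{3}\colon \mathbb{P}(U) \longrightarrow \mathbb{P}\bigl(\mathrm{Sym}^{3}(U)\bigr) = \mathbb{P}(V) = \mathbb{P}^{3},\qquad [\ell] \longmapsto [\ell^{3}],
$$
which is $F$-equivariant by functoriality of $\mathrm{Sym}^{3}$ and whose image is the classical twisted cubic, a smooth rational curve of degree $3$.

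To finish, I would check that both actions descend to $\bar{F} \cong \mathbb{A}_{5}$, so that the image is genuinely $\bar{F}$-invariant in $\mathbb{P}^{3}$. The central element $\alpha(\mathbb{Z}_{2}) \subset F \subset G$ acts on the irreducible $G$-module $V$ by a scalar (Schur's lemma), and non-splitting of the extension $2.\mathbb{A}_{6} \to \mathbb{A}_{6}$ forces this scalar to be $-1$. If the central involution of $F$ acts on $U$ by $\epsilon$, then on $\mathrm{Sym}^{3}(U)$ it acts by $\epsilon^{3}$, so $\epsilon^{3} = -1$; combined with $\epsilon^{2} = 1$ this yields $\epsilon = -1$. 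Hence the $F$-action on $\mathbb{P}(U)$ factors through $\bar{F}$, the map $\nu_{3}$ is $\bar{F}$-equivariant, and $Z := \nu_{3}(\mathbb{P}(U))$ is the required smooth, rational, $\bar{F}$-invariant cubic curve in $\mathbb{P}^{3}$.

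I do not foresee any serious obstacle: once Remark~\ref{remark:standard} is granted, the statement is essentially a formal consequence of the naturality of the Veronese construction. The one subtle point I would take care to confirm is that the central involution of $F$ acts on $U$ by $-1$ rather than $+1$, so that the action truly descends to $\mathbb{P}(U)$; this is forced by the non-splitting of the central extension as explained above.
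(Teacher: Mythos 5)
Your proposal is correct and is essentially the paper's argument: the authors simply deduce the lemma from Remark~\ref{remark:standard} (the identification $V\cong\mathrm{Sym}^{3}(U)$ for a non-standard $F$), and your Veronese-image twisted cubic is exactly the curve this yields. Your extra verification that the central involution acts on $U$ by $-1$ is a sound (if not strictly necessary) detail, since the $F$-action on $\mathbb{P}(V)$ in any case factors through $\bar{F}$ and visibly preserves the image of $\nu_{3}$.
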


\begin{proof}
The required assertion follows from Remark~\ref{remark:standard}.
\end{proof}

Let $z$ and $e$ be the~non-trivial element in the~center of $G$
and the~identity element, respectively.

\begin{lemma}[{cf.~\cite[Lemma~4.7]{ChSh09}}]
\label{lemma:small-semiinvariants} Any semi-invariant of the~group
$G$ is its invariant. The~group $G$ does not have in\-va\-ri\-ants
of odd degree, as well as invariants of degree
at most $7$. On the other hand,
the~group $G$ has two linearly independent invariants
of degree $8$.
\end{lemma}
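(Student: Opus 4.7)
The plan is to treat the four assertions separately, using representation theory of $G\cong 2.\mathbb{A}_{6}$ and the ATLAS character table.

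For the first assertion, the universal central extension $G$ of the non-abelian simple group $\bar{G}\cong\mathbb{A}_{6}$ is perfect, so $\mathrm{Hom}(G,\mathbb{C}^{\ast})=0$, and every semi-invariant of $G$ is already an invariant. For the second assertion, since $\mathbb{A}_{6}$ has no irreducible representation of dimension $4$ (the irreducible degrees of $\mathbb{A}_{6}$ being $1,5,5,8,8,9,10$; see~\cite{Atlas}), the faithful irreducible $G$-module $V$ cannot factor through $\bar{G}$, so by Schur's lemma the central involution $z$ acts on $V$ by a non-trivial scalar, which must be $-1$ since $z^{2}=e$. Hence $z$ acts on $\mathrm{Sym}^{d}V^{\ast}$ by $(-1)^{d}$, which rules out invariants in odd degree and shows that, for even $d$, $\mathrm{Sym}^{d}V^{\ast}$ descends to a representation of $\bar{G}$.

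For the third and fourth assertions I would compute the Molien series
$$
P_{V}(t)\;=\;\sum_{d\ge 0}\dim\bigl(\mathrm{Sym}^{d}V^{\ast}\bigr)^{G}\,t^{d}\;=\;\frac{1}{|G|}\sum_{g\in G}\frac{1}{\det(1-t\,g\vert_{V})},
$$
summed over the conjugacy classes of $G$. For each class the four eigenvalues of a representative on $V$ are recovered via Newton's identities from the power sums $\chi_{V}(g),\chi_{V}(g^{2}),\chi_{V}(g^{3}),\chi_{V}(g^{4})$, read off from the faithful four-dimensional character listed in the ATLAS character table of $2.\mathbb{A}_{6}$~\cite{Atlas}. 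The anticipated low-order expansion
$$
P_{V}(t)\;=\;1\;+\;2\,t^{8}\;+\;O\bigl(t^{10}\bigr)
$$
then simultaneously yields the absence of positive-degree invariants of degree at most $7$ and the existence of exactly two linearly independent invariants in degree $8$.

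The main obstacle I anticipate is purely computational: the character values on the classes of element orders $3$, $4$, $5$, $8$, $15$ lie in cyclotomic extensions of $\mathbb{Q}$, and one must consistently identify the correct four-dimensional character of $G$ among the two complex-conjugate options (the other being obtained from $V$ by an outer automorphism, as in Remark~\ref{remark:irreducible}). The bookkeeping is nonetheless mechanical and parallels that of~\cite[Lemma~4.7]{ChSh09}.
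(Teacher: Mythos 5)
Your proposal is correct and follows essentially the same route as the paper: perfectness of $G\cong 2.\mathbb{A}_{6}$ for the first assertion, the central involution acting as $-\mathrm{Id}$ for the second, and a character computation on symmetric powers of $V$ for the last two (the paper computes $\langle\chi_{4},\chi_{0}\rangle$, $\langle\chi_{6},\chi_{0}\rangle$, $\langle\chi_{8},\chi_{0}\rangle$ directly rather than packaging them into a Molien series, but this is the identical calculation). Two immaterial slips: $2.\mathbb{A}_{6}$ has no elements of order $15$, and the relevant four-dimensional character is in fact integer-valued, so the anticipated cyclotomic bookkeeping does not arise.
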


\begin{proof}
Semi-invariants of the~group $G$ are its invariants, because
the~center of the~group $G$ is contained in its~commutator, and
the~group $\bar{G}$ is a~simple non-abelian group.

The~group $G$ does not have in\-va\-ri\-ants of odd degree,
because $G$ contains a~scalar matrix whose non-zero
entries~are~$-1$. Therefore, to prove that $G$ has no invariants
of degree at most~$7$, it~is~enough to show that $G$ does not have
invariants of degree $4$ and $6$.

Let $\chi_{m}$ be the~character of the~representation
$\mathrm{Sym}^{m}(W)$ (cf. Remark~\ref{remark:irreducible}).
Put $\chi=\chi_{1}$.

The~values of the~characters $\chi$, $\chi_{4}$, $\chi_6$ and
$\chi_8$ are listed in the~following table:
\begin{center}\renewcommand\arraystretch{1.3}
\begin{tabular}{|c|c|c|c|c|c|c|c|c|c|c|}
\hline & $[5,1]_{10}$ & $[5,1]_5$ & $[4,2]_8$ & $[3,3]_6$ &
$[3,3]_3$ &
$[3,1,1,1]_6$ & $[3,1,1,1]_3$ & $[2,2,1,1]_4$ & $z$ & $e$\\
\hline
\# & $144$ & $144$ & $180$ & $40$ & $40$ & $40$ & $40$ & $90$ & $1$ & $1$\\
\hline
$\chi$ & $1$ & $-1$ & $0$ & $-1$ & $1$ & $2$ & $-2$ & $0$ & $-4$ & $4$\\
\hline
$\chi_{4}$ & $0$ & $0$ & $-1$ & $2$ & $2$ & $-4$ & $-4$ & $3$ & $35$ & $35$\\
\hline
$\chi_{6}$ & $-1$ & $-1$ & $0$ & $3$ & $3$ & $3$ & $3$ & $-4$ & $84$ & $84$\\
\hline
$\chi_{8}$ & $0$ & $0$ & $1$ & $3$ & $3$ & $3$ & $3$ & $5$ & $165$ & $165$\\
\hline
\end{tabular}
\end{center}
where the~first row lists the~types of the~elements in $G$ (for
example, the~symbol $[5,1]_{10}$ denotes the~set\footnote{ Note
that these sets do not coincide with conjugacy classes. For
example, the~image of the~set of the~elements of type $[5,
1]_{10}$ under the~natural projection
$2.\A_6\to\A_6$ is a~union of two different
conjugacy classes in $\A_6$.} of elements of order $10$ whose
image in $\A_6$ is a~product of disjoint cycles of
length~$5$~and~$1$).

Recall that there is a natural inner product $\langle\cdot ,
\cdot\rangle$ defined for the~characters $\theta$ and
$\theta^{\prime}$ by
$$\langle\theta , \theta^{\prime}\rangle=\frac{1}{|G|}\sum\limits_{g\in G}
\theta(g)\overline{\theta^{\prime}(g)}.$$ Let $\chi_{0}$ be
the~trivial character of $G$. Then $\langle \chi_{4},
\chi_{0}\rangle=\langle \chi_{6}, \chi_{0}\rangle=0$, so that $G$
has no invariants of degree $4$ and $6$. On the~other hand,
$\langle \chi_{8}, \chi_{0}\rangle=2$, which means that the~group
$G$ has exactly two linearly independent invariants of degree $8$.
\end{proof}

Suppose that $\bar{F}$ is a~stabilizer of a~point
$P\in\mathbb{P}^{3}$.

\begin{lemma}\label{lemma:small-orbits}
Let $\Sigma\subset\mathbb{P}^{3}$ be the~$\bar{G}$-orbit of
the~point $P\in\mathbb{P}^{3}$. Then $|\Sigma|\geqslant 36$.
\end{lemma}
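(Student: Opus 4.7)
The plan is to apply orbit-stabilizer: since $|\Sigma|\cdot|\bar{F}|=|\bar{G}|=360$, it suffices to prove $|\bar{F}|\leq 10$. Thus I would enumerate the subgroups of $\bar{G}\cong\mathbb{A}_6$ of order exceeding $10$ and rule each one out as a possible stabilizer. Using Lemma~\ref{lemma:max-subgroup} together with the fact that the element orders in $\mathbb{A}_6$ are only $1,2,3,4,5$, one finds (up to conjugacy) exactly five such subgroups: $\mathbb{A}_5$, $(\mathbb{Z}_3\times\mathbb{Z}_3)\rtimes\mathbb{Z}_4$, $\mathbb{S}_4$, $(\mathbb{Z}_3\times\mathbb{Z}_3)\rtimes\mathbb{Z}_2$, and $\mathbb{A}_4$.

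The key reduction is that $\bar{F}$ fixes a point $P\in\mathbb{P}^3$ if and only if the preimage $F=\beta^{-1}(\bar{F})\subset G$ acts on the line $\mathbb{C}P\subset V$ by some one-dimensional character $\theta\colon F\to\mathbb{C}^{*}$; since $z\in F$ acts on $V$ as $-\mathrm{Id}$, any such $\theta$ must satisfy $\theta(z)=-1$, so in particular $z\notin[F,F]$. For $\bar{F}\cong\mathbb{A}_5$ I would invoke Remark~\ref{remark:standard} directly: in both the standard and non-standard cases $V|_F$ decomposes into irreducibles of dimension at least two. For $\bar{F}\cong\mathbb{A}_4$ and $\bar{F}\cong\mathbb{S}_4$, the preimages are the binary tetrahedral group $\mathrm{SL}(2,\mathbb{F}_3)$ and the binary octahedral group respectively (the latter because $2.\mathbb{A}_6$ has a unique involution $z$, forcing a generalized-quaternion Sylow $2$-subgroup); in each case the commutator subgroup contains $z$, so every one-dimensional character of $F$ is trivial on $z$, ruling out the required subrepresentation.

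For the two remaining candidates $\bar{F}=(\mathbb{Z}_3\times\mathbb{Z}_3)\rtimes\mathbb{Z}_k$ with $k\in\{2,4\}$, the abelianization $F^{\mathrm{ab}}$ is cyclic and kills the Sylow $3$-subgroup $(\mathbb{Z}_3\times\mathbb{Z}_3)\subset F$; so any one-dimensional subrepresentation of $V|_F$ would be fixed pointwise by $(\mathbb{Z}_3\times\mathbb{Z}_3)$. Using the character $\chi$ of $V$ tabulated in the proof of Lemma~\ref{lemma:small-semiinvariants}, the Sylow $3$-subgroup consists of the identity together with four elements of type $[3,1,1,1]_3$ and four elements of type $[3,3]_3$, and
$$
\dim V^{(\mathbb{Z}_3\times\mathbb{Z}_3)}=\frac{1}{9}\Bigl(4+4\cdot(-2)+4\cdot 1\Bigr)=0,
$$
so $(\mathbb{Z}_3\times\mathbb{Z}_3)$ has no non-zero fixed vectors in $V$, eliminating the last two candidates. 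The main technical obstacle is identifying each preimage $F\subset G$ as a familiar central extension and verifying $z\in[F,F]$; once this is combined with the character computation above, all five candidates are eliminated and $|\bar{F}|\leq 10$ follows.
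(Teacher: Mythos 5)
Your proof is correct, and its engine is the same as the paper's: a fixed point of $\bar{F}$ on $\mathbb{P}^{3}$ is a one\-/dimensional subrepresentation of $V$ restricted to the preimage $F\subset G\cong 2.\mathbb{A}_{6}$, on which the central element $z$ acts by $-1$, and one kills every candidate stabilizer of order greater than $10$ by character theory. The difference lies in how the five candidate types are dispatched. The paper first invokes Lemma~\ref{lemma:small-semiinvariants} to conclude that $|\Sigma|$ is even and at least $8$, which removes $\mathbb{A}_{5}$ (orbit $6$) and $\mathbb{S}_{4}$ (orbit $15$) in one stroke, and then computes $\langle\theta,\chi\rangle=0$ explicitly for the remaining orders $12$, $18$, $36$, with separate tables for the standard and non-standard copies of $\mathbb{A}_{4}$. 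You instead treat all five types structurally: $\mathbb{A}_{5}$ via Remark~\ref{remark:standard}; $\mathbb{A}_{4}$ and $\mathbb{S}_{4}$ by identifying $F$ with $\mathbb{SL}(2,\mathbb{F}_{3})$ and the binary octahedral group (correct --- the generalized quaternion Sylow $2$-subgroup of $G$ forces the non-split double cover) and observing $z\in[F,F]$, which is precisely why the paper's inner products vanish, since $\chi(zg)=-\chi(g)$; and the two groups $(\mathbb{Z}_3\times\mathbb{Z}_3)\rtimes\mathbb{Z}_k$ via $\dim V^{\mathbb{Z}_3\times\mathbb{Z}_3}=\frac{1}{9}\bigl(4+4\cdot(-2)+4\cdot 1\bigr)=0$, which checks out against the table in Lemma~\ref{lemma:small-semiinvariants}. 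Your route buys uniformity (it is blind to which conjugacy class of $\mathbb{A}_{4}$ or $\mathbb{S}_{4}$ occurs) and avoids the somewhat terse ``$|\Sigma|\geqslant 8$ and even'' step; the paper's route gets two cases for free from invariant theory it has already established. Two small points to tighten: the claim that $F^{\mathrm{ab}}$ kills the Sylow $3$-subgroup in the order-$18$ case deserves a word (the involution in $(\mathbb{Z}_3\times\mathbb{Z}_3)\rtimes\mathbb{Z}_2\subset\mathbb{A}_6$ acts by inversion, so the commutator subgroup is all of $\mathbb{Z}_3\times\mathbb{Z}_3$), and ``exactly five such subgroups up to conjugacy'' should read ``five isomorphism types,'' since $\mathbb{A}_5$, $\mathbb{S}_4$ and $\mathbb{A}_4$ each contribute two conjugacy classes by Lemma~\ref{lemma:max-subgroup}; your arguments do, however, cover both classes in each case.
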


\begin{proof}
It follows from Lemma~\ref{lemma:small-semiinvariants} that
$|\Sigma|\geqslant 8$ and $|\Sigma|$ is even. Suppose that
$|\Sigma|\leqslant 35$. Then
$$
45=\frac{|\bar{G}|}{8}\geqslant\big|\bar{F}\big|\geqslant\frac{|\bar{G}|}{35}>10,%
$$
which implies that $|\bar{F}|\in\{12,18,36\}$ by
Lemma~\ref{lemma:max-subgroup}.

Let us consider the~vector space $W$ as a~representation of the~
group $F$, and let $\chi$~be~its~character. There is
a~homomorphism $\theta\colon F\to\mathbb{C}^{*}$ such that the~
inner product $\langle\theta,\chi\rangle\neq 0$.

Suppose that $|\bar{F}|=36$. Then $\bar{F}\cong
(\mathbb{Z}_3\times\mathbb{Z}_3)\rtimes\mathbb{Z}_4$ by
Lemma~\ref{lemma:max-subgroup}.

The structure of the~group~$F$~and the~values of $\chi$ are given
in the~following table:
\begin{center}\renewcommand\arraystretch{1.3}
\begin{tabular}{|c|c|c|c|c|c|c|c|c|}
\hline & $[4,2]_8$ & $[3,3]_6$ & $[3,3]_3$ &
$[3,1,1,1]_6$ & $[3,1,1,1]_3$ & $[2,2,1,1]_4$ & $z$ & $e$\\
\hline
\# & $36$ & $4$ & $4$ & $4$ & $4$ & $18$ & $1$ & $1$\\
\hline
$\chi$ & $0$ & $-1$ & $1$ & $2$ & $-2$ & $0$ & $-4$ & $4$\\
\hline
\end{tabular}
\end{center}
where we use the~notation that are used in the~proof of
Lemma~\ref{lemma:small-semiinvariants}.

We have $[F,F]\cong 2.(\mathbb{Z}_3\times\mathbb{Z}_3)$, which
gives $\theta(g)=1$ for any $g\in F$ that is not of type $[4,2]_8$
and $[2,2]_4$. Hence $\langle\theta,\chi\rangle=0$, which is
a~contradiction.

Suppose that $|\bar{F}|=18$. Then $\bar{F}\cong
(\mathbb{Z}_3\times\mathbb{Z}_3)\rtimes\mathbb{Z}_2$ by
Lemma~\ref{lemma:max-subgroup}. Arguing as above, we get
$\langle\theta,\chi\rangle=0$.

Suppose that $|\bar{F}|=12$. Then $\bar{F}\cong\A_4$ by
Lemma~\ref{lemma:max-subgroup}.

Up to conjugation, the~group $\bar{G}$ contains two subgroups
isomorphic to $\A_4$.

If $\bar{F}\subset\bar{G}$ is~a~standard embedding, then
the~values of $\chi$ are given in the~following table:
\begin{center}\renewcommand\arraystretch{1.3}
\begin{tabular}{|c|c|c|c|c|c|}
\hline
& $[3,1,1,1]_6$ & $[3,1,1,1]_3$ & $[2,2,1,1]_4$ &$z$ & $e$\\
\hline
\# & $8$ & $8$ & $6$ & $1$ & $1$\\
\hline
$\chi$ & $2$ & $-2$ & $0$ & $-4$ & $4$\\
\hline
\end{tabular}
\end{center}

If $\bar{F}\subset\bar{G}$ is~a~non-standard embedding, then
the~values of $\chi$ are given in the~following table:
\begin{center}\renewcommand\arraystretch{1.3}
\begin{tabular}{|c|c|c|c|c|c|}
\hline
& $[3,3]_6$ & $[3,3]_3$ & $[2,2,1,1]_4$ &$z$ & $e$\\
\hline
\# & $8$ & $8$ & $6$ & $1$ & $1$\\
\hline
$\chi$ & $-1$ & $1$ & $0$ & $-4$ & $4$\\
\hline
\end{tabular}
\end{center}

We have $[F,F]\cong 2.(\mathbb{Z}_2\times\mathbb{Z}_2)$. So
$\theta(g)=1$ for any $g\in F$ of order different from $3$ and
$6$,~and $\theta(g)=\theta(g^2)$ for all $g\in F$ of order $6$.
Now we can check that $\langle\theta,\chi\rangle=0$, which is
a~contradiction.
\end{proof}

\begin{lemma}
\label{lemma:no-cubic} Let $C$ be a~smooth irreducible
$\bar{G}$-invariant curve in $\P^3$ of genus $g\geqslant 13$, and
let~$\mathcal{I}_{C}$ be its ideal sheaf. Then
$h^{0}(\mathcal{O}_{\mathbb{P}^{3}}(i)\otimes\mathcal{I}_C)=0$ for
any $i\in\{1,2,3\}$.
\end{lemma}

\begin{proof}
It follows from Theorem~\ref{theorem:Castelnuovo} and
Lemma~\ref{lemma:sporadic-genera} that $d>4$. Hence
$h^{0}(\mathcal{O}_{\mathbb{P}^{3}}(2)\otimes\mathcal{I}_C)=0$,
since $G$ does not have semi-invariants of degree $2$ by
Lemma~\ref{lemma:small-semiinvariants}.

Suppose that there is a~cubic surface $X\subset\mathbb{P}^{3}$
such that $C\subset X$. Then
$h^{0}(\mathcal{O}_{\mathbb{P}^{3}}(3)\otimes\mathcal{I}_C)\geqslant
2$, since $G$ does not have semi-invariants of degree $3$ by
Lemma~\ref{lemma:small-semiinvariants}. Thus, there is a~cubic
surface $X^{\prime}\subset\mathbb{P}^{3}$ such that $C\subset
X^{\prime}\ne X$. Thus $C\subset X\cap X^{\prime}$, and $X$ and
$X^{\prime}$ are irreducible, because $C$ is contained neither in
a~quadric nor in a~plane.  We see that $d\leqslant 9$. Hence, we
have $g\leqslant 12$ by Theorem~\ref{theorem:Castelnuovo}, which
is a contradiction.
\end{proof}

\section{Projective space}
\label{section:space}

Let $\bar{G}$ be a~subgroup in $\mathrm{Aut}(\mathbb{P}^{3})$ such
that $\bar{G}\cong\A_{6}$. Then there is a~subgroup
$\hat{G}\subset\mathrm{Aut}(\mathbb{P}^{3})$ such that
$\bar{G}\subset\hat{G}\cong\SS_{6}$, which implies that
$\hat{G}\subseteq\mathrm{Aut}^{\bar{G}}(\mathbb{P}^{3})$, because
$\bar{G}$ is a~normal subgroup of the~group $\hat{G}$ (recall that
$\mathrm{Aut}^{\bar{G}}(\mathbb{P}^{3})$ is the normalizer of the
group $\bar{G}$ in the group $\mathrm{Aut}(\P^3)$).

The main purpose of this section is to prove that $\P^3$ is
$\bar{G}$-birationally rigid and to describe the group
$\mathrm{Bir}^{\bar{G}}(\P^3)$ (see Theorem~\ref{theorem:main}).
We will do this in several steps. But first of all, we must study
lines in $\P^3$ whose $\bar{G}$-orbits consists of $6$ lines
(there are $12$ such lines in total, the groups $\hat{G}$ acts
transitively on them, and they form two $\bar{G}$-orbits
consisting of $6$ lines each).

Let $\bar{G}_{1}\subset\bar{G}$ be a subgroup such that
$\bar{G}_{1}\cong\A_{5}$ and the~embedding
$\A_{5}\cong\bar{G}_{1}\subset\bar{G}\cong\A_{6}$ is standard (see
Definition~\ref{definition:standard-subgroups}). Then there are
two disjoint $\bar{G}_{1}$-invariant lines $L_{1}$
and~$L_1^\prime$ in~$\mathbb{P}^{3}$ by
Remark~\ref{remark:standard}. Denote by $L_{1},\ldots, L_{6}$
($L_{1}^{\prime},\ldots, L_{6}^{\prime}$, respectively) the lines
in $\mathbb{P}^{3}$ that are the images of $L_1$ ($L_1^\prime$,
respectively) under the action of $\bar{G}$.
Put $\mathrm{L}=\{L_1, \ldots, L_6\}$
and $\mathrm{L}'=\{L_1', \ldots, L_6'\}$.

\begin{lemma}\label{lemma:L-L-prime}
The~curve $\sum_{i=1}^{6}L_{i}+\sum_{i=1}^{6}L_{i}^{\prime}$ is
a~$\hat{G}$-orbit of the~line $L_{1}$. The $12$ lines
of~\mbox{$\mathrm{L}\cup\mathrm{L}'$}
are pairwise disjoint. For any $4$ lines among the lines
$L_{1},\ldots, L_{6}$ ($L_{1}^{\prime},\ldots, L_{6}^{\prime}$,
respectively), there are~$2$ lines in $\mathbb{P}^{3}$ that
intersect them. There are no lines in $\mathbb{P}^{3}$ that
intersect $5$ lines among $L_{1},\ldots, L_{6}$
($L_{1}^{\prime},\ldots, L_{6}^{\prime}$, respectively).
\end{lemma}

\begin{proof}
The first assertion of the lemma follows from the construction
of the lines $L_i$ and $L_i'$.
To prove the second assertion
note that
the stabilizers of any two of the~$12$ lines
of~\mbox{$\mathrm{L}\cup\mathrm{L}'$}
except for the case of the lines $L_i$ and $L_i'$
corresponding to one and the same stabilizer $\A_5\subset\bar{G}$
(i.\,e.~two standard subgroups
isomorphic to $\A_5$)
generate together the whole group
$\bar{G}\cong\A_6$. In particular, all~$12$ lines
of~\mbox{$\mathrm{L}\cup\mathrm{L}'$}
are pairwise distinct,
since otherwise there would
exist a $\bar{G}$-invariant
line in $\P^3$.
Similarly, if some two of the~$12$ lines
of~\mbox{$\mathrm{L}\cup\mathrm{L}'$} intersected at a point~$P$,
then there would exist a $\bar{G}$-invariant
point in~$\P^3$.

Suppose that there exist more than $2$ lines in $\P^3$
that intersect  some $4$ of the lines
of~$\mathrm{L}$ (say,
$L_1$, $L_2$, $L_3$ and $L_4$). Then the lines $L_1, \ldots, L_4$
are contained in a (unique) smooth quadric $X\subset\P^3$. Note
that there is an element $g\in\bar{G}\cong\A_6$ that preserves
the lines~$L_1$ and~$L_6$, interchanges the lines
$L_2$ and $L_3$ and interchanges the lines
$L_4$ and $L_5$. The quadric $X$ is invariant under $g$ (indeed,
an intersection of two distinct
smooth quadrics in $\P^3$ cannot contain three skew lines). Hence
$X$ contains the line $L_5$. Similarly, $X$ contains
the line $L_6$, and thus~$X$ is $\bar{G}$-invariant. The latter
contradicts Lemma~\ref{lemma:small-semiinvariants}.
Therefore, to prove the third assetion of the lemma
we may suppose that there exist exactly $1$ line $L_{1234}$ in
$\P^3$ that intersects the lines $L_1, \ldots, L_4$.
Note that the group $\bar{G}$ acts transitively on the
$4$-tuples of the lines of~$\mathrm{L}$. Thus for any $4$ lines
$L_{i_1}, \ldots, L_{i_4}$ there exists a unique line
$L_{i_1\ldots i_4}$ that intersects $L_{i_1}, \ldots, L_{i_4}$.
Put $P_{i_2 i_3 i_4}=L_1\cap L_{1 i_2 i_3 i_4}$.
Then the set $\{P_{i_2 i_3 i_4}\}$ is invariant under the group
$\bar{G}_1\cong\A_5$ and consists of at most ${5\choose 3}=10$ points.
This is impossible by Lemma~\ref{lemma:stabilizers}.

Finally, suppose that for some $5$ of the lines
of~$\mathrm{L}$ (say, for $L_2, \ldots, L_6$) there exists
a line $L$ that intersects all $5$ of them. The above argument
implies that there are at most $2$ lines intersecting
$L_2, \ldots, L_6$, so that $L$ is invariant under
the group $\bar{G}_1\cong\A_5$. Since the lines
of~\mbox{$\mathrm{L}\cup\mathrm{L}'$}
are pairwise disjoint, the line $L$ coincides with neither $L_1$
nor $L_1'$. Hence there are at least $3$ lines in $\P^3$ that are
invariant under $\bar{G}_1$, which contradicts
Remark~\ref{remark:standard}.

The same arguments apply if one replaces the lines
of~$\mathrm{L}$ by the lines of~$\mathrm{L}'$.
\end{proof}

Recall that several lines in $\mathbb{P}^{3}$ are said to
lie in a linear complex if the~corresponding points of
the~Grassmaniann $\mathrm{Gr}(2, 4)\subset\mathbb{P}^5$ lie in a
hyperplane section.

\begin{lemma}\label{lemma:linear-complex}
Neither $L_{1},L_{2},L_{3},L_{4},L_{5},L_{6}$ nor
$L_{1}^{\prime},L_{2}^{\prime},L_{3}^{\prime},L_{4}^{\prime},
L_{5}^{\prime},L_{6}^{\prime}$ lie in a~linear complex.
\end{lemma}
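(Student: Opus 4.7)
My plan is to pass to the Pl\"ucker picture in $\mathbb{P}^5=\mathbb{P}(\Lambda^2 V)$ and to exploit Remark~\ref{remark:irreducible}, which identifies $\Lambda^2 V$ with the permutation representation of $\bar{G}\cong\mathbb{A}_6$ on $\mathbb{C}^6$. By the outer $\hat{G}/\bar{G}$-symmetry swapping the two sextuples, it suffices to handle $L_1,\ldots,L_6$. Their Pl\"ucker images form a $\bar{G}$-orbit $\Sigma$ of size $6$ in $\mathrm{Gr}(2,V)\subset\mathbb{P}^5$, and ``lying in a linear complex'' means exactly that $\Sigma$ is contained in a hyperplane of $\mathbb{P}^5$.

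First, I would use representation theory to pin down this hyperplane. The annihilator of $\Sigma$ in $(\Lambda^2 V)^*\cong\mathbf{1}\oplus V_5$ is a $\bar{G}$-subrepresentation, hence one of $0$, $\mathbf{1}$, $V_5$, $\mathbf{1}\oplus V_5$; the last two would force $\Sigma$ to be $\{0\}$ or $\{[1{:}\ldots{:}1]\}$, absurd since $\Sigma$ consists of six distinct Pl\"ucker points. So the problem reduces to showing that $\Sigma$ does not lie in the unique $\bar{G}$-invariant hyperplane $H=\{x_1+\cdots+x_6=0\}$.

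Next, I would use the fact (from Remark~\ref{remark:12-lines}) that the stabilizer of $L_1$ in $\bar{G}$ is a \emph{standard} $\mathbb{A}_5$, whose fixed subspace in the permutation representation is two-dimensional and spanned by $(1,1,1,1,1,0)$ and $(0,0,0,0,0,1)$. Hence $[L_1]$ must sit on the line $\ell=\{[a{:}a{:}a{:}a{:}a{:}b]\}\subset\mathbb{P}^5$, and the hypothetical inclusion $\Sigma\subset H$ would pin $[L_1]$ down to $[1{:}1{:}1{:}1{:}1{:}{-}5]$.

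Finally, I would derive a contradiction from $[L_1]\in\mathrm{Gr}(2,V)$. The space of $\bar{G}$-invariant quadrics on $\Lambda^2 V$ is two-dimensional, spanned by $\sum x_i^2$ and $(\sum x_i)^2$, and the Grassmannian is a smooth rank-$6$ quadric in that pencil; so it equals $\alpha\sum x_i^2+\beta(\sum x_i)^2=0$ with $\alpha\neq 0$, since otherwise it would degenerate to the double hyperplane $H$. Evaluating at $[1{:}1{:}1{:}1{:}1{:}{-}5]$ gives $\alpha\cdot 30+\beta\cdot 0=0$, the desired contradiction. The only real care is needed in the invariant-theoretic dimension counts, both of which follow from the decomposition $\Lambda^2 V\cong\mathbf{1}\oplus V_5$ and the fact that $V_5$ is a self-dual irreducible representation of $\bar{G}$.
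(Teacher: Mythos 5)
Your proposal is correct and follows essentially the same route as the paper: pass to the Pl\"ucker embedding, use that $\Lambda^2(V)$ is the permutation representation to reduce to the unique $\bar{G}$-invariant hyperplane $\sum x_i=0$, use the standard $\mathbb{A}_5$-stabilizer of $L_1$ to pin its Pl\"ucker point down to $[1{:}1{:}1{:}1{:}1{:}{-}5]$, and check that this point does not lie on the Grassmannian quadric. Your invariant-theoretic justifications (the annihilator argument for why the hyperplane must be the invariant one, and the pencil of invariant quadrics for why $\mathrm{Gr}(2,4)\cap H$ is cut out by $\sum x_i^2=0$) merely make explicit two steps the paper asserts without comment.
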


\begin{proof}
It follows from Remark~\ref{remark:irreducible} that the~space
$\Lambda^2(V)\cong\mathbb{C}^6$ is the~permutation representation
of the~group $\bar{G}\cong\A_{6}$. The natural action of the~group
$\bar{G}$ on $\mathrm{Gr}(2,
4)\subset\mathbb{P}^5\cong\mathbb{P}(\Lambda^2(V))$ arises from
this representation. Let us identify $\bar{G}$ with a subgroup in
$\mathrm{Aut}(\mathbb{P}^{5})$. Then there is a~unique
$\bar{G}$-invariant hyperplane~$H\subset\mathbb{P}^{5}$. We may
assume that $\mathrm{Gr}(2, 4)\cap H$ is given by
$$
\sum_{i=0}^{5}x_{i}=\sum_{i=0}^{5}x_{i}^{2}=0\subset\mathbb{P}^{5}\cong\mathrm{Proj}\Big(\mathbb{C}\big[x_{0},x_{1},x_{2},x_{3},x_{4},x_{5}\big]\Big),
$$
and $H$ is given by $\sum_{i=0}^{5}x_{i}=0$. Let
$P\in\mathrm{Gr}(2, 4)$ be a point that corresponds to the~line
$L_1\subset\mathbb{P}^{3}$, and let $\bar{G}_{1}$ be the
stabilizer subgroup in $\bar{G}$ of the~point $P$. Then
$\bar{G}_{1}\cong\A_5$, and the~embedding
$\bar{G}_{1}\subset\bar{G}$ is standard. If the~$\bar{G}$-orbit of
the~point $P$ is contained in a hyperplane, then this hyperplane
must be $H$, which implies that the~$\bar{G}$-orbit of the~point
$P$ must contain a~point in~$\mathbb{P}^{5}$ that is given~by
$x_{0}=\ldots=x_{4}=-x_{5}/5$, which~is~impossible, because this
point does not belong to the~intersection  $\mathrm{Gr}(2, 4)\cap
H$.
\end{proof}

Now we are ready to formulate the main technical result of this
section.

\begin{theorem}
\label{theorem:A6-NFI} Let $\mathcal{M}$ be a (non-empty)
$\bar{G}$-invariant
linear system on $\P^3$ that does not have fixed components, and
let $\lambda$ be a positive rational number such that
$\lambda\mathcal{M}\sim_{\mathbb{Q}}-K_{\mathbb{P}^{3}}$. Suppose
that $(\P^3, \lambda\mathcal{M})$ is canonical at a general point of
every line $L_{1},\ldots,L_{6},L_{1}^\prime,\ldots,L_6^\prime$.
Then $(\mathbb{P}^{3},\lambda\mathcal{M})$ is canonical.
\end{theorem}

Before proving Theorem~\ref{theorem:A6-NFI}, let us show that it
implies that $\P^3$ is $\bar{G}$-birationally rigid and
$\mathrm{Bir}^{\bar{G}}(\P^3)$ is isomorphic to a~free product of
$\SS_{6}$ and $\SS_{6}$ with an~amalgamated subgroup $\A_{6}$. To
do this, will use the lines
$L_{1},\ldots,L_{6},L_{1}^\prime,\ldots,L_6^\prime$ to prove that
$\mathrm{Aut}^{\bar{G}}(\mathbb{P}^{3})=\hat{G}$ and to construct
two $\bar{G}$-equivariant birational non-biregular involutions of
$\P^3$ that was described in \cite{To33}. The proof of
$\bar{G}$-birational rigidity of $\P^3$ and description of the
group $\mathrm{Bir}^{\bar{G}}(\P^3)$ crucially depend on these
birational involutions.

\begin{lemma}
\label{lemma:auxiliary-aut} The equality
$\mathrm{Aut}^{\bar{G}}(\mathbb{P}^{3})=\hat{G}$ holds.
\end{lemma}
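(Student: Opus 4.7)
The plan is to bound $\mathrm{Aut}^{\bar G}(\mathbb{P}^{3})$ by its conjugation action on $\bar{G}$, and then to exclude the ``exotic'' outer automorphism of $\bar{G}$ using the dichotomy between standard and non-standard $\mathbb{A}_{5}$-embeddings recorded in Remark~\ref{remark:standard}. The first step is to verify that the centralizer of $\bar{G}$ in $\mathrm{Aut}(\mathbb{P}^{3})$ is trivial. Given $c$ in this centralizer with lift $\tilde{c}\in\mathrm{GL}(4,\mathbb{C})$, one gets $\tilde{c}\tilde{g}\tilde{c}^{-1}=\lambda_{g}\tilde{g}$ for every $\tilde{g}\in G$, and $g\mapsto\lambda_{g}$ is a character $\lambda\colon G\to\mathbb{C}^{*}$. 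Since $G\cong 2.\mathbb{A}_{6}$ is perfect, $\lambda$ is trivial, so $\tilde{c}$ centralizes $G$; by Schur's lemma together with Remark~\ref{remark:irreducible}, $\tilde{c}$ is a scalar, so $c$ is the~identity~in~$\mathbb{PGL}(4,\mathbb{C})$.

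Consequently, conjugation provides an injection
$$
\mathrm{Aut}^{\bar G}\big(\mathbb{P}^{3}\big)\hookrightarrow\mathrm{Aut}\big(\bar{G}\big)
$$
whose image has order dividing $|\mathrm{Aut}(\mathbb{A}_{6})|=1440$. Since $\hat{G}\cong\mathbb{S}_{6}$ already sits inside $\mathrm{Aut}^{\bar G}(\mathbb{P}^{3})$ with order $720$, and its image in $\mathrm{Aut}(\bar{G})$ is the~usual $\mathbb{S}_{6}$-subgroup (which preserves the~two conjugacy classes of $\mathbb{A}_{5}$-subgroups of $\bar{G}$, because $\mathbb{S}_{6}$-conjugation respects $\mathbb{A}_{6}$-conjugacy classes), either $\mathrm{Aut}^{\bar G}(\mathbb{P}^{3})=\hat{G}$ or $\mathrm{Aut}^{\bar G}(\mathbb{P}^{3})$ has order $1440$ and equals all of $\mathrm{Aut}(\bar{G})$. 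I rule out the~latter by contradiction: in that case there is $h\in\mathrm{Aut}^{\bar G}(\mathbb{P}^{3})\setminus\hat{G}$ whose conjugation action on $\bar{G}$ interchanges the~two conjugacy classes of $\mathbb{A}_{5}$-subgroups (there are exactly two by Lemma~\ref{lemma:max-subgroup}). Fixing a standard $\bar{G}_{1}\cong\mathbb{A}_{5}$ with invariant line $L_{1}\subset\mathbb{P}^{3}$ given by Remark~\ref{remark:12-lines}, the~conjugate $h\bar{G}_{1}h^{-1}$ is a non-standard $\mathbb{A}_{5}$-subgroup of $\bar{G}$, and the~line $h(L_{1})$ is invariant under it. However, by Remark~\ref{remark:standard} the~preimage $F\cong 2.\mathbb{A}_{5}$ of $h\bar{G}_{1}h^{-1}$ acts on $V=\mathbb{C}^{4}$ via the~irreducible representation $\mathrm{Sym}^{3}(U)$, so $F$ has no invariant $2$-dimensional subspace. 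Therefore no line in $\mathbb{P}^{3}$ can be invariant under a non-standard $\mathbb{A}_{5}$-subgroup, giving the~required contradiction.

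The~main obstacle is the~somewhat subtle matter of identifying which coset an extra automorphism $h$ would induce and then producing a concrete geometric witness (``has an invariant line'' versus ``has none'') that discriminates between the~two conjugacy classes of $\mathbb{A}_{5}$ in $\bar{G}$. All the~necessary input is already packaged in Remarks~\ref{remark:standard} and \ref{remark:12-lines}; without the~representation-theoretic rigidity they provide, there would be no direct route to eliminate the~exotic outer automorphism of $\mathbb{A}_{6}$.
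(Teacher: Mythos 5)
Your proof is correct, and it takes a genuinely different route from the paper's. The paper proves finiteness of $\tilde{G}=\mathrm{Aut}^{\bar{G}}(\mathbb{P}^{3})$ geometrically, by observing that $\tilde{G}$ acts on the Hierholzer surface of the six lines $L_{1},\ldots,L_{6}$, which is birational to a surface of general type (see \cite{Va01}); it then invokes the bound $\mathrm{lct}(\mathbb{P}^{3},\tilde{G})\geqslant 6/5$ together with the classification from \cite[Theorem~1.34]{ChSh09} of the finite subgroups of $\mathbb{SL}(4,\mathbb{C})$ satisfying this bound ($2.\mathbb{S}_{6}$, $2.\mathbb{A}_{7}$, $\widetilde{\mathbb{O}}^{\prime}(5,\mathbb{F}_{3})$, and four Heisenberg-related groups), and eliminates everything but $2.\mathbb{S}_{6}$ using normality of $\bar{G}$ and \cite{Nie92}. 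You instead get finiteness for free: the centralizer of $\bar{G}$ is trivial (your Schur-plus-perfectness argument is valid, using the irreducibility of $V$ from Remark~\ref{remark:irreducible}), so $\mathrm{Aut}^{\bar{G}}(\mathbb{P}^{3})$ embeds into $\mathrm{Aut}(\mathbb{A}_{6})$, where $\hat{G}$ maps onto the unique index-two subgroup isomorphic to $\mathbb{S}_{6}$ (the other two index-two subgroups contain elements of order $8$), reducing everything to excluding a single exotic coset. Your exclusion is also sound: by the footnote to Definition~\ref{definition:standard-subgroups} every element of $\mathrm{Aut}(\mathbb{A}_{6})\setminus\mathbb{S}_{6}$ interchanges the two conjugacy classes of $\mathbb{A}_{5}$-subgroups from Lemma~\ref{lemma:max-subgroup}, so such an $h$ would transport the $\bar{G}_{1}$-invariant line of Remark~\ref{remark:12-lines} to a line invariant under a non-standard $\mathbb{A}_{5}$, contradicting the irreducibility of $\mathrm{Sym}^{3}(U)$ as a $2.\mathbb{A}_{5}$-representation recorded in Remark~\ref{remark:standard}. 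Your argument is more elementary and self-contained, trading the surface-of-general-type finiteness argument and the external classification of \cite{ChSh09} for pure representation theory of $2.\mathbb{A}_{6}$ and the structure of $\mathrm{Aut}(\mathbb{A}_{6})$; the paper's heavier route has the side benefit of fitting the log-canonical-threshold framework used throughout the rest of the paper, but for this particular lemma yours is arguably the cleaner proof.
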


\begin{proof}
Put $\tilde{G}=\mathrm{Aut}^{\bar{G}}(\mathbb{P}^{3})$. Then
the~curve $\sum_{i=1}^{6}L_{i}+\sum_{i=1}^{6}L_{i}^{\prime}$
is $\tilde{G}$-invariant, and
the~group $\bar{G}$ is a~normal subgroup of the~group $\tilde{G}$.
There is a subgroup $\breve{G}\subset\tilde{G}$ of index $2$ such
that  $\sum_{i=1}^{6}L_{i}$ and $\sum_{i=1}^{6}L_{i}^{\prime}$ are
$\breve{G}$-invariant.

Let $Y$ be the~Hierholzer surface of the~lines
$L_{1},L_{2},L_{3},L_{4},L_{5},L_{6}$ (see
\cite[Section~2.1]{Va01}).~Then the~surface $Y$ is
$\breve{G}$-invariant and there exists a~monomorphism
$\xi\colon\breve{G}\to\mathrm{Aut}(Y)$. Moreover, the~surface $Y$
is birational to a~surface of general type (see
\cite[Theorem~2.1]{Va01}), which implies that $\breve{G}$ is
a~finite group (see e.\,g.\cite[Theorem~1.1]{HMX10}). Thus
$\tilde{G}$ is a~finite group. Hence, we have
$\mathrm{lct}(\mathbb{P}^{3},\tilde{G})\geqslant\mathrm{lct}(\mathbb{P}^{3},\bar{G})\geqslant
5/4$ by \cite[Theorem~4.13]{ChSh09}.

Let $\phi\colon\SL_4(\mathbb{C})\to\mathrm{Aut}(\mathbb{P}^{3})$
be the~natural projection. Then there is a~finite subgroup
$G\subset\SL_4(\mathbb{C})$ such that $\tilde{G}=\phi(G)$. Note
that $G$ may not be uniquely defined. However, by
\cite[Theorem~4.13]{ChSh09},  we may assume~that $G$ is one of the
eight groups described in~\cite[Lemma~4.12]{ChSh09}. Suppose that
$\hat{G}\subsetneq\tilde{G}$. Then $G\not\cong 2.\SS_{6}$.

Recall that $\bar{G}$ is a (non-trivial) normal subgroup in
$\tilde{G}$. Hence $G$ cannot be isomorphic to any of the groups
$2.\A_{7}$ and~$\PSp_4(\F_3)$, because the images of these groups
in $\PGL_4(\C)$ are simple groups. Moreover, $G\not\cong 2.\A_6$,
since there is a subgroup $\hat{G}\subset\tilde{G}$ isomorphic to
$\SS_6$.

For each of the four remaining groups
from~\cite[Lemma~4.12]{ChSh09} one has (see~\cite{Nie92}) an~exact
sequence of groups
$$
\xymatrix{&1\ar@{->}[rr]&&\mathbb{Z}_{2}^{4}\ar@{->}[rr]^{\alpha}&&\tilde{G}\ar@{->}[rr]^{\beta}&&\Gamma\ar@{->}[rr]&&1},%
$$
where $\Gamma$ is a~subgroup of the~group $\SS_{6}$. Thus
$\Gamma\cong\SS_{6}$ because
$\tilde{G}\supset\hat{G}\cong\SS_{6}$. Let $e$ be the~identity
element in $\tilde{G}$. Then $\bar{G}\cap\mathrm{im}(\alpha)=e$,
because $\bar{G}$ is a~simple group, and $\bar{G}$ is a~normal
subgroup of the~group $\tilde{G}$. Hence the subgroup of
$\tilde{G}$ generated by $\bar{G}\cong\A_6$ and
$\mathrm{im}(\alpha)\cong\Z_2^4$ is a direct product of the latter
groups. This leads to a~contradiction because the action of
$\Gamma$ on $\Z_2^4$ is faithful (see~\cite{Nie92}).
\end{proof}

Let $H$ be a general hyperplane in $\P^3$.

\begin{lemma}
\label{lemma:Todd-involution}  There is a $\bar{G}$-equivariant
birational non-biregular involution
$\iota\in\mathrm{Bir}(\mathbb{P}^{3})$ such that $\iota(H)$ is a
surface of degree $19$ that has singularity of multiplicity $5$ in
a general point of every line $L_{1},\ldots,L_{6}$, and the~group
generated by $\bar{G}$ and $\iota$ is isomorphic to $\SS_{6}$.
\end{lemma}

\begin{proof}
Let $\mathcal{H}$ be a~linear subsystem of the~linear system
$|\mathcal{O}_{\mathbb{P}^{3}}(4)|$ consisting of surfaces that
pass through the~lines $L_{1},L_{2},L_{3},L_{4},L_{5},L_{6}$. Then
it follows from \cite{To33} and \cite[Theorem~2.4]{Va01} that
$\mathcal{H}$ does not have fixed components, and $\mathcal{H}$
gives a~rational map $\psi\colon\mathbb{P}^{3}\dasharrow V$, where
$V$ is a~quartic threefold in $\mathbb{P}^{4}$. Let $\alpha\colon
U\to\mathbb{P}^{3}$ be a~blow up along
$L_{1},L_{2},L_{3},L_{4},L_{5},L_{6}$. Then there is a~commutative
diagram
$$
\xymatrix{
&U\ar@{->}[dl]_{\alpha}\ar@{->}[dr]^{\beta}&\\
\mathbb{P}^{3}\ar@{-->}[rr]_{\phi}&&V,}
$$
where $\beta$ is a~birational morphism that contracts finitely
many curves. It follows from \cite[Theorem~2.4]{Va01} that
$V\subset\mathbb{P}^{4}$ can be given by the~equation\footnote{
Note that the~quartic threefold $V\subset\mathbb{P}^{4}$ is
determinantal (see \cite{To33}, \cite[Example~6.4.2]{Pet98}).}
\begin{eqnarray*}
&5\Big(yw-zt+xy+xw-xz-xt-x^2\Big)^2-20xyw\Big(w-x+y-z-t\Big)&\\
&\Vert&\\
&\Big(3x^2+2y^2+2z^2+2t^2+2w^2-3xy+3xz+3xt-3xw-2yz-2yt+yw+zt-2zw-2tw\Big)^2&
\end{eqnarray*}
in appropriate homogeneous coordinates $[x:y:z:t:w]$ in
$\mathbb{P}^{4}$. By \cite{To33} we know that the~singular locus
of the~threefold $V$ consists of $36$ nodes, and the~morphism
$\beta$ contracts the~proper transforms of $30$ lines in
$\mathbb{P}^3$ each of whom intersects $4$ lines among
$L_{1},L_{2},L_{3},L_{4},L_{5},L_{6}$, and proper transforms of
$6$ twisted cubics in $\mathbb{P}^3$ each of whom has the~lines
$L_{1},L_{2},L_{3},L_{4},L_{5},L_{6}$ as chords.

The map $\phi$ is $\bar{G}$-equivariant. We can identify $\bar{G}$
with a~subgroup in $\mathrm{Aut}(V)$ and with a subgroup in
$\mathrm{Aut}(\mathbb{P}^{4})$. By
Lemma~\ref{lemma:small-semiinvariants}, there are no
$\bar{G}$-fixed points in $\mathbb{P}^{4}$. So, the~action of
the~group $\bar{G}$~on~$\mathbb{P}^{4}$ arises from its
irreducible five-dimensional representation (cf.
Remark~\ref{remark:irreducible}). Hence, there~is a~subgroup
$\bar{\Gamma}\subset\mathrm{Aut}(V)$ such that
$\bar{G}\subset\bar{\Gamma}\cong\SS_{6}$. Let $\theta$ be
an~involution in $\bar{\Gamma}$ such that $\theta\not\in\bar{G}$.
Put
$\iota=\phi^{-1}\circ\theta\circ\phi\in\mathrm{Bir}^{\bar{G}}(\mathbb{P}^{3})$.
Then it follows from the~construction of $\iota$ that $\langle
\iota,\bar{G}\rangle\cong\SS_{6}$.

Let us show that $\iota$ is not biregular. Suppose that this is
not true. Let us identify the~subgroup $\langle
\iota,\bar{G}\rangle$ with $\bar{\Gamma}\cong\SS_{6}$. Then
$\sum_{i=1}^{6}L_{i}$ must be $\bar{\Gamma}$-invariant. Let
$\bar{G}_{1}\subset\bar{G}$ and
$\bar{\Gamma}_{1}\subset\bar{\Gamma}$ be stabilizers of the~line
$L_{1}$. Then
$\A_{5}\cong\bar{G}_{1}\subset\bar{\Gamma}_{1}\cong\SS_{5}$, and
there are natural homomorphisms
$\zeta\colon\bar{G}_{1}\to\mathrm{Aut}(L_{1})$ and
$\eta\colon\bar{\Gamma}_{1}\to\mathrm{Aut}(L_{1})$. Thus, we have
$\A_{5}\cong\mathrm{im}(\zeta)\subseteq\mathrm{im}(\eta)$, which
gives $\mathrm{im}(\eta)\cong\SS_{5}$. But
$\mathrm{Aut}(L_{1})\cong\PGL_2(\mathbb{C})$ contains no subgroups
isomorphic to $\SS_{5}$. The obtained contradiction shows that
$\iota$ is not biregular.

Put $\tau=\beta^{-1}\circ\theta\circ\beta\in\mathrm{Bir}(U)$. Then
$\tau$ is a~composition of flops, and the commutative diagram
$$
\xymatrix{
&&U\ar@{->}[dll]_{\alpha}\ar@{->}[drr]^{\beta}\ar@{-->}[rrrr]^{\tau}&&&&U\ar@{->}[dll]_{\theta\circ\beta}\ar@{->}[drr]^{\alpha}&\\
\mathbb{P}^{3}\ar@{-->}[rrrr]_{\phi}&&&&V&&&&\mathbb{P}^{3}\ar@{-->}[llll]^{\phi\circ\iota},}
$$
is a~$\bar{G}$-equivariant Sarkisov link of type $\mathrm{II}$
(see \cite[Definition~3.4]{Co00}). The involution $\tau$ is
biregular outside of the~curves contracted by $\beta$. Then $\tau$
naturally acts on the~group $\mathrm{Pic}(U)$. Moreover, this
action is nontrivial, because $\iota$ is not biregular.

Put $\hat{H}=\alpha^{*}(H)$. Let $E_{k}$ be
the~$\alpha$-exceptional divisor such that $\alpha(E_{k})=L_{k}$.
Then
$$
\tau^{*}\big(\hat{H}\big)\cdot \tau^{*}\big(\hat{H}\big)\cdot\Big(4\hat{H}-\sum_{i=1}^{6}E_{i}\Big)=4,%
$$
because $\tau^{*}(K_{U})\sim K_{U}$ and $\beta$ contracts finitely
many curves. Similarly, we see that
$$
\tau^{*}\big(\hat{H}\big)\cdot
\tau^{*}\big(E_{k}\big)\cdot\Big(4\hat{H}-\sum_{i=1}^{6}E_{i}\Big)=1,\
\tau^{*}\big(E_{i}\big)\cdot
\tau^{*}\big(E_{k}\big)\cdot\Big(4\hat{H}-\sum_{i=1}^{6}E_{i}\Big)=\left\{\aligned
&-2\ \text{if}\ i=k,\\
&0\ \text{if}\ i\ne k,\\
\endaligned
\right.
$$
which immediately implies that either
$\tau^{*}(\hat{H})\sim\hat{H}$ and $\tau^{*}(E_{k})=E_{k}$, or
\begin{equation}\label{action-on-Pic}
\left\{\aligned &\tau^{*}(\hat{H})\sim 19\hat{H}-5\sum_{i=1}^{6}E_{i},\\
&\tau^{*}\big(E_{k}\big)\sim 12\hat{H}-4E_{k}-\sum_{i\ne k}3E_{i},
\endaligned
\right.
\end{equation}

Since $\iota$ is not biregular, the involution $\tau$ must act
non-trivially on $\mathrm{Pic}(U)$. Thus, we see that
$(\ref{action-on-Pic})$ holds, which implies, in particular, that
$\iota(H)$ is a surface of degree $19$ that has singularity of
multiplicity $5$ in a general point of every line
$L_{1},\ldots,L_{6}$, because $\iota(H)=\alpha\circ\tau(\hat{H})$.
\end{proof}

Put $\iota^{\prime}=\nu\circ\iota\circ\nu^{-1}$ for any
$\nu\in\hat{G}\cong \SS_{6}$ such that $\nu\not\in\bar{G}$. Then
$\iota^{\prime}$ is a $\bar{G}$-equivariant birational
non-biregular involution of $\mathbb{P}^{3}$ such that
$\iota^{\prime}(T)$ is a surface of degree $19$ that has
singularity of multiplicity $5$ in a general point of every line
$L^{\prime}_{1},\ldots,L^{\prime}_{6}$, and the~group generated by
$\bar{G}$ and $\iota^{\prime}$ is also isomorphic to $\SS_{6}$.
Note that the~choice of $\iota$ and $\iota^{\prime}$ is not
unique. Put $\Gamma=\langle\iota,\iota^{\prime},\hat{G}\rangle$.
Let us use birational involutions $\iota$ and $\iota^{\prime}$
together with Theorem~\ref{theorem:A6-NFI} to prove

\begin{lemma}
\label{lemma:P3-untwisting} Let $\mathcal{M}$ be a (non-empty)
$\bar{G}$-invariant linear system on $\P^3$ that does not have
fixed components. Then there are $\rho\in\Gamma$ and
$\mu\in\mathbb{Q}_{>0}$ such that
$\mu\rho(\mathcal{M})\sim_{\mathbb{Q}}-K_{\mathbb{P}^{3}}$, and
$(\P^3, \mu\rho(\mathcal{M}))$ is canonical.
\end{lemma}

\begin{proof}
Take $\lambda\in\mathbb{Q}_{>0}$ such that
$\lambda\mathcal{M}\sim_{\mathbb{Q}}-K_{\mathbb{P}^{3}}$. If
$(\P^3, \lambda\mathcal{M})$ is canonical, then we are done. Thus,
we may assume that $(\P^3, \lambda\mathcal{M})$ is not canonical.
By Theorem~\ref{theorem:A6-NFI}, the log pair $(\P^3,
\lambda\mathcal{M})$ is not canonical at a general point of one line
among $L_{1},\ldots,L_{6},L_{1}^\prime,\ldots,L_6^\prime$.

Without loss of generality, we may assume that $(\P^3,
\lambda\mathcal{M})$ is not canonical at a general point of the
line $L_1$. Then $(\P^3, \lambda\mathcal{M})$  is not canonical at
a general point of the lines $L_2, \ldots, L_6$, since $\mathcal{M}$
is $\bar{G}$-invariant. Let $M$ be a~general surface in
$\mathcal{M}$. Then $\mathrm{mult}_{L_{i}}(M)>1/\lambda$ for every
$i\in\{1,\ldots,6\}$ (see \cite[Exercise~6.18]{CKS04}). Note that
$M$ is a surface of degree $d=4/\lambda$. Let $\tilde{d}$ be the
degree of the surface $\iota(M)$. Put
$\tilde{\lambda}=4/\tilde{d}$. Then
$\tilde{\lambda}\iota(\mathcal{M})\sim_{\mathbb{Q}}-K_{\mathbb{P}^{3}}$.
Moreover, it follows from Lemma~\ref{lemma:Todd-involution} that
$$
\tilde{d}=19d-12\sum_{i=1}^{6}\mathrm{mult}_{L_{i}}(M)<19d-18d=d.
$$

If $(\P^3, \tilde{\lambda}\iota(\mathcal{M}))$ is canonical, then
we are done. If it is not canonical, then it follows from
Theorem~\ref{theorem:A6-NFI} that $(\P^3,
\tilde{\lambda}\iota(\mathcal{M}))$ is not canonical at a general
point of one line among
$L_{1},\ldots,L_{6},L_{1}^\prime,\ldots,L_6^\prime$. Thus, we can
iterate the above process. Since $\tilde{d}<d$, our iterations
must terminate in at most $d-1$ steps, which completes the proof.
\end{proof}

Now we ready to prove

\begin{theorem}
\label{theorem:A6}  The variety $\mathbb{P}^{3}$ is
$\bar{G}$-birationally rigid and
$\mathrm{Bir}^{\bar{G}}(\mathbb{P}^{3})=\Gamma$.
\end{theorem}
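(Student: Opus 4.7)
The plan is to combine the Noether--Fano--Iskovskikh method with the two Todd untwistings $\iota,\iota^{\prime}$. Given a $\bar{G}$-equivariant birational map $\chi\colon\mathbb{P}^{3}\dasharrow V^{\prime}$ to a~$\bar{G}$-Mori fibration, I pull back a~$\bar{G}$-invariant very ample linear system on $V^{\prime}$ to obtain a~mobile $\bar{G}$-invariant linear system $\mathcal{M}\subset|\mathcal{O}_{\mathbb{P}^{3}}(n)|$. If $\chi$ is not biregular, the~Noether--Fano inequality says that $(\mathbb{P}^{3},\frac{4}{n}\mathcal{M})$ is not canonical, so I may pick a~$\bar{G}$-invariant minimal centre $Z$ of non-canonical singularities.

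The first step is to exclude $Z$ being zero-dimensional. Let $\Sigma$ be the~$\bar{G}$-orbit of a~point in $Z$. Then $|\Sigma|\geqslant 36$ by Lemma~\ref{lemma:small-orbits}, while the~Corti inequality (\cite[Theorem~3.1]{Co00}) gives $\mathrm{mult}_{P}(M_{1}\cdot M_{2})>n^{2}/4$ at each $P\in\Sigma$ for general $M_{1},M_{2}\in\mathcal{M}$. The intersection bound
$$
n^{2}=M_{1}\cdot M_{2}\cdot H\geqslant\sum_{P\in\Sigma}\mathrm{mult}_{P}\big(M_{1}\cdot M_{2}\big)>9n^{2}
$$
is then absurd.

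If $Z$ is a~curve with $\bar{G}$-orbit of $k$ components, then $\mathrm{mult}(\mathcal{M})>n/4$ along each component, and intersecting with $H$ yields $k\deg(Z)\leqslant 15$. By Lemma~\ref{lemma:max-subgroup} the~stabiliser of $Z$ must then be a~maximal subgroup, namely $\mathbb{A}_{5}$, $\mathbb{S}_{4}$ or $(\mathbb{Z}_{3}\times\mathbb{Z}_{3})\rtimes\mathbb{Z}_{4}$, and $\deg(Z)\in\{1,2\}$. The conic case is excluded because by Remark~\ref{remark:standard} neither $U\oplus U^{\prime}$ nor $\mathrm{Sym}^{3}(U)$ has a~three-dimensional $2.\mathbb{A}_{5}$-invariant subspace, so $\mathbb{P}^{3}$ contains no $\mathbb{A}_{5}$-invariant plane. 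The remaining line cases are analysed using Remark~\ref{remark:standard}, Lemma~\ref{lemma:small-semiinvariants} and the~classification of two-dimensional representations of the~relevant central extensions, leaving only the~two orbits $\sum L_{i}$ and $\sum L_{i}^{\prime}$. I expect this classification of $\bar{G}$-invariant curves of low degree to be the~main technical obstacle.

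Finally, if $Z=\sum L_{i}$ I apply $\iota$ (and symmetrically $\iota^{\prime}$ for $\sum L_{i}^{\prime}$). Writing the~common multiplicity of $\mathcal{M}$ along $L_{i}$ as $m>n/4$ and using formula~(\ref{action-on-Pic}), one computes that $\iota(\mathcal{M})\subset|\mathcal{O}_{\mathbb{P}^{3}}(n^{\prime})|$ with
$$
n^{\prime}=19n-72m<19n-72\big(n/4\big)=n.
$$
Iterating, after finitely many steps no non-canonical centre remains, so the~composed map is biregular. It therefore lies in $\mathrm{Aut}^{\bar{G}}(\mathbb{P}^{3})=\hat{G}$ by Lemma~\ref{lemma:auxiliary-aut}, which forces $V^{\prime}\cong\mathbb{P}^{3}$ and $\chi\in\Gamma$. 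This yields both the~$\bar{G}$-birational rigidity of $\mathbb{P}^{3}$ and the~equality $\mathrm{Bir}^{\bar{G}}(\mathbb{P}^{3})=\Gamma$.
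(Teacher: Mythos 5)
Your overall strategy (Noether--Fano, classification of the $\bar{G}$-invariant non-canonical centres, untwisting the two orbits of six lines by $\iota$ and $\iota^{\prime}$, termination by strict decrease of the degree) is the same as the paper's, and your untwisting computation $n^{\prime}=19n-72m<n$ agrees with $(\ref{action-on-Pic})$. However, there are two genuine gaps in the classification step. First, the exclusion of a zero-dimensional centre: the inequality $M_{1}\cdot M_{2}\cdot H\geqslant\sum_{P\in\Sigma}\mathrm{mult}_{P}(M_{1}\cdot M_{2})$ with $H$ a general plane is false, because $M_{1}\cdot M_{2}$ is a one-cycle and the sum of its multiplicities at $36$ distinct points is not bounded by its degree (an irreducible conic already has multiplicity $1$ at each of its infinitely many points). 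Making a Corti-type argument work would require an auxiliary surface of small degree through all of $\Sigma$ containing no component of $M_{1}\cdot M_{2}$, which is precisely the delicate point. The paper sidesteps this: it passes to a strictly log canonical pair with $\mu<2\lambda$, applies Lemma~\ref{lemma:Kawamata-Shokurov-trick} and Theorem~\ref{theorem:Shokurov-vanishing} to obtain $|\mathrm{LCS}|\leqslant 35$ from $(\ref{equation:appendix-equality})$, and contradicts Lemma~\ref{lemma:small-orbits}, which gives $|\Sigma|\geqslant 36$.

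Second, and more seriously, your curve analysis only treats centres whose component stabiliser is a proper maximal subgroup, and on that basis concludes $\deg(Z)\in\{1,2\}$. You have omitted the case of an \emph{irreducible} $\bar{G}$-invariant curve, i.e. $k=1$, for which your intersection bound gives only $\deg(Z)\leqslant 15$. This case cannot be dismissed; it is where essentially all of the paper's effort is spent. The paper uses Kawamata subadjunction (Theorem~\ref{theorem:Kawamata}) to arrange that the minimal centre is a smooth curve with $g\leqslant 2d$, Nadel vanishing to get the numerical relation $r(4d-g+1)=35-q$ (Lemma~\ref{lemma:35-q-equality}), the Riemann--Hurwitz formula to restrict the genus to $\{10,16,19,25,31\}$ (Lemma~\ref{lemma:sporadic-genera}), the Clifford and Castelnuovo bounds together with Lemma~\ref{lemma:no-cubic} to eliminate $g\geqslant 16$, and finally the tangent developable of an $\mathbb{A}_{5}$-invariant twisted cubic to kill the surviving case $d=11$, $g=10$. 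None of this is reachable from the multiplicity bound alone, and as written your plan would let a $\bar{G}$-invariant curve of degree $11$ slip through; your own remark that this classification is ``the main technical obstacle'' is accurate, but the obstacle is not overcome by the degree bound you state.
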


\begin{proof}
Suppose that there exists a rational normal threefold $V$ with at
most terminal singularities that admits a faithful action of the
group $\bar{G}$ such that there are a $\bar{G}$-equivariant Mori
fibration $\pi\colon V\to S$ and a $\bar{G}$-equivariant
birational map $\xi\colon V\dasharrow \P^3$. Let $D$ be a
sufficiently general very ample divisor on $V$. Put
$\mathcal{M}=\xi(|D|)$. Then $\mathcal{M}$ does not have fixed
components and is $\bar{G}$-invariant. Thus, it follows from
Lemma~\ref{lemma:P3-untwisting} that there are $\rho\in\Gamma$ and
$\mu\in\mathbb{Q}_{>0}$ such that
$\mu\rho(\mathcal{M})\sim_{\mathbb{Q}}-K_{\mathbb{P}^{3}}$, and
$(\P^3, \mu\rho(\mathcal{M}))$ is canonical. Then $\rho\circ\xi$
is biregular by \cite[Theorem~4.2]{Co95}. In particular, we see
that $V\cong\P^3$ and $S$ is a point. Since all subgroups in
$\mathrm{Aut}(\mathbb{P}^{3})$ that are isomorphic to $\bar{G}$
are conjugate to each other, we see that $\mathbb{P}^{3}$ is
$\bar{G}$-birationally rigid and
$\mathrm{Bir}^{\bar{G}}(\mathbb{P}^{3})=\Gamma$.
\end{proof}

Before proving Theorem~\ref{theorem:A6-NFI}, let us use it one
more time to prove

\begin{theorem}
\label{theorem:auxiliary-amalgamated} The group $\Gamma$ is
the~free product of $\hat{G}$ and $\langle\iota,\bar{G}\rangle$
with~amalgamated~subgroup~$\bar{G}$.
\end{theorem}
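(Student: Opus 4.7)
The plan is to use the universal property to define a canonical surjection
$$\Phi\colon \hat{G} \bigstar_{\bar{G}} \langle \iota, \bar{G}\rangle \twoheadrightarrow \Gamma$$
(surjectivity uses $\iota' = \nu\iota\nu^{-1}$ for some $\nu \in \hat{G}$, as in the proof of Lemma~\ref{lemma:Todd-involution-second}), and then prove that $\Phi$ is injective. For $\Phi$ to be well-defined via the universal property and for the standard normal-form theorem to apply, I first check that $\hat{G} \cap \langle \iota, \bar{G}\rangle = \bar{G}$ inside $\Gamma$. Both of these subgroups contain $\bar{G}$ with index $2$, so their intersection is either $\bar{G}$ or all of $\mathbb{S}_{6}$; the latter is ruled out because $\iota$ is not biregular by Lemma~\ref{lemma:Todd-involution}.

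By the normal-form theorem, every non-identity element of the amalgamated product admits a reduced expression
$$w = g_0\, \iota\, g_1\, \iota\, g_2 \cdots \iota\, g_n,$$
with $g_0, g_n \in \hat{G}$ and $g_1,\ldots,g_{n-1} \in \hat{G}\setminus\bar{G}$ (leading or trailing $\iota$ possibly absent). When $n = 0$ we have $w = g_0 \in \hat{G}$, and $\Phi(w) = \mathrm{id}$ forces $g_0 = e$. So the task reduces to showing that for every reduced word with $n \geqslant 1$ the image $\Phi(w)$ is a non-trivial birational self-map of $\mathbb{P}^{3}$.

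The core of the argument lies in tracking $\Phi(w)$ through its Sarkisov decomposition, using the machinery developed for Theorem~\ref{theorem:auxiliary}. That proof produces, for any non-biregular $\chi \in \mathrm{Bir}^{\bar{G}}(\mathbb{P}^{3})$, a unique $\bar{G}$-maximal singularity supported either along the sextuple $\sum L_i$ or along $\sum L_i'$, and shows that precomposition with the corresponding involution ($\iota$ or $\iota'$) strictly decreases the $\mathbb{P}^{3}$-degree. Conversely, by Lemma~\ref{lemma:Todd-involution}, composing a biregular $\bar{G}$-equivariant automorphism with $\iota$ yields a map of degree $19$ whose unique $\bar{G}$-maximal singularity lies along $\sum L_i$. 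Moreover, for any $g \in \hat{G}\setminus\bar{G}$, the conjugate $g\iota g^{-1}$ has its maximal singularity along the opposite sextuple $\sum L_i'$, since $g$ interchanges the two $\bar{G}$-orbits of lines as recorded before Lemma~\ref{lemma:linear-complex}.

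Induction on $n$ then shows that $\Phi(w)$ has $\mathbb{P}^{3}$-degree strictly greater than $1$, and that its unique $\bar{G}$-maximal singularity lies along the sextuple determined by the innermost block $g_n$ (that is, along $\sum L_i$ or $\sum L_i'$ according to whether $g_n$ belongs to $\bar{G}$ or to $\hat{G}\setminus\bar{G}$). The inductive step amounts to checking that the middle factor $g_{n-1} \in \hat{G}\setminus\bar{G}$ moves the maximal singularity to the sextuple opposite to the one about which the next $\iota$ is centered; hence that $\iota$ cannot untwist the accumulated singularity, and the degree keeps growing. The main obstacle is establishing this uniqueness and controlled migration of the $\bar{G}$-maximal singularity at every stage, which is precisely what must be extracted from the proof of Theorem~\ref{theorem:auxiliary}: not merely the existence of an untwisting involution at each Sarkisov step, but the fact that the choice of involution is forced by the position of the maximal center, ruling out any hidden cancellation in the reduced word.
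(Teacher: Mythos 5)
Your proposal is correct and follows essentially the same route as the paper: both arguments reduce the amalgamated-product structure to the normal-form/reduced-word statement, and both hinge on the identical pair of facts — the exclusivity computation showing that at most one of the two sextuples $\sum L_{i}$, $\sum L_{i}^{\prime}$ can carry a maximal singularity, and the degree–multiplicity formulas $(\ref{action-on-Pic})$ from Lemma~\ref{lemma:Todd-involution}. The only difference is direction: the paper untwists an arbitrary element by an alternating word in $\iota,\iota^{\prime}$ until the degree is minimal and invokes the proof of Theorem~\ref{theorem:auxiliary} to conclude biregularity, whereas you run the induction forward on reduced words to show the degree stays above $1$; this is the same argument read backwards.
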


\begin{proof}
Let $\xi$ be a~birational automorphism in
$\mathrm{Bir}^{\bar{G}}(\mathbb{P}^{3})$ such that
$\xi\not\in\hat{G}$. Put $\mathcal{M}=\xi(|T|)$ and take
$\lambda\in\mathbb{Q}$  such that
$\lambda\mathcal{M}\sim_{\mathbb{Q}}-K_{\mathbb{P}^{3}}$. Then
$(\mathbb{P}^{3},\lambda\mathcal{M})$ is not canonical by
\cite[Theorem~4.2]{Co95}. By Theorem~\ref{theorem:A6-NFI}, the log
pair $(\P^3, \lambda\mathcal{M})$ is not canonical at a general
point of one line among
$L_{1},\ldots,L_{6},L_{1}^\prime,\ldots,L_6^\prime$. Arguing as in
the proof of Lemma~\ref{lemma:P3-untwisting}, we see that there
exists a combination
$$
\zeta=\underbrace{\ldots\circ\iota\circ\iota^{\prime}\circ\iota\circ\iota^{\prime}\circ\iota\circ\ldots}_{m\ \mathrm{times}},%
$$
such that $(\mathbb{P}^{3},\mu\zeta(\mathcal{M}))$ is canonical,
where $\mu$ is a positive rational number such that one has
\mbox{$\mu\zeta(\mathcal{M})\sim_{\mathbb{Q}}-K_{\mathbb{P}^{3}}$}. Then
$\zeta\circ\xi$ is biregular by \cite[Theorem~4.2]{Co95}, which
implies that $\zeta\circ\xi\in\hat{G}$ by
Lemma~\ref{lemma:auxiliary-aut}.

Let us show that $\zeta$ is uniquely determined by $\xi$ and the
algorithm hidden in the proof Lemma~\ref{lemma:P3-untwisting}.

Let $M$ be a~general surface in $\mathcal{M}$, and let $d$ be its
degree. Then $d=4/\lambda$. Arguing as in the proof of
Lemma~\ref{lemma:P3-untwisting}, we see that if $(\P^3,
\lambda\mathcal{M})$ is not canonical at a general point of one line
among $L_{1},\ldots,L_{6}$, then the degree of $\iota(M)$ is
smaller than $d$. Similarly, we see that if $(\P^3,
\lambda\mathcal{M})$ is not canonical at a general point of one line
among $L^\prime_{1},\ldots,L_{6}^\prime$, then the degree of
$\iota^\prime(M)$ is smaller than $d$. We can use this to
construct $\xi$ step by step. Thus, to show that $\zeta$ is
uniquely determined by $\xi$, we must show that $(\P^3,
\lambda\mathcal{M})$ can not be non-canonical at a general point of
one line among $L_{1},\ldots,L_{6}$ and the same time at a general
point of one line among $L^\prime_{1},\ldots,L_{6}^\prime$.

Suppose that $(\P^3, \lambda\mathcal{M})$ is not canonical at
a general point of one line among $L_{1},\ldots,L_{6}$ and the same
time at a general point of one line among
$L^\prime_{1},\ldots,L_{6}^\prime$. Then $(\P^3,
\lambda\mathcal{M})$ is not canonical at a general point of every
line among $L_{1},\ldots,L_{6},L^\prime_{1},\ldots,L_{6}^\prime$,
since $\mathcal{M}$ is $\bar{G}$-invariant. Then
$\mathrm{mult}_{L_{i}}(M)>1/\lambda$ and
$\mathrm{mult}_{L_{i}^\prime}(M)>1/\lambda$ for every
$i\in\{1,2,\ldots,6\}$. Let $\Pi$ be a~general plane in
$\mathbb{P}^{3}$ that contains the~line $L_{1}$. Put
$\mathcal{M}\vert_{\Pi}=\mathrm{mult}_{L_{1}}(M)L_{1}+\mathcal{B}$,
where $\mathcal{B}$ is a~linear system on $\Pi$ that does not have
fixed components. Let $B_{1}$ and $B_{2}$ be general curves in
$\mathcal{B}$, let $O_{i}$ be the point $\Pi\cap L_{i}$ for every
$i\in\{2,\ldots,6\}$, and let $O_{i}^{\prime}$ be the point
$\Pi\cap L_{i}^{\prime}$ for every $i\in\{1,\ldots,6\}$. Then
$\mathrm{mult}_{O_{i}}(B_{1})=\mathrm{mult}_{O_{i}}(B_{2})=\mathrm{mult}_{L_{1}}(M)$
for every $i\in\{2,\ldots,6\}$, and
$\mathrm{mult}_{O_{i}^{\prime}}(B_{1})=\mathrm{mult}_{O_{i}^{\prime}}(B_{2})=\mathrm{mult}_{L^{\prime}_{1}}(M)$
for every $i\in\{1,\ldots,6\}$. Then
$$
B_{1}\cdot
B_{2}\geqslant\sum_{i=2}^{6}\mathrm{mult}_{O_{i}}\big(B_{1}\big)\mathrm{mult}_{O_{i}}\big(B_{2}\big)+\sum_{i=1}^{6}\mathrm{mult}_{O^{\prime}_{i}}\big(B_{1}\big)\mathrm{mult}_{O^{\prime}_{i}}\big(B_{2}\big)>\frac{11}{\lambda^{2}},
$$
because $\mathrm{mult}_{L_{1}}(M)>1/\lambda$ and
$\mathrm{mult}_{L_{1}^{\prime}}(M)>1/\lambda$.
On the~other hand,
we have
$$
B_{1}\cdot B_{2}=\Big(\frac{4}{\lambda}\Pi\Big\vert_{\Pi}-\mathrm{mult}_{L_{1}}\big(M\big)L_{1}\Big)^{2}=\Big(\frac{4}{\lambda}-\mathrm{mult}_{L_{1}}\big(M\big)\Big)^{2}<\frac{9}{\lambda^{2}},%
$$
which is a~contradiction. Thus we see that $(\P^3,
\lambda\mathcal{M})$ is not canonical either at a general point of
every line among $L_{1},\ldots,L_{6}$, or at a general point of
every line among $L^\prime_{1},\ldots,L_{6}^\prime$. But it can
not be non-canonical at a general point of all these $12$ lines.

Recall that we defined $\iota^{\prime}$ ad
$\nu\circ\iota\circ\nu^{-1}$ for some $\nu\in\hat{G}$ such that
$\nu\not\in\bar{G}$. Without loss of generality we may assume that
$\nu$ is an involution. Thus, we see that every
$\alpha\in\mathrm{Bir}^{\bar{G}}(\mathbb{P}^{3})$ can be uniquely
written as
$$
\alpha=\underbrace{\ldots\circ\iota\circ\nu\circ\iota\circ\nu\circ\iota\circ\nu\circ\iota\circ\nu\circ\iota}_{r\ \mathrm{times}}\circ\beta%
$$
for some $\beta\in\hat{G}$ such that $r=0$ if and only if
$\alpha\in\hat{G}$. The latter implies that $\Gamma$ is the~free
product of the~ groups
$\hat{G}=\langle\nu,\bar{G}\rangle\cong\SS_{6}$ and
$\langle\iota,\bar{G}\rangle\cong\SS_{6}$
with~amalgamated~subgroup~$\bar{G}\cong\A_{6}$.
\end{proof}

\begin{remark}
\label{remark:non-amalgamated} Note that $\hat{G}\not\subset
\langle\iota,\iota^{\prime},\bar{G}\rangle$ (see the~proof of
Theorem~\ref{theorem:auxiliary-amalgamated}).
\end{remark}

In the~rest of this section, we prove
Theorem~\ref{theorem:A6-NFI}. Let $\mathcal{M}$ be a (non-empty)
$\bar{G}$-invariant linear system on $\P^3$ that does not have
fixed components, and let $\lambda$ be a positive rational number
such that
$\lambda\mathcal{M}\sim_{\mathbb{Q}}-K_{\mathbb{P}^{3}}$. Suppose
that $(\P^3, \lambda\mathcal{M})$ is canonical at a general point of
every line $L_{1},\ldots,L_{6},L_{1}^\prime,\ldots,L_6^\prime$. We
must prove that $(\mathbb{P}^{3},\lambda\mathcal{M})$ is
canonical.

Suppose that the~log pair $(\mathbb{P}^{3},\lambda\mathcal{M})$ is
not canonical. Let us seek for a contradiction. By
Corollary~\ref{corollary:mult-by-2} there is $\mu\in\mathbb{Q}$
such that $\mu<2\lambda$ and $(\mathbb{P}^{3},\mu\mathcal{M})$ is
strictly log canonical. Let $S\subset X$ be a~minimal center in
$\mathbb{LCS}(\mathbb{P}^{3},\mu\mathcal{M})$. Then $S$ is not
a~surface, since $\mathcal{M}$ has no fixed components.

Recall that we denote by $H$ a general hyperplane in
$\mathbb{P}^{3}$. By Lemma~\ref{lemma:Kawamata-Shokurov-trick},
there~is $\delta\in\mathbb{Q}$ and there~is a~$\bar{G}$-invariant
effective $\mathbb{Q}$-divisor $D\sim_{\mathbb{Q}}\delta H$ on
the~threefold $\mathbb{P}^{3}$ such that $0<\delta<8$, the~log
pair $(\mathbb{P}^{3},D)$ is log canonical and the set
$\mathbb{LCS}(\mathbb{P}^{3}, D)$ consists of all irreducible
components of the $\bar{G}$-orbit of $S$.

\begin{remark}
\label{remark:mult-2-inequality} It follows from the~proof of
Lemma~\ref{lemma:Kawamata-Shokurov-trick} that we may assume~that
$\mathrm{mult}_{L_{i}}(D)<2$ and
$\mathrm{mult}_{L_{i}^{\prime}}(D)<2$ for every
$i\in\{1,2,\ldots,6\}$, since $(\P^3, \lambda\mathcal{M})$ is
canonical at a general point of every line
$L_{1},\ldots,L_{6},L_{1}^\prime,\ldots,L_6^\prime$.
\end{remark}

Let $\mathcal{I}$ be the~multiplier ideal sheaf of the~log pair
$(\mathbb{P}^{3}, D)$, and let $\mathcal{L}$ be the~log canonical
singularities subscheme of the~log pair $(\mathbb{P}^{3}, D)$.
Then it follows from Theorem~\ref{theorem:Shokurov-vanishing} that
\begin{equation}
\label{equation:appendix-equality}
h^{0}\Big(\mathcal{O}_{\mathcal{L}}\otimes\mathcal{O}_{\mathbb{P}^{3}}\big(4H\big)\Big)=h^{0}\Big(\mathcal{O}_{\mathbb{P}^{3}}\big(4H\big)\Big)-h^{0}\Big(\mathcal{O}_{\mathbb{P}^{3}}\big(4H\big)\otimes\mathcal{I}\Big)=35-h^{0}\Big(\mathcal{O}_{\mathbb{P}^{3}}\big(4H\big)\otimes\mathcal{I}\Big).%
\end{equation}

\begin{lemma}
\label{lemma:auxiliary-4} The center $S$ is a~curve.
\end{lemma}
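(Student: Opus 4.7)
The plan is to rule out the only remaining possibility that $S$ is a point; combined with Lemma~\ref{lemma:auxiliary-2}, which excludes $S$ from being a surface, this will force $S$ to be a curve.

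First I would assume for contradiction that $S$ is a point $P\in\mathbb{P}^{3}$. Let $\Sigma$ denote the $\bar{G}$-orbit of $P$. By Lemma~\ref{lemma:small-orbits} we have $|\Sigma|\geqslant 36$. Since $S$ is a minimal element of $\mathbb{LCS}(\mathbb{P}^{3},D)$, the remark preceding Lemma~\ref{lemma:Kawamata-Shokurov-trick} (which is a consequence of Lemma~\ref{lemma:centers}) tells us that distinct $\bar{G}$-translates of $S$ are pairwise disjoint, so set-theoretically
$$
\mathrm{LCS}\Big(\mathbb{P}^{3},D\Big)=\Sigma.
$$
The subscheme $\mathcal{L}$ is reduced by Remark~\ref{remark:log-canonical-subscheme}, so it coincides with $\Sigma$ as a subscheme of $\mathbb{P}^{3}$. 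Consequently
$$
h^{0}\Big(\mathcal{O}_{\mathcal{L}}\otimes\mathcal{O}_{\mathbb{P}^{3}}\big(4H\big)\Big)=|\Sigma|\geqslant 36.
$$

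On the other hand, formula (\ref{equation:appendix-equality}) — which was obtained from the Nadel--Shokurov vanishing theorem (Theorem~\ref{theorem:Shokurov-vanishing}) applied to the log pair $(\mathbb{P}^{3},D)$ with nef and big part $(4-\delta)H$ — gives
$$
h^{0}\Big(\mathcal{O}_{\mathcal{L}}\otimes\mathcal{O}_{\mathbb{P}^{3}}\big(4H\big)\Big)=35-h^{0}\Big(\mathcal{O}_{\mathbb{P}^{3}}\big(4H\big)\otimes\mathcal{I}\Big)\leqslant 35,
$$
which contradicts the previous estimate. Therefore $S$ cannot be a point, and by Lemma~\ref{lemma:auxiliary-2} it is not a surface, so $S$ is a curve.

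I do not foresee any real obstacle in this argument: the hard analytic input (Nadel--Shokurov vanishing, which powers (\ref{equation:appendix-equality})) is already in place, as is the group-theoretic orbit bound $|\Sigma|\geqslant 36$ coming from the representation-theoretic analysis of Lemma~\ref{lemma:small-orbits}. The only thing to be slightly careful about is that $\mathcal{L}$ is genuinely reduced and equals $\Sigma$ scheme-theoretically, which is exactly the content of Remark~\ref{remark:log-canonical-subscheme} combined with the disjointness of the $\bar{G}$-translates of the minimal center.
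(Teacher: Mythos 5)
Your argument is correct and is exactly the paper's proof, just with the intermediate steps (reducedness of $\mathcal{L}$, disjointness of the translates of the minimal center, and the resulting identification $h^{0}(\mathcal{O}_{\mathcal{L}}\otimes\mathcal{O}_{\mathbb{P}^{3}}(4H))=|\Sigma|$) written out explicitly. The paper compresses all of this into one sentence citing (\ref{equation:appendix-equality}) and Lemma~\ref{lemma:small-orbits}.
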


\begin{proof}
If $S$ is a~point, then $|\mathrm{LCS}(\mathbb{P}^{3},
D)|\leqslant 35$ by (\ref{equation:appendix-equality}), which
contradicts Lemma~\ref{lemma:small-orbits}.
\end{proof}

By Theorem~\ref{theorem:Kawamata}, the~curve $S$ is a~smooth curve
in $\mathbb{P}^{3}$ of degree $d$ and genus $g$ such that
\begin{equation}\label{eq:g-2d}
g\leqslant 2d.
\end{equation}
Put $q=h^{0}(\mathcal{O}_{\mathbb{P}^{3}}(4H)\otimes\mathcal{I})$.
Let $Z$ be the~$\bar{G}$-orbit of the~curve $S$, let $r$ be
the~number of irreducible components of $Z$. Then $Z=\mathcal{L}$,
since $(\mathbb{P}^{3},D)$ is log canonical. Moreover, the curve
$Z$ is smooth by Lemma~\ref{lemma:centers}, and $Z$ is not
contained in a~hyperplane in $\P^3$ by
Remark~\ref{remark:irreducible}.

\begin{lemma}
\label{lemma:35-q-equality} The equality $r(4d-g+1)=35-q$ holds.
\end{lemma}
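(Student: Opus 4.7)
The plan is to turn the cohomological equality \eqref{equation:appendix-equality} into a Riemann--Roch calculation on the curve $Z$. First, I would observe that $\mathcal{L}=Z$ not only set-theoretically but scheme-theoretically: the subscheme $\mathcal{L}=\mathcal{L}(\mathbb{P}^{3},D)$ is reduced by Remark~\ref{remark:log-canonical-subscheme} (since $(\mathbb{P}^{3},D)$ is log canonical), so $\mathcal{O}_{\mathcal{L}}=\mathcal{O}_{Z}$. Next, since $S$ is a minimal center of log canonical singularities of the $\bar{G}$-invariant pair $(\mathbb{P}^{3},D)$, the general remark preceding Lemma~\ref{lemma:Kawamata-Shokurov-trick} gives that $g(S)\cap g'(S)\neq\varnothing \iff g(S)=g'(S)$. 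Hence the $\bar{G}$-orbit $Z$ is a disjoint union of $r$ irreducible components, each of which is $\bar{G}$-equivariantly isomorphic to $S$ and therefore a smooth curve of degree $d$ and genus $g$.

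Because the components of $Z$ are pairwise disjoint, there is a direct sum decomposition
$$
H^{0}\Big(\mathcal{O}_{Z}\otimes\mathcal{O}_{\mathbb{P}^{3}}\big(4H\big)\Big)=\bigoplus_{i=1}^{r}H^{0}\Big(S_{i},\mathcal{O}_{S_{i}}\big(4H\vert_{S_{i}}\big)\Big),
$$
where $S_{1},\ldots,S_{r}$ are the components of $Z$. Each term is isomorphic to $H^{0}(S,\mathcal{O}_{S}(4H\vert_{S}))$, and $4H\vert_{S}$ is a line bundle of degree $4d$ on the smooth curve $S$ of genus $g$.

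The final step is a standard Riemann--Roch computation. Recall from Theorem~\ref{theorem:Kawamata} (applied to obtain the bound $g\leqslant 2d$) that the curve $S$ satisfies $g\leqslant 2d$. In particular
$$
\deg\Big(4H\vert_{S}\Big)=4d\geqslant 2g>2g-2,
$$
so $\mathcal{O}_{S}(4H\vert_{S})$ is non-special, i.e.\ $h^{1}(\mathcal{O}_{S}(4H\vert_{S}))=0$. The Riemann--Roch theorem on $S$ then yields
$$
h^{0}\Big(\mathcal{O}_{S}\big(4H\vert_{S}\big)\Big)=4d-g+1.
$$
Combining this with the direct sum decomposition above and the equality \eqref{equation:appendix-equality} gives
$$
r\big(4d-g+1\big)=h^{0}\Big(\mathcal{O}_{Z}\otimes\mathcal{O}_{\mathbb{P}^{3}}\big(4H\big)\Big)=35-q,
$$
which is the desired identity. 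There is no real obstacle here; the only point requiring care is verifying the scheme-theoretic identification $\mathcal{L}=Z$ (so that the left-hand side of \eqref{equation:appendix-equality} equals a sum of Euler characteristics on the components), and the verification of non-speciality, both of which are straightforward given the hypotheses assembled in the preceding lemmas.
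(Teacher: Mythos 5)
Your proposal is correct and is essentially the paper's own argument: the paper's one-line proof also derives the equality from \eqref{equation:appendix-equality} together with Riemann--Roch, using $2d\geqslant g$ (hence $4d>2g-2$, so the line bundle is non-special) to get $h^{0}=4d-g+1$ on each of the $r$ disjoint components of $Z=\mathcal{L}$. You have merely spelled out the details (reducedness of $\mathcal{L}$, disjointness of the components, non-speciality) that the paper leaves implicit.
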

\begin{proof}
The equality follows from (\ref{equation:appendix-equality}) and
the~Riemann--Roch theorem, because $2d\geqslant g$.
\end{proof}

\begin{corollary}
\label{corollary:g-bound} The inequality $g\leqslant 34$ holds.
\end{corollary}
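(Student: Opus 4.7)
The plan is to combine the identity $r(4d-g+1) = 35 - q$ from Lemma~\ref{lemma:35-q-equality} with the bound $g \leqslant 2d$ coming from the Kawamata subadjunction theorem. First I would record the obvious two-sided estimate
$$
0 \leqslant q = h^{0}\Big(\mathcal{O}_{\mathbb{P}^{3}}\big(4H\big)\otimes\mathcal{I}\Big) \leqslant h^{0}\Big(\mathcal{O}_{\mathbb{P}^{3}}\big(4H\big)\Big) = 35,
$$
and observe that $r \geqslant 1$, since the orbit $Z$ is non-empty. Substituting into the identity yields
$$
0 \leqslant r\big(4d - g + 1\big) = 35 - q \leqslant 35.
$$

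Next I would note that since $S$ is a curve of positive degree $d \geqslant 1$ satisfying $g \leqslant 2d$, the integer $4d - g + 1$ is at least $2d + 1 \geqslant 3$, and in particular strictly positive. Dividing the above estimate by $r \geqslant 1$ gives
$$
4d - g + 1 \leqslant 35, \quad \text{i.e.,} \quad g \geqslant 4d - 34.
$$

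Combining this lower bound with $g \leqslant 2d$ produces $4d - 34 \leqslant 2d$, whence $d \leqslant 17$, and therefore $g \leqslant 2d \leqslant 34$. This is a short arithmetic consequence of the two preceding results and presents no real obstacle: the substantive work has already been done in establishing the Riemann--Roch-type identity of Lemma~\ref{lemma:35-q-equality} and the subadjunction bound $g \leqslant 2d$ recorded just before the corollary.
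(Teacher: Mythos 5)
Your proof is correct and follows exactly the route the paper intends: the corollary is stated without proof as an immediate consequence of Lemma~\ref{lemma:35-q-equality} together with the subadjunction bound $g\leqslant 2d$, and your arithmetic ($4d-g+1\leqslant 35$, hence $4d-34\leqslant g\leqslant 2d$, hence $d\leqslant 17$ and $g\leqslant 34$) is the natural way to extract the bound.
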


\begin{lemma}
\label{lemma:small-q} Suppose that $1\leqslant q\leqslant 7$. Then
$q\in\{5,6\}$.
\end{lemma}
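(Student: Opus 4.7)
The plan is representation-theoretic. Since $(\mathbb{P}^3, D)$ is log canonical, Remark~\ref{remark:log-canonical-subscheme} implies that the multiplier ideal sheaf $\mathcal{I}$ coincides with the (reduced) ideal sheaf $\mathcal{I}_Z$ of the log canonical centre $Z = \mathcal{L}$; hence $q = h^0(\mathbb{P}^3, \mathcal{I}_Z(4))$ is the dimension of a $\bar{G}$-invariant subspace of $H^0(\mathbb{P}^3, \mathcal{O}(4)) = \mathrm{Sym}^4(V^*)$.

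The centre of $G \subset \mathrm{SL}(4,\mathbb{C})$ acts on $\mathrm{Sym}^k(V^*)$ as $(-1)^k$, and therefore trivially for $k=4$, making $\mathrm{Sym}^4(V^*)$ a genuine $\bar{G}$-representation. By Lemma~\ref{lemma:small-semiinvariants}, $G$ has no invariants of degree $4$, so $\mathrm{Sym}^4(V^*)$ contains no trivial $\bar{G}$-summand. Since the non-trivial irreducible representations of $\mathbb{A}_6$ have dimensions in $\{5,5,8,8,9,10\}$ (see~\cite{Atlas}), any nonzero $\bar{G}$-invariant subspace of $\mathrm{Sym}^4(V^*)$ has dimension equal to a sum of elements of $\{5,8,9,10\}$ (with multiplicities permitted by the isotypic decomposition of $\mathrm{Sym}^4(V^*)$). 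In the range $1\leqslant q\leqslant 7$ the only such sum is $q=5$, which implies $q\in\{5,6\}$.

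The main obstacle is interpretative rather than technical: the argument above in fact yields the stronger conclusion $q=5$, not merely $q\in\{5,6\}$, so one suspects the authors intend a more geometric proof that treats $q=5$ and $q=6$ symmetrically and distinguishes them only later. A purely geometric approach would proceed by combining Lemma~\ref{lemma:35-q-equality} with the list of indices of subgroups of $\mathbb{A}_6$ and with $g\in\{10,16,19,25,31\}$ from Lemma~\ref{lemma:sporadic-genera} and Corollary~\ref{corollary:g-bound}. For $q\in\{1,2,3,4,7\}$ arithmetic forces $r=1$ and produces a short list of candidate pairs $(d,g)$, which are then eliminated by Lemma~\ref{lemma:no-cubic} combined with Riemann--Roch and Theorem~\ref{theorem:Clifford} applied to $|3H|_Z|$ when $g\geqslant 13$, and by the orbit-length restrictions in Lemma~\ref{lemma:sporadic-genera} when $g=10$ (the $q=4$ case).
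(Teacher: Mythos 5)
Your representation-theoretic argument is precisely the paper's own proof: the authors likewise observe that the centre of $G$ acts trivially in even degree, so that $W=H^{0}(\mathcal{O}_{\mathbb{P}^{3}}(4H)\otimes\mathcal{I})$ is a $\bar{G}$-subrepresentation of $\mathrm{Sym}^{4}(V)$ having no trivial summand by Lemma~\ref{lemma:small-semiinvariants}, and then conclude from the dimensions of the non-trivial irreducible representations of $\mathbb{A}_{6}$. Your observation that this actually forces $q=5$ is correct; the stated conclusion $q\in\{5,6\}$ is simply not sharp (a $6$-dimensional representation of the perfect group $\mathbb{A}_{6}$ without trivial summand does not exist either), and there is no hidden geometric argument of the kind you speculate about.
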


\begin{proof}
Put $W_4=H^{0}(\mathcal{O}_{\mathbb{P}^{3}}(4H)\otimes\mathcal{I})$.
Since the~center of $G$ acts trivially on polynomials of even
degree, the space $W_4$ has a natural structure of
a~$\bar{G}$-representation. Suppose that $q\not\in\{5,6\}$. Then
$W_4$ has a~trivial~sub\-rep\-re\-sen\-ta\-tion of the~group
$\bar{G}$ by dimension reasons (see~\cite{Atlas}), which is impossible by
Lemma~\ref{lemma:small-semiinvariants}.
\end{proof}

\begin{lemma}
\label{lemma:A6-reducible-curves} Suppose that $r\ne 1$. Then
$r=6$ and $d=1$.
\end{lemma}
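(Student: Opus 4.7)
My plan is to combine three constraints on the numerical data $(r,d,g,q)$ and then enumerate the surviving possibilities. Since $r\ne 1$, the stabilizer $\bar{F}\subset\bar{G}$ of the component $S$ is a proper subgroup of $\bar{G}\cong\mathbb{A}_{6}$, so by Lemma~\ref{lemma:max-subgroup} the index $r=[\bar{G}:\bar{F}]$ lies in the set of proper-subgroup indices of $\mathbb{A}_{6}$, namely $r\in\{6,10,15,20,30,36,40,\ldots\}$. In particular $r\ge 6$.

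Next, the space $W=H^{0}(\mathcal{O}_{\mathbb{P}^{3}}(4)\otimes\mathcal{I})$ of dimension $q$ is a $\bar{G}$-subrepresentation of the $35$-dimensional $\bar{G}$-module $H^{0}(\mathcal{O}_{\mathbb{P}^{3}}(4))$: the centre of $G$ acts trivially on polynomials of even degree (as noted in the proof of Lemma~\ref{lemma:small-q}), and by Lemma~\ref{lemma:small-semiinvariants} this ambient module contains no trivial summand, so neither does $W$. Since the non-trivial irreducible representations of $\mathbb{A}_{6}$ have dimensions $5,5,8,8,9,10$, the value of $q$ must lie in $\{0,5\}\cup\{n\ge 8 : n\text{ is a sum from the multiset }\{5,5,8,8,9,10\}\}$. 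In particular $q\notin\{11,17\text{ with trivial},\ldots\}$, and the only small admissible values with $q\le 17$ are $q\in\{0,5,8,9,10,13,14,15,16,17\}$.

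Third, the bound $g\le 2d$ given by Theorem~\ref{theorem:Kawamata} yields $k:=4d-g+1\ge 2d+1\ge 3$. Plugging this into the Riemann--Roch identity $rk=35-q$ of Lemma~\ref{lemma:35-q-equality}, one sees that any $r\ge 15$ would force $rk\ge 3r\ge 45>35$, which is impossible. Hence $r\in\{6,10\}$. A short enumeration of the pairs $(r,k)$ with $r\in\{6,10\}$, $k\ge 3$, $rk\le 35$, and $q=35-rk$ admissible gives exactly the three triples $(r,k,q)\in\{(6,5,5),(6,3,17),(10,3,5)\}$, with corresponding relations $g=4d-4$, $g=4d-2$ and $g=4d-2$ respectively.

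In each of these three cases one solves for the admissible $d$ using $g\le 2d$ (this gives $d\le 2$ in the first case and $d\le 1$ in the other two) and then compares with the actual genus of a smooth irreducible curve in $\mathbb{P}^{3}$ of the corresponding degree: a line has $g=0$ and a plane conic has $g=0$, so the candidates $(d,g)=(2,4)$, $(1,2)$, $(1,2)$ are all excluded. The only surviving triple is $(r,d,g)=(6,1,0)$, which matches the configuration of Remark~\ref{remark:12-lines}. The only real obstacle is the bookkeeping in this last enumeration, but once the inequality $k\ge 3$ has cut $r$ down to two values the argument is entirely routine.
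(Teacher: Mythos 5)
Your proof is correct and follows essentially the same route as the paper: the identity $r(4d-g+1)=35-q$ from Lemma~\ref{lemma:35-q-equality}, the bound $g\leqslant 2d$, the restriction on $r$ coming from indices of subgroups of $\mathbb{A}_6$, the representation-theoretic exclusion of most values of $q$ (your version is a sharper form of Lemma~\ref{lemma:small-q}), and the final comparison with the genus of a line and of a conic. The only difference is organizational — you enumerate admissible triples $(r,k,q)$ directly where the paper first splits into $q\geqslant 6$ versus $q=5$ — and both versions land on $(r,d,g)=(6,1,0)$.
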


\begin{proof}
Since $r\leqslant 35$ by Lemma~\ref{lemma:35-q-equality}, one has
$r\in\{6,10,15,20,30\}$ by Lemma~\ref{lemma:max-subgroup}. In
particular, the~inequality $q\geqslant 1$ holds by
Lemma~\ref{lemma:35-q-equality}.

Suppose that $q\geqslant 6$. Then $r(4d-g+1)<30$ by
Lemma~\ref{lemma:35-q-equality}. We see that $g+1\leqslant
4d-g+1<5$, which implies that $g\leqslant 3$. Then $d=1$ and
$g=0$, so that $4d-g+1=5$, which is a~contradiction.

By Lemma~\ref{lemma:small-q} we may assume that $q=5$. Then
$r\in\{6, 10, 15, 30\}$ by Lemma~\ref{lemma:35-q-equality}.

If $r=30$, then $g+1\leqslant 4d-g+1=1$ by
Lemma~\ref{lemma:35-q-equality}, which is a~contradiction.

If $r=15$, then $g+1\leqslant 4d-g+1=2$ by
Lemma~\ref{lemma:35-q-equality},  which leads to a~contradiction.

If $r=10$, then $g+1\leqslant 4d-g+1=3$ by
Lemma~\ref{lemma:35-q-equality}, which leads to a~contradiction.

If $r=6$, then $g+1\leqslant 4d-g+1=5$ by
Lemma~\ref{lemma:35-q-equality}, which gives $g=0$ and $d=1$.
\end{proof}

\begin{lemma}
\label{lemma:six-lines} The equality $r=1$ holds.
\end{lemma}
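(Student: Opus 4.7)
The plan is to argue by contradiction, assuming $r \neq 1$. Then by Lemma \ref{lemma:A6-reducible-curves} we have $r = 6$ and $d = 1$, and by the classification of $\bar{G}$-orbits of six lines (Remark \ref{remark:12-lines} and the discussion preceding Lemma \ref{lemma:linear-complex}), $Z$ is one of the two sextuples $\sum L_i$ or $\sum L_i'$. These are exchanged by any outer element of $\hat{G} \cong \mathbb{S}_6$, so I may assume $Z = \sum_{i=1}^6 L_i$. Set $m = \mathrm{mult}_{L_1}(D)$; by $\bar{G}$-invariance this equals $\mathrm{mult}_{L_i}(D)$ for every $i$, and by Remark \ref{remark:mult-2-inequality} we have $m < 2$.

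The first step is a lower bound $\delta \geq 7$ via Nadel vanishing. I would apply Theorem \ref{theorem:Shokurov-vanishing} with Cartier divisor $3H$: the hypothesis $3H - K_{\mathbb{P}^3} - D \sim_\mathbb{Q} (7-\delta)H$ nef and big requires $\delta < 7$, and then the ideal sheaf sequence $0 \to \mathcal{I}(3) \to \mathcal{O}_{\mathbb{P}^3}(3) \to \mathcal{O}_Z(3) \to 0$ forces $h^0(\mathcal{I}(3)) = 20 - 24 < 0$, which is absurd. The second step is a lower bound $m \geq 3/2$ via iterated blow-up. Since $m < 2$, the exceptional divisor $E_1 \cong \mathbb{P}^1 \times \mathbb{P}^1$ of the blow-up of $L_1$ has discrepancy $1-m > -1$, so is not the valuation realizing $L_1$ as a log canonical center. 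Hence $L_1$ must be realized by a further blow-up of a curve $C \subset E_1$ dominating $L_1$; setting the discrepancy $2 - m - \mathrm{mult}_C(\tilde D_1)$ equal to $-1$ and combining with the fibre-degree bound $\mathrm{mult}_C(\tilde D_1) \cdot (C \cdot f) \leq m$ for $f$ a fibre of $E_1 \to L_1$ forces $C$ to be a section and $m \geq 3/2$.

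To reach the contradiction, I would apply the Todd involution $\iota$ of Lemma \ref{lemma:Todd-involution} to $D$. Since $\iota$ is a composition of flops (hence preserves $K$ and log canonicity) and $\iota \in \langle \iota, \bar{G} \rangle \cong \mathbb{S}_6$ permutes the lines $L_1, \ldots, L_6$ amongst themselves, the pair $(\mathbb{P}^3, \iota_*(D))$ is again a $\bar{G}$-invariant log canonical pair with LCS $\sum L_i$, and the formulas (\ref{action-on-Pic}) on $\mathrm{Pic}(U)$ give $\iota_*(D) \sim_\mathbb{Q} (19\delta - 72m) H$ with $\mathrm{mult}_{L_i}(\iota_*(D)) = 5\delta - 19m$. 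Applying the first-step Nadel bound to $\iota_*(D)$ then forces $19\delta - 72m \geq 7$, i.e. $m \leq (19\delta - 7)/72$, pinning $(\delta, m)$ to the narrow region $\delta \in [7, 8)$, $m \in [3/2, (19\delta-7)/72]$.

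The main obstacle is closing this numerical gap. My expectation is that the closing argument uses the structure of $\mathrm{supp}(D)$: by Lemma \ref{lemma:small-semiinvariants} together with Lemma \ref{lemma:small-orbits} applied to $\bar{G}$-orbits of hyperplanes, every irreducible component of $\mathrm{supp}(D)$ is a $\bar{G}$-invariant hypersurface of degree $\geq 8$, and restriction of such a surface of degree $\leq 11$ to $L_i \cong \mathbb{P}^1$ would be a nonzero $\mathbb{A}_5$-invariant effective divisor of degree $\leq 11$, impossible since $\mathbb{A}_5$-orbits on $\mathbb{P}^1$ have size $\geq 12$ by Lemma \ref{lemma:stabilizers}; thus every such component contains both sextuples $\{L_i\}$ and $\{L_i'\}$. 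Consequently $L_i' \subset \mathrm{supp}(D)$; since $L_i'$ is not in the LCS $\sum L_i$, the pair is klt at a generic point of $L_i'$, forcing $\mathrm{mult}_{L_i'}(D) < 1$. Writing $D$ as a sum of $\bar{G}$-invariant components and comparing the multiplicities along $L_i$ and $L_i'$ in this decomposition against the Nadel/discrepancy bounds above should yield the desired numerical contradiction.
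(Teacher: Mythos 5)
Your argument does not close, and you acknowledge this yourself: after your three steps the admissible region $\delta\in[7,8)$, $m\in[3/2,(19\delta-7)/72]$ is non-empty, and the sketched final step does not produce a contradiction. Beyond that, two of the three steps you do carry out have genuine problems. In step 2, setting the discrepancy $2-m-\mathrm{mult}_{C}(\tilde D_1)$ equal to $-1$ presupposes that the log canonical place over $L_1$ is extracted by exactly one further blow-up along $C$; in general that place may live arbitrarily high over $C$, and the only inequality you are entitled to from $C$ being a non-klt centre of $(U,\bar D+(m-1)E_1)$ is $\mathrm{mult}_C(\bar D)+(m-1)\geqslant 1$, i.e. $\mathrm{mult}_C(\bar D)\geqslant 2-m$, which together with your fibre bound gives only $m\geqslant 1$ --- nothing new. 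In step 3 the claim that $(\mathbb{P}^3,\iota_*(D))$ is log canonical with $\mathrm{LCS}=\sum L_i$ is unjustified: $\iota$ is not biregular and does not permute the lines; it is a Sarkisov link that blows up the $L_i$ and contracts six degree-$12$ surfaces $R_i$ onto them. The flop $\tau$ is crepant for the log pullback $(U,\bar D+(m-1)\sum E_i)$, so what descends is log canonicity of $(\mathbb{P}^3,\iota_*(D)+(m-1)\sum R_i)$, and the log canonical centres are carried to curves inside the $R_i$, not to the $L_i$; your Nadel bound therefore cannot be applied to $\iota_*(D)$ as stated. Finally, in the closing sketch the inference that klt at the generic point of $L_i'$ forces $\mathrm{mult}_{L_i'}(D)<1$ is false (a boundary with two branches of coefficient $9/10$ through $L_i'$ is klt there and has multiplicity $9/5$); Remark~\ref{remark:mult-2-inequality} gives only the bound $<2$.

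The paper's proof is entirely different and much shorter: it blows up $L_1$ alone, notes that since $1<\mathrm{mult}_{L_1}(D)<2$ the log canonical centre $L_1$ must be realised by a curve $C$ in the exceptional quadric $E\cong\mathbb{P}^1\times\mathbb{P}^1$ dominating $L_1$, transfers $C$ into $\mathbb{LCS}(E,\bar D\vert_{E})$ by inversion of adjunction (Theorem~\ref{theorem:adjunction}), and then invokes Lemma~\ref{lemma:quasric-surface-Dokshitzer} --- that is, the representation theory of the standard $\mathbb{A}_5\cong\bar F\subset\bar G$ acting on $E$, which excludes invariant curves of the relevant bidegrees --- to conclude that $\mathrm{LCS}(E,\bar D\vert_{E})$ is finite. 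Your first step ($\delta\geqslant 7$ via Nadel vanishing twisted by $3H$) is correct and is a genuine observation not made in the paper, but by itself it leads nowhere; the missing idea is the restriction to the exceptional divisor over a single line and the appeal to Lemma~\ref{lemma:quasric-surface-Dokshitzer}.
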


\begin{proof}
Suppose that $r\ne 1$. Then $Z$ is a~disjoint union of $6$ lines
by Lemma~\ref{lemma:A6-reducible-curves}, which implies that
either $Z=L_{1}+L_{2}+L_{3}+L_{4}+L_{5}+L_{6}$, or
$Z=L^{\prime}_{1}+L^{\prime}_{2}+L^{\prime}_{3}+L^{\prime}_{4}+L^{\prime}_{5}+L^{\prime}_{6}$.

Without loss of generality, we may assume that $S=L_{1}$ and
$Z=L_{1}+L_{2}+L_{3}+L_{4}+L_{5}+L_{6}$.

Let $\bar{F}\subset\bar{G}$ be the~stabilizer of $L_{1}$. Then
$\bar{F}\cong\A_{5}$. Let $\pi\colon U\to\mathbb{P}^{3}$
be the~blow up of $L_{1}$. Then
$$
K_{U}+\bar{D}+\Big(\mathrm{mult}_{L_{1}}\big(D\big)-1\Big)E\sim_{\mathbb{Q}}\pi^{*}\Big(K_{\mathbb{P}^{3}}+D\Big),
$$
where $E$ is the~$\pi$-exceptional divisor, and $\bar{D}$ is
the~proper transform of $D$ on $U$. One has
$2>\mathrm{mult}_{L_{1}}(D)>1$ by
Remark~\ref{remark:mult-2-inequality}. The group $\bar{F}$
naturally acts on  $E$ so that the~divisor $\bar{D}\vert_{E}$ is
$\bar{F}$-invariant.

We can identify the~surface $E$ with a~smooth quadric in
$\mathbb{P}^{3}$. The action of the~group $\bar{F}$~extends to
the~ambient space $\mathbb{P}^3$. Note that this action arises
from the~standard four-dimensional representation of the~group
$\bar{F}\cong\A_{5}$.

It follows from the~inequality $\mathrm{mult}_{L_{1}}(D)<2$ that
the~set $\mathbb{LCS}(U,\bar{D}+(\mathrm{mult}_{L_{1}}(D)-1)E)$
contains an irreducible reduced curve $C\subset E$ such that
$\pi(C)=L_{1}$. One has $C\in\mathbb{LCS}(E,\bar{D}\vert_{E})$ by
Theorem~\ref{theorem:adjunction}, which is impossible by
Lemma~\ref{lemma:quasric-surface-Dokshitzer}.
\end{proof}

We see that $r=1$, so that $Z=S$. By Lemma~\ref{lemma:sporadic-genera}
one has $g\in\{31,25,19,16,10\}$.

\begin{lemma}
\label{lemma:g31} The inequality $g\neq 31$ holds.
\end{lemma}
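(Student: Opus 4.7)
\smallskip

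The plan is to combine the Riemann--Roch equality of Lemma~\ref{lemma:35-q-equality} with the representation-theoretic restriction of Lemma~\ref{lemma:small-q}, pinning down the pair $(d,q)$ to a single impossible value.

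Assume for contradiction that $g=31$. Since $S$ is a smooth curve with $g\leqslant 2d$, the integer $d$ must satisfy $d\geqslant 16$. Lemma~\ref{lemma:six-lines} gives $r=1$, so the equality $r(4d-g+1)=35-q$ of Lemma~\ref{lemma:35-q-equality} becomes $q=65-4d$. If $d\geqslant 17$, then $q<0$, which is absurd because $q=h^{0}(\mathcal{O}_{\mathbb{P}^{3}}(4H)\otimes\mathcal{I})\geqslant 0$. Hence the only surviving possibility is
$$
d=16,\qquad q=1.
$$

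However, Lemma~\ref{lemma:small-q} forbids $q=1$: the only admissible values in the range $1\leqslant q\leqslant 7$ are $5$ and $6$. One can see this directly as well---a one-dimensional $\bar{G}$-invariant subspace of $H^{0}(\mathcal{O}_{\mathbb{P}^{3}}(4H))$ is automatically a trivial subrepresentation of $\bar{G}\cong\mathbb{A}_{6}$, which by Lemma~\ref{lemma:small-semiinvariants} is impossible since $G$ has no invariants of degree $\leqslant 7$. This contradicts $q=1$ and completes the proof.

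The argument is essentially immediate once the prior lemmata are in place, reducing to an arithmetic check on $g\leqslant 2d$ combined with the Riemann--Roch count for quartics vanishing on $S$ and the fact that $\mathbb{A}_{6}$ admits no non-trivial one-dimensional representation; there is no substantive obstacle to overcome.
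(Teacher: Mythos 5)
Your proof is correct and uses exactly the same ingredients as the paper's: the identity $4d=65-q$ from Lemma~\ref{lemma:35-q-equality} with $r=1$, the bound $d\geqslant 16$ from $g\leqslant 2d$, and the exclusion of small values of $q$ via Lemma~\ref{lemma:small-q}. The paper merely orders the deductions differently (it derives $q\geqslant 5$, hence $d\leqslant 15$, and contradicts $d\geqslant 16$), so the two arguments are essentially identical.
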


\begin{proof}
Suppose that $g=31$. Then it follows from
Lemma~\ref{lemma:35-q-equality} that
\begin{equation}\label{eq:g-31}
d=\frac{65-q}{4},
\end{equation}
which gives $q>0$. Hence $q\geqslant 5$ by
Lemma~\ref{lemma:small-q} and thus $d\le 15$ by~(\ref{eq:g-31}).
By~(\ref{eq:g-2d}) we have $g/2\leqslant d\leqslant 15$,
which is impossible.
\end{proof}

\begin{lemma}
\label{lemma:g25} The inequality $g\neq 25$ holds.
\end{lemma}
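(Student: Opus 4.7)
The plan is to adapt the arithmetic argument used in the proof of Lemma~\ref{lemma:g31} to the case $g = 25$. Since $r = 1$ by Lemma~\ref{lemma:six-lines}, Lemma~\ref{lemma:35-q-equality} specializes to $4d = 34 + g - q = 59 - q$. In particular, $q$ must be a non-negative integer with $q \equiv 3 \pmod{4}$, so $q \in \{3, 7, 11, 15, \ldots\}$; the value $q = 0$ is automatically excluded because $59$ is not divisible by $4$.

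The next step is to apply Lemma~\ref{lemma:small-q}: any $q$ with $1 \leqslant q \leqslant 7$ is forced into $\{5, 6\}$. Combined with the congruence $q \equiv 3 \pmod{4}$, this rules out both remaining candidates $q = 3$ and $q = 7$ in that range, and hence $q \geqslant 11$, which gives $d = (59-q)/4 \leqslant 12$. On the other hand, the Clifford-type inequality $g \leqslant 2d$ recorded immediately before Lemma~\ref{lemma:35-q-equality} (via Theorem~\ref{theorem:Kawamata} and the minimality of the log-canonical center $S$) forces $d \geqslant \lceil g/2 \rceil = 13$. This contradicts $d \leqslant 12$ and finishes the argument.

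No genuine geometric difficulty appears here: all the heavy lifting has already been carried out in Lemmas~\ref{lemma:35-q-equality}, \ref{lemma:small-q} and \ref{lemma:six-lines}, together with the bound $g \leqslant 2d$. What remains is a short congruence-and-inequality check, and the only point requiring attention is verifying that the two numerical candidates $(d, q) = (14, 3)$ and $(13, 7)$ compatible with Lemma~\ref{lemma:35-q-equality} are precisely the ones excluded by the representation-theoretic constraint of Lemma~\ref{lemma:small-q} — which is exactly the situation above.
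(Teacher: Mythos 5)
Your argument is correct and is essentially identical to the paper's proof: both use the integrality of $d=(59-q)/4$ together with Lemma~\ref{lemma:small-q} to force $q\geqslant 11$ and hence $d\leqslant 12$, then contradict this with $d\geqslant g/2=12.5$. You have merely spelled out the congruence $q\equiv 3\pmod 4$ more explicitly than the paper does.
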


\begin{proof}
Suppose that $g=25$. Then it follows from
Lemma~\ref{lemma:35-q-equality} that
\begin{equation}\label{eq:g-25}
d=\frac{59-q}{4},
\end{equation}
which gives $q>0$. Hence $q\geqslant 11$ by
Lemma~\ref{lemma:small-q}
and thus $d\le 15$ by~(\ref{eq:g-25}).
By~(\ref{eq:g-2d}) we have $g/2\leqslant d\leqslant 12$,
which is impossible.
\end{proof}

\begin{lemma}
\label{lemma:g19} The inequality $g\neq 19$ holds.
\end{lemma}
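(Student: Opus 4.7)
The plan is to mimic the template of Lemmas~\ref{lemma:g25} and~\ref{lemma:g31}: assume $g=19$ for contradiction, list the few surviving pairs $(d,q)$ allowed by the numerical constraints, and knock them out one by one. Setting $g=19$ in Lemma~\ref{lemma:35-q-equality}, and using $r=1$ from Lemma~\ref{lemma:six-lines}, one gets $4d=53-q$, so in particular $q\equiv 1\pmod{4}$. Lemma~\ref{lemma:small-q} (combined with Lemma~\ref{lemma:small-semiinvariants}) then excludes $q=1$, and the bound $d\geqslant g/2=19/2$ forces $d\geqslant 10$, i.e.~$q\leqslant 13$. Hence the only possibilities to rule out are
$$
(d,q)\in\big\{(12,5),\ (11,9),\ (10,13)\big\}.
$$

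The case $(d,q)=(10,13)$ is killed by the Castelnuovo bound (Theorem~\ref{theorem:Castelnuovo}): since $S$ is non-degenerate in $\mathbb{P}^{3}$ by Remark~\ref{remark:irreducible}, one applies Theorem~\ref{theorem:Castelnuovo} with $n=3$, $d=10$, giving $m=4$, $\varepsilon=1$ and $g\leqslant 16$, which contradicts $g=19$.

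For the remaining two pairs the plan is to invoke Lemma~\ref{lemma:no-cubic}: since $g=19\geqslant 13$, no cubic surface in $\mathbb{P}^{3}$ contains $S$, so the restriction map
$$
H^{0}\big(\mathcal{O}_{\mathbb{P}^{3}}(3)\big)\longrightarrow H^{0}\big(\mathcal{O}_{S}(3H)\big)
$$
is injective, and consequently $h^{0}(\mathcal{O}_{S}(3H))\geqslant 20$. On the other hand, Riemann--Roch on $S$ yields
$$
h^{0}\big(\mathcal{O}_{S}(3H)\big)-h^{1}\big(\mathcal{O}_{S}(3H)\big)=3d-g+1=3d-18\leqslant 18,
$$
so that $h^{1}(\mathcal{O}_{S}(3H))>0$. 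Then Clifford's theorem (Theorem~\ref{theorem:Clifford}) applies and gives $h^{0}(\mathcal{O}_{S}(3H))\leqslant 3d/2$. For $d=12$ this reads $h^{0}\leqslant 18$, and for $d=11$ it reads $h^{0}\leqslant 16$, each time contradicting $h^{0}\geqslant 20$.

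The main obstacle, which is why $g=19$ does not succumb to the one-line numerical argument that handled $g=25$ and $g=31$, is precisely that the bound $d\geqslant g/2$ together with the representation-theoretic restrictions on $q$ fails to rule out all cases on its own; the decisive extra ingredient is the combination of Lemma~\ref{lemma:no-cubic} (forcing $h^{0}(\mathcal{O}_{S}(3H))$ to be large) with Clifford's inequality applied to $3H$. Apart from this refinement the proof should follow the same formal steps as the preceding two lemmas.
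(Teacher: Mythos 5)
Your proof is correct and follows essentially the same route as the paper: the key step in both is that Lemma~\ref{lemma:no-cubic} forces $h^{0}(\mathcal{O}_{\mathbb{P}^3}(3)\otimes\mathcal{O}_{S})\geqslant 20$, which combined with Riemann--Roch (giving $h^{1}\ne 0$ since $d\leqslant 12$) and Clifford's theorem yields a contradiction. The only cosmetic difference is that you enumerate the pairs $(d,q)$ explicitly and dispatch $(10,13)$ via the Castelnuovo bound, whereas the paper handles all cases uniformly with the same Clifford argument (which also works for $d=10$); your case analysis is in fact slightly more careful about why $d\leqslant 12$.
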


\begin{proof}
Suppose that $g=19$. Then $S$ is not contained in a~cubic surface
by Lemma~\ref{lemma:no-cubic}. We~have
$h^0(\mathcal{O}_{\mathbb{P}^3}(3)\otimes\mathcal{O}_{S})\geqslant
h^0(\mathcal{O}_{\mathbb{P}^3}(3))=20$, because there is an~exact
sequence of the~cohomology groups.
$$
0\longrightarrow H^0\Big(\mathcal{O}_{\mathbb{P}^3}\big(3\big)\otimes\mathcal{I}\Big)\longrightarrow H^0\Big(\mathcal{O}_{\mathbb{P}^3}\big(3\big)\Big)\longrightarrow H^0\Big(\mathcal{O}_{\mathbb{P}^3}\big(3\big)\otimes\mathcal{O}_{S}\Big).%
$$

By the~Riemann--Roch theorem, we have
$$
20\leqslant h^0\Big(\mathcal{O}_{\mathbb{P}^3}\big(3\big)\otimes\mathcal{O}_{S}\Big)=3d-g+1+h^1\Big(\mathcal{O}_{\mathbb{P}^3}\big(3\big)\otimes\mathcal{O}_{S}\Big)=3d-18+h^1\Big(\mathcal{O}_{\mathbb{P}^3}\big(3\big)\otimes\mathcal{O}_{S}\Big),%
$$
which implies that
$h^1(\mathcal{O}_{\mathbb{P}^3}(3)\otimes\mathcal{O}_{S})\ne 0$,
because $d\leqslant 12$ by Lemma~\ref{lemma:35-q-equality}. Then
it follows from Theorem~\ref{theorem:Clifford} that $19\geqslant
3d/2+1\geqslant
h^0(\mathcal{O}_{\mathbb{P}^3}(3)\otimes\mathcal{O}_{S})\geqslant
20$, which is a contradiction.
\end{proof}

\begin{lemma}
\label{lemma:g16} The inequality $g\neq 16$ holds.
\end{lemma}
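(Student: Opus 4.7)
The plan is to mimic the proof of Lemma~\ref{lemma:g19} step by step: pin down the finitely many possibilities for $d$ using Lemmas~\ref{lemma:35-q-equality} and~\ref{lemma:small-q}, then derive a contradiction by applying Riemann--Roch and Clifford's theorem to the degree-$3$ hyperplane divisor on $S$.

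First I would translate Lemma~\ref{lemma:35-q-equality} into this setting. With $r=1$ (Lemma~\ref{lemma:six-lines}) and $g = 16$, the equation $r(4d - g + 1) = 35 - q$ becomes $4d = 50 - q$. The bound $q \geqslant 0$ gives $d \leqslant 12$, and the inequality $g \leqslant 2d$ from Theorem~\ref{theorem:Kawamata} gives $d \geqslant 8$. The value $d = 12$ corresponds to $q = 2$, which is excluded by Lemma~\ref{lemma:small-q} since in the range $1 \leqslant q \leqslant 7$ only $q \in \{5, 6\}$ is allowed. Hence $d \in \{8, 9, 10, 11\}$, and in particular $d \leqslant 11$.

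Next I would exploit the hypothesis $g = 16 \geqslant 13$: by Lemma~\ref{lemma:no-cubic}, the ideal sheaf $\mathcal{I}$ of $S \subset \mathbb{P}^{3}$ satisfies $h^{0}(\mathcal{O}_{\mathbb{P}^{3}}(3) \otimes \mathcal{I}) = 0$. The restriction exact sequence
$$
0 \longrightarrow H^{0}\bigl(\mathcal{O}_{\mathbb{P}^{3}}(3) \otimes \mathcal{I}\bigr) \longrightarrow H^{0}\bigl(\mathcal{O}_{\mathbb{P}^{3}}(3)\bigr) \longrightarrow H^{0}\bigl(\mathcal{O}_{S}(3H)\bigr)
$$
then forces $h^{0}(\mathcal{O}_{S}(3H)) \geqslant h^{0}(\mathcal{O}_{\mathbb{P}^{3}}(3)) = 20$.

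The contradiction now comes from Riemann--Roch and Clifford. Riemann--Roch on $S$ gives $h^{0}(\mathcal{O}_{S}(3H)) = 3d - 15 + h^{1}(\mathcal{O}_{S}(3H))$. If $h^{1}(\mathcal{O}_{S}(3H)) = 0$, then $h^{0} = 3d - 15 \leqslant 3 \cdot 11 - 15 = 18 < 20$. If instead $h^{1}(\mathcal{O}_{S}(3H)) \neq 0$, then Theorem~\ref{theorem:Clifford} bounds $h^{0}(\mathcal{O}_{S}(3H)) \leqslant \tfrac{3d}{2}+1 \leqslant \tfrac{33}{2}+1 < 20$. Either alternative contradicts $h^{0}(\mathcal{O}_{S}(3H)) \geqslant 20$, and this completes the proof.

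There is really no hard step here; the structure is essentially identical to Lemma~\ref{lemma:g19}, and the only genuinely new bookkeeping is the exclusion of $d = 12$ via Lemma~\ref{lemma:small-q}. I expect the only thing one has to be careful about is the three numerical constraints $g \leqslant 2d$, $q \geqslant 0$, and $q \notin \{1,2,3,4,7\}$ interacting in such a way that $d$ is pinned into the range $\{8,9,10,11\}$, so that Clifford's inequality actually produces the required contradiction.
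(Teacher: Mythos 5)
Your proof is correct and follows exactly the paper's route: the paper's own proof of this lemma is literally ``arguing as in the proof of Lemma~\ref{lemma:g19}'' combined with the bound $d\leqslant 11$, which is precisely the Riemann--Roch/Clifford contradiction you spell out. Your explicit exclusion of $d=12$ via $q=2$ and Lemma~\ref{lemma:small-q} is a welcome bit of bookkeeping that the paper leaves implicit when it attributes $d\leqslant 11$ to Lemma~\ref{lemma:35-q-equality} alone.
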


\begin{proof}
Suppose that $g=16$. Arguing as in the~proof of
Lemma~\ref{lemma:g19}, and keeping in mind that $d\le 11$ by
Lemma~\ref{lemma:35-q-equality}, we see that $18\geqslant
3d/2+1\geqslant
h^0(\mathcal{O}_{\mathbb{P}^3}(3)\otimes\mathcal{O}_{S})\geqslant
20$, which is a contradiction.
\end{proof}

Therefore, we see that $g=10$. Then $d\geqslant 9$ by
Theorem~\ref{theorem:Castelnuovo}.

\begin{lemma}
\label{lemma:g-10-d-9} The inequality $d\neq 9$ holds.
\end{lemma}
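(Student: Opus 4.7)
The plan is to run the standard ``compute $q$, bound $h^{0}$'s, apply Clifford/Riemann--Roch'' machinery, but reinforced by the representation theory of $G$ developed in Lemma~\ref{lemma:small-semiinvariants}, since Lemma~\ref{lemma:no-cubic} is unavailable for $g=10$.

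First, from Lemma~\ref{lemma:six-lines} we have $r=1$, so Lemma~\ref{lemma:35-q-equality} gives $q=35-(4d-g+1)=8$. Riemann--Roch on $S$ yields $h^{0}(\mathcal{O}_{S}(3))=18$ (non-special, since $\deg(3H|_{S})=27>2g-2=18$) and $h^{0}(\mathcal{O}_{S}(2))\in\{9,10\}$, with value $10$ precisely when $2H|_{S}\sim K_{S}$. From the ideal sheaf sequence
$$
0\to H^{0}(\mathcal{I}(k))\to H^{0}(\mathcal{O}_{\mathbb{P}^{3}}(k))\to H^{0}(\mathcal{O}_{S}(k))\to H^{1}(\mathcal{I}(k))\to 0
$$
(using $H^{1}(\mathcal{O}_{\mathbb{P}^{3}}(k))=0$) I would extract $h^{0}(\mathcal{I}(3))\ge 2$ and $h^{0}(\mathcal{I}(2))\le 1$ (in case $\mathcal{O}_{S}(2)\ne K_{S}$).

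Second, I would identify the representation-theoretic content. A character computation (analogous to the one in the proof of Lemma~\ref{lemma:small-semiinvariants}, and using Remark~\ref{remark:irreducible}) shows that $\mathrm{Sym}^{2}V^{*}$ is the $10$-dimensional irreducible $\mathbb{A}_{6}$-representation $V_{10}$. Since this representation is irreducible, the $\bar{G}$-equivariant restriction $H^{0}(\mathcal{O}_{\mathbb{P}^{3}}(2))\to H^{0}(\mathcal{O}_{S}(2))$ has kernel $0$ or $V_{10}$; the latter would force $S$ to be contained in every quadric, which is impossible as the base locus of $|\mathcal{O}_{\mathbb{P}^{3}}(2)|$ is empty. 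Hence $h^{0}(\mathcal{I}(2))=0$ and $h^{0}(\mathcal{O}_{S}(2))\ge 10$, so $\mathcal{O}_{S}(2)\cong K_{S}$: the bundle $H|_{S}$ is a theta characteristic.

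Third, I apply the same idea in degree $3$. A character computation shows that $\mathrm{Sym}^{3}V^{*}$, a $20$-dimensional faithful $G$-representation (center acting by $-1$), contains neither of the two $4$-dimensional faithful irreducible representations $V,V^{\prime}$ of $G\cong 2.\mathbb{A}_{6}$ (the inner products $\langle\chi_{\mathrm{Sym}^{3}V},\chi_{V}\rangle$ and $\langle\chi_{\mathrm{Sym}^{3}V},\chi_{V^{\prime}}\rangle$ both vanish). Moreover $\langle\chi_{\mathrm{Sym}^{3}V},\chi_{\mathrm{Sym}^{3}V}\rangle=2$, so $\mathrm{Sym}^{3}V^{*}$ decomposes as a sum of two distinct faithful irreducibles, forced by dimension to be $10+10$. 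Consequently the $G$-subrepresentation lattice of $H^{0}(\mathcal{O}_{\mathbb{P}^{3}}(3))$ forces $h^{0}(\mathcal{I}(3))\in\{0,10,20\}$. The exact sequence rules out $0$, and the empty-base-locus argument rules out $20$, so $h^{0}(\mathcal{I}(3))=10$ and $h^{1}(\mathcal{I}(3))=8$.

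The main obstacle, and the last step of the plan, is to extract a contradiction from the pair $(h^{0}(\mathcal{I}(3)),h^{0}(\mathcal{I}(4)))=(10,8)$ combined with $\mathcal{O}_{S}(2)\cong K_{S}$. The cleanest route is via the canonical model: the isomorphism $\mathrm{Sym}^{2}V^{*}\xrightarrow{\sim}H^{0}(K_{S})$ of $\bar{G}$-representations shows that the canonical image of $S$ in $\mathbb{P}^{9}$ lies on the Veronese threefold $\nu_{2}(\mathbb{P}^{3})$. Since $S$ is neither hyperelliptic nor trigonal (an $\bar{G}$-invariant $g^{1}_{2}$ or $g^{1}_{3}$ would produce a non-trivial $\bar{G}$-action on $\mathbb{P}^{1}$, impossible as $\mathbb{A}_{6}\not\hookrightarrow\mathrm{PGL}(2,\mathbb{C})$), Petri's theorem applies: the homogeneous ideal of the canonical model is generated by its $\binom{g-2}{2}=28$ quadrics. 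Decomposing this $28$-dimensional space of quadrics as a $\bar{G}$-representation ($20$-dimensional Pl\"ucker quadrics of $\nu_{2}(\mathbb{P}^{3})$ plus the $8$-dimensional $H^{0}(\mathcal{I}(4))$) and comparing with the $\bar{G}$-structure forced by $H^{0}(\mathcal{I}(3))\cong V^{f}_{10,\bullet}$ through the $\bar{G}$-equivariant multiplication map $H^{0}(\mathcal{O}_{\mathbb{P}^{3}}(1))\otimes H^{0}(\mathcal{I}(3))\to H^{0}(\mathcal{I}(4))$ yields the desired contradiction.
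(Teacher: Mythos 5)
Your first three steps are correct and verifiable: with $r=1$, $d=9$, $g=10$ one gets $q=8$; the character computations showing that $\mathrm{Sym}^{2}V^{*}$ is the irreducible $10$-dimensional representation of $\mathbb{A}_{6}$ and that $\mathrm{Sym}^{3}V^{*}$ is the sum of the two distinct faithful $10$-dimensional irreducibles of $2.\mathbb{A}_{6}$ both check out; hence $h^{0}(\mathcal{I}(2))=0$, $2H\vert_{S}\sim K_{S}$, and $h^{0}(\mathcal{I}(3))\in\{0,10,20\}$, while the ideal-sheaf sequence gives $h^{0}(\mathcal{I}(3))\geqslant 20-18=2$. The genuine gap is your last step. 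Having settled on $h^{0}(\mathcal{I}(3))=10$, you propose to conclude by ``decomposing the $28$ Petri quadrics as $20+8$ and comparing with the $\bar{G}$-structure forced by $H^{0}(\mathcal{I}(3))$ through the multiplication map'', but you never carry out this comparison, and the numerology you exhibit ($\binom{8}{2}=28=20+8$) is exactly what Petri's theorem predicts, i.e.\ it is internally consistent; nothing you write indicates where a contradiction would actually come from, so as stated this step is an unproved assertion rather than a proof.

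What you missed is that your step three already finishes the argument: $h^{0}(\mathcal{I}(3))=10$ is geometrically impossible for an irreducible nondegenerate curve of degree $9$ lying on no quadric. Any two independent cubics through $S$ have no common component (a common plane or quadric component would force the irreducible nondegenerate curve $S$ into a surface of degree at most $2$), so their intersection is a curve of degree $9$ containing $S$, hence equal to $S$; thus $S$ is a complete intersection of two cubics and $h^{0}(\mathcal{I}(3))=2$, incompatible with $h^{0}(\mathcal{I}(3))\in\{0,10,20\}$. (Alternatively, restriction to a general plane $\Pi$ embeds $H^{0}(\mathcal{I}(3))$ into the space of plane cubics through the nine points $S\cap\Pi$, which has dimension at most $9$.) The paper's own proof is shorter still and needs no degree-$3$ character computation: since $G$ has no invariant of degree $2$, the curve $S$ lies on no quadric, hence by \cite[Example~6.4.3]{Har77} it is the complete intersection $F_{1}\cap F_{2}$ of two cubics; the pencil generated by $F_{1}$ and $F_{2}$ is $\bar{G}$-invariant, and since $\mathbb{A}_{6}$ does not embed into $\mathbb{PGL}(2,\mathbb{C})$ the group acts trivially on this pencil, so each $F_{i}$ is $\bar{G}$-invariant, contradicting the absence of odd-degree invariants of $G$ established in Lemma~\ref{lemma:small-semiinvariants}.
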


\begin{proof}
Suppose that $d=9$. It follows from
Lemma~\ref{lemma:small-semiinvariants} that
$h^0(\mathcal{O}_{\mathbb{P}^3}(2)\otimes\mathcal{I})\ne 1$, which
implies that $S=F_{1}\cap F_{2}$, where $F_{1}$ and $F_{2}$ are
cubic surfaces in $\mathbb{P}^{3}$ (see
\cite[Example~6.4.3]{Har77})

The group $\bar{G}$ cannot act non-trivially on the~pencil
generated by $F_1$ and $F_2$, which implies that the~surfaces
$F_1$ and $F_2$ must be $\bar{G}$-invariant. The latter is
impossible by Lemma~\ref{lemma:small-semiinvariants}.
\end{proof}

\begin{lemma}
\label{lemma:g-10-d-10} The inequality $d\neq 10$ holds.
\end{lemma}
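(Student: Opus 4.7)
The plan is to exploit the $G$-equivariant structure on the spaces $H^0(S,\mathcal{O}_S(1))$ and $H^0(S,K_S-\mathcal{O}_S(1))$ together with Clifford's theorem. The key input from representation theory is that every irreducible faithful representation of $G\cong 2.\mathbb{A}_{6}$ (that is, every irreducible $G$-representation on which the central element $z$ acts as $-1$) has dimension in $\{4,4,8,8,10,10\}$; in particular, no faithful $G$-representation has dimension $3$, $5$ or $6$, since none of these numbers can be written as a sum of elements of $\{4,8,10\}$. This can be read off from \cite{Atlas} or verified by the same kind of character-table bookkeeping used in the proof of Lemma~\ref{lemma:small-semiinvariants}.

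By Remark~\ref{remark:irreducible} the $G$-representation $V$ is irreducible, so $S$ is not contained in a hyperplane, and the $G$-equivariant restriction map
$$
V^*\cong H^0\bigl(\mathbb{P}^3,\mathcal{O}_{\mathbb{P}^3}(1)\bigr)\hookrightarrow H^0\bigl(S,\mathcal{O}_S(1)\bigr)
$$
is injective. Writing $H=\mathcal{O}_S(1)$ and $h=h^0(S,H)$, one has $h\geq 4$, hence $H$ is special, and Theorem~\ref{theorem:Clifford} forces $h\leq d/2+1=6$. Since $z$ acts as $-1$ on $V^*$, and hence on every fibre of $\mathcal{O}_S(1)$ and on $H^0(S,H)$, the $G$-representation $H^0(S,H)$ is faithful; combined with the dimension restriction above, this forces $h=4$ and makes the inclusion $V^*\hookrightarrow H^0(S,H)$ an isomorphism.

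On the other hand, $z$ acts trivially on $S$ and hence on $K_S$, so $z$ acts as $-1$ on $K_S-H$ and on $H^0(S,K_S-H)$. By the Riemann--Roch theorem $h^1(S,H)=h-(d-g+1)=3$, so Serre duality produces a three-dimensional faithful $G$-representation $H^0(S,K_S-H)$, which is the desired contradiction. The main obstacle is simply pinning down the faithful-irrep dimensions of $2.\mathbb{A}_{6}$; once this is in hand, the rest of the argument is a short piece of bookkeeping with Clifford, Riemann--Roch and Serre duality.
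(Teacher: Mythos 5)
Your argument is correct, but it takes a genuinely different route from the paper. The paper's own proof is a one\--liner: with $r=1$, $d=10$, $g=10$, Lemma~\ref{lemma:35-q-equality} gives $q=35-(4d-g+1)=4$, which contradicts Lemma~\ref{lemma:small-q} (itself a representation\--theoretic statement about $H^{0}(\mathcal{O}_{\mathbb{P}^{3}}(4)\otimes\mathcal{I})$ as an $\mathbb{A}_{6}$\--module with no invariants of degree at most $7$). You instead work intrinsically on the curve $S$: injectivity of $V^{*}\hookrightarrow H^{0}(S,\mathcal{O}_{S}(1))$ forces $h^{0}\geqslant 4$, Clifford caps it at $6$, the constraint that the central element $z$ acts as the scalar $-1$ forces the dimension into $\{4,8,10,12,\ldots\}$ and hence equal to $4$, and then Riemann--Roch and Serre duality produce a $3$\--dimensional representation on which $z$ acts as $-1$ --- impossible since the faithful irreducibles of $2.\mathbb{A}_{6}$ have dimensions $4,4,8,8,10,10$. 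Both proofs are ultimately representation\--theoretic; the paper's is shorter because the vanishing\--theorem formula $r(4d-g+1)=35-q$ and the small\--$q$ lemma are already in place, whereas yours bypasses that machinery for this step at the cost of needing the character data of the double cover $2.\mathbb{A}_{6}$ rather than only of $\mathbb{A}_{6}$. One small imprecision: the sentence ``no faithful $G$\--representation has dimension $3$, $5$ or $6$'' is not literally true --- the direct sum of a faithful $4$\--dimensional irreducible with the trivial representation is faithful of dimension $5$. What you actually need, and what your argument in fact uses, is that no representation on which $z$ acts as the \emph{scalar} $-1$ can have dimension $3$, $5$ or $6$, since every irreducible constituent of such a representation is a faithful irreducible and hence has dimension in $\{4,8,10\}$; both $H^{0}(S,\mathcal{O}_{S}(1))$ and $H^{1}(S,\mathcal{O}_{S}(1))\cong H^{0}(S,K_{S}-\mathcal{O}_{S}(1))^{*}$ do have $z$ acting as $-1$, so the conclusion stands. (Note also that the statement of Theorem~\ref{theorem:Clifford} in the paper omits the ``$+1$''; you are using the correct form $h^{0}\leqslant\frac{1}{2}\deg+1$, consistent with how the paper itself applies it in Lemmas~\ref{lemma:g19} and~\ref{lemma:g16}.)
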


\begin{proof}
Suppose that $d=10$. Then $q=4$ by
Lemma~\ref{lemma:35-q-equality}, which is impossible by
Lemma~\ref{lemma:small-q}.
\end{proof}

Thus we see that $d\geqslant 11$. Then $q=0$ and $d=11$ by
Lemma~\ref{lemma:35-q-equality}.

Take a subgroup~$\bar{F}\subset\bar{G}$  such that
$\bar{F}\cong\A_{5}$ and the~embedding
$\bar{F}\subset\bar{G}$~is~non-standard~(see~De\-fi\-ni\-tion~\ref{definition:standard-subgroups}).
Then there is an~$\bar{F}$-invariant twisted cubic curve
$C\subset\mathbb{P}^{3}$ by
Lemma~\ref{lemma:A5-invariant-quartic}.

Let $R$ be the~quartic surface in $\mathbb{P}^{3}$ that is swept
out by the~lines that are tangent to $C$. Then
the~surface $R$ is $\bar{F}$-invariant, and
the~curve $S$ is not contained in the~surface $R$, because
$q=0$.
Put $\Sigma=R\cap S$. We have
$\Sigma=\Sigma_{1}\cup\ldots\cup\Sigma_{r}$, where $\Sigma_{i}$ is
a~$\bar{F}$-orbit. Hence, we have
$$
44=R\cdot S=\sum_{i=1}^{r}a_{i}\big|\Sigma_{i}\big|
$$
for some positive integers $a_{1},\ldots,a_{r}$. We may assume
that $|\Sigma_{1}|\geqslant\ldots\geqslant |\Sigma_{r}|$. But
$|\Sigma_{i}|\in\{12,20,30,60\}$ for every $i\in\{1,\ldots,r\}$ by
Lemma~\ref{lemma:stabilizers}. Thus, we see that
$|\Sigma_{1}|=20$.

Let $O\in\Sigma_{1}$ be a~point, and let
$\bar{F}_{O}\subset\bar{F}$ be the~stabilizer of the~point $O$.
Then $\bar{F}_{O}\cong\mathbb{Z}_3$.

Let $\Gamma$ be the~$\bar{G}$-orbit of the~point $O$, and let
$\bar{G}_{O}\subset\bar{G}$ be the~stabilizer of the~point $O$.
Then
the~order of the~group $\bar{G}_{O}$ must divide $18$. Moreover,
the~group $\bar{G}_{O}$ is cyclic by Lemma~\ref{lemma:stabilizers},
which implies that $\bar{G}_{O}\cong\mathbb{Z}_3$. Hence, we see
that $|\Gamma|=120$, which contradicts
Lemma~\ref{lemma:sporadic-genera}.

The obtained contradiction completes the~proof of
Theorem~\ref{theorem:A6-NFI}.

\section{Segre cubic}
\label{section:cubic}

Let $G\subset\SL_5(\mathbb{C})$ be a~subgroup such that
$G\cong\A_{6}$. Let
$\phi\colon\SL_5(\mathbb{C})\to\mathrm{Aut}(\mathbb{P}^{4})$~be
the~natural projection. Put $W=\mathbb{C}^{5}$ and
$\bar{G}=\phi(G)\subset\mathrm{Aut}(\mathbb{P}^{4})$. Then
the~space $W$ is an irreducible representation of the~group
$G\cong \bar{G}\cong \A_{6}$, and there is a unique cubic
hypersurface $X\subset\mathbb{P}^{4}$ that is $\bar{G}$-invariant.
Let us identify $X$ with a complete intersection in
$\mathbb{P}^{5}$ that is given by the~equation
$$
\sum_{i=0}^{5}x_{i}=\sum_{i=0}^{5}x_{i}^{3}=0\subset\mathbb{P}^{5}\cong\mathrm{Proj}\Big(\mathbb{C}\big[x_{0},x_{1},x_{2},x_{3},x_{4},x_{5}\big]\Big),
$$
and let us identify $\bar{G}$ with a~subgroup of the~group
$\mathrm{Aut}(X)$ (cf. Example~\ref{example:Segre-cubic}).

Let $O\in X$ be a~point, and let $\bar{F}\subset\bar{G}$ be its
stabilizer.

\begin{remark}
\label{remark:tangent-space-representation} Let $\tilde{T}$ be
the~ affine tangent space to $\mathbb{P}^{4}$ at the~point
$O\in\mathbb{P}^{4}$. Then $\bar{F}$ naturally acts on the~space
$\tilde{T}$. Let us consider $W$ as a~representation of the~group
$\bar{F}$. One has
$$\tilde{T}\cong W\slash W_{O}\otimes W_{O}^{*},$$
where $W_{O}$ is the~one-dimensional subrepresentation
of $\bar{F}$ that corresponds to the~point $O\in\mathbb{P}^{4}$.
\end{remark}

Let $\Sigma$ be the~$\bar{G}$-orbit of the~point $O\in X$.

\begin{lemma}
\label{lemma:Segre-small-orbits}  Suppose that $|\Sigma|\leqslant
15$. Then $|\Sigma|\in\{10,15\}$.
\end{lemma}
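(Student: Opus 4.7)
The plan is to use Lagrange's theorem to cut down to a short list of possibilities for the stabilizer $\bar{F}\subset\bar{G}$ of the point $O$, and then rule out the bad cases by combining character theory with the explicit defining equation of the Segre cubic.

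First I would observe that $|\bar{F}|=360/|\Sigma|\geqslant 24$ whenever $|\Sigma|\leqslant 15$. Running through the subgroup lattice of $\mathbb{A}_6$ using Lemma~\ref{lemma:max-subgroup}, the only subgroups of order at least $24$ are $\mathbb{A}_6$ itself, $\mathbb{A}_5$, $(\mathbb{Z}_3\times\mathbb{Z}_3)\rtimes\mathbb{Z}_4$, and $\mathbb{S}_4$, giving $|\Sigma|\in\{1,6,10,15\}$. The case $|\Sigma|=1$ is eliminated immediately because $W$ is an irreducible representation of $\bar{G}$, so $\mathbb{P}^4=\mathbb{P}(W)$ has no $\bar{G}$-fixed points.

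The remaining task is to exclude $|\Sigma|=6$, where $\bar{F}\cong\mathbb{A}_5$. By Lemma~\ref{lemma:max-subgroup} there are two conjugacy classes, corresponding to the standard and non-standard embeddings of $\mathbb{A}_5$ into $\mathbb{A}_6$. For the non-standard embedding, I would compute the restriction $W|_{\bar{F}}$ via characters: identifying $W$ with the sum-zero subspace of the permutation representation of $\mathbb{S}_6$ on six letters, the character of $W$ vanishes on $5$-cycles and takes the value $-1$ on elements of cycle type $(3,3)$, which are precisely the images of the $3$-cycles of the non-standard $\mathbb{A}_5$. Taking the inner product with the trivial character of $\bar{F}$ then gives zero, so there is no $\bar{F}$-fixed line in $W$, contradiction.

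For the standard embedding there is a unique $\bar{F}$-fixed line in $W$; realizing $\bar{F}$ as the subgroup of even permutations of $\{x_0,\ldots,x_4\}$ fixing $x_5$ in the $\mathbb{P}^5$ model of Example~\ref{example:Segre-cubic}, this line is spanned by $(1,1,1,1,1,-5)$. The proof then concludes with the direct check that the corresponding point $[1:1:1:1:1:-5]\in\mathbb{P}^5$ does not lie on $X$, since
$$
1^3+1^3+1^3+1^3+1^3+(-5)^3=-120\ne 0.
$$
The main technical nuisance is keeping track of which $\mathbb{A}_6$-conjugacy classes contain the $3$-cycles coming from each of the two embeddings of $\mathbb{A}_5$, but once that bookkeeping is done the rest is mechanical.
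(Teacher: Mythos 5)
Your proposal is correct and follows essentially the same route as the paper: reduce to $|\Sigma|\in\{1,6,10,15\}$ via Lemma~\ref{lemma:max-subgroup}, rule out $|\Sigma|=1$ by irreducibility of $W$, and rule out $|\Sigma|=6$ by showing the non-standard $\mathbb{A}_5$ has no fixed line in $W$ while the standard one fixes only $[1:1:1:1:1:-5]$, which fails $\sum x_i^3=0$. The only (harmless) difference is that you verify the non-standard case by an explicit character inner product, whereas the paper deduces it from the fact that $W$ restricted to a non-standard $\mathbb{A}_5$ is irreducible, so that the existence of the fixed point $O$ already forces the embedding to be standard.
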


\begin{proof}
One has $|\Sigma|\ne 1$, because $W$ is an irreducible
representation of the~group $\bar{G}$. Hence
$|\Sigma|\in\{6,10,15\}$ by Lemma~\ref{lemma:max-subgroup}.
Suppose that $|\Sigma|=6$. Then $\bar{F}\cong\A_5$ by
Lemma~\ref{lemma:max-subgroup}.

Let us consider $W$ as a~representation of the~group $\bar{F}$.
Then $W$ is reducible and \mbox{$W\cong W_{t}\oplus W_4$}, where $W_{t}$
and $W_4$ are the~trivial and a four-dimensional representations
of the~group~$\bar{F}$, respectively. The embedding
$\bar{F}\subset \bar{G}$ is standard (see
Definition~\ref{definition:standard-subgroups}), because $W$ is
reducible.

Note that $W_t$ is the~only one-dimensional subrepresentation of
the representation $W$, because the representation $W_4$ is
irreducible. Hence, the~set $\Sigma$ must contain a~point in
$\mathbb{P}^{5}$ that is given~by $x_{0}=\ldots=x_{4}=-x_{5}/5$,
which~is~impossible, because this point does not belong to
the~hypersurface $X\subset\mathbb{P}^{5}$.
\end{proof}

Let $T$ be the~affine tangent space to $X$ at the~point $O$. Then
$\bar{F}$ naturally acts on~$T$.

\begin{lemma}
\label{lemma:Segre-small-orbits-10}  Suppose that $|\Sigma|=10$.
Then $\Sigma=\mathrm{Sing}(X)$ and
$\bar{F}\cong(\mathbb{Z}_3\times\mathbb{Z}_3)\rtimes
\mathbb{Z}_4$. Moreover,~$T$ is an~irreducible four-dimensional
representation of the~group $\bar{F}$.
\end{lemma}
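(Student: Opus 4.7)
The plan is to first pin down $\bar{F}$ from its order using Lemma~\ref{lemma:max-subgroup}, then use character theory to decompose $W$ as a $\bar{F}$-representation, and finally apply Remark~\ref{remark:tangent-space-representation} to show that $\tilde T$ is irreducible of dimension $4$, which will force $O$ to be singular on $X$ and hence $\Sigma\subseteq\mathrm{Sing}(X)$.

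Since $|\Sigma|=10$, the orbit--stabilizer formula gives $|\bar F|=36$, and by Lemma~\ref{lemma:max-subgroup} the only subgroup of $\mathbb{A}_6$ of order $36$ up to conjugacy is $(\mathbb{Z}_3\times\mathbb{Z}_3)\rtimes\mathbb{Z}_4$. To study $W|_{\bar F}$, I would realize $\bar F$ concretely as the stabilizer in $\mathbb{A}_6$ of the partition $\{1,2,3\}|\{4,5,6\}$ and count how its $36$ elements distribute among the conjugacy classes of $\mathbb{A}_6$: one identity, nine double transpositions (class $2A$), four $3$-cycles (class $3A$), four products of two disjoint $3$-cycles (class $3B$), and eighteen elements of cycle type $4{+}2$ (class $4A$). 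Using the standard character values of the permutation $5$-dimensional representation of $\mathbb{A}_6$ and the fact that $\bar F/[\bar F,\bar F]\cong\mathbb{Z}_4$ yields exactly four $1$-dimensional characters of $\bar F$, the orthogonality relations should give the decomposition
$$
W\big|_{\bar F}\;\cong\;\chi\;\oplus\;W_4,
$$
where $\chi$ is a $1$-dimensional character of order $2$ (the ``sign of the swap'' of the two triples) appearing with multiplicity one, and $W_4$ is an irreducible $4$-dimensional representation. The point $O$ then corresponds to the unique line on which $\bar F$ acts by $\chi$.

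By Remark~\ref{remark:tangent-space-representation}, the affine tangent space to $\mathbb{P}^4$ at $O$ is
$$
\tilde T\;\cong\;(W/\mathbb{C}v)\otimes(\mathbb{C}v)^{*}\;\cong\;W_4\otimes\chi,
$$
which is irreducible of dimension $4$, because $W_4$ is irreducible and tensoring by a $1$-dimensional representation preserves irreducibility. The tangent space $T$ to $X$ at $O$ is an $\bar F$-invariant subspace of $\tilde T$; if $O$ were a smooth point of the cubic $X$, then $T$ would be a proper $3$-dimensional $\bar F$-subrepresentation of $\tilde T$, contradicting irreducibility. Hence $O$ is singular on $X$, so $T=\tilde T$ is irreducible of dimension $4$, and $\Sigma\subseteq\mathrm{Sing}(X)$. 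Since the Segre cubic has exactly $10$ nodes (Example~\ref{example:Segre-cubic}) and $|\Sigma|=10$, one concludes $\Sigma=\mathrm{Sing}(X)$.

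The main delicate step is the character computation: one must verify the distribution of elements of $\bar F$ among the $\mathbb{A}_6$-classes (in particular that the eighteen order-$4$ elements of $\bar F$ all lie in class $4A$, and that the eight non-identity order-$3$ elements split evenly between $3A$ and $3B$), and then carry out the inner products $\langle W|_{\bar F},\theta\rangle$ for each $1$-dimensional character $\theta$ of $\bar F$ and $\langle W|_{\bar F},W|_{\bar F}\rangle=2$ to separate out the $1$-dimensional summand and confirm the irreducibility of its $4$-dimensional complement. Once these elementary computations are carried out, the geometric conclusion follows immediately from the irreducibility of $\tilde T$.
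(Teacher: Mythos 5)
Your proposal is correct, and its representation-theoretic core coincides with the paper's: both identify $\bar F\cong(\mathbb{Z}_3\times\mathbb{Z}_3)\rtimes\mathbb{Z}_4$ from $|\bar F|=36$ and Lemma~\ref{lemma:max-subgroup}, tabulate how the $36$ elements fall into the cycle types of $\mathbb{A}_6$ (your counts $1+9+4+4+18$ agree with the paper's table), deduce $W\vert_{\bar F}\cong W_1\oplus W_4$ with $W_4$ irreducible, and invoke Remark~\ref{remark:tangent-space-representation} to get $\tilde T\cong W_4\otimes W_1^{*}$ irreducible. The inner products you defer do check out: $\langle\chi\vert_{\bar F},\chi_1\rangle=\frac{1}{36}(18-4+8+9+5)=1$ and $\langle\chi\vert_{\bar F},\chi\vert_{\bar F}\rangle=\frac{1}{36}(18+4+16+9+25)=2$, so the decomposition is as you claim.

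The one place where you genuinely diverge is the identification $\Sigma=\mathrm{Sing}(X)$. The paper argues by uniqueness of $\bar F$ up to conjugacy that $\bar F$ may be taken to fix the explicit node $[1:-1:1:-1:1:-1]$, and then uses the uniqueness of the one-dimensional summand $W_1$ to conclude $O$ is that node. You instead let the irreducibility of the four-dimensional tangent representation do the work: a smooth point of the cubic would give a proper three-dimensional $\bar F$-invariant subspace of $\tilde T$, which is impossible, so $O$ must be singular and $|\Sigma|=|\mathrm{Sing}(X)|=10$ forces equality. Your variant avoids writing down coordinates of the fixed point but leans on the count of ten nodes from Example~\ref{example:Segre-cubic}; the paper's variant needs the explicit point but gets $\Sigma=\mathrm{Sing}(X)$ and $T=\tilde T$ in one stroke. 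Both are sound, and your version has the small aesthetic advantage that the irreducibility of $\tilde T$ is used twice --- once to locate $O$ in $\mathrm{Sing}(X)$ and once as the stated conclusion.
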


\begin{proof}
One has $\bar{F}\cong
(\mathbb{Z}_3\times\mathbb{Z}_3)\rtimes \mathbb{Z}_4$ by
Lemma~\ref{lemma:max-subgroup}.

Up to conjugation, the~group $\bar{G}$ has a~unique subgroup that
is isomorphic to $\bar{F}$, which implies that we may assume that
$\bar{F}$ fixes the~point  $[1:-1:1:-1:1:-1]\in\mathrm{Sing}(X)$.

Let us consider $W$ as a~representation of the~group $\bar{F}$,
and let $W_{1}\subset W$ be the~one-dimensional subrepresentation
of the~group $\bar{G}$ that corresponds to the~point
$[1:-1:1:-1:1:-1]$. Then $W\cong W_{1}\oplus W_{4}$, where $W_{4}$
is some~four-dimensional representation of the~group $\bar{F}$.

Let $\chi_1$  and $\chi$ be the~characters of the~representations
$W_{1}$ and $W$, respectively. The~values of the~characters
$\chi_1$ and $\chi$ and the~structure of the~subgroup $\bar{F}$
are given in the~following table:
\begin{center}\renewcommand\arraystretch{1.3}
\begin{tabular}{|c|c|c|c|c|c|}
\hline & $[4,2]$ & $[3,3]$ &
$[3,1,1,1]$ & $[2,2,1,1]$ & $e$\\
\hline
\# & $18$ & $4$ & $4$ & $9$ & $1$\\
\hline
$\chi$ & $-1$ & $-1$ & $2$ & $1$ & $5$\\
\hline
$\chi_1$ & $-1$ & $1$ & $1$ & $1$ & $1$\\
\hline
\end{tabular}
\end{center}
where we use notation similar to the~ones used in the~proof of
Lemma~\ref{lemma:small-semiinvariants}.

We see that $W_{4}$ is an~irreducible representation of the~group
$\bar{F}$. Thus, without loss of generality, we may assume that
$O=[1:-1:1:-1:1:-1]$. Then $T\cong W_{4}\otimes W_1^{*}$ by
Remark~\ref{remark:tangent-space-representation}, which implies
that $T$ is an~irreducible representation of the~group $\bar{F}$.
\end{proof}

Let $\Gamma$ be the~$\bar{G}$-orbit of the~point
$[0:0:0:0:1:-1]\in X\setminus\mathrm{Sing}(X)$. Then
$|\Gamma|=15$.

\begin{lemma}
\label{lemma:Segre-small-orbits-15} Suppose that $|\Sigma|=15$.
Then $\Sigma=\Gamma$, and $\bar{F}\cong\SS_4$.
Moreover, $T$ is an~irreducible three-dimensional representation of
the~group $\bar{F}$.
\end{lemma}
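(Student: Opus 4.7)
The plan begins with a group-theoretic observation: since $|\Sigma|=15$, we have $|\bar{F}|=24$, and Lemma~\ref{lemma:max-subgroup} forces $\bar{F}\cong\mathbb{S}_4$. A priori this embedding is either standard or non-standard in the sense of Definition~\ref{definition:standard-subgroups}, and the first real step will be to rule out the non-standard case via representation theory.

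Since $O\in\mathbb{P}(W)$ is $\bar{F}$-fixed, the representation $W$ of $\bar{F}$ must contain a one-dimensional subrepresentation; equivalently, $W|_{\mathbb{A}_4}$ contains the trivial character, where $\mathbb{A}_4\subset\bar{F}$ is the unique index-two subgroup. I would compute this multiplicity via characters, using the values of $\chi_W$ derived from the permutation representation of $\bar{G}$ on six letters. The key point is that a standard $\mathbb{A}_4\subset\bar{G}$ contains eight elements of cycle type $[3,1,1,1]$ on which $\chi_W=2$, whereas a non-standard $\mathbb{A}_4$ contains eight elements of type $[3,3]$ on which $\chi_W=-1$, with all other conjugacy-class types matching; the inner products with the trivial character then come out to $2$ and $0$ respectively. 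Thus the non-standard embedding has no $\bar{F}$-fixed point on $\mathbb{P}(W)$, and $\bar{F}$ must be a standard $\mathbb{S}_4$.

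To identify $O$ within its $\bar{G}$-orbit, I would then exhibit the $\bar{F}$-fixed points of $\mathbb{P}(W)$ explicitly. The standard $\mathbb{S}_4$ sits inside the pair-stabilizer $\mathbb{S}_4\times\mathbb{S}_2\subset\mathbb{S}_6$, so both $[1:1:1:1:-2:-2]$ and $[0:0:0:0:1:-1]$ are fixed by that pair-stabilizer, accounting for the two one-dimensional $\bar{F}$-subrepresentations of $W$ counted above. A direct substitution shows that only the second satisfies $\sum x_i^3=0$, so $O$ lies in the $\bar{G}$-orbit of $[0:0:0:0:1:-1]$, which gives $\Sigma=\Gamma$.

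For the irreducibility of $T$, I would apply Remark~\ref{remark:tangent-space-representation}. Here $W_O$ is the sign subrepresentation of the standard $\mathbb{S}_4$ (the line through $(0,0,0,0,1,-1)$), so $W/W_O$ decomposes as the trivial representation plus the standard three-dimensional irreducible $W_3$, and tensoring by $W_O^*$ (which is the sign character, as $W_O$ is self-dual) yields
\begin{equation*}
\tilde{T}\;\cong\;W_1'\oplus\bigl(W_3\otimes W_1'\bigr),
\end{equation*}
whose second summand is the other three-dimensional irreducible of $\mathbb{S}_4$. Since $O\in\Gamma$ lies in $X\setminus\mathrm{Sing}(X)$, the subspace $T\subset\tilde{T}$ has dimension three, and being $\bar{F}$-invariant it must coincide with the unique three-dimensional subrepresentation $W_3\otimes W_1'$ of $\tilde{T}$, which is irreducible. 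The one step requiring genuine care is the character calculation separating the two embeddings of $\mathbb{S}_4$; the remaining arguments are essentially bookkeeping.
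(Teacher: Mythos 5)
Your proposal is correct and follows essentially the same route as the paper: rule out the non-standard $\mathbb{S}_4$ by showing $W$ has no one-dimensional subrepresentation there, locate the two $\bar{F}$-fixed points in $\mathbb{P}(W)$ for the standard embedding, check that only $[0:0:0:0:1:-1]$ lies on $X$, and deduce irreducibility of $T$ from $W\cong W_t\oplus W_1\oplus W_3$ together with Remark~\ref{remark:tangent-space-representation}. The only (harmless) difference is that you compute the fixed-vector multiplicity by restricting to $\mathbb{A}_4$ and pairing with its trivial character, whereas the paper pairs $\chi$ directly with the two one-dimensional characters of $\mathbb{S}_4$.
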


\begin{proof}
One has $\bar{F}\cong\SS_4$ by
Lemma~\ref{lemma:max-subgroup}. Up to conjugation, the~group
$\bar{G}$ contains exactly  two non-conjugate subgroups that are
isomorphic~to the~group $\bar{F}\cong\SS_{4}$ (see
Lemma~\ref{lemma:max-subgroup}).

Let us consider $W$ as a~representation of the~group $\bar{F}$.
Then $W$ contains some one-dimensional subrepresentation  $U$ of
the~group $\bar{F}$ that correspond to the~point $O\in X$.

Let $W_{t}$ be the~trivial~one-dimensional representations of
the~group~$\bar{F}$, and let $W_{1}$ be the~non-trivial
one-dimensional representations of the~group~$\bar{F}$. Then
either $U\cong W_{t}$ or $U\cong W_{1}$.

Let $\chi$ and $\chi_{1}$ be the~characters of $W$ and $W_{1}$,
respectively.

Suppose that
$\SS_{4}\cong\bar{F}\subset\bar{G}\cong\A_{6}$ is
a~non-standard embedding. The~values~of~$\chi_1$~and~$\chi$~and
the~struc\-ture of the~subgroup $\bar{F}$ are given in
the~following table:
\begin{center}\renewcommand\arraystretch{1.3}
\begin{tabular}{|c|c|c|c|c|c|}
\hline  & $[4,2]$ &
$[3,3]$ & $[2,2,1,1]$ & $[2,2,1,1]$ & $e$\\
\hline
\# & $6$ & $8$ & $3$ & $6$ & $1$\\
\hline
$\chi$ & $-1$ & $-1$ & $1$ & $1$ & $5$\\
\hline
$\chi_{1}$ & $-1$ & $1$ & $1$ & $-1$ & $1$\\
\hline
\end{tabular}
\end{center}
where we use notation similar to the~ones used in the~proof of
Lemma~\ref{lemma:small-semiinvariants}, and we divide the~elements
of type $[2,2,1,1]$ that are contained in the~subgroup $\bar{F}$
into two classes with respect to the~values of the~character
$\chi_{1}$.  We see that $W$ contains no one-dimensional
subrepresentations.

Thus $\SS_{4}\cong\bar{F}\subset\bar{G}\cong\A_{6}$
is a~standard embedding. The~values of $\chi_1$ and $\chi$ and
the~structure of the~subgroup $\bar{F}$ are given in the~following
table:
\begin{center}\renewcommand\arraystretch{1.3}
\begin{tabular}{|c|c|c|c|c|c|}
\hline  & $[4,2]$ &
$[3,1,1,1]$ & $[2,2,1,1]$ & $[2,2,1,1]$ & $e$\\
\hline
\# & $6$ & $8$ & $3$ & $6$ & $1$\\
\hline
$\chi$ & $-1$ & $2$ & $1$ & $1$ & $5$\\
\hline
$\chi_{1}$ & $-1$ & $1$ & $1$ & $-1$ & $1$\\
\hline
\end{tabular}
\end{center}
which implies that $W\cong W_{t}\oplus W_{1}\oplus W_3$, where
$W_3$ is an~irreducible three-dimensional representation of
the~group~$\bar{F}$. Hence $T$ is an~irreducible representation of
the~group~$\bar{F}$ by
Remark~\ref{remark:tangent-space-representation}.

Suppose that $\Sigma\ne\Gamma$. Then $[1:1:1:1:-2:-2]\in\Sigma$.
But $[1:1:1:1:-2:-2]\not\in X$, which gives a contradiction.
\end{proof}

Let $H$ be a~general hyperplane section of the~cubic
$X\subset\mathbb{P}^{4}$. Then the $\bar{G}$-invariant subgroup of
the~group $\mathrm{Cl}(X)$ is generated by $H$ (see
Example~\ref{example:Segre-cubic}). The main purpose of this
section is to prove

\begin{theorem}
\label{theorem:Segre-cubic}  The threefold $X$ is
$\bar{G}$-birationally superrigid and
$\mathrm{Bir}^{\bar{G}}(X)\cong\SS_{6}$.
\end{theorem}

By Corollary~\ref{corollary:normalizers}, the isomorphism
$\mathrm{Bir}^{\bar{G}}(X)\cong\SS_{6}$ follows from
$\bar{G}$-birational superrigidity of $X$ and classification of
primitive subgroups in $\SL_5(\C)$ (see \cite{Fe71}). In the
remaining part of this section, we will prove that $X$ is
$\bar{G}$-birationally superrigid.

Suppose that $X$ is not $\bar{G}$-birationally superrigid. Let us
derive a~contradiction. It follows from \cite[Theorem~4.2]{Co95}
or \cite[Theorem~1.4.1]{Ch05umn} that there is a (non-empty)
$\bar{G}$-invariant
linear system~$\mathcal{M}$ on $X$ such that $\mathcal{M}$ does
not have fixed components, and $(X,\lambda\mathcal{M})$ is not
canonical, where $\lambda\in\mathbb{Q}$ such that
$\lambda\mathcal{M}\sim_{\mathbb{Q}} -K_{X}$. By
Corollary~\ref{corollary:mult-by-2} there is $\mu\in\mathbb{Q}$
such that $\mu<2\lambda$ and $(X,\mu\mathcal{M})$ is strictly log
canonical. Let $S\subset X$ be a~minimal center in
$\mathbb{LCS}(X,\mu\mathcal{M})$. Then it follows from
Lemma~\ref{lemma:centers} that $S\cap g(S)\ne\varnothing$ if and
only if $S=g(S)$ for every $g\in\bar{G}$. Moreover, the center $S$
is not a~surface, since $\mathcal{M}$ has no fixed components.

By Lemma~\ref{lemma:Kawamata-Shokurov-trick}, there~is
a~$\bar{G}$-invariant effective $\mathbb{Q}$-divisor $D$ on $X$
such that the set $\mathbb{LCS}(X, D)$ consists of irreducible
components of the $\bar{G}$-orbit of $S$, the~log pair $(X,D)$ is
log canonical and $D\sim_{\mathbb{Q}}-\delta K_{X}$ for some
$\delta\in\mathbb{Q}$ such that $0<\delta<2$. Arguing as in
the~proof of~\cite[Lemma~2.11]{ChSh09}, we see that we can replace
$D$ by $(\mu-\epsilon)\mathcal{M}+\epsilon^{\prime}\mathcal{H}$,
where $\epsilon\in\mathbb{Q}_{>0}\ni\epsilon^{\prime}$ such that
$\epsilon\ll 1$ and $\epsilon^{\prime}\ll 1$, and $\mathcal{H}$ is
a~$\bar{G}$-invariant linear system such that $\mathcal{H}\sim
-nK_{X}$ holds for some $n\gg 0$, the~base locus of the~linear
system $\mathcal{H}$ coincides with $\mathbb{LCS}(X, D)$, and
$\delta=(\mu-\epsilon)/\lambda+\epsilon^{\prime}n$. Thus, without
loss of generality, we can replace $D$ by $\mu\mathcal{M}$ (cf.
Lemma~\ref{lemma:Kawamata-Shokurov-trick}). Therefore, without
loss of generality, we may assume that the set $\mathbb{LCS}(X,
\mu\mathcal{M})$ also consists of irreducible components of the
$\bar{G}$-orbit of $S$.

Let $\mathcal{I}$ be the~multiplier ideal sheaf of
$(X,\mu\mathcal{M})$, and let $\mathcal{L}$ be the~subscheme given
by $\mathcal{I}$.~Then
\begin{equation}
\label{equation:Segre-cubic-equality}
h^{0}\Big(\mathcal{O}_{\mathcal{L}}\otimes\mathcal{O}_{X}\big(2H\big)\Big)=h^{0}\Big(\mathcal{O}_{X}\big(2H\big)\Big)-h^{0}\Big(\mathcal{O}_{X}\big(2H\big)\otimes\mathcal{I}\Big)=15-h^{0}\Big(\mathcal{O}_{X}\big(2H\big)\otimes\mathcal{I}\Big)%
\end{equation}
by Theorem~\ref{theorem:Shokurov-vanishing}.

\begin{lemma}
\label{lemma:Segre-cubic-4} The center $S$ is not a~singular point
of the~threefold $X$.
\end{lemma}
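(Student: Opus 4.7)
Suppose, for a contradiction, that $S = P$ is a singular point of $X$. By Lemma~\ref{lemma:Segre-small-orbits-10}, the $\bar{G}$-orbit of $P$ coincides with the full singular locus $\Sigma = \mathrm{Sing}(X)$, consisting of ten ordinary double points.

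The plan is to combine Nadel vanishing (which produces a $\bar{G}$-invariant auxiliary linear system through $\Sigma$) with a global intersection computation on the blow-up of the ten nodes. By Remark~\ref{remark:Kawamata-Shokurov-trick} I replace $D$ with $\mu\mathcal{M}$ so that $\mathbb{LCS}(X, \mu\mathcal{M}) = \Sigma$; by Remark~\ref{remark:log-canonical-subscheme} the subscheme $\mathcal{L}$ is reduced, hence $\mathcal{L} = \Sigma$ scheme-theoretically. Then equation~$(\ref{equation:Segre-cubic-equality})$ yields $h^0(\mathcal{I}\otimes\mathcal{O}_X(2H)) = 15 - 10 = 5$, producing a four-dimensional $\bar{G}$-invariant mobile linear system $\mathcal{Q}\subset |2H|$ of quadric sections of $X$ passing through $\Sigma$.

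Now let $\pi\colon U\to X$ be the blow-up of the ten nodes, with exceptional divisors $E_i\cong\mathbb{P}^1\times\mathbb{P}^1$; each $E_i$ has normal bundle $\mathcal{O}(-1,-1)$, so $E_i^3 = 2$. For general $M_1, M_2\in\mathcal{M}$ and $Q\in\mathcal{Q}$, the global intersection on $X$ is
$$M_1\cdot M_2\cdot Q = \left(\tfrac{2}{\lambda}\right)^{\!2}\cdot 2\cdot H^3 = \tfrac{24}{\lambda^2}.$$
Since $(X,\lambda\mathcal{M})$ is not canonical at each $P_i$, a sharp Corti-type inequality \cite[Theorem~3.1]{Co00}, adapted to the ordinary double point, yields the local bound $\mathrm{mult}_{P_i}(M_1\cdot M_2) > 4/\lambda^2$. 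Combined with $\mathrm{mult}_{P_i}(Q)\geq 1$ (since $Q$ passes through every $P_i$), summing the local contributions over the ten nodes gives
$$\tfrac{24}{\lambda^2} \;=\; M_1\cdot M_2\cdot Q \;\geq\; \sum_{i=1}^{10}\mathrm{mult}_{P_i}(M_1\cdot M_2)\cdot\mathrm{mult}_{P_i}(Q) \;>\; \tfrac{40}{\lambda^2},$$
a contradiction.

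The main technical obstacle I anticipate is justifying the sharp multiplicity bound at the ordinary double points. The bare non-canonicality estimate $m := \mathrm{ord}_{E_i}(\pi^*\mathcal{M}) > 1/\lambda$ (via \cite[Theorem~1.7.20]{Ch05umn}) only gives $\mathrm{mult}_{P_i}(M_1\cdot M_2) = 2m^2 > 2/\lambda^2$ (the factor $2$ coming from $E_i^3 = 2$), which produces only $20/\lambda^2 < 24/\lambda^2$ and is insufficient. The genuine Corti refinement adapted to the ODP geometry is therefore essential; as a backup one could instead construct, via another application of Nadel vanishing (now to $\mathcal{I}^2\otimes\mathcal{O}_X(3H)$), a $\bar{G}$-invariant cubic section $Q'\in|3H|$ with $\mathrm{mult}_{P_i}(Q')\geq 2$ for every $i$, in which case even the bare estimate yields $\bar M_1\cdot \bar M_2\cdot \bar{Q'}$-contribution $\geq 40m^2 > 40/\lambda^2$ against total $36/\lambda^2$, again a contradiction.
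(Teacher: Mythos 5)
Your reduction to the statement that $\mathrm{LCS}(X,\mu\mathcal{M})=\mathrm{Sing}(X)$ and your use of $(\ref{equation:Segre-cubic-equality})$ to produce the $5$-dimensional space of quadric sections through the ten nodes are fine, and the global intersection number $M_{1}\cdot M_{2}\cdot Q=24/\lambda^{2}$ is computed correctly. But the step on which the whole argument rests --- the ``sharp Corti-type inequality adapted to the ordinary double point'' giving $\mathrm{mult}_{P_i}(M_{1}\cdot M_{2})>4/\lambda^{2}$ --- is asserted, not proved, and it is not a result available in the cited literature. The known ODP analogue of the Corti/Noether--Fano estimate (\cite[Theorem~3.10]{Co00}, \cite[Theorem~1.7.20]{Ch05umn}) is exactly the bound $m=\mathrm{ord}_{E_i}(\mathcal{M})>1/\lambda$, whose contribution to $M_1\cdot M_2\cdot Q$ per node is $2m^{2}q\geqslant 2m^{2}>2/\lambda^{2}$; as you yourself compute, this gives only $20/\lambda^{2}<24/\lambda^{2}$ and the argument collapses. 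The backup does not repair this: Theorem~\ref{theorem:Shokurov-vanishing} applies to the multiplier ideal $\mathcal{I}$, not to $\mathcal{I}^{2}$, so it gives no control of $h^{0}(\mathcal{O}_{X}(3H)\otimes\mathcal{I}^{2})$; and a parameter count makes the required divisor very unlikely to exist --- cubic forms on $\mathbb{P}^{4}$ form a $35$-dimensional space, while a double point at each of the ten nodes imposes about $50$ conditions, and the only cubic singular at all ten nodes is the equation of $X$ itself, which cuts the zero divisor on $X$. So neither route closes the gap.

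For comparison, the paper avoids any global intersection count and argues locally at a single node $P$: it blows up $\mathrm{Sing}(X)$, restricts the non-klt pair to the exceptional quadric $E_{1}\cong\mathbb{P}^{1}\times\mathbb{P}^{1}$ via Inversion of Adjunction (Theorem~\ref{theorem:adjunction}), and contradicts the bound $\mathrm{lct}(E_{1},\upsilon(\bar{F}))\geqslant 1$, which follows from Lemma~\ref{lemma:quasric-surface-lct} because the tangent space at $P$ is an irreducible $4$-dimensional representation of the stabilizer $\bar{F}\cong(\mathbb{Z}_3\times\mathbb{Z}_3)\rtimes\mathbb{Z}_4$ (Lemma~\ref{lemma:Segre-small-orbits-10}). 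The only quantitative input needed is $m<2$, which the paper extracts from a conic lying in one of the fifteen planes of $X$ through four of the nodes. If you want to salvage your global approach you would need to either prove the $4n^{2}$-type inequality at an ODP or find extra invariant divisors with higher multiplicity at the nodes; the paper's route shows that the representation theory of the stabilizer is the cheaper resource here.
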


\begin{proof}
Suppose that $S$ is a~singular point of the~threefold $X$. Then
$\mathrm{LCS}(X,\mu\mathcal{M})=\mathrm{Sing}(X)$, because
$\bar{G}$ acts transitively on the~set $\mathrm{Sing}(X)$. Recall
that $|\mathrm{Sing}(X)|=10$.

Let $\bar{F}\subset\bar{G}$ be a~stabilizer of the~point $S$. Then
$\bar{F}\cong (\mathbb{Z}_3\times\mathbb{Z}_3)\rtimes
\mathbb{Z}_4$ by Lemma~\ref{lemma:Segre-small-orbits-10}.

Let $\pi\colon U\to X$ be a~blow up of the~points
$\mathrm{Sing}(X)$, and let $E_{1},\ldots,E_{10}$ be the~
$\pi$-exceptional divisors. Then there is a~positive rational
number $m$ such that
$$
K_{U}+\mu\bar{\mathcal{M}}+\big(m-1\big)\sum_{i=1}^{10}E_{i}\sim_{\mathbb{Q}}\pi^{*}\Big(K_{X}+\mu\mathcal{M}\Big),
$$
where $\bar{\mathcal{M}}$ is the~proper transforms of the~linear
system $\mathcal{M}$ on the~variety $U$.

Note that it follows from \cite[Theorem~1.7.20]{Ch05umn} that
$m\geqslant 1$ (cf. \cite[Theorem~3.10]{Co00}).

We may assume that $\pi(E_{1})=S$. There is a~natural homomorphism
$\upsilon\colon\bar{F}\to\mathrm{Aut}(E_{1})$. Then
$\mathrm{lct}(E_{1},\upsilon(\bar{F}))\geqslant 1$ by
Lemmas~\ref{lemma:quasric-surface-lct} and
\ref{lemma:Segre-small-orbits-10}, because
$E_{1}\cong\mathbb{P}^{1}\times\mathbb{P}^{1}$.

Let us show that $\mathrm{lct}(E_{1},\upsilon(\bar{F}))<1$ to
derive a contradiction.

One can easily check that there exists a~two-dimensional linear
subspace $\Pi\subset\mathbb{P}^{4}$ such that
$|\Pi\cap\mathrm{Sing}(X)|=4$ and $\Pi\subset X$ (see
\cite[Section~3.2]{Hu96}). Let $C$ be a~general conic in $\Pi$
that contains $\Pi\cap\mathrm{Sing}(X)$. Then $C$ is not contained
in the base locus of the linear system $\mathcal{M}$, and $C$ is
irreducible. Let $\bar{C}$ be a~proper transform of the~curve $C$
on the~threefold $U$. Then
$$
\frac{8}{\mu}-4\frac{m}{\mu}>\frac{4}{\lambda}-4\frac{m}{\mu}=\bar{\mathcal{M}}\cdot\bar{C}\geqslant 0,%
$$
which implies that $m<2$. In particular, the~log pair $(U,\
\mu\bar{\mathcal{M}}+(m-1)\sum_{i=1}^{10}E_{i})$ is not Kawamata
log terminal along $E_{1}$. Hence
$\mathrm{lct}(E_{1},\upsilon(\bar{F}))<1$ by
Theorem~\ref{theorem:adjunction}.
\end{proof}

\begin{lemma}
\label{lemma:Segre-cubic-5} The center $S$ is a~curve.
\end{lemma}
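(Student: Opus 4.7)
The plan is to rule out the possibility that $S$ is a smooth point of $X$. Combined with Lemma~\ref{lemma:Segre-cubic-4}, this will complete the proof.

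First I would exploit the Nadel vanishing in~(\ref{equation:Segre-cubic-equality}). Since $(X,\mu\mathcal{M})$ is log canonical, the subscheme $\mathcal{L}$ is reduced by Remark~\ref{remark:log-canonical-subscheme}, and after the Kawamata--Shokurov trick of Remark~\ref{remark:Kawamata-Shokurov-trick} we may assume $\mathcal{L}=\Sigma=\bar{G}\cdot S$ as a $0$-dimensional subscheme. Hence $h^0(\mathcal{O}_{\mathcal{L}}\otimes\mathcal{O}_{X}(2H))=|\Sigma|$, and~(\ref{equation:Segre-cubic-equality}) gives $|\Sigma|\leqslant 15$. By Lemma~\ref{lemma:Segre-small-orbits} this forces $|\Sigma|\in\{10,15\}$, and the case $|\Sigma|=10$ is immediate because Lemma~\ref{lemma:Segre-small-orbits-10} would identify $\Sigma$ with $\mathrm{Sing}(X)$, contradicting the smoothness of $S$.

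The remaining case $|\Sigma|=15$ is the crux. Here Lemma~\ref{lemma:Segre-small-orbits-15} gives $\Sigma=\Gamma$ and $\bar{F}\cong\mathbb{S}_{4}$ acting irreducibly on the tangent space $T$ at $S$. I would apply the Corti inequality \cite[Theorem~3.1]{Co00} at each $P\in\Sigma$: for sufficiently general $M_{1},M_{2}\in\mathcal{M}$, the non-canonicity of $(X,\lambda\mathcal{M})$ at $P$ forces $\mathrm{mult}_{P}(M_{1}\cdot M_{2})>4/\lambda^{2}$, while the effective $1$-cycle $Z=M_{1}\cdot M_{2}$ has total $H$-degree $12/\lambda^{2}$. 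To convert this into a contradiction I would test $Z$ against the six $\bar{G}$-conjugate hyperplane sections $Y_{i}=\{x_{i}=0\}\cap X$. A direct check using the coordinates from Example~\ref{example:Segre-cubic} shows that each $Y_{i}$ avoids the ten nodes of $X$ and is therefore a smooth cubic surface, that each $Y_{i}$ passes through exactly ten of the fifteen points of $\Gamma$, and that each point of $\Gamma$ lies on exactly four of the six $Y_{i}$. Consequently, if no component of $Z$ is contained in any $Y_{i}$, then $Y_{0}\cdot Z\geqslant 10\cdot 4/\lambda^{2}=40/\lambda^{2}$ contradicts $Y_{0}\cdot Z=12/\lambda^{2}$, so $Z$ must contain components in every $Y_{i}$ by $\bar{G}$-symmetry.

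The main obstacle is controlling the part $Z_{0}$ of $Z$ supported on $\bigcup_{i}Y_{i}$. Writing $Z=Z_{0}+Z_{1}$, the double counting $6\,\mathrm{deg}(Z_{1})=\sum_{i}Y_{i}\cdot Z_{1}\geqslant 4\sum_{P\in\Sigma}\mathrm{mult}_{P}(Z_{1})$ confines $Z_{1}$, but bounding $\sum_{P}\mathrm{mult}_{P}(Z_{0})$ will require a finer analysis of the low-degree curves on each smooth cubic surface $Y_{i}$ that can meet $\Gamma\cap Y_{i}$ with high multiplicity. Should the purely combinatorial route fail, an alternative would be to blow up all fifteen points of $\Sigma$ simultaneously and exploit the irreducibility of $T$ on the exceptional $\mathbb{P}^{2}=\mathbb{P}(T)$, combining Theorem~\ref{theorem:adjunction} with a log canonical threshold bound for the induced $\bar{F}$-invariant pair on $\mathbb{P}^{2}$, in direct parallel with the treatment of the singular-point case in Lemma~\ref{lemma:Segre-cubic-4}.
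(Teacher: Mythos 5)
Your reduction to the case where $S$ is a smooth point with $\bar{G}$-orbit $\Sigma=\Gamma$ of length $15$ is correct and coincides with the paper's first step (vanishing plus Lemma~\ref{lemma:Segre-small-orbits}, with $|\Sigma|=10$ excluded by Lemma~\ref{lemma:Segre-cubic-4}). But the main case is not closed, and the gap you yourself flag is the real one. The hyperplane-section count against the six surfaces $Y_{i}=\{x_{i}=0\}\cap X$ collapses as soon as $Z=M_{1}\cdot M_{2}$ has components inside the $Y_{i}$ passing through points of $\Gamma$: such components need not be $\bar{G}$-invariant individually, so there is no cheap lower bound on their degree, and nothing prevents $Z_{0}$ from carrying essentially all of the local multiplicity at $\Gamma$. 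Your fallback (blow up the points of $\Sigma$ and run adjunction/Corti on the exceptional $\mathbb{P}^{2}$) is the right second half of the argument, but it cannot stand alone: the resulting chain of inequalities compares $\mathrm{mult}_{S}(M_{1}\cdot M_{2})$ from below with $\mathrm{mult}_{S}^{2}(\mathcal{M})+\delta\,\mathrm{mult}_{C}(\bar{M}_{1}\cdot\bar{M}_{2})$, and to turn this into a contradiction one needs a nontrivial \emph{upper} bound on $\mathrm{mult}_{S}(M_{1}\cdot M_{2})$ (and hence on $\mathrm{mult}_{S}(\mathcal{M})$); the trivial bound $\mathrm{mult}_{S}(M_{1}\cdot M_{2})\leqslant\deg(M_{1}\cdot M_{2})=12/\lambda^{2}$ is far too weak. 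Your sketch supplies no such bound.

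The missing idea in the paper is a different auxiliary configuration: the $\bar{G}$-orbit of $45$ lines forming $15$ coplanar triangles $L_{i}^{1}+L_{i}^{2}+L_{i}^{3}$ whose vertices avoid $\Gamma$ but which pass through the points of $\Gamma$. Writing $M_{1}\cdot M_{2}=\gamma\sum_{i}(L_{i}^{1}+L_{i}^{2}+L_{i}^{3})+\Omega$ and intersecting with a hyperplane section through one triangle gives $\gamma\geqslant 2m-4/\lambda^{2}$, and then the degree count $12/\lambda^{2}\geqslant 45\gamma$ yields $m\leqslant 32/(15\lambda^{2})$, hence $\mathrm{mult}_{S}(\mathcal{M})<3/\mu$. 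This explicit decomposition along a known $\bar{G}$-orbit of curves is precisely what neutralizes the ``components inside the test surfaces'' problem that defeats your $Y_{i}$-counting, and it is what makes the subsequent blow-up step (connectedness, $\delta\geqslant 2$ from the irreducibility of the tangent representation, and the Corti inequality) terminate in the contradiction $128/(15\mu^{2})\geqslant 9/\mu^{2}$. As written, your proposal does not reach a contradiction in the case $|\Sigma|=15$.
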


\begin{proof}
Suppose that $S$ is a~point. Then $S\not\in\mathrm{Sing}(X)$ by
Lemma~\ref{lemma:Segre-cubic-5}. It follows from
(\ref{equation:Segre-cubic-equality})~that
$|\mathrm{LCS}(X,\mu\mathcal{M})|\leqslant 15$, so that
$\mathrm{LCS}(X,\mu\mathcal{M})$ is the~$\bar{G}$-orbit of
the~point \mbox{$[0:0:0:0:1:-1]$} by
Lemma~\ref{lemma:Segre-small-orbits}. There are $15$
two-dimensional linear subspaces $\Pi_1,\ldots,\Pi_{15}$ in
$\mathbb{P}^{4}$  such that
$|\Pi_{i}\cap\mathrm{LCS}(X,\mu\mathcal{M})|=6$ and
$X\cap\Pi_{i}=L_{i}^{1}+L_{i}^{2}+L_{i}^{3}$ for any
$i\in\{1,\ldots,15\}$, where $L_{i}^{j}$ is a~line such that
$$
\Big(\big(L_{i}^{1}\cap L_{i}^{2}\big)\cup\big(L_{i}^{1}\cap L_{i}^{3}\big)\cup\big(L_{i}^{2}\cap L_{i}^{3}\big)\Big)\bigcap\mathrm{LCS}\big(X,\mu\mathcal{M}\big)=\varnothing%
$$
and $L_{i}^{j}=L_{r}^{s}$ if and only if $(i,j)=(r,s)$. Note that
the~curve $\sum_{i=1}^{15}(L_{i}^{1}+L_{i}^{2}+L_{i}^{3})$ is
a~$\bar{G}$-orbit of the~line $L_{1}^{1}$. Without loss of
generality, we may assume that $S\in L_{1}^{1}$.

Let $M_{1}$ and $M_{2}$ be general surfaces in the~linear system
$\mathcal{M}$. Put
$$M_{1}\cdot
M_{2}=\gamma\sum_{i=1}^{15}(L_{i}^{1}+L_{i}^{2}+L_{i}^{3})+\Omega,$$
where $\Omega$ is an effective cycle such that
$L_{i}^{j}\not\subseteq\mathrm{Supp}(\Omega)$ for every
$i\in\{1,\ldots,15\}$ and $j\in\{1,2,3\}$, and $\gamma$ is
a~non-negative rational number. Put
$m=\mathrm{mult}_{S}(M_{1}\cdot M_{2})$, and let $D$ be a~general
surface in $|H|$ that contains the~lines $L_{1}^{1}$, $L_{1}^{2}$
and $L_{1}^{3}$. Then
$$
\frac{12}{\lambda^{2}}-3\gamma=D\cdot \Big(\Omega+\gamma\sum_{i=2}^{15}\Big(L_{i}^{1}+L_{i}^{2}+L_{i}^{3}\Big)\Big)\geqslant 6(m-\gamma),%
$$
which implies that $\gamma\geqslant 2m-4/\lambda^{2}$. Therefore,
we see that
$$
\frac{12}{\lambda^{2}}=H\cdot M_{1}\cdot M_{2}=45\gamma+H\cdot\Omega\geqslant 45\gamma\geqslant 45\Big(2m-\frac{4}{\lambda^{2}}\Big),%
$$
which implies that $m\leqslant 32/(15\lambda^{2})$. In particular,
we see that
\begin{equation}\label{eq:15}
\mathrm{mult}_{S}\big(\mathcal{M}\big)\leqslant\sqrt{m}\leqslant\frac{2\sqrt{2}}{\sqrt{15}\lambda}<\frac{3}{2\lambda}<\frac{3}{\mu}.%
\end{equation}

Let $\pi\colon U\to X$ be a~blow up of the~point $S$, and let $E$
be the~$\pi$-exceptional divisor. Then
$$
K_{U}+\mu\bar{\mathcal{M}}+\Big(\mu\mathrm{mult}_{S}\big(\mathcal{M}\big)-2\Big)E\sim_{\mathbb{Q}}\pi^{*}\Big(K_{X}+\mu\mathcal{M}\Big),
$$
where $\bar{\mathcal{M}}$ is the~proper transforms of
$\mathcal{M}$ on the~variety $U$.

Let $\bar{F}\subset\bar{G}$ be a~stabilizer of the~point $S$. Then
$\bar{F}\cong \SS_4$ by
Lemma~\ref{lemma:Segre-small-orbits-15}, and there is a~natural
homomorphism $\upsilon\colon\bar{F}\to\mathrm{Aut}(E)$. Note that
$\upsilon$ is
a~monomorphism by~Lemma~\ref{lemma:Segre-small-orbits-15}.

There is an irreducible proper subvariety $C\subsetneq
E\cong\mathbb{P}^{2}$ such that
$$g(C)\in\mathbb{LCS}(U,\
\mu\bar{\mathcal{M}}+(\mu\mathrm{mult}_{S}(\mathcal{M})-2)E)$$
for every $g\in\upsilon(\bar{F})$. Then $C$ is a~curve by
Theorem~\ref{theorem:connectedness} and
Lemma~\ref{lemma:Segre-small-orbits-15}.

Let $\bar{M}_{1}$ and $\bar{M}_{2}$ be general surfaces in
$\bar{\mathcal{M}}$. Then it follows from \cite[Theorem~3.1]{Co00}
that
$$
\mathrm{mult}_{g(C)}\Big(\bar{M}_{1}\cdot \bar{M}_{2}\Big)\geqslant \frac{4}{\mu^{2}}\Big(3-\mu\mathrm{mult}_{S}\big(\mathcal{M}\big)\Big).%
$$
for every $g\in\upsilon(\bar{F})$. Let $\delta$ be the~degree in
$E\cong\mathbb{P}^2$ of the~ $\upsilon(\bar{F})$-orbit of
the~curve $C$. Then
$$
\frac{128}{15\mu^{2}}\geqslant\frac{32}{15\lambda^{2}}\geqslant
m\geqslant \mathrm{mult}_{S}^{2}\big(\mathcal{M}\big)+
\delta\mathrm{mult}_{C}\Big(\bar{M}_{1}\cdot \bar{M}_{2}\Big)\geqslant \mathrm{mult}_{S}^{2}\big(\mathcal{M}\big)+\frac{8}{\mu^{2}}\Big(3-\mu\mathrm{mult}_{S}\big(\mathcal{M}\big)\Big),
$$
because $\delta\geqslant 2$ by Lemma~\ref{lemma:Segre-small-orbits-15}.
The latter value is greater than $9/\mu^{2}$,
which can be easily seen using the elementary properties of quadratic
forms and~(\ref{eq:15}).
The obtained contradiction completes the~proof.
\end{proof}

By Theorem~\ref{theorem:Kawamata}, the~curve $S$ is a~smooth curve
in $\mathbb{P}^{4}$ of degree $d$ and genus $g\leqslant d$. Put
$q=h^{0}(\mathcal{O}_{X}(2H)\otimes\mathcal{I})$, let $Z$ be
the~$\bar{G}$-orbit of the~curve $S$, let $r$ be the~number of
irreducible components of the~curve~$Z$.

\begin{lemma}
\label{lemma:15-q-equality} The equality $r(2d-g+1)=15-q$ holds.
\end{lemma}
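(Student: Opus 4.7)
The plan is to compute $h^0(\mathcal{O}_{\mathcal{L}} \otimes \mathcal{O}_X(2H))$ in two different ways, matching the result of the vanishing theorem in (\ref{equation:Segre-cubic-equality}) with a direct Riemann--Roch calculation on the curve $S$.

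First I would identify the scheme $\mathcal{L}$ with the reduced curve $Z$. Since $(X, \mu\mathcal{M})$ is log canonical, Remark~\ref{remark:log-canonical-subscheme} tells us that $\mathcal{L}$ is reduced, and by construction its support is $\mathrm{LCS}(X,\mu\mathcal{M})=\bigcup_{g\in\bar{G}}g(S)=Z$. Since $S$ is a minimal center, Lemma~\ref{lemma:centers} guarantees $g(S)\cap g'(S)\neq\varnothing$ implies $g(S)=g'(S)$, so $Z$ is a disjoint union of $r$ irreducible smooth curves, each isomorphic to $S$ via some element of $\bar{G}$. Thus $\mathcal{L}=Z$ as schemes, and
$$h^0\bigl(\mathcal{O}_{\mathcal{L}}\otimes\mathcal{O}_X(2H)\bigr)=r\cdot h^0\bigl(\mathcal{O}_S(2H|_S)\bigr).$$

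Next I would apply Riemann--Roch on the smooth curve $S$ of genus $g$ to the divisor $2H|_S$ of degree $2d$. Since $g\le d$ (given by Theorem~\ref{theorem:Kawamata} and the preceding discussion) one has $2d\ge 2g>2g-2$, so $h^1(\mathcal{O}_S(2H|_S))=0$ and
$$h^0\bigl(\mathcal{O}_S(2H|_S)\bigr)=2d-g+1.$$
Combining this with the previous step gives
$$h^0\bigl(\mathcal{O}_{\mathcal{L}}\otimes\mathcal{O}_X(2H)\bigr)=r(2d-g+1).$$

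Finally, comparing with equation (\ref{equation:Segre-cubic-equality}), which asserts that this same quantity equals $15-q$, yields the required equality $r(2d-g+1)=15-q$. The only non-routine point is verifying that the components of $Z$ are disjoint (so that $h^0$ splits as a sum) and that $g \leq d$ gives the needed vanishing of $h^1$; both are immediate from results already established, so no real obstacle arises.
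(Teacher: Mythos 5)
Your proof is correct and matches the paper's argument, which simply invokes equation (\ref{equation:Segre-cubic-equality}) and the Riemann--Roch theorem using $d\geqslant g$; you have merely filled in the routine details (disjointness of the components of $Z$ via Lemma~\ref{lemma:centers}, and the vanishing of $h^1$ from $2d>2g-2$). No issues.
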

\begin{proof}
The equality follows from (\ref{equation:Segre-cubic-equality})
and the~Riemann--Roch theorem, because $d\geqslant g$.
\end{proof}

\begin{corollary}
\label{corollary:Segre-cubic-g-bound} The inequality $g\leqslant
14$ holds.
\end{corollary}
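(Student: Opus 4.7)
The corollary is a straightforward numerical consequence of Lemma~\ref{lemma:15-q-equality} combined with the bound $g\leqslant d$ that was recorded just above it (coming from Theorem~\ref{theorem:Kawamata} applied to the minimal center $S$, which forces $S$ to be a smooth curve of non-negative Clifford index, so its genus is at most its degree in $\mathbb{P}^4$).

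The plan is as follows. First, note that $q=h^{0}(\mathcal{O}_{X}(2H)\otimes\mathcal{I})\geqslant 0$ and $r\geqslant 1$ (the orbit $Z$ contains $S$ as one of its irreducible components). Plugging these trivial bounds into the equality
$$
r\big(2d-g+1\big)=15-q
$$
from Lemma~\ref{lemma:15-q-equality} immediately yields $2d-g+1\leqslant 15$, i.e.\ $d\leqslant (14+g)/2$. Combining this with the inequality $g\leqslant d$ gives $g\leqslant (14+g)/2$, hence $g\leqslant 14$, which is the desired bound.

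There is no genuine obstacle here; the only point worth double-checking is that $2d-g+1$ is indeed a positive integer (so that dividing through is legitimate), which follows from $d\geqslant g\geqslant 0$. This corollary merely records the consequence of Lemma~\ref{lemma:15-q-equality} that will be used repeatedly in the subsequent genus-by-genus analysis, exactly as Corollary~\ref{corollary:g-bound} was used in the $\mathbb{P}^{3}$ case.
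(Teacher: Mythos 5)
Your derivation is correct and is exactly the intended one: the paper states this as an immediate corollary of Lemma~\ref{lemma:15-q-equality}, and your chain $g+1\leqslant d+1\leqslant 2d-g+1\leqslant r(2d-g+1)=15-q\leqslant 15$ (using $q\geqslant 0$, $r\geqslant 1$ and $g\leqslant d$) is the same computation. Nothing is missing.
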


Note that $Z$ is not contained in a~hyperplane in
$\mathbb{P}^{4}$, since $W$ is an irreducible
$\bar{G}$-representation.

\begin{lemma}
\label{lemma:Segre-A6-reducible-curves} The equality $r=1$ holds.
\end{lemma}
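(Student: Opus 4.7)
The plan is to combine the numerical identity of Lemma~\ref{lemma:15-q-equality} with the subgroup structure of $\mathbb{A}_6$ to force $r = 1$ with essentially no geometry beyond the bounds already recorded.

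First I would bound $r$ from above. Since $S$ is a smooth irreducible curve of positive degree, one has $d \geq 1$ and $g \leq d$, whence $2d - g + 1 \geq d + 1 \geq 2$. Substituting into $r(2d - g + 1) = 15 - q \leq 15$ gives $r \leq 7$. The integer $r$ is the index in $\bar{G} \cong \mathbb{A}_6$ of the setwise stabilizer of $S$. By Lemma~\ref{lemma:max-subgroup} (together with the simplicity of $\mathbb{A}_6$, which rules out subgroups of indices $2,3,4,5,7$), the only possible values with $r \leq 7$ are $r = 1$ and $r = 6$.

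Next I would rule out $r = 6$. In that case $6(2d - g + 1) \leq 15$ forces $2d - g + 1 = 2$, so $g = 2d - 1$; combined with $g \leq d$ this gives $d \leq 1$, whence $d = 1$ and $g = 1$. But any smooth irreducible curve of degree $1$ in $\mathbb{P}^4$ is a line, and so has genus $0$, a contradiction. Therefore $r = 1$ and $Z = S$.

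No real obstacle is expected: the entire content of the lemma is packed into the Kawamata subadjunction bound $g \leq d$ (already invoked just before Lemma~\ref{lemma:15-q-equality}) and the Riemann--Roch calculation underlying Lemma~\ref{lemma:15-q-equality}; the anti-canonical divisor on the Segre cubic being $-K_X \sim 2H$ makes the numerics tight enough to kill $r = 6$ automatically. Had the numerics been looser and permitted a $\bar{G}$-orbit of six lines (as happens in the $\mathbb{P}^3$ analogue, cf.~Lemma~\ref{lemma:six-lines}), the fall-back would be to analyse $W$ as a representation of the stabilizer $\bar{F} \cong \mathbb{A}_5$ and show that $\mathbb{P}^4 = \mathbb{P}(W)$ admits no $\bar{F}$-invariant line, in the spirit of the proof of Lemma~\ref{lemma:Segre-small-orbits}; this extra step is unnecessary here.
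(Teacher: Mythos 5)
Your proof is correct and follows essentially the same route as the paper's: both combine the Riemann--Roch count $r(2d-g+1)=15-q$ of Lemma~\ref{lemma:15-q-equality} with the subgroup indices from Lemma~\ref{lemma:max-subgroup} and the bound $g\leqslant d$, ending in the same numerical contradiction (a degree-one curve forced to have positive genus). Your organization is in fact slightly tidier, since bounding $r\leqslant 7$ at the outset disposes of $r=10$ and $r=15$ immediately, whereas the paper treats the cases $q=0$ and $q\geqslant 1$ separately.
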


\begin{proof}
Suppose that $r\geqslant 2$. Then $r\leqslant 15$ by
Lemma~\ref{lemma:15-q-equality}, which implies that
$r\in\{6,10,15\}$ by Lemma~\ref{lemma:max-subgroup}. If $q=0$,
then $2d-g+1=1$ by Lemma~\ref{lemma:15-q-equality}, which is
impossible, because $g\leqslant d$.

We have $q\geqslant 1$. Then $r(g+1)\leqslant 14$ and $g\leqslant
1$. If $g=0$, then $r(2d+1)\leqslant 14$ by
Lemma~\ref{lemma:15-q-equality}, which implies a~contradictory
inequality $d\leqslant 0$. We see that $g=1$. Thus $2rd\leqslant
14$ by Lemma~\ref{lemma:15-q-equality}, which implies that $d=1$
and $g=0$, that contradicts the~equality $g=1$.
\end{proof}

There is a~natural monomorphism
$\theta\colon\bar{G}\to\mathrm{Aut}(S)$ (see
Lemma~\ref{lemma:Segre-small-orbits}).

\begin{lemma}
\label{lemma:Segre-g-must-be-10} The equality $g=10$ holds.
\end{lemma}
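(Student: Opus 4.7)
The plan is pure assembly: combine the upper bound on $g$ from Corollary~\ref{corollary:Segre-cubic-g-bound} with the list of admissible genera for smooth curves carrying a faithful action of $\mathbb{A}_{6}$ furnished by Lemma~\ref{lemma:sporadic-genera}, and observe that the intersection of these two constraints is a single value.

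First I would record the hypotheses needed to invoke Lemma~\ref{lemma:sporadic-genera}. By Theorem~\ref{theorem:Kawamata} applied to the minimal center $S\in\mathbb{LCS}(X,\mu\mathcal{M})$, the curve $S$ is smooth, and by the monomorphism $\theta\colon\bar{G}\to\mathrm{Aut}(S)$ stated immediately before the lemma, the group $\bar{G}\cong\mathbb{A}_{6}$ acts faithfully on $S$. Hence $S$ satisfies exactly the conditions under which Lemma~\ref{lemma:sporadic-genera} applies.

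Next I would apply Corollary~\ref{corollary:Segre-cubic-g-bound} to obtain $g\leqslant 14$. Since $14\leqslant 34$, Lemma~\ref{lemma:sporadic-genera} then forces $g\in\{10,16,19,25,31\}$. The unique element of this set that does not exceed $14$ is $10$, which yields $g=10$.

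There is no serious obstacle here; the lemma is a bookkeeping step that pinpoints the genus of $S$ so that the subsequent analysis in the paper (which will use Lemma~\ref{lemma:sporadic-genera} again to restrict orbit lengths of length $120$, together with the Riemann--Roch and Castelnuovo bounds already in play) has a definite target. The only point that must be checked carefully is the faithfulness of $\theta$, but this is already in place: any nontrivial element of $\bar{G}$ fixing $S$ pointwise would fix the entire $\bar{G}$-orbit of any point of $S$, contradicting the structure of orbits and stabilizers on $X$ recorded in Lemmas~\ref{lemma:Segre-small-orbits}--\ref{lemma:Segre-small-orbits-15}.
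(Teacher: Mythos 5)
Your proof is correct and follows essentially the same route as the paper, which simply cites Lemma~\ref{lemma:15-q-equality} (the source of the bound $g\leqslant 14$ in Corollary~\ref{corollary:Segre-cubic-g-bound}) together with Lemma~\ref{lemma:sporadic-genera} and intersects the two constraints to get $g=10$. The faithfulness of $\theta$ is indeed the only hypothesis to check, and it is already recorded in the paper just before the lemma via Lemma~\ref{lemma:Segre-small-orbits} (the kernel of $\theta$ is normal in the simple group $\bar{G}$, and cannot be all of $\bar{G}$ since $X$ has no $\bar{G}$-fixed points).
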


\begin{proof}
The required assertion follows from
Lemmas~\ref{lemma:15-q-equality} and \ref{lemma:sporadic-genera}.
\end{proof}

The equality $g=10$ and Lemma~\ref{lemma:15-q-equality} imply that
$d\leqslant 12$.

\begin{lemma}
\label{lemma:Segre-q-must-be-0} The equality $q=0$ holds.
\end{lemma}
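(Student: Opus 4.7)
The plan is to combine the numerical identity of Lemma~\ref{lemma:15-q-equality} with the basic representation theory of $\mathbb{A}_{6}$. Since $g=10$, $r=1$ and $10=g\leqslant d\leqslant 12$ by Corollary~\ref{corollary:Segre-cubic-g-bound}, the equality $r(2d-g+1)=15-q$ forces $q=24-2d\in\{0,2,4\}$, and it suffices to rule out $q=2$ and $q=4$.

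Next I would observe that the exact sequence
$$
0\longrightarrow\mathcal{O}_{\mathbb{P}^{4}}(-1)\longrightarrow\mathcal{O}_{\mathbb{P}^{4}}(2)\longrightarrow\mathcal{O}_{X}(2H)\longrightarrow 0,
$$
together with the vanishing $H^{i}(\mathcal{O}_{\mathbb{P}^{4}}(-1))=0$ for $i=0,1$, yields a $\bar{G}$-equivariant isomorphism $H^{0}(\mathcal{O}_{X}(2H))\cong\mathrm{Sym}^{2}(W^{*})$. Under this identification, the $q$-dimensional space $H^{0}(\mathcal{O}_{X}(2H)\otimes\mathcal{I})$ of quadrics containing the $\bar{G}$-invariant curve $S=Z$ is a $\bar{G}$-subrepresentation. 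According to \cite{Atlas}, every irreducible representation of $\bar{G}\cong\mathbb{A}_{6}$ has dimension in $\{1,5,8,9,10\}$, so any $\bar{G}$-subrepresentation of dimension at most four must be a direct sum of copies of the trivial representation.

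The main remaining step is therefore to show that $\mathrm{Sym}^{2}(W^{*})$ contains the trivial $\bar{G}$-representation with multiplicity exactly one, i.e., that the space of $\bar{G}$-invariant quadrics on $\mathbb{P}^{4}$ is one-dimensional. Geometrically this is visible from the embedding: on the hyperplane $\sum_{i=0}^{5}x_{i}=0\subset\mathbb{P}^{5}$ the only $\mathbb{A}_{6}$-invariant quadric is the restriction of $\sigma_{2}(x_{0},\ldots,x_{5})$, which up to sign equals $\tfrac{1}{2}\sum_{i=0}^{5}x_{i}^{2}$ there. Equivalently, a short character computation using $\chi_{\mathrm{Sym}^{2}W}(g)=\tfrac{1}{2}\bigl(\chi_{W}(g)^{2}+\chi_{W}(g^{2})\bigr)$ and the conjugacy classes of $\mathbb{A}_{6}$ gives $\langle\chi_{\mathrm{Sym}^{2}W},\chi_{0}\rangle=1$. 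With this in hand, any $\bar{G}$-subrepresentation of $\mathrm{Sym}^{2}(W^{*})$ of dimension at most four has dimension $0$ or $1$, so the constraint $q\in\{0,2,4\}$ forces $q=0$. There is no serious obstacle here: once the multiplicity of the trivial representation in $\mathrm{Sym}^{2}(W^{*})$ is pinned down, the conclusion is a pure dimension count.
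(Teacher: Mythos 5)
Your proof is correct, but it follows a genuinely different route from the paper's. The paper argues geometrically: it takes a standard subgroup $\bar{\Psi}\cong\mathbb{A}_{5}$ of $\bar{G}$, which has an invariant hyperplane section $H\subset X$; since $S$ is not contained in a hyperplane, $H\cap S$ is a $\bar{\Psi}$-invariant set of at most $d\leqslant 12$ points, hence of exactly $12$ points by the orbit-length constraints of Lemma~\ref{lemma:stabilizers}; this forces $d=12$ and then $q=0$ by Lemma~\ref{lemma:15-q-equality}. You instead exploit the numerical identity $q=24-2d\in\{0,2,4\}$ together with the representation theory of $\mathbb{A}_{6}$: the $q$-dimensional space $H^{0}(\mathcal{O}_{X}(2H)\otimes\mathcal{I})$ is a $\bar{G}$-subrepresentation of $\mathrm{Sym}^{2}(W^{*})$, any subrepresentation of dimension at most $4$ is a sum of trivial summands, and the trivial representation occurs in $\mathrm{Sym}^{2}(W^{*})$ with multiplicity one (your character computation checks out: $\frac{1}{360}(15+3\cdot 45+3\cdot 40+0+0+1\cdot 90)=1$, consistent with the uniqueness of the invariant quadric used elsewhere in the paper). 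Your argument is very much in the spirit of the paper's Lemmas~\ref{lemma:small-q} and~\ref{lemma:quadric-small-q}, which handle the analogous step for $\mathbb{P}^{3}$ and for the quadric by exactly this kind of dimension count; it has the advantage of not needing the auxiliary $\mathbb{A}_{5}$-invariant hyperplane section, while the paper's argument has the side benefit of pinning down $d=12$ directly. Both are complete; the only ingredient you rely on from outside your own computation is the bound $g\leqslant d$ (giving $d\geqslant 10$), which the paper establishes before Lemma~\ref{lemma:15-q-equality}, so this is legitimate.
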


\begin{proof}
Let $\bar{\Psi}\subset\bar{G}$ be a~subgroup such that
$\bar{\Psi}\cong\A_{5}$ and the~embedding
$\A_5\cong\bar{\Psi}\subset\bar{G}\cong\A_6$ is standard. There is
a~$\bar{\Psi}$-invariant hyperplane section $H\subset X$. Note
that $S\not\subset H$. We have $|H\cap S|\leqslant d\leqslant 12$,
which implies that $|H\cap S|=12$ by
Lemma~\ref{lemma:stabilizers}, because $H\cap S$ is
$\bar{\Psi}$-invariant. Then $q=0$ by
Lemma~\ref{lemma:15-q-equality}.
\end{proof}

Let $Q$ be the~$\bar{G}$-invariant quadric in $\mathbb{P}^4$ (cf.
Example~\ref{example:quadric-threefold}). Then $S\not\subset Q$,
because $q=0$.

Put $\Delta=Q\cap S$. Then $|\Delta|\leqslant 24$.  Let
$\bar{\Psi}\subset\bar{G}$ be a~stabilizer of a~point in $\Delta$.
Then
$$
\big|\bar{\Psi}\big|\geqslant
\frac{|\bar{G}|}{|\Delta|}\geqslant\frac{360}{24}>6,
$$
which is impossible by Lemma~\ref{lemma:stabilizers}.

The obtained contradiction completes the~proof of
Theorem~\ref{theorem:Segre-cubic}.

\section{Quadric threefold}
\label{section:quadric}

Let $G\subset\SL_5(\mathbb{C})$ be a~subgroup such that
$G\cong\A_{6}$, and let
$\phi\colon\SL_5(\mathbb{C})\to\mathrm{Aut}(\mathbb{P}^{4})$ be
the~natural projection. Put $W=\mathbb{C}^{5}$ and
$\bar{G}=\phi(G)$. Then there is a~smooth quadric hypersurface
$Q\subset\mathbb{P}^{4}$ that is $\bar{G}$-invariant. Let us
identify $Q$ with a~smooth complete intersection in
$\mathbb{P}^{5}$ that is given by the~equation
$$
\sum_{i=0}^{5}x_{i}=\sum_{i=0}^{5}x_{i}^{2}=0\subset\mathbb{P}^{5}\cong\mathrm{Proj}\Big(\mathbb{C}\big[x_{0},x_{1},x_{2},x_{3},x_{4},x_{5}\big]\Big),
$$
and let us identify $\bar{G}$ with a~subgroup of the~group
$\mathrm{Aut}(Q)$ (cf. Example~\ref{example:quadric-threefold}).

Let $O\in Q$ be a~point, let $\bar{F}\subset\bar{G}$ be its
stabilizer, and let $\Sigma$ be its $\bar{G}$-orbit.

\begin{lemma}
\label{lemma:quadric-small-orbits} Suppose that $|\Sigma|\leqslant
30$. Then $|\Sigma|=30$, and there exists a cubic hypersurface
$X\subset\mathbb{P}^{4}$~such that $\Sigma\subset X$ and
$Q\not\subset X$.
\end{lemma}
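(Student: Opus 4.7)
My plan is a case analysis on the order of the stabilizer $\bar{F}$, followed by a representation-theoretic comparison in the one remaining case.

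Since $|\bar{F}|=360/|\Sigma|\ge 12$, Lemma~\ref{lemma:max-subgroup} (together with the fact that $\mathbb{A}_6$ has no element of order greater than $5$) forces $\bar{F}$ to be conjugate to one of $\mathbb{A}_6$, $\mathbb{A}_5$, $(\mathbb{Z}_3\times\mathbb{Z}_3)\rtimes\mathbb{Z}_4$, $\mathbb{S}_4$, $(\mathbb{Z}_3\times\mathbb{Z}_3)\rtimes\mathbb{Z}_2$, or $\mathbb{A}_4$, with corresponding orbit lengths $1,6,10,15,20,30$. Arguing as in Lemmas~\ref{lemma:Segre-small-orbits-10} and~\ref{lemma:Segre-small-orbits-15}, I decompose $W$ as a $\bar{F}$-representation to locate the $\bar{F}$-fixed points in $\mathbb{P}^4$; for each of the five smaller possibilities, the candidate fixed loci (up to $\bar{G}$-conjugacy) are the points
\[
[1:1:1:1:1:-5],\ [1:-1:1:-1:1:-1],\ [1:1:1:1:-2:-2],\ [0:0:0:0:1:-1],\ [1:1:1:-1:-1:-1]
\]
(the non-standard embeddings of $\mathbb{A}_5$ and $\mathbb{S}_4$ contributing no $\bar{F}$-fixed point in $\mathbb{P}^4$ at all), and a direct check shows that $\sum x_i^2\ne 0$ at each, so none of them lies on $Q$. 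Hence $|\Sigma|=30$ and $\bar{F}\cong\mathbb{A}_4$; moreover $\bar{F}$ must be standard, since a character computation $\langle\chi_W|_{\mathbb{A}_4},\mathbf{1}\rangle=0$ in the non-standard case shows that the non-standard $\mathbb{A}_4$ has no invariant vector in $W$.

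It remains to exhibit a cubic containing $\Sigma$ but not containing $Q$. I will show that the $\bar{G}$-equivariant restriction map
\[
\rho\colon H^0(\mathcal{O}_Q(3))\longrightarrow H^0(\mathcal{O}_\Sigma)
\]
fails to be injective; a nonzero element of $\ker\rho$ lifts under the exact sequence $0\to\mathcal{O}_{\mathbb{P}^4}(1)\xrightarrow{\cdot Q}\mathcal{O}_{\mathbb{P}^4}(3)\to\mathcal{O}_Q(3)\to 0$ to a cubic on $\mathbb{P}^4$ that vanishes on $\Sigma$ but not on $Q$. Both sides of $\rho$ are $30$-dimensional $\bar{G}$-modules, so it suffices to show they are non-isomorphic as $\mathbb{A}_6$-representations. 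The source has character $\chi_{\mathrm{Sym}^3W}-\chi_W$, which I compute from the character table of $\mathbb{A}_6$; the target is $\mathrm{Ind}_{\mathbb{A}_4}^{\mathbb{A}_6}\mathbf{1}$, whose value on a conjugacy class $c$ is $|C_{\bar{G}}(g)|\cdot|c\cap\mathbb{A}_4|/12$ for any $g\in c$. A direct comparison on the two $\mathbb{A}_6$-classes of elements of order $3$ then reveals that the two $5$-dimensional irreducibles of $\mathbb{A}_6$ occur with different multiplicities in source and target, so the two characters disagree and $\rho$ has nontrivial kernel.

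The main obstacle is precisely this last step: the dimensional bound $h^0(\mathcal{I}_\Sigma(3))\ge 5=h^0(\mathcal{I}_Q(3))$ coming from $|\Sigma|=30<35$ is generically an equality, so the existence of $X$ cannot be deduced from a Riemann--Roch count alone; the $\bar{G}$-equivariant structure must be exploited in an essential way via the character mismatch above.
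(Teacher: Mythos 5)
Your case analysis on $|\bar{F}|$ and the endgame are the right shape, but there is a genuine gap in the middle: the claim that $\bar{F}$ must be a \emph{standard} $\mathbb{A}_4$. The computation $\langle\chi_W|_{\mathbb{A}_4},\mathbf{1}\rangle=0$ for the non-standard embedding only shows that there is no invariant \emph{vector}; a fixed point of $\bar{F}$ in $\mathbb{P}^4=\mathbb{P}(W)$ corresponds to \emph{any} one-dimensional subrepresentation, not necessarily the trivial one. In fact $W$ restricted to a non-standard $\mathbb{A}_4$ decomposes as $\omega\oplus\bar\omega\oplus(\text{3-dimensional})$, where $\omega,\bar\omega$ are the nontrivial characters of $\mathbb{A}_4/V_4\cong\mathbb{Z}_3$. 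Concretely, for the non-standard $\mathbb{A}_4=\langle(12)(34),(12)(56),(135)(246)\rangle$ the two corresponding fixed points are $[1:1:\omega:\omega:\omega^2:\omega^2]$ and its conjugate, and both satisfy $\sum x_i=\sum x_i^2=0$, i.e.\ they lie on $Q$. So there is a second $\bar{G}$-orbit of length $30$ on $Q$, with non-standard $\mathbb{A}_4$ stabilizer, which your argument (as written) does not cover. The good news is that your final step is exactly the right tool and extends verbatim: $\mathrm{Ind}_{\mathbb{A}_4^{\mathrm{ns}}}^{\mathbb{A}_6}\mathbf{1}$ has character value $6$ on the class $[3,3]$ and $0$ on $[3,1,1,1]$, whereas $H^0(\mathcal{O}_Q(3))=\mathrm{Sym}^3W^*/q\cdot W^*$ has value $3$ on both, so again the two five-dimensional irreducibles of $\mathbb{A}_6$ occur with mismatched multiplicities (with their roles interchanged relative to the standard case) and $\rho$ has nontrivial kernel. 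You should run the comparison for both conjugacy classes of $\mathbb{A}_4$.

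For context, the paper's own proof commits essentially the same error: it asserts the embedding must be standard ``because otherwise $W$ would be an irreducible representation of $\bar{F}$'', which is impossible for a group of order $12$ acting on $\mathbb{C}^5$, and it then produces an explicit cubic, $\sum x_i=(x_0-x_1)(x_2-x_3)(x_4-x_5)=0$, that contains only the standard orbit --- indeed the point $[1:\omega:\omega^2:1:\omega:\omega^2]$ it exhibits on $Q\setminus X$ belongs precisely to the overlooked non-standard orbit. So where the paper closes the argument by an explicit pigeonhole construction (a point with four equal coordinates must have two equal coordinates within one of the pairs $\{0,1\},\{2,3\},\{4,5\}$), your non-constructive character-mismatch argument is a genuinely different and, once the missing case is added, more robust route; your closing remark that a bare dimension count $h^0(\mathcal{I}_\Sigma(3))\geqslant 5$ cannot suffice is exactly right.
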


\begin{proof}
One has $|\Sigma|\ne 1$, because $W$ is an irreducible
representation of the~group $\bar{G}$. Then
$|\Sigma|\in\{6,10,15,20,30\}$ by Lemma~\ref{lemma:max-subgroup}.
Arguing as in the~proof of Lemmas~\ref{lemma:linear-complex},
\ref{lemma:Segre-small-orbits},~\ref{lemma:Segre-small-orbits-10}
and~\ref{lemma:Segre-small-orbits-15}, we see that
$|\Sigma|\in\{20,30\}$.

Let us consider $W$ as a~representation of the~group $\bar{F}$.
Then $W$ contains some one-dimensional subrepresentation $U$ of
the~group $\bar{F}$ corresponding to the~point $O\in Q$.

Suppose that $|\Sigma|=20$. Then $\bar{F}\cong
(\mathbb{Z}_3\times\mathbb{Z}_3)\rtimes\mathbb{Z}_2$ by
Lemma~\ref{lemma:max-subgroup}.

Let $W_{t}$ and $W_{1}$ be the~trivial~one-dimensional~and
the~non-trivial one-dimensional representations of~$\bar{F}$ (see
the~proof of Lemma~\ref{lemma:Segre-small-orbits-10}),
respectively. Then either $U\cong W_{t}$ or $U\cong W_{1}$.

Let $\chi$ and $\chi_{1}$ be the~characters of the~representations
$W$ and $W_{1}$, respectively. The~values of the~cha\-rac\-ters
$\chi_1$ and $\chi$ and the~structure of the~subgroup $\bar{F}$
are given in the~following table:
\begin{center}\renewcommand\arraystretch{1.3}
\begin{tabular}{|c|c|c|c|c|c|c|c|}
\hline & $[3,3]$ & $[3,1,1,1]$ &
$[2,2,1,1]$ & $e$\\
\hline
\# & $4$ & $4$ & $9$ & $1$\\
\hline
$\chi$ & $-1$ & $2$ & $1$ & $5$\\
\hline
$\chi_{1}$ & $1$ & $1$ & $-1$ & $1$\\
\hline
\end{tabular}
\end{center}
where we use notation similar to the~ones used in the~proofs of
Lemmas~\ref{lemma:small-semiinvariants},~\ref{lemma:Segre-small-orbits-10}
and~\ref{lemma:Segre-small-orbits-15}.

We see that $U\cong W_{1}$. Thus $[1:-1:1:-1:1:-1]\in\Sigma$. But
$[1:-1:1:-1:1:-1]\not\in Q$.

Therefore, we see that $|\Sigma|=30$. Then $\bar{F}\cong\A_4$ by
Lemma~\ref{lemma:max-subgroup}. The embedding
$\A_4\cong\bar{F}\subset\bar{G}\cong\A_6$ must be standard,
because otherwise the~representation $W$ would be an~irreducible
representation of the~group $\bar{F}$ (cf. the~proof of
Lemma~\ref{lemma:Segre-small-orbits-15}).

There are exactly two $\bar{F}$-invariant points in $Q$. These
points form a~subset
$$
\Big\{\big[1:1:1:1:-2+\sqrt{-2}:-2-\sqrt{-2}\big], \big[1:1:1:1:-2-\sqrt{-2}:-2+\sqrt{-2}\big]\Big\}\subset\Sigma.%
$$

Let $X$ be the~cubic threefold in $\mathbb{P}^{5}$ that is given
by
$$
\sum_{i=0}^{5}x_i=\big(x_0-x_1\big)\big(x_2-x_3\big)\big(x_4-x_5\big)=0,
$$
let $P$ be the~point
$[1:\omega:\omega^2:1:\omega:\omega^2]\in\mathbb{P}^{5}$, where
$\omega$ is a~non-trivial cube root of unity.~Then $\Sigma\subset
X\not\ni P\in Q$, which completes the~proof.
\end{proof}

The main purpose of this section is to prove the~following result.

\begin{theorem}
\label{theorem:quadric-threefold}  The quadric threefold $Q$ is
$\bar{G}$-birationally superrigid and
$\mathrm{Bir}^{\bar{G}}(Q)\cong\SS_{6}$.
\end{theorem}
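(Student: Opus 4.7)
The plan is to mirror the proof of Theorem~\ref{theorem:Segre-cubic}, with adjustments for the fact that $Q$ is smooth and $-K_Q\sim 3H$. I would begin by assuming $Q$ is not $\bar{G}$-birationally superrigid. The Noether--Fano inequality (cf. Lemma~\ref{lemma:Segre-cubic-Noether-Fano}) produces a $\bar{G}$-invariant mobile linear system $\mathcal{M}$ with $K_Q+\lambda\mathcal{M}\sim_{\mathbb{Q}}0$ and $(Q,\lambda\mathcal{M})$ non-canonical, and the multiplication-by-two trick (as in Lemma~\ref{lemma:Segre-cubic-1}) yields $\mu<2\lambda$ with $(Q,\mu\mathcal{M})$ strictly log canonical. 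Let $S$ be a minimal center in $\mathbb{LCS}(Q,\mu\mathcal{M})$; applying Lemma~\ref{lemma:Kawamata-Shokurov-trick} with the perturbation of Remark~\ref{remark:Kawamata-Shokurov-trick} we may arrange $\mathbb{LCS}(Q,\mu\mathcal{M})$ to coincide with the $\bar{G}$-orbit of $S$. Mobility of $\mathcal{M}$ forbids $S$ from being a divisor.

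Since $-K_Q-(K_Q+\mu\mathcal{M})\sim_{\mathbb{Q}}(2-\mu/\lambda)(-K_Q)$ is ample, Theorem~\ref{theorem:Shokurov-vanishing} applied with $D=-K_Q\sim 3H$ gives
$$
h^0\Big(\mathcal{O}_{\mathcal{L}}\otimes\mathcal{O}_Q(3H)\Big)=h^0\Big(\mathcal{O}_Q(3H)\Big)-h^0\Big(\mathcal{O}_Q(3H)\otimes\mathcal{I}\Big)=30-q,
$$
where $\mathcal{I}$ is the multiplier ideal sheaf, $\mathcal{L}$ its subscheme, and $q=h^0(\mathcal{O}_Q(3H)\otimes\mathcal{I})$. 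If $S$ were a point, the left-hand side would equal the size of the $\bar{G}$-orbit of $S$; by Lemma~\ref{lemma:quadric-small-orbits} every $\bar{G}$-orbit on $Q$ has at least $30$ elements, so the orbit would have to be the unique $30$-point orbit of Lemma~\ref{lemma:quadric-small-orbits} (with stabilizer $\mathbb{A}_4$) and $q=0$. I expect to derive the contradiction in this point case by an intersection-theoretic analysis on the blow-up at a point of this orbit, parallel to Lemma~\ref{lemma:Segre-cubic-5}, combined with the Corti inequality; this will be the main obstacle, since unlike the Segre case $Q$ does not come with an obvious pencil of small-degree curves through the orbit.

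Thus $S$ is a smooth curve of degree $d$ and genus $g$ by Theorem~\ref{theorem:Kawamata}. Kawamata subadjunction yields $2g-2\le 3d(\mu/\lambda-1)<3d$, hence $2g\le 3d+1$. Letting $r$ denote the number of components of the $\bar{G}$-orbit of $S$, Riemann--Roch combined with the cohomology identity gives
$$
r(3d-g+1)=30-q,
$$
which together with $2g\le 3d+1$ forces $rg\le 30$. By Lemmas~\ref{lemma:max-subgroup} and~\ref{lemma:sporadic-genera} we obtain $r=1$ and $g\in\{10,16,19,25\}$. The equation $3d=29+g-q$, the Castelnuovo bound of Theorem~\ref{theorem:Castelnuovo}, and the representation theory of $\mathbb{A}_6$ (which in particular excludes $q=3$, since $H^0(\mathcal{O}_Q(3H))$ admits no three-dimensional $\bar{G}$-subrepresentation) then restrict $(d,q)$ to a short explicit list for each value of $g$.

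Finally I would eliminate each remaining $(g,d,q)$ by intersection. If $q=0$, the curve $S$ is not contained in the $\bar{G}$-invariant Segre cubic $X$ of Example~\ref{example:Segre-cubic}, so $X\cap S$ is a nonempty $\bar{G}$-invariant finite subset of $S$ of cardinality at most $3d$; by Lemma~\ref{lemma:stabilizers} every $\mathbb{A}_6$-orbit on a smooth curve has at least $60$ elements, and $3d<60$ in every surviving case, yielding the contradiction. For each $(g,d,q)$ with $q>0$, where $S$ may lie in $X\cap Q$, I would instead intersect $S$ with the unique $\bar{\Psi}$-invariant hyperplane $H_{\bar{\Psi}}\subset\mathbb{P}^4$ attached to a standard subgroup $\bar{\Psi}\cong\mathbb{A}_5\subset\bar{G}$: since $S\not\subset H_{\bar{\Psi}}$ (otherwise the intersection of $\bar{G}$-translates of $H_{\bar{\Psi}}$ would violate the irreducibility of $W$), the cycle $H_{\bar{\Psi}}\cap S$ has degree $d$ and is supported on a union of $\bar{\Psi}$-orbits whose sizes lie in $\{12,20,30,60\}$, and the resulting arithmetic obstruction eliminates each residual value of $d$.
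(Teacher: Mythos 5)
Your skeleton matches the paper's (Noether--Fano plus the multiplication-by-two trick, Kawamata--Shokurov, Nadel vanishing against $\mathcal{O}_{Q}(3H)$ giving $r(3d-g+1)=30-q$, the orbit-size lemma, and an intersection endgame), but there is a genuine gap exactly where you flag ``the main obstacle'': the case $\mathrm{dim}(S)=0$. You correctly deduce that the orbit $\Sigma$ would have $30$ points and $q=0$, but you then propose an unexecuted blow-up analysis with the Corti inequality, which you yourself admit you cannot carry out for lack of low-degree curves through $\Sigma$. The point of the \emph{second} bullet of Lemma~\ref{lemma:quadric-small-orbits} --- which you cite but do not use --- is precisely to kill this case with no local analysis at all: it produces a cubic hypersurface $X'\subset\mathbb{P}^{4}$ with $\Sigma\subset X'\not\supset Q$, and since $\mathcal{L}$ is reduced (Remark~\ref{remark:log-canonical-subscheme}) and equal to $\Sigma$, the surface $X'\cap Q$ is a nonzero section of $\mathcal{O}_{Q}(3H)\otimes\mathcal{I}$, forcing $q\geqslant 1$ and contradicting $|\Sigma|=30-q=30$. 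As written, your proposal does not close this case.

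Two smaller points. First, ``by Lemmas~\ref{lemma:max-subgroup} and~\ref{lemma:sporadic-genera} we obtain $r=1$'' is too quick: Lemma~\ref{lemma:sporadic-genera} concerns a faithful $\mathbb{A}_{6}$-action on an irreducible curve and says nothing about the components of a reducible $Z$, whose stabilizers are proper subgroups; the paper's Lemma~\ref{lemma:quadric-threefold-reducible-curves} needs a case analysis in $q$ and $r$ ending with a representation-theoretic argument that no line is invariant under an $\mathbb{A}_{5}$-stabilizer. Second, your curve-case endgame is organized differently from the paper's but is sound in outline: your $q=0$ argument (intersecting $S$ with the Segre cubic and invoking the $60$-point lower bound of Lemma~\ref{lemma:stabilizers}, valid since $3d\leqslant 57<60$) transplants the paper's Section~\ref{section:cubic} endgame, while for $q>0$ the hyperplane-section count pins down $d=12$, whence $g=10$ by Castelnuovo and Lemma~\ref{lemma:sporadic-genera}, and $q=3$ is excluded by the one-dimensionality of the trivial isotypic component of $H^{0}(\mathcal{O}_{Q}(3H))$ --- this is exactly the paper's Lemmas~\ref{lemma:in-Segre-cubic} and~\ref{lemma:quadric-small-q}, so carry those computations out explicitly rather than asserting ``the resulting arithmetic obstruction eliminates each residual value of $d$.''
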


By Corollary~\ref{corollary:normalizers}, the isomorphism
$\mathrm{Bir}^{\bar{G}}(Q)\cong\SS_{6}$ follows from
$\bar{G}$-birational superrigidity of~$Q$ and classification of
primitive subgroups in $\SL_5(\C)$ (see \cite{Fe71}). In the
remaining part of this section, we will prove that $Q$ is
$\bar{G}$-birationally superrigid.

Suppose that $Q$ is not $\bar{G}$-birationally superrigid. Arguing
as in the~proof of Theorem~\ref{theorem:Segre-cubic}, we~see that
there~is a~$\bar{G}$-invariant effective $\mathbb{Q}$-divisor $D$
on $Q$ such that the set $\mathbb{LCS}(Q, D)$ consists of
irreducible components of the $\bar{G}$-orbit of $S$, the~log pair
$(Q,D)$ is log canonical, and $D\sim_{\mathbb{Q}}-\delta K_{Q}$
for some positive rational number $\delta<2$, where $S$ is
a~minimal center in $\mathbb{LCS}(Q, D)$ such that either $S$ is
a~point or a~smooth curve.

Let $\mathcal{I}$ be the~multiplier ideal sheaf of the~log pair
$(Q, D)$, let $\mathcal{L}$ be subscheme that is given~by
the~ideal sheaf $\mathcal{I}$, and let $H$ be a~general hyperplane
section of the~threefold $Q\subset\mathbb{P}^{3}$. Then
\begin{equation}
\label{equation:quadric-threefold-equality}
h^{0}\Big(\mathcal{O}_{\mathcal{L}}\otimes\mathcal{O}_{Q}\big(3H\big)\Big)=h^{0}\Big(\mathcal{O}_{Q}\big(3H\big)\Big)-
h^{0}\Big(\mathcal{O}_{Q}\big(3H\big)\otimes\mathcal{I}\Big)=30-h^{0}\Big(\mathcal{O}_{Q}\big(3H\big)\otimes\mathcal{I}\Big)%
\end{equation}
by Theorem~\ref{theorem:Shokurov-vanishing}.

\begin{lemma}
\label{lemma:quadric-threefold-points} The center $S$ is a~curve.
\end{lemma}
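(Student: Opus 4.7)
My plan is to argue by contradiction, supposing that $S$ is a point and then contradicting equation~(\ref{equation:quadric-threefold-equality}) via the orbit classification supplied by Lemma~\ref{lemma:quadric-small-orbits}.

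First, if $S$ were a point, then $\mathrm{LCS}(Q,D)=\Sigma$ would be a finite $\bar{G}$-orbit. Because the log pair $(Q,D)$ is log canonical, Remark~\ref{remark:log-canonical-subscheme} makes $\mathcal{L}$ reduced, so $\mathcal{L}=\Sigma$ as a reduced $0$-dimensional subscheme of $Q$. Hence
\[
h^{0}\bigl(\mathcal{O}_{\mathcal{L}}\otimes\mathcal{O}_{Q}(3H)\bigr)=|\Sigma|,
\]
and equation~(\ref{equation:quadric-threefold-equality}) reads $|\Sigma|=30-q$, where $q=h^{0}(\mathcal{O}_{Q}(3H)\otimes\mathcal{I})\geqslant 0$. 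In particular $|\Sigma|\leqslant 30$.

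Next I would invoke Lemma~\ref{lemma:quadric-small-orbits}: the bound $|\Sigma|\leqslant 30$ forces $|\Sigma|=30$ (and therefore $q=0$), and simultaneously supplies a cubic hypersurface $X\subset\mathbb{P}^{4}$ with $\Sigma\subset X$ and $Q\not\subset X$. Restricting the defining cubic of $X$ to $Q$ then yields a nonzero section of $\mathcal{O}_{Q}(3H)$ vanishing on $\Sigma$, which forces $q\geqslant 1$. This contradicts $q=0$ and rules out the case of $S$ being a point; by the dichotomy recorded in the setup, $S$ must be a smooth curve.

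The one step that genuinely needs verification, and which I expect to be the only subtle point, is the applicability of Theorem~\ref{theorem:Shokurov-vanishing} in the form used in equation~(\ref{equation:quadric-threefold-equality}): with $D\sim_{\mathbb{Q}} 3\delta H$ and $\delta<2$, the nef and big divisor required by the theorem is $-(K_{Q}+D)+3H\sim_{\mathbb{Q}}(6-3\delta)H$, which is ample, so Nadel vanishing applies as claimed. Beyond this routine check, I do not foresee a real obstacle: the hard work, namely the tangent-space representation-theoretic analysis that rules out orbits of size smaller than $30$ and pins down the cubic at size $30$, has already been absorbed into Lemma~\ref{lemma:quadric-small-orbits}, and the present lemma follows as an almost immediate bookkeeping consequence.
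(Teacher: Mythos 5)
Your proposal is correct and is exactly the argument the paper intends: its one-line proof simply cites equation~(\ref{equation:quadric-threefold-equality}) and Lemma~\ref{lemma:quadric-small-orbits}, and your expansion (reducedness of $\mathcal{L}$ giving $|\Sigma|=30-q\leqslant 30$, then the cubic from Lemma~\ref{lemma:quadric-small-orbits} forcing $q\geqslant 1$ against $q=0$) is the intended bookkeeping. No gaps.
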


\begin{proof}
The required assertion follows from
(\ref{equation:quadric-threefold-equality}) and
Lemma~\ref{lemma:quadric-small-orbits}.
\end{proof}

By Theorem~\ref{theorem:Kawamata}, the~curve $S$ is a~smooth curve
of degree $d$ and genus $g$ such that
\begin{equation}\label{eq:g-3d1}
g\leqslant \frac{3d+1}{2}.
\end{equation}
Put $q=h^{0}(\mathcal{O}_{Q}(3H)\otimes\mathcal{I})$, let $Z$ be
the~$\bar{G}$-orbit of the~curve $S$, let $r$ be the~number of
irreducible components of the~curve~$Z$.

\begin{lemma}
\label{lemma:quadric-30-q-equality} The equality $r(3d-g+1)=30-q$
holds.
\end{lemma}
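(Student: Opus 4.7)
The plan is to apply the Nadel--Shokurov vanishing theorem to the log pair $(Q,D)$ and then compute global sections on $Z$ component by component via Riemann--Roch.

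First I would exploit equation (\ref{equation:quadric-threefold-equality}), which already gives
$$
h^{0}\Big(\mathcal{O}_{\mathcal{L}}\otimes\mathcal{O}_{Q}\big(3H\big)\Big)=30-q.
$$
So it remains to identify the left-hand side with $r(3d-g+1)$. Since $(Q,D)$ is log canonical, Remark~\ref{remark:log-canonical-subscheme} tells us that $\mathcal{L}$ is reduced, so as a scheme $\mathcal{L}$ coincides with the reduced closed subscheme whose support is $\mathrm{LCS}(Q,D)=\bigcup_{g\in\bar{G}}g(S)$.

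Next I would argue that $Z$ is a disjoint union. Since $S$ is a minimal center of $\mathbb{LCS}(Q,D)$, Lemma~\ref{lemma:centers} applied exactly as in the paragraph just before Lemma~\ref{lemma:Kawamata-Shokurov-trick} shows that for $g,g'\in\bar{G}$ one has $g(S)\cap g'(S)\neq\varnothing$ if and only if $g(S)=g'(S)$. Hence $\mathcal{L}=Z=S_{1}\sqcup\cdots\sqcup S_{r}$, where each component $S_{i}$ is isomorphic (via some element of $\bar{G}$) to the smooth curve $S$ of degree $d$ and genus $g$.

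Finally I would apply Riemann--Roch on each component separately. For every $i$ the divisor $3H|_{S_{i}}$ has degree $3d$, and the hypothesis $g\leqslant(3d+1)/2$ is equivalent to $3d\geqslant 2g-1>2g-2$, so
$$
h^{1}\Big(\mathcal{O}_{S_{i}}\big(3H|_{S_{i}}\big)\Big)=0
$$
and hence $h^{0}(\mathcal{O}_{S_{i}}(3H|_{S_{i}}))=3d-g+1$ by Riemann--Roch. Summing over the $r$ disjoint components gives
$$
h^{0}\Big(\mathcal{O}_{\mathcal{L}}\otimes\mathcal{O}_{Q}\big(3H\big)\Big)=\sum_{i=1}^{r}h^{0}\Big(\mathcal{O}_{S_{i}}\big(3H|_{S_{i}}\big)\Big)=r\big(3d-g+1\big),
$$
which combined with the displayed vanishing equation yields $r(3d-g+1)=30-q$, as required. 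There is no serious obstacle here; the only point to be careful about is ensuring the $h^{1}$-vanishing, which is exactly guaranteed by the genus bound recorded just before the statement.
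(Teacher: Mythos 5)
Your proposal is correct and follows exactly the route the paper takes: the paper's one-line proof cites equation (\ref{equation:quadric-threefold-equality}) and the Riemann--Roch theorem together with the inequality $3d\geqslant 2g-1$, and your write-up simply fills in the implicit steps (reducedness of $\mathcal{L}$, disjointness of the components of $Z$ via Lemma~\ref{lemma:centers}, and the $h^{1}$-vanishing from $3d>2g-2$). No discrepancies to report.
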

\begin{proof}
The equality follows from
(\ref{equation:quadric-threefold-equality}) and the~Riemann--Roch
theorem, because $3d\geqslant 2g-1$.
\end{proof}

Let $X$ be the~cubic threefold in $\mathbb{P}^{5}$ that is given
by $\sum_{i=0}^{5}x_{i}=\sum_{i=0}^{5}x_{i}^{3}=0$
(cf. Example~\ref{example:Segre-cubic}).

\begin{lemma}
\label{lemma:quadric-small-q} Suppose that $q\geqslant 2$. Then
$q\geqslant 5$.
\end{lemma}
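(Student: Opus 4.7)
The plan is to argue by representation theory, exploiting the small dimensions of the irreducible representations of $\mathbb{A}_6$. Since $D$ is $\bar{G}$-invariant, the multiplier ideal sheaf $\mathcal{I}$ is $\bar{G}$-equivariant, hence $H^0(\mathcal{O}_Q(3H)\otimes\mathcal{I})$ is a $\bar{G}$-subrepresentation of the $30$-dimensional $\bar{G}$-module $H^0(\mathcal{O}_Q(3H))$. Note that $\bar{G}\cong\mathbb{A}_{6}$ has trivial center, so its projective action on $\mathbb{P}^{4}$ lifts uniquely to a~linear action on $W$, and all the cohomology spaces below are honest $\bar{G}$-modules.

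Recall from \cite{Atlas} that the~irreducible complex representations of $\mathbb{A}_{6}$ have dimensions $1,5,5,8,8,9,10$. In particular, any $\bar{G}$-subrepresentation of dimension $2$, $3$, or $4$ must consist entirely of copies of the~trivial representation, and hence contain at least two such copies. Therefore the~assertion reduces to showing that the~trivial representation occurs in $H^0(\mathcal{O}_Q(3H))$ with multiplicity at most~$1$.

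For the~latter, I would use the~$\bar{G}$-equivariant short exact sequence
$$
0\longrightarrow \mathcal{O}_{\mathbb{P}^{4}}\big(1\big)\longrightarrow \mathcal{O}_{\mathbb{P}^{4}}\big(3\big)\longrightarrow \mathcal{O}_{Q}\big(3H\big)\longrightarrow 0,
$$
whose first map is multiplication by the~invariant defining equation of $Q$. Since $H^1(\mathbb{P}^{4},\mathcal{O}_{\mathbb{P}^{4}}(1))=0$ and taking $\bar{G}$-invariants is exact for the~finite group $\bar{G}$ (Maschke), one obtains
$$
\dim H^0\Big(\mathcal{O}_Q\big(3H\big)\Big)^{\bar{G}}=\dim H^0\Big(\mathcal{O}_{\mathbb{P}^{4}}\big(3\big)\Big)^{\bar{G}}-\dim H^0\Big(\mathcal{O}_{\mathbb{P}^{4}}\big(1\big)\Big)^{\bar{G}}.
$$
The~second term vanishes because $W^{*}$ is an~irreducible nontrivial $\bar{G}$-representation, and the~first equals $1$ because, as recorded at the~beginning of Section~\ref{section:cubic}, the~Segre cubic is the~unique $\bar{G}$-invariant cubic hypersurface in $\mathbb{P}^{4}$. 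Hence $\dim H^0(\mathcal{O}_Q(3H))^{\bar{G}}=1$, completing the~reduction.

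The~argument involves no substantive obstacle: all ingredients—the~Atlas character-table data and the~uniqueness of the~$\bar{G}$-invariant cubic in~$\mathbb{P}^{4}$—are already available in the~paper. The~only small point to be careful about is that the~$\bar{G}$-action really is linear on the~relevant cohomology, which follows from the~triviality of the~center of $\mathbb{A}_{6}$.
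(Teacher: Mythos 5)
Your proof is correct and is essentially the paper's own argument: since $\mathbb{A}_{6}$ has no nontrivial irreducible representation of dimension less than $5$, a subrepresentation of dimension $2$, $3$ or $4$ would be trivial, producing at least two $\bar{G}$-invariant surfaces in $|\mathcal{O}_{Q}(3H)|$ and contradicting the fact that $Q\cap X$ is the only one; you merely make the uniqueness explicit via the restriction sequence. (One cosmetic remark: the linear $\bar{G}$-action on these cohomology spaces comes from the subgroup $G\subset\mathbb{SL}(5,\mathbb{C})$ fixed at the start of Section~\ref{section:quadric}, not from the triviality of the center of $\mathbb{A}_{6}$ --- lifting projective representations to linear ones is controlled by the Schur multiplier, which for $\mathbb{A}_{6}$ is nontrivial.)
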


\begin{proof}
Suppose that $2\leqslant q\leqslant 4$. It follows
from~\cite{Atlas} that $\bar{G}$ acts trivially on
the~$q$-dimensional vector space
$H^{0}(\mathcal{O}_{Q}(3H)\otimes\mathcal{I})$. But $Q\cap X$ is
the~only $\bar{G}$-invariant surface in $|\mathcal{O}_{Q}(3H)|$.
\end{proof}

Note that $Z$ is not contained in a~hyperplane in
$\mathbb{P}^{4}$, since $W$ is an irreducible
$\bar{G}$-representation.

\begin{lemma}
\label{lemma:quadric-threefold-reducible-curves} The curve $S$ is
irreducible.
\end{lemma}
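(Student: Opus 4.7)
The plan mirrors the~proof of Lemma~\ref{lemma:Segre-A6-reducible-curves}. Suppose $r\geqslant 2$, and let us derive a~contradiction.

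First, I~would observe that Theorem~\ref{theorem:Kawamata} applied to $(Q, D)$ with $D\sim_{\mathbb{Q}}-\delta K_{Q}$, $\delta<2$, gives (as stated before Lemma~\ref{lemma:quadric-30-q-equality}) the~bound $g\leqslant(3d+1)/2$, so that
$$
3d-g+1\ \geqslant\ \tfrac{3d+1}{2}\ \geqslant\ 2
$$
whenever $d\geqslant 1$. Combined with the~numerical identity of Lemma~\ref{lemma:quadric-30-q-equality}, this forces $r\leqslant 15$. Since $r$ is the~index of the~stabilizer of $S$ in $\bar{G}\cong\mathbb{A}_{6}$, Lemma~\ref{lemma:max-subgroup} then leaves only the~possibilities $r\in\{6,10,15\}$.

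Next I~would run through each of these three values. In each case, Lemma~\ref{lemma:quadric-30-q-equality} pins down $3d-g+1$ and~$q$; together with the~genus bound, Lemma~\ref{lemma:quadric-small-q} on the~forbidden values of~$q$, and the~elementary facts that a~smooth curve of degree~$1$ or~$2$ has genus~$0$ and a~smooth curve of degree~$3$ has genus at most~$1$, this quickly rules out the~cases $r\in\{10, 15\}$ and all but one of the~sub-cases of $r=6$. The~only surviving possibility is
$$
r=6,\qquad d=1,\qquad g=0,\qquad q=6,
$$
so that $Z$ is a~$\bar{G}$-orbit of $6$ pairwise disjoint lines on $Q\subset\mathbb{P}^{4}$.

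Finally, I~would rule out this last configuration by a~representation-theoretic argument. The~stabilizer $\bar{F}\subset\bar{G}$ of one such line has order~$60$, hence $\bar{F}\cong\mathbb{A}_{5}$ by Lemma~\ref{lemma:max-subgroup}. An~$\bar{F}$-invariant line in $\mathbb{P}^{4}=\mathbb{P}(W)$ corresponds to a~$2$-dimensional $\bar{F}$-invariant subspace of~$W$. But the~irreducible representations of~$\mathbb{A}_{5}$ have dimensions $1,3,3,4,5$, with only the~trivial one-dimensional representation (as $\mathbb{A}_{5}$ is simple), so the~only possible decompositions of~$W$ as an~$\bar{F}$-module are the~irreducible five-dimensional one and $W_{t}\oplus W_{4}$ with $W_{4}$ irreducible of dimension~$4$. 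Neither contains a~$2$-dimensional subrepresentation, giving the~desired contradiction. The~main obstacle is just the~bookkeeping of the~numerical case analysis in the~middle step; the~concluding representation-theoretic argument is short and uses only the~character table of $\mathbb{A}_{5}$.
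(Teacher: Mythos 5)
Your proposal is correct and follows essentially the same route as the paper: bound $r$ via $3d-g+1\geqslant 2$ and Lemma~\ref{lemma:max-subgroup}, run the numerical case analysis from Lemma~\ref{lemma:quadric-30-q-equality} to reduce to six disjoint lines with $\mathbb{A}_5$-stabilizer, and kill that case by showing $W$ restricted to $\bar{F}\cong\mathbb{A}_5$ has no two-dimensional invariant subspace. One small imprecision in the last step: the dimensions $1,3,3,4,5$ of the irreducible $\mathbb{A}_5$-representations alone do not exclude a decomposition such as $W_t\oplus W_t\oplus W_3$ (which \emph{would} contain a two-dimensional invariant subspace), so you should justify the dichotomy ``$W$ irreducible or $W\cong W_t\oplus W_4$'' by actually restricting the character of the five-dimensional $\mathbb{A}_6$-representation (or by Frobenius reciprocity, which shows the trivial summand occurs with multiplicity at most one), exactly as the paper implicitly does.
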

\begin{proof}
Suppose that $r\geqslant 2$. By Lemma~\ref{lemma:max-subgroup},
either $r=6$, or $r\geqslant 10$.

Suppose that $q=0$. If $r=6$, then it follows from
Lemma~\ref{lemma:quadric-30-q-equality} that
\begin{equation}\label{eq:quadric-2}
3d-g+1=5.
\end{equation}
From (\ref{eq:g-3d1}) and (\ref{eq:quadric-2}) we obtain that $g\le 5$.
Applying~(\ref{eq:quadric-2}) we see that $d\le 3$ and thus $g\le 1$.
Applying~(\ref{eq:quadric-2}) once again, we obtain a contradiction.
Therefore, we see that $r\geqslant 10$ and
\begin{equation}\label{eq:quadric-3}
3d-g+1\leqslant 3
\end{equation}
by Lemma~\ref{lemma:quadric-30-q-equality}.
By~(\ref{eq:g-3d1}) and~(\ref{eq:quadric-3})
we have $g\leqslant 3$.
Applying~(\ref{eq:quadric-3}) again, we obtain $d=1$ and thus $g=0$,
which is incompatible with~(\ref{eq:quadric-3}).

Suppose that $q=1$. Then $r(3d-g+1)=29$ by
Lemma~\ref{lemma:quadric-30-q-equality}, which is impossible by
Lemma~\ref{lemma:max-subgroup}.

Therefore, we see that $q\geqslant 2$. Hence $q\geqslant 5$ by
Lemma~\ref{lemma:quadric-small-q}. It follows from
Lemma~\ref{lemma:quadric-30-q-equality} that $r(3d-g+1)\leqslant
25$, which implies that $3d-g\leqslant 3$. We have $g\leqslant 4$,
because $2g-1\leqslant 3d$. Thus $d\leqslant 2$ and $g=0$.

Applying Lemma~\ref{lemma:quadric-30-q-equality}, we get $d=1$ and
$r=6$. Then $S$ is a~line, and $Z$ is a~union of six lines.

Let $\bar{\Psi}\subset\bar{G}$ be a~stabilizer of the~line $S$.
Then $\bar{\Psi}\cong\A_5$ by
Lemma~\ref{lemma:max-subgroup}.

Let us consider $W$ as a~representation of the~group
$\bar{\Psi}\cong\A_5$. Then either $W$ is irreducible, or $W\cong
W_{t}\oplus W_{4}$, where $W_{t}$ and $W_{4}$ are the~trivial
one-dimensional and the~standard four-dimensional representations
of the~group $\bar{\Psi}\cong\A_5$, respectively. In both cases,
the~line $S$ can not be $\bar{\Psi}$-invariant.
\end{proof}

We see that $r=1$. Hence $g\leqslant 30$ and $d\leqslant 19$ by
(\ref{equation:quadric-threefold-equality}), because $3d\geqslant
2g-1$.

\begin{lemma}
\label{lemma:in-Segre-cubic} The equalities $d=12$ and $g=10$
hold.
\end{lemma}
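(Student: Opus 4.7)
The strategy is to narrow the admissible pairs $(d,g)$ to the single possibility $(12,10)$ by combining the arithmetic constraints already established with group-theoretic input. Since $\bar{G}\cong\mathbb{A}_6$ is simple and has no fixed points on $Q$ by Lemma~\ref{lemma:quadric-small-orbits}, it acts faithfully on $S$, so Lemma~\ref{lemma:sporadic-genera} together with $g\leq(3d+1)/2\leq 29$ restricts $g$ to $\{10,16,19,25\}$. For each such $g$, Lemma~\ref{lemma:quadric-30-q-equality} with $r=1$ yields $q=29+g-3d$, and combined with $q\ge 0$, the exclusion $q\notin\{2,3,4\}$ from Lemma~\ref{lemma:quadric-small-q}, and the Castelnuovo bound of Theorem~\ref{theorem:Castelnuovo}, one is left with only a short list of candidate pairs.

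To single out $d=12$ I would use the standard $\mathbb{A}_5$-subgroup $\bar{\Psi}\subset\bar{G}$: by Remark~\ref{remark:standard} there is a $\bar{\Psi}$-invariant hyperplane $H_0\subset\mathbb{P}^4$, and the $\bar{G}$-orbit of $H_0$ consists of six hyperplanes with empty common intersection. Since $S$ is irreducible and $\bar{G}$-invariant, $S\not\subset H_0$, so $H_0\cap S$ is a finite $\bar{\Psi}$-invariant subscheme of $S$ of degree $d$. By Lemma~\ref{lemma:stabilizers} every $\bar{\Psi}$-orbit on $S$ has size in $\{12,20,30,60\}$, and by $\bar{\Psi}$-equivariance the multiplicity of $H_0\cap S$ is constant along each orbit; hence $d$ is a positive sum $\sum_i m_i|O_i|$ with every $|O_i|\ge 12$. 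Combined with the bound $11\le d\le 13$ available once $g=10$ (from Castelnuovo together with $q\ge 0$), this forces $d=12$.

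The main obstacle is to rule out the candidates with $g\in\{16,19,25\}$, where the arithmetic alone leaves several admissible $d$. There I would combine the orbit-decomposition argument above with Clifford's theorem (Theorem~\ref{theorem:Clifford}) applied to $\mathcal{O}_S(H)$ and $\mathcal{O}_S(2H)$, and with the representation-theoretic fact that $\mathbb{A}_6$ has no nontrivial irreducible representations of dimension less than $5$, used exactly as in the proof of Lemma~\ref{lemma:quadric-small-q}, to bound $h^0(\mathcal{O}_S(kH))$ and detect forbidden small $\bar{G}$-subrepresentations in $H^0(\mathcal{O}_Q(kH)\otimes\mathcal{I}_S)$ for $k=1,2$. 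An alternative route, when $3d<60$, is to intersect $S$ with the unique $\bar{G}$-invariant cubic section $Q\cap X$ (with $X$ the Segre cubic) and invoke the lower bound $60$ on $\bar{G}$-orbit sizes from Lemma~\ref{lemma:stabilizers}, which forces $S\subset X$; further restrictions on curves of prescribed $(d,g)$ lying on the smooth K3 surface $Q\cap X$ then eliminate each remaining case.
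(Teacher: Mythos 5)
Your second paragraph is, in essence, the paper's entire proof of this lemma, but you deploy it in the wrong order and thereby manufacture an obstacle that does not exist. The paper intersects $S$ with one of the six $\mathbb{A}_5$-invariant hyperplane sections $\Pi_i=Q\cap\{x_i=0\}$ (their common intersection is empty, so $S\not\subset\Pi_0$ after relabelling) and observes that $d=\Pi_0\cdot S$ is a sum $\sum_i m_i|O_i|$ over $\bar{\Psi}$-orbits with every $|O_i|\geqslant 12$ (the paper quotes Lemma~\ref{lemma:orbits-for-standard-A5-in-A6} on $\Pi_0\cong\mathbb{P}^1\times\mathbb{P}^1$; your variant via Lemma~\ref{lemma:stabilizers} applied to the smooth curve $S$ works at least as well). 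The point you miss is that the bound $d\leqslant 19$ is available \emph{unconditionally} before the lemma --- you even use it to get $g\leqslant 29$ --- so the orbit decomposition already forces $d=12$ with no case analysis on $g$: no sum of terms from $\{12,20,30,60\}$, with multiplicities, other than a single $12$ fits under $19$. Theorem~\ref{theorem:Castelnuovo} then gives $g\leqslant 15$ for a non-degenerate degree-$12$ curve in $\mathbb{P}^4$, and Lemma~\ref{lemma:sporadic-genera} leaves $g=10$. That is the whole proof.

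As written, however, your proposal has a genuine gap: you only run the hyperplane argument ``once $g=10$'' and explicitly defer $g\in\{16,19,25\}$ to a paragraph of unexecuted plans (Clifford plus representation theory to ``detect forbidden subrepresentations''; ``further restrictions on curves of prescribed $(d,g)$ lying on the smooth K3 surface \dots then eliminate each remaining case''). None of this is carried out, and it is not evident that it would close every remaining $(d,g)$ pair. Two smaller points: the $\bar{\Psi}$-invariant hyperplane in $\mathbb{P}^4$ comes from the splitting $W\cong W_t\oplus W_4$ of the five-dimensional representation under a standard $\mathbb{A}_5$ (cf.\ the proof of Lemma~\ref{lemma:Segre-small-orbits}), not from Remark~\ref{remark:standard}, which concerns the four-dimensional representation of $2.\mathbb{A}_5$; and your parenthetical observation that $S\subset Q\cap X$ whenever $3d<60$ is correct (it would give $q\geqslant 1$), but it is not needed.
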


\begin{proof}
Let $\Pi_{i}\subset Q$ be a~hyperplane section that is cut out by
$x_{i}=0$, where $i\in\{0,\ldots,5\}$. Then
$\Pi_0\cap\ldots\cap\Pi_5=\varnothing$, which implies that,
without loss of generality, we may assume that
$S\not\subset\Pi_{0}$.

Let $\bar{\Psi}\subset\bar{G}$ be a~stabilizer of the~surface
$\Pi_{0}$. Then $\bar{\Psi}\cong\A_{5}$ and the~embedding
$\bar{\Psi}\subset \bar{G}$ must be standard. Hence we have
$$
19\geqslant d=\Pi_{0}\cdot S\geqslant\big|\Pi_{0}\cap S\big|,%
$$
which implies that $d=\Pi_{0}\cdot S=|\Pi_{0}\cap S|=12$ by
Lemma~\ref{lemma:orbits-for-standard-A5-in-A6}, because
$\Pi_{0}\cong\mathbb{P}^{1}\times\mathbb{P}^{1}$.

It follows from Theorem~\ref{theorem:Castelnuovo} that $g\leqslant
15$. Thus $g=10$ by Lemma~\ref{lemma:sporadic-genera}.
\end{proof}

Thus, it follows from Lemma~\ref{lemma:quadric-30-q-equality} that
$q=3$, which is impossible by Lemma~\ref{lemma:quadric-small-q}.

The obtained contradiction completes the~proof of
Theorem~\ref{theorem:quadric-threefold}.

\appendix

\section{Klein cubic threefold}
\label{section:Klein-cubic}

Put $\bar{\Gamma}=\PSL_2(\mathbb{F}_{11})$. Let $V$ be a~Fano
threefold~with terminal singularities such that $V$ admits a
non-trivial action of the~group $\bar{\Gamma}$, and
the~$\bar{\Gamma}$-invariant subgroup of the~group
$\mathrm{Cl}(V)$ is $\mathbb{Z}$.

\begin{example}
\label{example:Klein-cubic} If $V$ is a~smooth hypersurface in
$\mathbb{P}^4$ that is given by the~equation
$$
x_0^2x_1+x_1^2x_2+x_2^2x_3+x_3^2x_4+x_4^2x_0=0\subset\mathbb{P}^{4}\cong\mathrm{Proj}\Big(\mathbb{C}\big[x_{0},x_{1},x_{2},x_{3},x_{4}\big]\Big),
$$
then $V$ is non-rational by \cite[Theorem~0.12]{ClGr72}, and
$\mathrm{Aut}(V_{3})\cong\bar{\Gamma}$ by~\cite{Ad78}.
\end{example}

\begin{example}[{\cite[Example~2.9]{Pr09}}]
\label{example:V14} There is a~non-trivial action of the group
$\bar{\Gamma}$ on $\mathrm{Gr}(2,6)$, and
$\mathrm{Pic}(\mathrm{Gr}(2,6))=\mathbb{Z}[H]$ for some very ample
divisor $H$. The linear system~$|H|$ gives an embedding
$\zeta\colon\mathrm{Gr}(2,6)\to\mathbb{P}^{14}$, which induces
a~non-trivial action of the group $\bar{\Gamma}$ on
$\mathbb{P}^{14}$. Put $V=\zeta(\mathrm{Gr}(2,6))\cap\Pi$, where
$\Pi$ is the~unique $\bar{\Gamma}$-invariant linear subspace
$\Pi\subset\mathbb{P}^{14}$ such that $\mathrm{dim}(\Pi)=9$. Then
the~variety $V$ is a~smooth Fano threefold such that
$\mathrm{Pic}(V)\cong\mathbb{Z}$ and $-K_{V}^{3}=14$ (see
\cite{IsPr99}). Furthermore, the~variety $V$ admits a non-trivial
action of the~group $\bar{\Gamma}$.
\end{example}

Let $V_{3}$ and $V_{14}$ be the~threefolds that are constructed in
Examples~\ref{example:Klein-cubic} and~\ref{example:V14},
respectively.

\begin{theorem}[{\cite[Theorem~1.5]{Pr09}}]
\label{theorem:Yura-Cremona-non-rational} There is a
$\bar{\Gamma}$-equivariant birational map $\chi\colon V\dasharrow
U$ such~that either $U\cong V_{3}$ or $U\cong V_{14}$.
\end{theorem}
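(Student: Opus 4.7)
The plan is to classify, up to $\bar{G}$-equivariant birational equivalence, all Fano threefolds with terminal singularities carrying a non-trivial action of $\bar{G} = \mathbb{PSL}(2,\mathbb{F}_{11})$ with $\mathrm{Cl}(V)^{\bar{G}} = \mathbb{Z}$, and then to identify the resulting equivalence classes with $V_{3}$ and $V_{14}$. Since the hypotheses say $V$ is already a $\bar{G}$-Mori fiber space over a point, the only freedom lies in the choice of Fano model.

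First, I would extract representation-theoretic restrictions. The complex irreducible representations of $\bar{G}$ have dimensions $1,5,5,10,10,11,12,12$, and the smallest faithful representation has dimension $5$; in particular, every non-trivial $\bar{G}$-orbit on $V$ has size at least $11$, and every $\bar{G}$-equivariant embedding $V \hookrightarrow \mathbb{P}^{N}$ is severely constrained. Moreover, the $\bar{G}$-module $H^{0}(V,-K_{V})$, of dimension $g+2$ in the smooth case, must decompose as a sum of the representations above, pinning down a very short list of possible anticanonical genera.

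Second, I would analyze the singularities of $V$. A singular terminal point $P \in V$ has a finite $\bar{G}$-orbit of size at least $11$, and the stabilizer $\bar{G}_{P}$ must act faithfully on the Zariski tangent space (cf.\ Remark~\ref{remark:tangent-space-representation}) and on the local index-one cover. Combining this with the classification of three-dimensional terminal singularities and the list of faithful small-dimensional representations of subgroups of $\bar{G}$ should either force $V$ to be smooth or leave only a very restricted list of singular models, each of which I would treat individually.

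Third, for smooth $V$, I would invoke the Mori--Mukai classification of smooth Fano threefolds together with the known computations of their automorphism groups, and discard those families whose general member has automorphism group too small to contain $\bar{G}$. The two families that survive naturally are the cubics (containing $V_{3}$, with $\mathrm{Aut}(V_{3}) \cong \bar{G}$) and the Mukai genus-$8$ threefolds, realised as linear sections of $\mathrm{Gr}(2,6) \subset \mathbb{P}^{14}$ (containing $V_{14}$, cut out by the unique $\bar{G}$-invariant $\mathbb{P}^{9}$). For any remaining $\bar{G}$-Fano candidate one would construct an explicit $\bar{G}$-equivariant Sarkisov link to $V_{3}$ or $V_{14}$. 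The main obstacle I anticipate is precisely this final reduction: eliminating a priori possible $\bar{G}$-Fano terminal models not isomorphic to $V_{3}$ or $V_{14}$, and constructing the equivariant Sarkisov links required. This should be handled by running a $\bar{G}$-equivariant MMP, exploiting the fact that $\bar{G}$ does not embed in $\mathrm{PGL}(2,\mathbb{C})$ (so no $\bar{G}$-equivariant Mori fibration to a positive-dimensional base is possible with non-trivial action on the base), and then applying $\bar{G}$-equivariant birational rigidity arguments to prune the Sarkisov graph down to the two asserted Fano models.
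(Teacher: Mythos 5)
This statement is not proved in the paper at all: it is quoted verbatim from \cite[Theorem~1.5]{Pr09} (Prokhorov's classification of simple finite subgroups of $\mathrm{Cr}_3(\mathbb{C})$), and the authors use it as a black box in Appendix~\ref{section:Klein-cubic}. So there is no internal proof to compare your attempt against; the only meaningful comparison is with Prokhorov's argument, and your outline does track its general shape --- equivariant MMP, orbit and representation-theoretic constraints coming from the fact that the smallest faithful representation of $\mathbb{PSL}(2,\mathbb{F}_{11})$ has dimension $5$ and the smallest orbit has length $11$, analysis of terminal singular points via the index-one cover (exactly as in Lemma~\ref{lemma:1} of Appendix~\ref{section:del-Pezzo}, which carries out this kind of argument for $\mathbb{A}_6$), and the identification of the surviving Fano models inside the Iskovskikh--Mukai classification.

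The genuine gap is that your text is a plan rather than a proof: every load-bearing step is introduced with ``I would'' and none is executed. In particular, the step you yourself flag as the main obstacle --- showing that \emph{every} $\bar{G}$-Fano terminal threefold with $\mathrm{Cl}(V)^{\bar{G}}\cong\mathbb{Z}$ is biregular (not merely linked) to $V_3$ or $V_{14}$ --- is the entire content of the theorem, and it is not reducible to ``invoke Mori--Mukai and discard families with small automorphism groups'': one must bound the degree $-K_V^3$ and the rank of $\mathrm{Cl}(V)$, rule out non-Gorenstein and Gorenstein singular models case by case, and handle Fano threefolds of higher Fano index (e.g.\ one must exclude $\mathbb{P}^3$, the quadric, and the degree-$5$ threefold $V_5$ by showing $\mathbb{PSL}(2,\mathbb{F}_{11})$ does not act on them, which again is representation theory but has to be done). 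Also note that your appeal to ``$\bar{G}$-equivariant birational rigidity arguments to prune the Sarkisov graph'' is circular in this context: the superrigidity of $V_3$ and $V_{14}$ is precisely what the present paper proves in Theorem~\ref{theorem:Klein-cubic} \emph{using} the statement under discussion as input, so it cannot be assumed in its proof. As written, the proposal identifies the right ingredients but establishes none of them.
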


\begin{remark}[{see~\cite[Remark~2.10]{Pr09}}]
\label{remark:Palatini-quartic} The varieties $V_{3}$ and
$V_{14}$ are birationally equivalent.
\end{remark}

The main purpose of this section is to prove the~following result
(cf. \cite[Remark~2.10]{Pr09}).

\begin{theorem}
\label{theorem:Klein-cubic}
The varieties $V_3$ and $V_{14}$ are $\bar{\Gamma}$-birationally superrigid.
\end{theorem}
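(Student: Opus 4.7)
The plan is to adapt, separately, the arguments of Theorem~\ref{theorem:Segre-cubic} and Theorem~\ref{theorem:quadric-threefold} to each of the smooth Fano threefolds $V=V_{3}$ and $V=V_{14}$. Assume $V$ is not $\bar{G}$-birationally superrigid. By the Noether--Fano inequality (cf.~Lemma~\ref{lemma:Segre-cubic-Noether-Fano}) there is a $\bar{G}$-invariant mobile linear system $\mathcal{M}$ with $K_{V}+\lambda\mathcal{M}\sim_{\mathbb{Q}}0$ such that $(V,\lambda\mathcal{M})$ is not canonical. The multiplication-by-two trick (cf.~Lemma~\ref{lemma:Segre-cubic-1}) then gives $\mu<2\lambda$ with $(V,\mu\mathcal{M})$ strictly log canonical, and by Lemma~\ref{lemma:Kawamata-Shokurov-trick} and Remark~\ref{remark:Kawamata-Shokurov-trick} we may assume $\mathbb{LCS}(V,\mu\mathcal{M})=\bar{G}\cdot S$ for a single minimal centre $S$, which cannot be a divisor because $\mathcal{M}$ is mobile.

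Let $H$ denote the hyperplane class, so that $-K_{V}=2H$ on $V_{3}$ and $-K_{V}=H$ on $V_{14}$, and write $D=\mu\mathcal{M}\sim_{\mathbb{Q}}-\delta K_{V}$ with $0<\delta<2$. The Nadel vanishing theorem (Theorem~\ref{theorem:Shokurov-vanishing}) applied with $k=2$ on $V_{3}$ and $k=1$ on $V_{14}$ yields
$$
h^{0}\bigl(\mathcal{O}_{\mathcal{L}}\otimes\mathcal{O}_{V}(kH)\bigr)=h^{0}\bigl(\mathcal{O}_{V}(kH)\bigr)-h^{0}\bigl(\mathcal{O}_{V}(kH)\otimes\mathcal{I}\bigr),
$$
where $\mathcal{L}$ is the LCS subscheme and $\mathcal{I}$ is its ideal sheaf. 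The right-hand side is bounded by $15$ on $V_{3}$ (all quadrics in $\mathbb{P}^{4}$) and by $10$ on $V_{14}$ (the $\bar{G}$-invariant hyperplane $\Pi\subset\mathbb{P}^{14}\supset\mathrm{Gr}(2,6)$); these bounds control both $|\bar{G}\cdot S|$ in the point case and, via the Riemann--Roch theorem as in Lemma~\ref{lemma:15-q-equality}, the degree of $S$ in the curve case.

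The point case on $V_{14}$ is immediate, since the minimal non-trivial $\bar{G}$-orbit on any faithful projective representation of $\bar{G}=\mathbb{PSL}(2,\mathbb{F}_{11})$ has size at least $11$ while the ambient $10$-dimensional representation of $\bar{G}$ on $\Pi$ is irreducible. On $V_{3}$ only orbits of size $11$ or $12$ are compatible with the Nadel bound; these correspond to stabilisers $\mathbb{A}_{5}$ and $\mathbb{F}_{11}\rtimes\mathbb{F}_{5}$ respectively, and a character-table computation parallel to Lemmas~\ref{lemma:Segre-small-orbits}, \ref{lemma:Segre-small-orbits-10} and~\ref{lemma:Segre-small-orbits-15} excludes both possibilities by showing that the unique candidate fixed point in $\mathbb{P}^{4}$ does not lie on the Klein cubic. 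The curve case requires bounding $S$ by Kawamata subadjunction (Theorem~\ref{theorem:Kawamata}), Castelnuovo's theorem (Theorem~\ref{theorem:Castelnuovo}) and Clifford's theorem (Theorem~\ref{theorem:Clifford}); the Riemann--Hurwitz analysis for $\bar{G}$ acting on $S$, using element orders $\{1,2,3,5,6,11\}$ in analogy with Lemma~\ref{lemma:sporadic-genera}, restricts the admissible genera to a short list that one eliminates by comparing with the projective embedding of~$V$.

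The principal obstacle is the combinatorial and representation-theoretic bookkeeping for $\mathbb{PSL}(2,\mathbb{F}_{11})$: enumerating the small $\bar{G}$-orbits of points and of smooth irreducible curves in $\mathbb{P}^{4}$ and in $\mathbb{P}^{9}$, determining which of them can lie on $V_{3}$ or on $V_{14}$, and, for each surviving pair $(\deg S,\operatorname{genus} S)$, producing an auxiliary $\bar{G}$-invariant surface of small degree whose intersection with $S$ yields a numerical contradiction, exactly as in the concluding argument of Section~\ref{section:cubic}. A secondary technical point, in the spirit of Lemma~\ref{lemma:six-lines}, is ruling out reducible orbits $Z$ by a blow-up and inversion-of-adjunction argument on the exceptional divisor, using an analogue of Lemma~\ref{lemma:quasric-surface-Dokshitzer} for the restriction of the ambient representation to the stabiliser of a component of $Z$.
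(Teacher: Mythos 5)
Your skeleton coincides with the paper's: Noether--Fano, the multiplication-by-two trick, Kawamata--Shokurov, Nadel vanishing with $2H$ on $V_3$ (bound $15$) and $H$ on $V_{14}$ (bound $10$), then a point case and a curve case. But the proposal stops short of a proof precisely where the content lies, and the routes you sketch for closing it are not the ones that work. In the curve case you defer to ``bookkeeping'': enumerating invariant curves, and for each surviving pair $(\deg S,g)$ producing an auxiliary $\bar{G}$-invariant surface whose intersection with $S$ gives a contradiction. No such enumeration or auxiliary surface is needed, and you have not shown that this programme terminates. The actual mechanism is a pincer on the genus alone: on one side, Theorem~\ref{theorem:Hurwitz-bound} (the Hurwitz bound $|\mathrm{Aut}(S)|\le 84(g-1)$ together with Breuer's refinements for $g\le 15$) shows that a curve admitting a faithful action of a group of order $660$ must have $g\ge 11$ (and, for the $V_3$ case, $g=14$ once $g\le 14$ is known); on the other side, the Nadel equality combined with Riemann--Roch on $\mathcal{O}_{\mathcal{L}}(H)$ (resp.\ $\mathcal{O}_{\mathcal{L}}(2H)$) and Kawamata's inequality $2g-2<S\cdot H$ caps $S\cdot H$ and hence forces $g\le 10$ on $V_{14}$ and $g\le 14$ on $V_3$; the residual case $g=14$ on $V_3$ dies by Riemann--Hurwitz. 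Castelnuovo and Clifford, which you invoke, play no role here. Similarly, your plan to exclude reducible $Z$ by blowing up and applying inversion of adjunction with an analogue of Lemma~\ref{lemma:quasric-surface-Dokshitzer} is unnecessary and unverified: since $\mathbb{PSL}(2,\mathbb{F}_{11})$ has no nontrivial homomorphism to $\mathbb{S}_r$ for $1<r\le 10$, a reducible orbit has $r\ge 11$ components, which contradicts the same Riemann--Roch count $r(S\cdot H-g+1)\le 10$ (resp.\ $\le 15$) directly. This minimal-permutation-degree fact is the single lever that kills both the reducible case and (via orbit sizes) constrains the point case, and it is absent from your write-up.

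Two smaller inaccuracies. In the point case on $V_3$ you propose to exhibit ``the unique candidate fixed point'' of the stabiliser $\bar{F}$ and check it is off the Klein cubic; in fact the restriction of the five-dimensional representation $W$ to either admissible stabiliser ($\mathbb{A}_5$ of index $11$ or $\mathbb{Z}_{11}\rtimes\mathbb{Z}_5$ of index $12$) is irreducible, so there is no candidate fixed point at all --- the contradiction is representation-theoretic, not geometric. In the point case on $V_{14}$ your argument via irreducibility of the $10$-dimensional representation is plausible, but the paper's route (a $\bar{G}$-fixed point on the threefold would make the tangent space a faithful three-dimensional representation of $\mathbb{PSL}(2,\mathbb{F}_{11})$, which does not exist) avoids having to establish that irreducibility.
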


\begin{corollary}
\label{corollary:Klein-cubic-baby} The birational map $\chi\colon
V\dasharrow U$ in Theorem~\ref{theorem:Yura-Cremona-non-rational}
is biregular.
\end{corollary}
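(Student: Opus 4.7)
The plan is to treat $V_{3}$ and $V_{14}$ in parallel, following the scheme used in the proofs of Theorems~\ref{theorem:Segre-cubic} and~\ref{theorem:quadric-threefold}. Suppose for contradiction that $V\in\{V_{3},V_{14}\}$ is not $\bar{G}$-birationally superrigid. The $\bar{G}$-equivariant Noether--Fano inequality (Lemma~\ref{lemma:auxiliary-1}, \cite[Corollary~A.22]{Ch09}) yields a $\bar{G}$-invariant mobile linear system $\mathcal{M}$ with $K_{V}+\lambda\mathcal{M}\sim_{\mathbb{Q}}0$ for which $(V,\lambda\mathcal{M})$ is not canonical. The multiplication-by-two trick (cf.~Lemma~\ref{lemma:Segre-cubic-1}) supplies $\mu<2\lambda$ such that $(V,\mu\mathcal{M})$ is strictly log canonical, and Lemma~\ref{lemma:Kawamata-Shokurov-trick} (together with the modification indicated in Remark~\ref{remark:Kawamata-Shokurov-trick}) lets us assume that $\mathbb{LCS}(V,\mu\mathcal{M})$ is a single $\bar{G}$-orbit of a minimal center $S$.

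The decisive input is the representation theory of $\bar{G}=\mathbb{PSL}(2,\mathbb{F}_{11})$: it is simple of order $660$, its nontrivial irreducible complex representations have dimensions $5,5,10,10,11,12,12$, and every proper subgroup has index at least $11$, so every nontrivial $\bar{G}$-orbit has size $\geqslant 11$. For $V_{3}$ the group $\bar{G}$ acts on $\mathbb{P}^{4}$ via one of the five-dimensional irreducibles; for $V_{14}$ it acts on $\mathbb{P}^{9}$ via the restriction of a ten-dimensional irreducible. In both settings $S$ cannot be a surface (the linear system $\mathcal{M}$ is mobile). If $S$ is a point, then applying Theorem~\ref{theorem:Shokurov-vanishing} to the Cartier divisor $-2K_{V_{3}}$ on $V_{3}$ (respectively to $-2K_{V_{14}}$ on $V_{14}$) bounds $|\mathrm{LCS}|$ above in terms of $h^{0}(\mathcal{O}_{V}(-2K_{V}))$, reducing the orbit to one of a few candidate sizes. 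Each such orbit is excluded by examining the stabilizer (using Lemma~\ref{lemma:max-subgroup} adapted to $\mathbb{PSL}(2,\mathbb{F}_{11})$), decomposing the ambient five- or ten-dimensional representation as a sum of characters of the stabilizer, and checking whether the corresponding fixed point actually lies on $V$ — exactly as in Lemmas~\ref{lemma:Segre-small-orbits-10},~\ref{lemma:Segre-small-orbits-15}, and~\ref{lemma:quadric-small-orbits}.

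If $S$ is a curve, then Theorem~\ref{theorem:Kawamata} makes $S$ smooth with a $\bar{G}$-action, and the Riemann--Roch theorem combined with Theorem~\ref{theorem:Shokurov-vanishing} furnishes a numerical identity of the form $r(aD-g+1)=N-q$ analogous to Lemmas~\ref{lemma:35-q-equality},~\ref{lemma:15-q-equality} and~\ref{lemma:quadric-30-q-equality}. A $\bar{G}$-version of Lemma~\ref{lemma:sporadic-genera} — obtained by applying the Riemann--Hurwitz formula to $S\to S/\bar{G}$, with point stabilizers cyclic of order at most $6$ by Lemma~\ref{lemma:stabilizers} — restricts $g$ to a short sporadic list. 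Castelnuovo's bound (Theorem~\ref{theorem:Castelnuovo}) and Clifford's theorem (Theorem~\ref{theorem:Clifford}) then force a short list of numerical types $(r,d,g)$, each of which is excluded by intersecting $S$ with a $\bar{G}$-invariant hypersurface: for $V_{3}$ one uses a hyperplane section fixed by an index-$11$ subgroup $\mathbb{A}_{5}\subset\bar{G}$ (analogous to the use of $\Pi_{0}$ in Lemma~\ref{lemma:in-Segre-cubic}), while for $V_{14}$ one exploits the Pl\"ucker structure inherited from $V_{14}\subset\mathrm{Gr}(2,6)\subset\mathbb{P}^{9}$ to produce $\bar{G}_{1}$-invariant divisors of small degree. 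The main obstacle is precisely the curve case for $V_{14}$, because the lack of an obvious low-degree $\bar{G}$-invariant hypersurface forces one to combine the Mukai embedding with orbit-length bounds coming from the classification of subgroups of $\mathbb{PSL}(2,\mathbb{F}_{11})$; once this step is completed, Corollary~\ref{corollary:Klein-cubic-baby} follows immediately from Theorem~\ref{theorem:regularization}.
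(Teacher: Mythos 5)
Your overall architecture is the right one: the paper also deduces Corollary~\ref{corollary:Klein-cubic-baby} from the $\bar{G}$-birational superrigidity of $V_{3}$ and $V_{14}$ (Theorem~\ref{theorem:Klein-cubic}), proved by the same Noether--Fano / multiplication-by-two / Kawamata--Shokurov / Nadel-vanishing pipeline you describe. But your sketch has a genuine gap exactly where you flag the ``main obstacle'': the curve case, above all for $V_{14}$. The tools you propose there --- a sporadic-genus list \`a la Lemma~\ref{lemma:sporadic-genera}, Castelnuovo and Clifford, and intersection of $S$ with low-degree $\bar{G}$-invariant hypersurfaces extracted from the Pl\"ucker embedding --- are not what closes this case, and you leave the $V_{14}$ endgame unproved. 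The missing idea is Theorem~\ref{theorem:Hurwitz-bound}: since $|\mathbb{PSL}(2,\mathbb{F}_{11})|=660$ exceeds the maximal order of $\mathrm{Aut}(C)$ for every genus $g\leqslant 10$ (and also for $g\in\{11,12,13\}$ by Breuer's table), a faithful action on $S$ forces $g\geqslant 11$; on the other hand Nadel vanishing with the twist $H=-K_{V_{14}}$ gives $10\geqslant S\cdot H-g+1$, which combined with the subadjunction inequality $2g-2<S\cdot H$ yields $S\cdot H\leqslant 19$ and hence $g\leqslant 10$ --- a purely numerical contradiction requiring no invariant hypersurface at all. For $V_{3}$ the same squeeze leaves only $g=14$, which is then killed by Riemann--Hurwitz.

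Two further points would derail your version as written. First, your claim that point stabilizers in $S$ are ``cyclic of order at most $6$'' is false for $\bar{G}\cong\mathbb{PSL}(2,\mathbb{F}_{11})$: cyclic subgroups of order $11$ occur, and the term $600a_{11}$ is essential in the Riemann--Hurwitz computation that eliminates $g=14$ on $V_{3}$; Lemma~\ref{lemma:stabilizers} only asserts cyclicity, not the bound $6$, which is specific to $\mathbb{A}_{6}$. Second, twisting by $-2K_{V}$ rather than by $-K_{V_{14}}$ (respectively $2H=-K_{V_{3}}$) inflates the right-hand side of the vanishing count from $10$ (respectively $15$) to roughly $40$ (respectively $65$); the zero-dimensional case then no longer forces the stabilizer to have order $\geqslant 44$, and the Riemann--Roch identity in the curve case becomes too weak to produce the bound $g\leqslant 10$ (respectively $g\leqslant 14$). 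With the correct twist, the point case for $V_{14}$ is immediate: $|\Sigma|\leqslant 10$ forces a $\bar{G}$-fixed point, whose tangent space would be a faithful three-dimensional representation of $\mathbb{PSL}(2,\mathbb{F}_{11})$, which does not exist.
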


\begin{corollary}
\label{corollary:Klein-cubic-main} There exists no
$\bar{\Gamma}$-equivariant birational map $V_{14}\dasharrow V_{3}$.
\end{corollary}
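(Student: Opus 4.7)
The plan is to prove superrigidity for $V_3$ and $V_{14}$ separately, following the template already established in the proofs of Theorem~\ref{theorem:Segre-cubic} and Theorem~\ref{theorem:quadric-threefold}. For each $V\in\{V_3,V_{14}\}$, assume $V$ is not $\bar{G}$-birationally superrigid. By the Noether--Fano inequality (compare Lemma~\ref{lemma:Segre-cubic-Noether-Fano}) there is a $\bar{G}$-invariant mobile linear system $\mathcal{M}$ on $V$ with $K_V+\lambda\mathcal{M}\sim_{\mathbb{Q}}0$ such that $(V,\lambda\mathcal{M})$ is not canonical. The ``multiplication by two'' trick (compare Lemma~\ref{lemma:Segre-cubic-1}) produces $\mu<2\lambda$ such that $(V,\mu\mathcal{M})$ is strictly log canonical; let $S$ be a minimal center in $\mathbb{LCS}(V,\mu\mathcal{M})$. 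After applying Lemma~\ref{lemma:Kawamata-Shokurov-trick}, and replacing $\mu\mathcal{M}$ by an appropriate convex combination in the spirit of Remark~\ref{remark:Kawamata-Shokurov-trick}, I may assume $\mathbb{LCS}(V,\mu\mathcal{M})=\bigcup_{g\in\bar{G}} g(S)$.

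Next I would rule out possibilities for $S$. The case $\dim(S)=2$ is excluded because $\mathcal{M}$ is mobile. To eliminate $\dim(S)=0$, I would apply the Nadel--Shokurov vanishing theorem (Theorem~\ref{theorem:Shokurov-vanishing}) with an appropriate line bundle: on $V_3$ use $\mathcal{O}_{V_3}(2H)$ where $H$ is a hyperplane section and $-K_{V_3}=2H$, and on $V_{14}$ use $\mathcal{O}_{V_{14}}(H)$ where $H$ is the hyperplane section class of the embedding into $\mathbb{P}^{9}$. The Euler characteristic computation, combined with a classification of small $\bar{G}$-orbits on $V$ analogous to Lemmas~\ref{lemma:Segre-small-orbits} and~\ref{lemma:quadric-small-orbits}, forces the orbit of $S$ to be too large. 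Here one uses that the action on $V_3\subset\mathbb{P}^4$ (respectively $V_{14}\subset\mathbb{P}^9$) arises from an irreducible $5$-dimensional (respectively $10$-dimensional) representation of $\bar{G}=\mathbb{PSL}(2,\mathbb{F}_{11})$, together with the character table of $\bar{G}$ and the list of its maximal subgroups ($\mathbb{A}_5$ in two classes, $D_{12}$, $D_{10}$, and the Borel subgroup $\mathbb{Z}_{11}\rtimes\mathbb{Z}_5$).

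The remaining and principal case is $\dim(S)=1$. By Theorem~\ref{theorem:Kawamata}, $S$ is a smooth irreducible curve of some degree $d$ and genus $g$. Let $Z$ be its $\bar{G}$-orbit with $r$ irreducible components, and set $q=h^0(\mathcal{O}_V(aH)\otimes\mathcal{I}(V,\mu\mathcal{M}))$ for the natural choice of $a$ making $aH+K_V$ effective. Then Theorem~\ref{theorem:Shokurov-vanishing} together with the Riemann--Roch theorem on each component yields an equality of the form $r(aH\cdot S-g+1)=N-q$, as in Lemmas~\ref{lemma:15-q-equality} and~\ref{lemma:quadric-30-q-equality}. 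The $\mathbb{PSL}(2,\mathbb{F}_{11})$-version of Lemma~\ref{lemma:sporadic-genera}, obtained from the Riemann--Hurwitz formula applied to $S\to S/\bar{G}$ (with point stabilizers cyclic of order dividing $\max\{2,3,5,6,11\}$), restricts $g$ to a short list of values. These are then eliminated one by one, using Castelnuovo's bound (Theorem~\ref{theorem:Castelnuovo}) to force $d$ small, Clifford's theorem (Theorem~\ref{theorem:Clifford}) to constrain linear series on $S$, and the absence of low-degree $\bar{G}$-invariant hypersurfaces through $S$ (the analogue of Lemma~\ref{lemma:Segre-q-must-be-0} and Lemma~\ref{lemma:in-Segre-cubic}).

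The hardest step will be the $\dim(S)=1$ case on $V_{14}$: since $V_{14}$ is not a hypersurface, intersection-theoretic arguments like the one in Lemma~\ref{lemma:in-Segre-cubic} (which cut $S$ by a $\bar{G}_1$-invariant hyperplane section isomorphic to $\mathbb{P}^1\times\mathbb{P}^1$) must be replaced by an analysis using $\bar{G}_1$-invariant members of $|H|$ on $V_{14}$, where $\bar{G}_1\cong\mathbb{A}_5\subset\bar{G}$; the existence and geometry of these sections relies on the explicit construction of $V_{14}$ as a linear section of $\mathrm{Gr}(2,6)$ (Example~\ref{example:V14}) and on the decomposition of the $10$-dimensional representation of $\bar{G}$ restricted to each class of $\mathbb{A}_5$-subgroups. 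A secondary obstacle is the $\dim(S)=0$ case on $V_3$: in principle one must also handle the possibility that $S$ is a fixed point of a subgroup of index $11$ or $12$ in $\bar{G}$, and exclude this using the fact that the restriction of the $5$-dimensional representation of $\bar{G}$ to either class of $\mathbb{A}_5$-subgroups is irreducible, so that no $\bar{G}$-orbit of size $11$ can lie on $V_3\subset\mathbb{P}^4$.
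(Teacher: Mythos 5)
Your top-level strategy is the same as the paper's: Corollary~\ref{corollary:Klein-cubic-main} is obtained by proving that both $V_3$ and $V_{14}$ are $\bar{G}$-birationally superrigid (Theorem~\ref{theorem:Klein-cubic}), and your superrigidity argument follows the same Noether--Fano / minimal-center / Kawamata--Shokurov / Nadel-vanishing template, with the zero-dimensional case handled exactly as in the paper (a stabilizer of large order, then irreducibility of the five-dimensional representation restricted to $\mathbb{A}_5$ and $\mathbb{Z}_{11}\rtimes\mathbb{Z}_5$ on $V_3$, respectively non-existence of a faithful three-dimensional representation of $\bar{G}$ on the tangent space for $V_{14}$). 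Do remember to record the final deduction, which you leave implicit: superrigidity forces any $\bar{G}$-equivariant birational map $V_{14}\dasharrow V_{3}$ to be biregular, which is absurd since $-K_{V_{14}}^{3}=14\neq 24=-K_{V_{3}}^{3}$. Where you genuinely diverge is in the one-dimensional case, which you flag as the hardest step and plan to attack with a Riemann--Hurwitz genus list followed by case-by-case elimination via Castelnuovo's bound, Clifford's theorem, and an analysis of $\mathbb{A}_5$-invariant hyperplane sections of $V_{14}\subset\mathbb{P}^{9}$. The paper's actual argument is much lighter: after showing that $S$ is an irreducible $\bar{G}$-invariant curve on which $\bar{G}$ acts faithfully (Lemmas~\ref{lemma:auxiliary-5} and~\ref{lemma:Klein-cubic-7}, using that $\mathbb{PSL}(2,\mathbb{F}_{11})$ admits no nontrivial homomorphism to $\mathbb{S}_r$ for $1<r\leqslant 10$), it plays the order $|\bar{G}|=660$ against the refined Hurwitz bounds of Theorem~\ref{theorem:Hurwitz-bound} to get $g\geqslant 11$, while the vanishing theorem plus Riemann--Roch plus the genus bound from Kawamata subadjunction gives $S\cdot H\leqslant 19$ and hence $g\leqslant 10$ on $V_{14}$ --- an immediate contradiction requiring no Castelnuovo, no Clifford, and no invariant hypersurfaces; on $V_3$ the same comparison leaves only $g=14$, which a single Riemann--Hurwitz computation kills. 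Your route would also close (the Riemann--Hurwitz list for a group of order $660$ combined with the same degree bounds leaves no admissible genus), but the hard geometric input you anticipate needing from the construction of $V_{14}$ is unnecessary: the only fact used is $h^{0}(-K_{V_{14}})=10$.
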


\begin{corollary}
\label{corollary:Klein-cubic} Up to conjugation, the~group
$\mathrm{Bir}(V_{3})\cong\mathrm{Bir}(V_{14})$ contains exactly
$2$ subgroups~that are isomorphic~to~the~simple group
$\PSL_2(\mathbb{F}_{11})$.
\end{corollary}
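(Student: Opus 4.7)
The plan is to derive the corollary from the superrigidity assertion of Theorem~\ref{theorem:Klein-cubic}, the two-model classification of Theorem~\ref{theorem:Yura-Cremona-non-rational}, and the regularization machinery of Section~\ref{section:intro} adapted to the birational class of $V_3$ (equivalently, of $V_{14}$) in place of $\mathbb{P}^3$.

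Given an arbitrary embedding $\tau\colon\bar{G}\hookrightarrow\mathrm{Bir}(V_3)$, I will first produce a Mori regularization $(V,\xi,\upsilon,\pi)$ as in Definition~\ref{definition:regularization} by $\bar{G}$-equivariant resolution of indeterminacy of $\tau$ followed by the $\bar{G}$-equivariant Minimal Model Program. The next step is to show $\dim(S)=0$: otherwise $\bar{G}=\mathbb{PSL}(2,\mathbb{F}_{11})$ would act non-trivially on the rational base $S$ of dimension at most $2$, producing a faithful embedding $\bar{G}\hookrightarrow\mathrm{Bir}(\mathbb{P}^m)$ with $m\leqslant 2$; but no such embedding exists ($\bar{G}$ is too large to sit in $\mathbb{PGL}(2,\mathbb{C})$, and is excluded from $\mathrm{Cr}_2(\mathbb{C})$ by the Dolgachev--Iskovskikh classification cited in the proof of Lemma~\ref{lemma:Cr-3-Cr-2}). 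Consequently $V$ satisfies the hypotheses opening Appendix~\ref{section:Klein-cubic}, and Theorem~\ref{theorem:Yura-Cremona-non-rational} yields a $\bar{G}$-equivariant birational map $V\dasharrow U$ with $U\in\{V_3,V_{14}\}$. Superrigidity (Theorem~\ref{theorem:Klein-cubic}) then forces this map to be biregular, so $V\cong U$; this is precisely Corollary~\ref{corollary:Klein-cubic-baby}.

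Thus $\tau(\bar{G})$ is conjugate inside $\mathrm{Bir}(V_3)$ either to a subgroup of $\mathrm{Aut}(V_3)$ or to a subgroup of $\mathrm{Aut}(V_{14})$ transported across Remark~\ref{remark:Palatini-quartic}. Since $\mathrm{Aut}(V_3)\cong\bar{G}$ by Example~\ref{example:Klein-cubic}, the former determines a unique conjugacy class; and Theorem~\ref{theorem:regularization} implies that any two $V_{14}$-type regularizations of $\bar{G}$-subgroups are related by an element of $\mathrm{Aut}(V_{14})\subset\mathrm{Bir}(V_{14})\cong\mathrm{Bir}(V_3)$, giving at most one $V_{14}$-class. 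To show the two classes are actually distinct I will argue by contradiction: if they coincided, Theorem~\ref{theorem:regularization} would produce a $\bar{G}$-equivariant birational map $V_3\dasharrow V_{14}$, which by Theorem~\ref{theorem:Klein-cubic} would be biregular (this is Corollary~\ref{corollary:Klein-cubic-main}); but $V_3\not\cong V_{14}$ already as abstract varieties, since $(-K_{V_3})^3=24\neq 14=(-K_{V_{14}})^3$.

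The main obstacle is not the corollary itself but its input Theorem~\ref{theorem:Klein-cubic}. Its proof would follow the blueprint of Theorems~\ref{theorem:Segre-cubic} and~\ref{theorem:quadric-threefold}: apply Noether--Fano together with the ``multiplication by two'' trick to reduce to a strictly log-canonical pair $(V,\mu\mathcal{M})$; invoke Lemma~\ref{lemma:Kawamata-Shokurov-trick} to replace $\mathcal{M}$ by a $\bar{G}$-invariant $\mathbb{Q}$-divisor $D$ with the same minimal log-canonical center $S$; then use Nadel--Shokurov vanishing (Theorem~\ref{theorem:Shokurov-vanishing}) to bound the length of the $\bar{G}$-orbit of $S$, and rule out each case ($S$ a point, a singular point, or a smooth curve) via Castelnuovo, Clifford, and Riemann--Hurwitz bounds. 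The delicate input will be the orbit classification on $V_3$ and $V_{14}$ -- the analogues of Lemmas~\ref{lemma:Segre-small-orbits}--\ref{lemma:Segre-small-orbits-15} -- combined with the character theory of $\mathbb{PSL}(2,\mathbb{F}_{11})$ (whose smallest non-trivial irreducible representations have dimensions $5,5,10,10,11,12,12$), which is where I expect the bulk of the technical work to lie.
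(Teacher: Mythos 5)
Your proposal is correct and follows essentially the same route that the paper intends for this corollary: regularize an arbitrary embedding, rule out positive-dimensional bases via the non-embeddability of $\mathbb{PSL}(2,\mathbb{F}_{11})$ into $\mathrm{Cr}_{2}(\mathbb{C})$, invoke Theorem~\ref{theorem:Yura-Cremona-non-rational} to land on $V_{3}$ or $V_{14}$, and use the superrigidity of Theorem~\ref{theorem:Klein-cubic} (hence Corollaries~\ref{corollary:Klein-cubic-baby} and~\ref{corollary:Klein-cubic-main}) to promote equivariant birational maps to isomorphisms and to separate the two classes, with $V_{3}\not\cong V_{14}$ checked by anticanonical degree. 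The one point you (like the paper) leave tacit is that ``at most one class per model'' also requires $\mathrm{Aut}(V_{14})$ to contain a unique conjugacy class of subgroups isomorphic to $\bar{G}$ --- Theorem~\ref{theorem:regularization} plus superrigidity only reduces conjugacy in $\mathrm{Bir}$ to conjugacy in $\mathrm{Aut}(V_{14})$; for $V_{3}$ this is supplied by Adler's theorem $\mathrm{Aut}(V_{3})\cong\bar{G}$, and the analogous fact for $V_{14}$ should be recorded.
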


Let us prove that $V_{14}$ is $\bar{\Gamma}$-birationally
superrigid. Suppose that $V_{14}$ is not
$\bar{\Gamma}$-birationally super\-ri\-gid. There is a (non-empty)
$\bar{\Gamma}$-invariant linear system $\mathcal{M}$ without fixed
components such that
$\lambda\mathcal{M}\sim_{\mathbb{Q}}-K_{V_{14}}$ for some
$\lambda\in\mathbb{Q}_{>0}$. Then $(V_{14},\lambda\mathcal{M})$ is
not canonical (see \cite[Theorem~1.4.1]{Ch05umn}).

There is $\mu\in\mathbb{Q}$ such that $\mu<2\lambda$ and
$(V_{14},\mu\mathcal{M})$ is strictly log canonical.

Let $S\subset V_{14}$ be a~minimal center in
$\mathbb{LCS}(V_{14},\mu\mathcal{M})$. Then
$\mathrm{dim}(S)\in\{0,1\}$.

By Lemma~\ref{lemma:Kawamata-Shokurov-trick}, there~is
a~$\bar{\Gamma}$-invariant effective $\mathbb{Q}$-divisor $D$ on
the~threefold $V_{14}$ such that the set $\mathbb{LCS}(V_{14}, D)$
consists of irreducible components of the $\Gamma$-orbit of $S$,
and $D\sim_{\mathbb{Q}} -\epsilon K_{V_{14}}$, where $\epsilon$ is
a~positive rational number such that $\epsilon<2$.

Let $\mathcal{I}$ be the~multiplier ideal sheaf of the~log pair
$(V_{14}, D)$, and let $\mathcal{L}(V_{14},D)$ be the~log canonical
singularities subscheme of the~log pair $(V_{14}, D)$. Then it
follows from Theorem~\ref{theorem:Shokurov-vanishing} that
\begin{equation}
\label{equation:Klein-cubic-equality}
h^{0}\Big(\mathcal{O}_{\mathcal{L}(V_{14},D)}\otimes\mathcal{O}_{V_{14}}\big(H\big)\Big)=h^{0}\Big(\mathcal{O}_{V_{14}}\big(H\big)\Big)-h^{0}\Big(\mathcal{O}_{V_{14}}\big(H\big)\otimes\mathcal{I}\Big)=10-h^{0}\Big(\mathcal{O}_{V_{14}}\big(H\big)\otimes\mathcal{I}\Big),%
\end{equation}
where $H\in |-K_{V_{14}}|$.

\begin{lemma}
\label{lemma:Klein-cubic-4} The equality $\mathrm{dim}(S)=0$ is
impossible.
\end{lemma}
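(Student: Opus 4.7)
Suppose for contradiction that $S$ is a~point. Then the~set $\mathrm{LCS}(V_{14},D)$ coincides with the~$\bar{G}$-orbit $\Sigma$ of the~point $S$, and by Remark~\ref{remark:log-canonical-subscheme} the~subscheme $\mathcal{L}$ is reduced and zero-dimensional. Hence
$$
|\Sigma|=h^{0}\Big(\mathcal{O}_{\mathcal{L}}\otimes\mathcal{O}_{V_{14}}\big(H\big)\Big)\leqslant 10
$$
by (\ref{equation:Klein-cubic-equality}). The~plan is to show that such an orbit cannot exist, by combining the~subgroup structure and the~representation theory of the~simple group $\bar{G}\cong\mathbb{PSL}(2,\mathbb{F}_{11})$.

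The~first step is to exclude non-trivial orbits of size at most $10$. The~order of $\bar{G}$ is $660=2^{2}\cdot 3\cdot 5\cdot 11$, and any proper subgroup of index $n\leqslant 10$ would give, via the~coset action, a~non-trivial homomorphism $\bar{G}\to\mathbb{S}_{n}$, which is injective by simplicity of $\bar{G}$; but $11\mid|\bar{G}|$ and $11\nmid n!$ for $n\leqslant 10$, a~contradiction. Therefore every non-trivial $\bar{G}$-orbit on $V_{14}$ has cardinality at least $11$, and the~bound $|\Sigma|\leqslant 10$ forces $\Sigma$ to consist of a~single $\bar{G}$-fixed point $P\in V_{14}$.

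The~second step is to rule out the~existence of a~$\bar{G}$-fixed point on $V_{14}$. At such a~point $P$, the~group $\bar{G}$ would act linearly on the~three-dimensional tangent space $T_{P}V_{14}$. Since $\bar{G}$ is simple and non-abelian, this representation is either trivial or faithful; but according to the~character table of $\mathbb{PSL}(2,\mathbb{F}_{11})$ (see \cite{Atlas}), every non-trivial irreducible representation of $\bar{G}$ has dimension at least $5$, so no faithful three-dimensional representation exists. Hence the~action on $T_{P}V_{14}$ is trivial, and by linearization of a~finite group action at a~smooth fixed point together with connectedness of $V_{14}$, the~action of $\bar{G}$ on the~whole threefold $V_{14}$ would be trivial, contradicting Example~\ref{example:V14}.

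I do not expect any serious obstacle: the~two ingredients used — the~minimal index of a~proper subgroup and the~minimal dimension of a~faithful representation of $\mathbb{PSL}(2,\mathbb{F}_{11})$ — are both standard and can be read off from \cite{Atlas}, while the~geometric input is just the~cohomological bound (\ref{equation:Klein-cubic-equality}) already in place.
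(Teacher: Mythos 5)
Your proposal is correct and follows essentially the same route as the paper: the vanishing bound forces $|\Sigma|\leqslant 10$, group theory for $\mathbb{PSL}(2,\mathbb{F}_{11})$ forces $\Sigma$ to be a single fixed point, and the absence of a faithful three-dimensional representation rules out a fixed point on the smooth threefold $V_{14}$. The only (harmless) differences are that you derive the orbit bound from the $11\nmid n!$ index argument instead of quoting the maximal subgroup orders from \cite{Atlas}, and that you spell out explicitly why a non-faithful tangent representation would force the whole action to be trivial, a step the paper leaves implicit.
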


\begin{proof}
Suppose that $\mathrm{dim}(S)=0$. Let $\bar{F}\subset\bar{\Gamma}$
be a~stabilizer of a~point in $\mathrm{LCS}(V_{14}, D)$. Then
$|\mathrm{LCS}(V_{14}, D)|\leqslant 10$ by
(\ref{equation:Klein-cubic-equality}). Thus, we see that
$|\bar{F}|\geqslant|\bar{\Gamma}|/10=66$. Hence, we must have
$\mathrm{LCS}(V_{14}, D)=S$ and  $\bar{F}=\bar{\Gamma}$, because
there are no proper subgroups of $\bar{\Gamma}$ of order greater
than~$60$ (see \cite{Atlas}).

The~action of  $\bar{\Gamma}$ on the~tangent space to $V_{14}$ at
the~point $S$ gives a~faithful three-dimensional representation of
the~group $\bar{\Gamma}$, which does not exist (see~\cite{Atlas}).
\end{proof}

It follows from Theorem~\ref{theorem:Kawamata} that $S$ is
a~smooth curve of genus $g$ such that $2g-2<S\cdot H$.

\begin{lemma}
\label{lemma:auxiliary-5} The curve $S$ is $\bar{\Gamma}$-invariant.
\end{lemma}

\begin{proof}
Let $Z$ be the~$\bar{\Gamma}$-orbit of the~curve $S$. Then
$Z=\mathcal{L}(V_{14},D)$, because $(V_{14},D)$ is log canonical.

Suppose that  $S\ne Z$. Let $r$ be the~number of irreducible
components of $Z$. Then $r\geqslant 11$, because
there is no nontrivial homomorphism
$\bar{\Gamma}\to\SS_{r}$ if $1<r\leqslant 10$.

Using (\ref{equation:Klein-cubic-equality}) and the~Riemann--Roch
theorem, we see~that
$$
10\geqslant 10-h^{0}\Big(\mathcal{O}_{V_{14}}\big(H\big)\otimes\mathcal{I}\Big)
=h^{0}\Big(\mathcal{O}_{\mathcal{L}(V_{14},D)}\otimes\mathcal{O}_{V_{14}}\big(H\big)\Big)
=r\Big(S\cdot H-g+1\Big)\ge r,%
$$
because $\mathcal{L}(V_{14},D)=Z$ and $2g-2<S\cdot H$. Thus, we see that
$r\leqslant 10$.
\end{proof}

Therefore, there is a~natural homomorphism
$\theta\colon\bar{\Gamma}\to\mathrm{Aut}(S)$.

\begin{lemma}
\label{lemma:Klein-cubic-7} The homomorphism $\theta$ is
a~monomorphism.
\end{lemma}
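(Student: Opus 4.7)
The plan is to exploit the simplicity of $\bar{G}=\mathbb{PSL}(2,\mathbb{F}_{11})$ together with the known (lack of) low-dimensional representations of $\bar{G}$, exactly as in the proof of Lemma~\ref{lemma:Klein-cubic-4}.

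First, I would observe that $\bar{G}$ is a simple non-abelian group, so the normal subgroup $\ker(\theta)\trianglelefteq\bar{G}$ must be either trivial or the whole group $\bar{G}$. Consequently, it suffices to exclude the possibility that $\bar{G}$ acts trivially on the curve $S$. Assume for contradiction that $\ker(\theta)=\bar{G}$, so that every element of $\bar{G}$ fixes every point of $S$ pointwise.

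Next, pick any point $P\in S$. Since $V_{14}$ is a smooth threefold, the cotangent space $T_{P}V_{14}$ is $3$-dimensional, and the stabilizer of $P$ (which under our assumption is the whole group $\bar{G}$) acts linearly on $T_{P}V_{14}$. This action is faithful: a finite-order automorphism of a smooth variety fixing a point and inducing the identity on the tangent space is the identity on a formal neighbourhood and hence, by irreducibility of $V_{14}$, the identity. Thus we obtain a faithful three-dimensional representation of $\bar{G}\cong\mathbb{PSL}(2,\mathbb{F}_{11})$.

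The final (and only) step is to rule this out from the character table (see \cite{Atlas}): the non-trivial irreducible representations of $\mathbb{PSL}(2,\mathbb{F}_{11})$ have dimensions $5,5,10,10,11,12,12$. Therefore every representation of $\bar{G}$ of dimension at most $3$ is a sum of trivial characters and so is itself trivial, contradicting the faithfulness established in the previous step. This contradiction forces $\ker(\theta)$ to be trivial, i.e.\ $\theta$ is a monomorphism. No step here is a serious obstacle; the argument is essentially a two-line reduction once Lemma~\ref{lemma:auxiliary-5} is in hand.
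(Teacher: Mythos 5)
Your proof is correct and follows the paper's argument exactly: simplicity forces $\ker(\theta)\in\{1,\bar{G}\}$, a trivial action gives a $\bar{G}$-fixed point $P\in S$, and the faithful action of $\bar{G}$ on the three-dimensional tangent space $T_{P}V_{14}$ contradicts the fact that the smallest non-trivial irreducible representation of $\mathbb{PSL}(2,\mathbb{F}_{11})$ has dimension $5$. The only additions are the (correct) justification of faithfulness of the tangent-space action and the explicit list of representation dimensions, which the paper leaves to \cite{Atlas}.
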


\begin{proof}
Suppose that $\theta$ is not a~monomorphism. Then
$\mathrm{ker}(\theta)=\bar{\Gamma}$, because $\bar{\Gamma}$
simple. Let $P$ be a point in $S$. Then $P$ is
$\bar{\Gamma}$-invariant. The action of the~group~$\bar{\Gamma}$
on the~tangent space to the~threefold $V_{14}$ at the~point $P$
gives its faithful three-dimensional representation, which does
not exist (see~\cite{Atlas}).
\end{proof}

\begin{lemma}
\label{lemma:Klein-cubic-6} The inequality $g\geqslant 11$ holds.
\end{lemma}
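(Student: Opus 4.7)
The plan is to apply the Riemann--Hurwitz formula to the quotient map $S\to\bar{S}:=S\slash\bar{G}$, exploiting the faithful $\bar{G}$-action on $S$ given by Lemma~\ref{lemma:Klein-cubic-7} (cf.~the~proof of Lemma~\ref{lemma:sporadic-genera}). Preliminary observations: $g\geq 2$, because $\bar{G}$ is simple non-abelian of order $660$ and therefore embeds neither into $\mathrm{Aut}(\mathbb{P}^{1})\cong\mathbb{PGL}(2,\mathbb{C})$ nor into the automorphism group of an elliptic curve; and every point stabilizer $\bar{F}\subset\bar{G}$ acts faithfully on the tangent line to $S$, hence is cyclic (cf.~Lemma~\ref{lemma:stabilizers}), with order drawn from $\{1,2,3,5,6,11\}$, which exhausts the element orders of $\mathbb{PSL}(2,\mathbb{F}_{11})$.

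Denote by $\bar{g}$ the genus of $\bar{S}$ and by $a_{k}$ the number of $\bar{G}$-orbits on $S$ whose stabilizer has order $k\geq 2$. Then
\[
2g-2=660\bigl(2\bar{g}-2\bigr)+330\,a_{2}+440\,a_{3}+528\,a_{5}+550\,a_{6}+600\,a_{11}.
\]
Assuming $g\leq 10$ one has $2g-2\leq 18<660$, which forces $\bar{g}=0$ and $a_{11}\leq 2$. I would then reduce the equation modulo $11$: the coefficients $330,440,528,550$ are multiples of $11$, while $600\equiv 6\pmod{11}$, so $g-1\equiv 3\,a_{11}\pmod{11}$. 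Combined with $a_{11}\in\{0,1,2\}$ and $g\geq 2$, this leaves precisely two candidates, namely $g=4$ with $a_{11}=1$ and $g=7$ with $a_{11}=2$.

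After subtracting the $600\,a_{11}$ contribution and dividing by~$22$, each case reduces to a small Diophantine equation:
\[
15\,a_{2}+20\,a_{3}+24\,a_{5}+25\,a_{6}=33 \qquad\text{or}\qquad 15\,a_{2}+20\,a_{3}+24\,a_{5}+25\,a_{6}=6,
\]
respectively. A direct inspection rules out both: in the first, every single nonzero term misses $33$ and every sum of two nonzero terms already exceeds it; in the second, $6$ is smaller than every coefficient. This yields the desired contradiction and hence $g\geq 11$. The main (and only) obstacle is bookkeeping the Diophantine case analysis; conceptually the argument is a direct Riemann--Hurwitz calculation combined with the classification of cyclic subgroups of $\mathbb{PSL}(2,\mathbb{F}_{11})$.
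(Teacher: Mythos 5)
Your proof is correct, and it takes a genuinely different route from the paper's. The paper disposes of $2\leqslant g\leqslant 10$ in one stroke by quoting Theorem~\ref{theorem:Hurwitz-bound}: the general Hurwitz bound $84(g-1)$ already falls below $660=|\bar{G}|$ for $g\leqslant 8$, and the tabulated sharp bounds $320$ and $432$ for $g=9,10$ do the rest; the residual cases $g=0,1$ are then excluded exactly as in your preliminary observations (no embedding of $\bar{G}$ into $\mathbb{PGL}(2,\mathbb{C})$, solvability of the automorphism group of an elliptic curve). You instead run the Riemann--Hurwitz computation for $S\to S/\bar{G}$ directly, using the cyclicity of point stabilizers and the element orders $\{1,2,3,5,6,11\}$ of $\mathbb{PSL}(2,\mathbb{F}_{11})$, and kill the whole range $2\leqslant g\leqslant 10$ by the reduction modulo $11$ followed by two small Diophantine checks; your arithmetic ($\bar{g}=0$, the candidates $(g,a_{11})=(4,1)$ and $(7,2)$, and the equations $15a_{2}+20a_{3}+24a_{5}+25a_{6}=33$ resp.\ $=6$) all checks out. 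What the paper's route buys is brevity at the cost of an appeal to Breuer's tables of exact bounds for specific genera (which are genuinely needed for $g=9,10$, where $84(g-1)>660$); what your route buys is a self-contained elementary argument, and it is in fact the same technique the paper itself deploys in Lemma~\ref{lemma:sporadic-genera} and again at the very end of Appendix~\ref{section:Klein-cubic} to eliminate $g=14$.
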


\begin{proof}
Suppose that $g\leqslant 10$. Then $g\leqslant 1$ by
Theorem~\ref{theorem:Hurwitz-bound} since $|\bar{\Gamma}|=660$.

Moreover, it follows from the~classification of finite subgroups
of $\PSL_2(\mathbb{C})$ that the~monomorphism $\theta$
does not exist if $g=0$. Thus, we see that $g=1$ and
$\mathrm{Aut}(S)$ contains a~simple non-abelian subgroup
$\theta(\bar{\Gamma})\cong\PSL_2(\mathbb{F}_{11})$, which is
impossible, because $\mathrm{Aut}(S)$ is solvable.
\end{proof}

Using (\ref{equation:Klein-cubic-equality}) and the~Riemann--Roch
theorem, we see~that
$$
10\geqslant h^{0}\Big(\mathcal{O}_{\mathcal{L}(V_{14},D)}\otimes\mathcal{O}_{V_{14}}\big(H\big)\Big)=S\cdot H-g+1,%
$$
because $\mathcal{L}(V_{14},D)=Z$ and $2g-2<S\cdot H$. Then
$g\geqslant S\cdot H-9$. But $2g-2<S\cdot H$. Hence $2(S\cdot
H)-20\leqslant 2g-2<S\cdot H$, which implies that $S\cdot
H\leqslant 19$. Therefore, $g\leqslant 10$, which is impossible by
Lemma~\ref{lemma:Klein-cubic-6}.

The obtained contradiction shows that $V_{14}$ is
$\bar{\Gamma}$-birationally superrigid.

To complete the proof of Theorem~\ref{theorem:Klein-cubic}, we
assume that the~threefold $V_3$ is not~$\bar{\Gamma}$-birationally
superrigid. Then there is a $\bar{\Gamma}$-invariant linear system
$\mathcal{M}$ without fixed components such that
$\lambda\mathcal{M}\sim_{\mathbb{Q}} -K_{V_{3}}$ for some
$\lambda\in\mathbb{Q}_{>0}$. Then $(V_{3},\lambda\mathcal{M})$ is
not canonical (see \cite[Theorem~1.4.1]{Ch05umn}).

There is $\mu\in\mathbb{Q}$ such that $\mu<2\lambda$ and
$(V_{3},\mu\mathcal{M})$ is strictly log canonical.

Let $S\subset V_{3}$ be a~minimal center in
$\mathbb{LCS}(V_{3},\mu\mathcal{M})$. Then
$\mathrm{dim}(S)\in\{0,1\}$.

By Lemma~\ref{lemma:Kawamata-Shokurov-trick}, there~is
a~$\bar{\Gamma}$-invariant effective $\mathbb{Q}$-divisor $D$ on
the~threefold $V_{3}$ such that the set $\mathbb{LCS}(V_{3}, D)$
consists of irreducible components of the $\bar{\Gamma}$-orbit of
$S$, and $D\sim_{\mathbb{Q}} -\epsilon K_{V_{3}}$, where~$\epsilon$
is a~positive rational number such that $\epsilon<2$.

Let $\mathcal{I}$ be the~multiplier ideal sheaf of the~log pair
$(V_{3}, D)$, and let $\mathcal{L}(V_3,D)$ be the~log canonical
singularities subscheme of the~log pair $(V_{3}, D)$. Then it
follows from Theorem~\ref{theorem:Shokurov-vanishing} that
\begin{equation}
\label{equation:Klein-cubic-equality-cubic}
h^{0}\Big(\mathcal{O}_{\mathcal{L}(V_3,D)}\otimes\mathcal{O}_{V_{3}}\big(2H\big)\Big)=
h^{0}\Big(\mathcal{O}_{V_{3}}\big(2H\big)\Big)-
h^{0}\Big(\mathcal{O}_{V_{3}}\big(2H\big)\otimes\mathcal{I}\Big)=
15-h^{0}\Big(\mathcal{O}_{V_{3}}\big(2H\big)\otimes\mathcal{I}\Big),%
\end{equation}
where $H$ is an ample generator of the group $\mathrm{Pic}(V_3)$.

\begin{lemma}
\label{lemma:Klein-cubic-4-cubic} The equality $\mathrm{dim}(S)=0$ is
impossible.
\end{lemma}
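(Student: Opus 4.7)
The plan is to adapt the argument of Lemma~\ref{lemma:Klein-cubic-4} to the present setting, accounting for the larger bound $|\mathrm{LCS}(V_3,D)|\leqslant 15$ coming from~(\ref{equation:Klein-cubic-equality-cubic}). First, suppose for contradiction that $\mathrm{dim}(S)=0$. Then $\mathrm{LCS}(V_3,D)$ coincides with the $\bar{G}$-orbit $\Sigma=\bar{G}\cdot S$ by the construction of $D$, and inequality~(\ref{equation:Klein-cubic-equality-cubic}) gives $|\Sigma|\leqslant 15$. Letting $\bar{F}\subset\bar{G}$ be the stabilizer of a point of $\Sigma$, we have $|\bar{F}|\geqslant 660/15=44$. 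From the classification of subgroups of $\mathbb{PSL}(2,\mathbb{F}_{11})$ (see~\cite{Atlas}), the only proper subgroups of order at least $44$ are the icosahedral subgroup $\mathbb{A}_5$ (of order $60$) and the Borel subgroup $B\cong\mathbb{Z}_{11}\rtimes\mathbb{Z}_5$ (of order $55$), corresponding respectively to orbits of sizes $11$ and $12$; thus $|\Sigma|\in\{1,11,12\}$.

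In each of the three cases I would derive a contradiction by showing that $W=\mathbb{C}^5$, viewed as an $\bar{F}$-representation, admits no one-dimensional subrepresentation, although the $\bar{F}$-fixed point $O\in V_3\subset\mathbb{P}(W)$ would provide exactly such an $\bar{F}$-invariant line. For $|\Sigma|=1$ this is immediate from the irreducibility of the $\bar{G}$-representation $W$. For $|\Sigma|=11$ with $\bar{F}\cong\mathbb{A}_5$, the only one-dimensional character of $\bar{F}$ is trivial since $\bar{F}$ is perfect, so it suffices to verify $W^{\bar{F}}=0$; this follows from Frobenius reciprocity together with the classical fact that $\bar{G}$ acts $2$-transitively on $\bar{G}/\mathbb{A}_5$, giving $\mathrm{Ind}_{\mathbb{A}_5}^{\bar{G}}(\mathbf{1})=\mathbf{1}\oplus\rho_{10}$ with $\rho_{10}$ an irreducible $10$-dimensional representation distinct from $W$. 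For $|\Sigma|=12$ with $\bar{F}\cong B$, every one-dimensional character of $B$ factors through $B^{\mathrm{ab}}\cong\mathbb{Z}_5$ and is therefore trivial on the normal Sylow $\mathbb{Z}_{11}\subset B$, so it suffices to show $W^{\mathbb{Z}_{11}}=0$; this can be read off from the character values of $W$ on elements of order $11$, namely the Gauss sums $(-1\pm i\sqrt{11})/2$, which yield $\frac{1}{11}\sum_{h\in\mathbb{Z}_{11}}\chi_W(h)=\frac{1}{11}(5+5\cdot(-1))=0$.

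The main obstacle is the representation-theoretic bookkeeping in the two non-trivial cases; both rely on standard information about $\mathbb{PSL}(2,\mathbb{F}_{11})$ available from~\cite{Atlas}, namely the $2$-transitivity of the $11$-point action on the cosets of $\mathbb{A}_5$ and the character values of the two $5$-dimensional irreducible representations on Sylow-$11$ elements. With these inputs in hand, the three cases close uniformly and eliminate the possibility $\mathrm{dim}(S)=0$.
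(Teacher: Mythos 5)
Your proof is correct and follows essentially the same route as the paper: bound $|\mathrm{LCS}(V_{3},D)|$ by $15$ via (\ref{equation:Klein-cubic-equality-cubic}), conclude $\bar{F}\in\{\bar{G},\mathbb{A}_5,\mathbb{Z}_{11}\rtimes\mathbb{Z}_5\}$, and rule out each case because a fixed point of $\bar{F}$ in $\mathbb{P}(W)$ would give a one-dimensional $\bar{F}$-subrepresentation of $W=\mathbb{C}^{5}$. The paper's proof merely asserts (``one can show'') that $W$ restricts irreducibly to $F$, whereas you verify the sufficient weaker statement that no one-dimensional subrepresentation exists, with correct explicit character computations; this is a legitimate filling-in of the same argument rather than a different approach.
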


\begin{proof}
Suppose that $\mathrm{dim}(S)=0$. Let $\bar{F}\subset\bar{\Gamma}$ be
a~stabilizer of a~point in $\mathrm{LCS}(V_{3}, D)$. Then
$$
\big|\bar{F}\big|=\frac{\big|\bar{\Gamma}\big|}{\Big|\mathrm{LCS}\Big(V_{3}, D\Big)\Big|}\geqslant\frac{\big|\bar{\Gamma}\big|}{15}=44%
$$
by (\ref{equation:Klein-cubic-equality-cubic}). Thus, if
$\bar{F}\ne\bar{\Gamma}$, then $\bar{F}$ is isomorphic to either $\A_5$
or $\Z_{11}\rtimes\Z_5$ (see~\cite{Atlas}).

We may identify $\bar{\Gamma}$ with a subgroup in
$\mathrm{Aut}(\mathbb{P}^{4})$. There is a subgroup $G\subset
\SL_5(\mathbb{C})$ such that
$$
\bar{\Gamma}=\phi\big(G\big)\subset\mathrm{Aut}\Big(\mathbb{P}^{4}\Big)\cong
\PGL_5\Big(\mathbb{C}\Big)
$$
and $G\cong\bar{\Gamma}$, where
$\phi\colon\SL_5(\mathbb{C})\to\mathrm{Aut}(\mathbb{P}^{4})$~is~the~natural
projection.

Put $W=\mathbb{C}^{5}$. Then $W$ is an irreducible representation
of the group $G$ (see~\cite{Ad78}), which implies that
$\bar{F}\ne\bar{\Gamma}$. If $F$ is a subgroup of the group $G$
such that $\phi(F)=\bar{F}$, then one can show that $W$ is an
irreducible representation of the group $F$, which is a
contradiction.
\end{proof}

It follows from Theorem~\ref{theorem:Kawamata} that $S$ is
a~smooth curve of genus $g$ such that $g\le S\cdot H$.

\begin{lemma}
\label{lemma:auxiliary-5-cubic} The curve $S$ is $\bar{\Gamma}$-invariant.
\end{lemma}

\begin{proof}
Let $Z$ be the~$\bar{\Gamma}$-orbit of the~curve $S$. Then
$Z=\mathcal{L}(V_3,D)$, because $(V_{3},D)$ is log canonical.

Suppose that  $S\ne Z$. Let $r$ be the~number of irreducible
components of $Z$. Then $r\geqslant 11$, because there is no
non-trivial homomorphism $\bar{\Gamma}\to\SS_{r}$ in the case when
$1<r\leqslant 10$.

Using (\ref{equation:Klein-cubic-equality-cubic}) and the~Riemann--Roch
theorem, we see~that
$$
15\geqslant 15-h^{0}\Big(\mathcal{O}_{V_{3}}\big(2H\big)\otimes\mathcal{I}\Big)=
h^{0}\Big(\mathcal{O}_{\mathcal{L}(V_3,D)}\otimes\mathcal{O}_{V_{3}}
\big(2H\big)\Big)=r\Big(2S\cdot H-g+1\Big),%
$$
because $\mathcal{L}(V_3,D)=Z$ and $g\le S\cdot H$. Since $r\ge
11$, one has $2S\cdot H-g+1\le 1$, which contradicts the
inequality $g\le S\cdot H$.
\end{proof}

We see that there is a~natural homomorphism
$\theta\colon\bar{\Gamma}\to\mathrm{Aut}(S)$. Arguing as in
Lemma~\ref{lemma:Klein-cubic-7}, we see that $\theta$ is a
monomorphism. Arguing as in Lemma~\ref{lemma:Klein-cubic-6}, one
obtains $g\ge 11$.

By Theorem~\ref{theorem:Hurwitz-bound} we may assume that $g=14$.
Let $a_i$ be the number of points on $S$ whose stabilizers in
$\bar{\Gamma}$ are isomorphic to $\Z_i$. Using the~Riemann--Hurwitz
formula, we see that
$$
2g-2=\big(2\bar{g}-2\big)\cdot\big|\bar{\Gamma}\big|+330a_2+440a_3+528a_5+550a_6+600a_{11},%
$$
where $\bar{g}$ is the genus of the quotient curve $S/\bar{\Gamma}$
(cf. the proof of Lemma~\ref{lemma:sporadic-genera}). Then~\mbox{$\bar{g}=0$}.
We~have
$$
1294-528a_5=330a_2+440a_3+550a_6+600a_{11},%
$$
which leads to a contradiction. The obtained contradiction
completes the proof of Theorem~\ref{theorem:Klein-cubic}.

\section{Del Pezzo fibrations}
\label{section:del-Pezzo}\renewcommand{\thefootnote}{\fnsymbol{footnote}}
\centerline{by \textsc{Yuri Prokhorov}\footnote[1]{This work is
partially supported by the~grants  RFBR 08-01-00395-a,
NSh-1983.2008.1 and NSh-1987.2008.1.}}
\bigskip

Let $X$ be a~threefold with at worst terminal singularities such
that the~group $\mathrm{Aut}(X)$ has a~subgroup
$\bar{G}\cong\A_{6}$, and let $\pi\colon X\to \mathbb{P}^{1}$ be
a~$\bar{G}$-Mori fibration.

The goal of this~appendix is to prove the~following result.

\begin{theorem}
\label{theorem:del-Pezzo-1}
The isomorphism
$X\cong\mathbb{P}^1\times\mathbb{P}^2$ holds, and $\pi$ is
the~projection to the~first factor.
\end{theorem}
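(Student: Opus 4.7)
The plan is to exploit the rigidity of the unique three-dimensional projective representation of $\mathbb{A}_6$ (the Valentiner representation) to force the fibration to be trivial. First, I would analyze the action on the base. The classification of finite subgroups of $\mathrm{Aut}(\mathbb{P}^{1})=\mathbb{PGL}(2,\mathbb{C})$ permits only cyclic, dihedral, $\mathbb{A}_{4}$, $\mathbb{S}_{4}$, and $\mathbb{A}_{5}$ subgroups, so the simple group $\bar{G}\cong\mathbb{A}_{6}$ cannot act faithfully on $\mathbb{P}^{1}$. Being simple, its action on $\mathbb{P}^{1}$ must then be trivial, and hence $\bar{G}$ acts faithfully on every fiber of~$\pi$.

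Next, let $F$ be a general fiber of $\pi$. Since the terminal singularities of $X$ are isolated, $F$ is a smooth del Pezzo surface carrying a faithful action of $\bar{G}$, and the Mori condition $\rho^{\bar{G}}(X/\mathbb{P}^{1})=1$ gives $\mathrm{Pic}(F)^{\bar{G}}=\mathbb{Z}$. A case analysis of del Pezzo surfaces of degrees $1,\ldots,9$, using that $\mathbb{A}_{6}$ has no faithful complex representation of dimension less than $3$ and inspecting the known automorphism groups, rules out every possibility other than $F\cong\mathbb{P}^{2}$ with the Valentiner action (compare the proof of Lemma~\ref{lemma:Cr-3-Cr-2} and \cite[Theorem~B.7]{Ch09}). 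In particular $(-K_{X})^{2}\cdot F_{0}=9$ for every fiber $F_{0}$. Since the only normal Gorenstein del Pezzo surface of degree $9$ is $\mathbb{P}^{2}$ and $\bar{G}$-invariant degenerations of the Valentiner $\mathbb{P}^{2}$ would need to carry their own faithful $\mathbb{A}_{6}$-action of the same numerical type, rigidity rules out singular fibers, so $\pi$ is a $\mathbb{P}^{2}$-bundle.

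Finally, I would write $X=\mathbb{P}(\mathcal{E})$ for some rank-three vector bundle $\mathcal{E}$ on $\mathbb{P}^{1}$, which by Grothendieck's theorem splits as $\mathcal{E}\cong\mathcal{O}(a_{1})\oplus\mathcal{O}(a_{2})\oplus\mathcal{O}(a_{3})$. After twisting by a line bundle, the $\bar{G}$-action on $X$ lifts to an action of the central extension $3.\mathbb{A}_{6}$ on $\mathcal{E}$ which on each fiber realises the irreducible Valentiner representation. If the $a_{i}$ were not all equal, the canonical filtration of $\mathcal{E}$ by degrees of the summands would be $\bar{G}$-invariant and would give, after restriction to any fiber, a proper invariant subspace of the Valentiner representation, contradicting its irreducibility. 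Hence $a_{1}=a_{2}=a_{3}$, and $X\cong\mathbb{P}^{1}\times\mathbb{P}^{2}$ with $\pi$ the projection to the first factor.

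The main obstacle is the middle step: classifying $\mathbb{A}_{6}$-actions on del Pezzo surfaces with invariant Picard rank one, and ruling out singular fibers in the family. The first part relies on the observation that the automorphism groups of del Pezzo surfaces of degree at most $8$ contain no copy of $\mathbb{A}_{6}$ acting with one-dimensional invariant Picard group, while the second follows from the rigidity of the Valentiner embedding together with the constancy of $(-K_{X})^{2}\cdot F_{0}$ along the fibration.
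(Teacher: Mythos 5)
Your overall strategy coincides with the paper's: trivial action on the base, identification of the general fiber with $\mathbb{P}^{2}$ via a case analysis of del Pezzo surfaces (this is Lemma~\ref{lemma:Belousov}), smoothness of all fibers, and then triviality of the resulting $\mathbb{P}^{2}$-bundle via irreducibility of the three-dimensional representation of $3.\mathbb{A}_{6}$ (the paper phrases this as the absence of $\bar{G}$-fixed points and $\bar{G}$-invariant lines in the fibers, but the content is the same). The first and last steps are fine.

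The genuine gap is the step you yourself flag as the main obstacle: ruling out degenerate fibers. Your argument that ``the only normal Gorenstein del Pezzo surface of degree $9$ is $\mathbb{P}^{2}$'' plus ``rigidity'' does not suffice, for two reasons. First, a priori a scheme fiber of a $\bar{G}$-Mori fibration can be non-reduced, reducible, or non-normal, and $X$ itself need not be Gorenstein along a special fiber (terminal singularities of index $>1$ are allowed); constancy of $(-K_{X})^{2}\cdot F_{0}$ says nothing about these possibilities. Second, even granting normality and reducedness, there exist normal non-Gorenstein degenerations of $\mathbb{P}^{2}$ with quotient singularities and $K^{2}=9$, such as $\mathbb{P}(1,1,4)$ and more generally $\mathbb{P}(a^{2},b^{2},c^{2})$ for Markov triples, so ``rigidity of the Valentiner embedding'' is not by itself an argument. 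The paper closes this gap with real work: Lemma~\ref{lemma:1} shows (via the index-one cover and the classification of terminal singularities) that a terminal threefold with an $\mathbb{A}_{6}$-action has no fixed points; Kawamata subadjunction and the Shokurov connectedness principle then force the support of each fiber to be a single normal surface with Kawamata log terminal singularities; the Mori--Prokhorov theorem on multiple fibers excludes non-reduced fibers because its locus of non-Cartier points has at most $4$ elements, hence would be a fixed point ($\mathbb{A}_{6}$ has no proper subgroup of index $<6$); and finally the Manetti/Hacking--Prokhorov bound of at most $3$ singular points on a normal degeneration of $\mathbb{P}^{2}$ forces smoothness by the same fixed-point argument. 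Without some substitute for this chain (in particular for Lemma~\ref{lemma:1}, which is itself not obvious), your proof of the middle step is incomplete.
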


Recall that there exists no monomorphism
$\bar{G}\to\PGL_2(\mathbb C)$.

\begin{lemma}[{cf. \cite[Lemma~4.5]{Pr09}}]
\label{lemma:1}
Let $Y$ be a~threefold with at worst terminal singularities such that
$\mathrm{Aut}(Y)$ has a subgroup $\bar{G}\cong\A_{6}$.
Then $Y$ contains no $\bar{G}$-invariant points.
\end{lemma}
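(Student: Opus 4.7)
The plan is to argue by contradiction. Suppose $P\in Y$ is a $\bar{G}$-invariant point. Since the local ring $\hat{\mathcal{O}}_{Y,P}$ is $\bar{G}$-equivariant, we obtain a natural linear representation
$$
\rho\colon \bar{G}\longrightarrow \mathrm{GL}\big(T_{P}Y\big)
$$
on the~Zariski tangent space at $P$. I would first observe that $\rho$ must be faithful. Indeed, any element $g\in\ker(\rho)$ acts trivially on $\mathfrak{m}_{P}/\mathfrak{m}_{P}^{2}$, and by Cartan's linearization theorem for finite group actions on local complete rings, $g$~therefore acts trivially on a formal neighbourhood of $P$; hence $g$ acts trivially on $Y$, so $g=e$.

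Next, I would use the~classification of terminal threefold singularities to bound $\dim T_{P}Y$. Since every terminal threefold singularity has embedding dimension at most $4$ (in the~smooth case because $\dim Y=3$, and in the~singular case because terminal threefold singularities are cDV or cyclic quotients thereof), we conclude $\dim T_{P}Y\leqslant 4$.

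The main step is now purely representation-theoretic. By the~character table of $\mathbb{A}_{6}$ (see~\cite{Atlas}), the~dimensions of the~complex irreducible representations of $\bar{G}\cong\mathbb{A}_{6}$ are $1,5,5,8,8,9,10$. In~particular, every irreducible representation of $\bar{G}$ of dimension at most $4$ is trivial, so any representation of $\bar{G}$ of dimension at most $4$ splits as a~direct sum of trivial representations and is therefore trivial. Applying this to $\rho$ gives $\rho\equiv \mathrm{id}$, contradicting faithfulness. The~only subtlety worth double-checking is the~faithfulness step at a~singular terminal point, but this is standard for finite-order automorphisms of isolated normal singularities, and no deeper geometry of $Y$ or the~fibration $\pi$ is actually required for this lemma.
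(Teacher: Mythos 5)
Your argument breaks down at the step where you bound the embedding dimension: it is \emph{not} true that every terminal threefold singularity has Zariski tangent space of dimension at most $4$. Only the \emph{Gorenstein} terminal points (the isolated cDV points) are hypersurface singularities. A non-Gorenstein terminal point is a cyclic quotient of a hypersurface singularity, and passing to a quotient typically increases the embedding dimension: for instance the terminal cyclic quotient singularity $\frac{1}{2}(1,1,1)$ (the cone over the Veronese surface) has embedding dimension $6$, and for $\frac{1}{r}(1,-1,a)$ the embedding dimension grows with $r$. So your inequality $\mathrm{dim}(T_{P}Y)\leqslant 4$ fails exactly in the case that carries all the difficulty. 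Indeed, the paper's proof runs your representation-theoretic observation in the opposite direction: since every representation of $\mathbb{A}_{6}$ of dimension less than $5$ is trivial and the tangent-space action is faithful, one concludes $\mathrm{dim}(T_{P,Y})\geqslant 5$, hence that $Y$ is \emph{not} Gorenstein at $P$ --- and then the real work begins. (Your faithfulness step via linearization is fine; that is not where the subtlety lies.)

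To close the gap you would need the paper's remaining argument, or a substitute for it: pass to the index-one cover $(Y^{\sharp},P^{\sharp})\to(Y,P)$ of degree $r$ equal to the Gorenstein index, which yields a central extension $1\to\mathbb{Z}_{r}\to\bar{G}^{\sharp}\to\bar{G}\to 1$ acting on the hypersurface singularity $(Y^{\sharp},P^{\sharp})$, where the tangent space does have dimension at most $4$. One then uses the explicit classification of the possible $\mathbb{Z}_{r}$-actions on $T_{P^{\sharp},Y^{\sharp}}$ from the classification of terminal singularities to show that the central subgroup has at least two (in dimension $4$, at least three) distinct eigenvalues, so that $T_{P^{\sharp},Y^{\sharp}}$ decomposes with a $\bar{G}^{\sharp}$-subrepresentation of dimension at most $2$; this is ruled out because $\mathbb{A}_{6}$ admits no embedding into $\mathbb{PGL}(2,\mathbb{C})$ and $2.\mathbb{A}_{6}$ has no faithful three-dimensional representation. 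None of this is present in your proposal, so as written the proof only covers the Gorenstein (in particular the smooth) case.
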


\begin{proof}
Suppose that $Y$ contains a~$\bar{G}$-invariant point $P\in Y$.
Let us show that this assumption leads to a contradiction.  Let
$T_{P,Y}$ be the~ Zariski tangent space to $Y$ at the~point
$P$.~Then $\mathrm{dim}(T_{P,Y})\geqslant 5$, because $\bar{G}$
has no faithful representations of dimension less than $5$. We see
that $Y$ is not~Gorenstein at the~point $P$ (see
\cite[Section~3]{YPG}). Let us regard $(Y\ni P)$ as an analytic
germ.

Let $r$ be the~Gorenstein index of the~singularity $(Y,P)$, let
$\pi\colon (Y^{\sharp}, P^{\sharp})\to (Y,P)$ be the~index one
cover (see~\cite[Section~3.5]{YPG}), where $P^{\sharp}=
\pi^{-1}(P)$. Then
$$
\pi\colon Y^{\sharp}\setminus \big\{ P^{\sharp}\big\}\to Y\setminus \big\{P\big\}%
$$
is the~ topological universal cover (of degree $r$). Thus, there
is an exact~sequence of groups
$$
\xymatrix{1\ar@{->}[rr]&&\mathbb{Z}_{r}\ar@{->}[rr]^{\alpha}&&\bar{G}^{\sharp}\ar@{->}[rr]^{\beta}&&\bar{G}\ar@{->}[rr]&&1},%
$$
where $\bar G^{\sharp}$ is a~finite subgroup in
$\mathrm{Aut}(Y^{\sharp}, P^{\sharp})$. Since $\bar G$ is simple,
this is a~central extension.

The group $\bar{G}^{\sharp}$ naturally acts on the~Zariski tangent
space $T_{P^{\sharp},Y^{\sharp}}$ to $Y^{\sharp}$ at the~point
$P^{\sharp}$.

Recall that $(Y^{\sharp}, P^{\sharp})$ is a~hypersurface
singularity. Hence, we have
$\mathrm{dim}(T_{P^{\sharp},Y^{\sharp}})\leqslant 4$.

By the~classification of three-dimensional terminal singularities
(see~\cite[Section~6.1]{YPG}) the~action of the~group
$\alpha(\mathbb{Z}_{r})$ on $T_{P^{\sharp},Y^{\sharp}}$ in some
coordinate system has one of the~following forms:
\begin{itemize}
 \item either $\dim (T_{P^{\sharp},Y^{\sharp}})=3$ and $(x_1,x_2,x_3)\longmapsto (\varepsilon x_1,\varepsilon^{-1} x_2,\varepsilon^{a} x_3)$,%
 \item or $\dim (T_{P^{\sharp},Y^{\sharp}})=4$ and $(x_1,x_2,x_3,x_4)\longmapsto (\varepsilon x_1,\varepsilon^{-1} x_2,\varepsilon^{a} x_3, x_4)$,%
 \item or $\dim (T_{P^{\sharp},Y^{\sharp}})=r=4$ and $(x_1,x_2,x_3, x_4)\longmapsto (\sqrt{-1} x_1,-\sqrt{-1} x_2,\pm \sqrt{-1} x_3, - x_4)$,%
\end{itemize}
where $\varepsilon$ is a~primitive $r$-th root of unity and $\gcd
(r,a)=1$.

The case $\mathrm{dim}(T_{P^{\sharp},Y^{\sharp}})=3$ and $r=2$ is
impossible, because the~group $2.\A_6$ does not have~faithful
three-dimensional representations. Therefore, the~central subgroup
$\alpha(\mathbb{Z}_{r})\subset \bar{G}^{\sharp}$ has at least~$2$
different eigenvalues. Then $T_{P^{\sharp},Y^{\sharp}}$ is
a~reducible representation of the~ group $\bar{G}^{\sharp}$.

If $\dim(T_{P^{\sharp},Y^{\sharp}})=4$, then the~subgroup
$\alpha(\mathbb{Z}_{r})$ has at least $3$ different eigenvalues.

Hence, in every possible case, the~group $\bar{G}^{\sharp}$ has
a~subrepresentation of dimension at least~$2$, which is
impossible, because the~group $\bar{G}\cong\A_{6}$ does no
admit any embedding to $\PGL_2(\mathbb C)$.
\end{proof}

\begin{lemma}[cf. {\cite{DoIs06}}]
\label{lemma:Belousov} Let  $S$ be a smooth del Pezzo surface such
that $S$ admits a non-trivial action of the~group $\bar G$. Then
$S\cong\mathbb{P}^2$.
\end{lemma}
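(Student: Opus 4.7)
The plan is to carry out a case analysis on the degree $d := K_{S}^{2} \in \{1, \ldots, 9\}$ and to rule out every value except $d = 9$ (where $S \cong \mathbb{P}^{2}$). Simplicity of $\bar{G} \cong \mathbb{A}_{6}$ ensures that any non-trivial action is faithful, so $\bar{G} \hookrightarrow \mathrm{Aut}(S)$. Throughout I shall use repeatedly that (i) $\bar{G}$ has no non-trivial homomorphism into $\mathrm{PGL}(2, \mathbb{C})$ or into $\mathbb{S}_{n}$ for $n \leq 5$, (ii) $\mathbb{A}_{6}$ has no faithful linear representation of dimension below $5$, and (iii) any lift of a projective representation of $\bar{G}$ factors through the Schur cover $6.\mathbb{A}_{6}$.

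For $d = 8$ one has $S \cong \mathbb{P}^{1} \times \mathbb{P}^{1}$, and $\mathrm{Aut}(S) \cong \mathrm{PGL}(2, \mathbb{C})^{2} \rtimes \mathbb{Z}_{2}$ contains no copy of $\bar{G}$ by (i). For $d = 7$ and $d = 6$, the $(-1)$-curves (three and six in number) are permuted via $\mathrm{Aut}(S) \to \mathbb{S}_{3}$ and $\mathrm{Aut}(S) \to D_{12}$ respectively; the image of $\bar{G}$ is trivial, so $\bar{G}$ preserves every $(-1)$-curve, and after contracting two (resp.\ three) disjoint ones we obtain a $\bar{G}$-action on $\mathbb{P}^{2}$ with at least two fixed points; lifting to $\mathrm{SL}(3, \mathbb{C})$ and using complete reducibility, the $3$-dimensional representation splits as a sum of characters, making $\bar{G}$ abelian, a contradiction. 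For $d = 5$ one has $\mathrm{Aut}(S) \cong \mathbb{S}_{5}$, which does not contain $\mathbb{A}_{6}$; for $d = 4$, the group $\mathrm{Aut}(S)$ embeds in the Weyl group $W(D_{5})$ of order $1920 = 2^{7} \cdot 3 \cdot 5$, whose Sylow $3$-subgroup has order $3$, while that of $\mathbb{A}_{6}$ has order $9$.

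The remaining cases $d \in \{1, 2, 3\}$ are treated by invariant theory. For $d = 3$, the anticanonical embedding makes $S$ into a cubic surface in $\mathbb{P}^{3} = \mathbb{P}(V)$, where $V$ is a $4$-dimensional faithful representation of some central extension $\tilde{G}$ of $\bar{G}$; analysing the character tables of the covers of $\mathbb{A}_{6}$, one sees that necessarily $\tilde{G} = 2.\mathbb{A}_{6}$ acting via one of its two irreducible $4$-dimensional representations, on which the central $\mathbb{Z}_{2}$ acts by $-1$, precluding any $\bar{G}$-invariant cubic form exactly as in Lemma~\ref{lemma:small-semiinvariants}. For $d = 2$, $S$ is a double cover of $\mathbb{P}^{2}$ branched over a smooth quartic; simplicity of $\bar{G}$ gives $\bar{G} \hookrightarrow \mathrm{PGL}(3, \mathbb{C})$ stabilising a smooth quartic, but the Valentiner lift $3.\mathbb{A}_{6} \subset \mathrm{SL}(3, \mathbb{C})$ has no semi-invariant of degree $4$ (its lowest-degree invariant is of degree $6$). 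For $d = 1$, the bianticanonical map exhibits $S$ as a double cover of $\mathbb{P}(1, 1, 2)$ branched along a sextic, and every finite subgroup of $\mathrm{Aut}(\mathbb{P}(1, 1, 2))$ projects with cyclic kernel to $\mathrm{PGL}(2, \mathbb{C})$, contradicting (i). The main obstacle is $d = 3$: since $W(E_{6})$ does contain $\mathbb{A}_{6}$, no Sylow-type obstruction is available, and one must resort to the invariant-theoretic argument just sketched.
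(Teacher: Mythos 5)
Your case-by-case argument is essentially correct, but it takes a genuinely different route from the paper. The paper avoids running through the degrees one at a time: it first notes that for $K_S^2\geqslant 5$ the $\bar G$-action on $\operatorname{Pic}(S)$ is trivial (the rank is at most $5$ and $K_S$ is fixed), so every $(-1)$-curve would be invariant, which is absurd; then for $K_S^2\leqslant 4$ it shows that no anticanonical curve can be $\bar G$-invariant (such a curve has at most $4$ components, each rational or elliptic), so $\bar G$ acts non-trivially on $H^0(\mathcal{O}_S(-K_S))$, forcing $K_S^2+1\geqslant 5$ since $\mathbb{A}_6$ has no non-trivial representation of dimension below $5$; this leaves only $K_S^2=4$, which is killed by observing that both quadrics of the $\bar G$-invariant pencil cutting out $S\subset\mathbb{P}^4$ must be invariant, and the vertex of a degenerate member would be a fixed point in the irreducible $5$-dimensional representation. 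Your proof replaces this uniform representation-theoretic bound with explicit knowledge of the automorphism groups degree by degree (the map to $W(D_5)$ and a Sylow count for $d=4$, the Valentiner group's invariant degrees $6,12,30$ for $d=2$, the parity argument for $2.\mathbb{A}_6$ for $d=3$, exactly as in Lemma~\ref{lemma:small-semiinvariants}). Both work; yours is more explicit but leans on more external input, while the paper's handles degrees $1$--$4$ in one stroke and only needs a special argument at degree $4$.

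One genuine omission: in degree $8$ you assert $S\cong\mathbb{P}^1\times\mathbb{P}^1$, but the Hirzebruch surface $\mathbb{F}_1$ (the blow-up of $\mathbb{P}^2$ at a point) is also a smooth del Pezzo surface of degree $8$, and your $\mathrm{PGL}(2,\mathbb{C})^2\rtimes\mathbb{Z}_2$ argument does not apply to it. The fix is immediate and is already contained in your own degree $7$ argument: the unique $(-1)$-curve on $\mathbb{F}_1$ is $\bar G$-invariant, contracting it yields a faithful action on $\mathbb{P}^2$ with a fixed point, hence a $1$-dimensional subrepresentation in the $3$-dimensional lift; since every central extension of $\mathbb{A}_6$ is perfect, its only $1$-dimensional representation is trivial, and complete reducibility then forces the central character to be trivial and the whole $3$-dimensional representation to be a sum of trivials, contradicting faithfulness. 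You should state this case explicitly; as written the enumeration of degree $8$ surfaces is incomplete.
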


\begin{proof}
Note that $S\not\cong\mathbb{P}^1\times \mathbb{P}^1$, because
there exists no monomorphism $\bar{G}\to\PGL_2(\C)$.

Suppose that $S\not\cong\mathbb{P}^2$. Let us derive a
contradiction.

If $K_{S}^{2}\geqslant 5$, then the~action of $\bar G$ on
$\operatorname{Pic} (S)$ is trivial, because $\operatorname{rk}
\operatorname{Pic} (S)\leqslant 5$ and the~canonical class $K_S$
is $\bar{G}$-invariant. Then any $(-1)$-curve on $S$ must be
invariant, a contradiction.

Let $C$ be a $\bar{G}$-invariant curve in $|-K_{S}|$. Then every
component of the~curve $C$ is either rational or elliptic~curve.
Moreover, the curve  $C$ consists of at most $4$ components, which
immediately implies that  $C$ is not $\bar{G}$-invariant, because
$\bar{G}$~is~simple.

Put $V=H^0(\mathcal{O}_{S}(-K_S))$. By the~above, the~group
$\bar{G}$ acts non-trivially on $V$. Then
$$
4\geqslant K_{S}^{2}=h^0\Big(\mathcal{O}_{S}\big(-K_S\big)\Big)-1\geqslant 4,%
$$
which implies that $K_{S}^{2}=4$, and the~space $V$ is
an~irreducible five-dimensional rep\-re\-senta\-tion of the~group
$\bar{G}$, because $\bar{G}$ has no non-trivial representations of
dimension less than~$5$ (see~\cite{Atlas}).

We see that $S=Q_{1}\cap Q_{2}\subset\mathbb{P}^{4}$, where
$Q_{1}$ and $Q_{2}$ are irreducible quadric hypersurfaces.

The action of the~group $\bar{G}$ on the~space $V$ induces its
action on $\mathbb{P}^{4}$.

Let $\mathcal{P}$ be the~pencil generated by $Q_{1}$ and $Q_{2}$
is $\bar{G}$-invariant. Then $\mathcal{P}$ is $\bar{G}$-invariant.
Since there is no monomorphism $\bar{G}\to\PGL_2(\C)$,
both quadrics $Q_{1}$ and $Q_{2}$ are $\bar{G}$-invariant,
which is impossible, because otherwise the vertex of a degenerate
quadric in pencil is a fixed point.
\end{proof}

\begin{corollary}\label{corollary:Belousov}
Let $F_\pi$ be a~general fiber of the~morphism $\pi$. Then
$K_{F_\pi}^2=9$
\end{corollary}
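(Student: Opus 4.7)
The plan is to first observe that, since $\pi$ is a $\bar{G}$-Mori fibration, it is $\bar{G}$-equivariant, and so the base $\mathbb{P}^{1}$ inherits a $\bar{G}$-action. Because $\mathrm{Aut}(\mathbb{P}^{1})\cong\mathbb{PGL}(2,\mathbb{C})$ admits no embedding of $\bar{G}\cong\mathbb{A}_{6}$ (a fact already exploited in Lemma~\ref{lemma:Cr-3-Cr-2} and in the proof of Lemma~\ref{lemma:Belousov}), and $\bar{G}$ is simple, this induced action on $\mathbb{P}^{1}$ must be trivial. Consequently $\bar{G}$ preserves every fibre of the morphism $\pi$, and in particular acts on a general fibre $F_{\pi}$.

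Next, I would argue that $F_{\pi}$ is a smooth del~Pezzo surface carrying a non-trivial $\bar{G}$-action. Since $X$ has terminal (hence rational, Cohen--Macaulay) singularities, generic smoothness in characteristic zero ensures that $F_{\pi}$ is smooth for a general choice. Moreover, $-K_{X}$ is $\pi$-ample by the definition of a Mori fibration, so by adjunction $-K_{F_{\pi}}\sim -K_{X}\vert_{F_{\pi}}$ is an ample divisor on $F_{\pi}$, exhibiting $F_{\pi}$ as a smooth del~Pezzo surface. The induced action of $\bar{G}$ on $F_{\pi}$ cannot be trivial: otherwise $\bar{G}$ would act trivially on the preimage of a non-empty Zariski-open subset of $\mathbb{P}^{1}$, and therefore trivially on the whole of $X$, contradicting $\bar{G}\subset\mathrm{Aut}(X)$.

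Having verified the hypotheses of Lemma~\ref{lemma:Belousov}, I apply it to $F_{\pi}$ to conclude $F_{\pi}\cong\mathbb{P}^{2}$, whence $K_{F_{\pi}}^{2}=9$. There is no real obstacle here: the whole substantive content lies in Lemma~\ref{lemma:Belousov}, and the deduction amounts to the two elementary observations above, namely that the $\bar{G}$-action descends trivially to the base and that generic smoothness makes the general fibre a smooth del~Pezzo.
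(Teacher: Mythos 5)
Your proposal is correct and takes essentially the same route as the paper, which presents this statement as an immediate consequence of Lemma~\ref{lemma:Belousov}; you merely supply the routine verifications (triviality of the induced $\bar{G}$-action on the base $\mathbb{P}^{1}$, smoothness and the del Pezzo structure of a general fibre via generic smoothness and adjunction, and non-triviality of the action on that fibre) needed to invoke the lemma.
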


Let $F$ be any~scheme fiber of the~morphism $\pi\colon
X\to\mathbb{P}^{1}$. Then $F$ is $\bar{G}$-invariant.

\begin{lemma}
\label{lemma:2} The threefold $X$ is smooth and
$F\cong\mathbb{P}^2$.
\end{lemma}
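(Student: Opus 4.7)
The plan is as follows. Because $\mathbb{A}_{6}$ admits no embedding into $\mathrm{PGL}(2,\mathbb{C})$, the induced $\bar{G}$-action on the base $\mathbb{P}^{1}$ of $\pi$ must be trivial, so every scheme-theoretic fiber $F=\pi^{-1}(t)$ is $\bar{G}$-invariant. Combining this with Corollary~\ref{corollary:Belousov}, the generic fiber is isomorphic to $\mathbb{P}^{2}$; in particular $F\equiv F_{\pi}$ numerically and $(-K_{X})^{2}\cdot F=9$. The task then splits into showing, first, that $F$ is irreducible and reduced; second, that such an $F$ must be $\cong\mathbb{P}^{2}$; and third, that $X$ is smooth along $F$.

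For the first step I would decompose $F=\sum n_{i}F_{i}$ into distinct prime components and consider the induced $\bar{G}$-action on the set $\{F_{i}\}$. By Lemma~\ref{lemma:max-subgroup} every proper subgroup of $\mathbb{A}_{6}$ has index either $1$ or at least $6$, so any non-trivial $\bar{G}$-orbit on $\{F_{i}\}$ has length at least $6$. Since $-K_{X}$ is $\pi$-ample, $(-K_{X})^{2}\cdot F_{i}>0$ for each $i$, and the identity $\sum n_{i}\bigl((-K_{X})^{2}\cdot F_{i}\bigr)=9$ forces either a single $\bar{G}$-fixed component or a configuration with many components meeting along $\bar{G}$-invariant curves whose mutual incidence yields a $\bar{G}$-fixed point in $X$, contradicting Lemma~\ref{lemma:1}. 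The same mechanism rules out a non-reduced structure ($n_{i}\ge 2$), since the reduction of such an $F$ would be a $\bar{G}$-invariant surface of strictly smaller $(-K_{X})^{2}$-degree, again producing small $\bar{G}$-orbits incompatible with Lemma~\ref{lemma:1}.

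With $F$ irreducible and reduced, adjunction together with the $\mathbb{Q}$-factoriality and terminality of $X$ identifies $F$ with a normal Gorenstein del Pezzo surface of degree $9$ carrying a $\bar{G}$-action. An adaptation of Lemma~\ref{lemma:Belousov} to this (possibly singular) setting forces $F\cong\mathbb{P}^{2}$: indeed, a smooth del Pezzo of degree $9$ is already $\mathbb{P}^{2}$, and any singular point of $F$ would lie in a $\bar{G}$-orbit $\Sigma\subset F\subset X$ with $|\Sigma|\ge 6$, but then $\Sigma$ would also lie in $\mathrm{Sing}(X)$ or create a $\bar{G}$-invariant subscheme to which Lemma~\ref{lemma:1} gives a fixed point upon passing to a minimal stratum, a contradiction. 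Finally, once every fiber is $\mathbb{P}^{2}$, the relative degree of $-K_{X}$ equals $3$ on every fiber and standard flatness/cohomology-and-base-change arguments show that $\pi$ is a smooth $\mathbb{P}^{2}$-fibration; equivalently, any singular point of $X$ would lie in some fiber $F\cong\mathbb{P}^{2}$, whose $\bar{G}$-orbit is again ruled out by Lemma~\ref{lemma:1} and the classification of small $\mathbb{A}_{6}$-orbits on $\mathbb{P}^{2}$ under the Valentiner embedding. Thus $X$ is smooth and $F\cong\mathbb{P}^{2}$.

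I expect the main obstacle to be the first step: systematically excluding all possible reducible/non-reduced fiber configurations that are numerically compatible with $(-K_{X})^{2}\cdot F=9$ and $\bar{G}$-equivariance. Once fibers are known to be irreducible and reduced, the remaining work is fairly formal because of the strong constraints imposed by Lemma~\ref{lemma:1} and the representation theory of $\mathbb{A}_{6}$.
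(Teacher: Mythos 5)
Your overall strategy --- trivial action on the base, general fibre $\cong\mathbb{P}^2$ via Corollary~\ref{corollary:Belousov}, then killing bad configurations with Lemma~\ref{lemma:1} --- matches the paper's at the top level, but the technical steps you leave as sketches are precisely where the paper has to invoke substantial external results, and your substitutes for them do not work. To show that $\operatorname{Supp}(F)$ is irreducible \emph{and normal}, the paper does not argue combinatorially on components: it applies the Kawamata--Shokurov trick to the strictly log canonical pair $(X,\mu F)$, uses the connectedness principle (Theorem~\ref{theorem:connectedness}) together with Lemma~\ref{lemma:1} to see that the unique minimal centre cannot be a point or a rational curve and hence is a $\bar{G}$-invariant surface equal to $\operatorname{Supp}(F)$, and then extracts normality and Kawamata log terminal singularities from subadjunction (Theorem~\ref{theorem:Kawamata}). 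Your numerical argument, even if completed, would give at best irreducibility and reducedness, not normality or Gorenstein-ness, which you assert without proof; and it is not complete: a $\bar{G}$-fixed component with $(-K_X)^2\cdot F_0=3$ plus a $\bar{G}$-orbit of six components with $(-K_X)^2\cdot F_i=1$ is numerically consistent with total degree $9$, and the pairwise intersections of six permuted components form $\bar{G}$-orbits of curves of length $15$, so no fixed point is "forced by mutual incidence" without further work.

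The more serious break is in your deduction that $F$ is smooth. A singular point of a fibre need not be a singular point of $X$, and Lemma~\ref{lemma:1} excludes only $\bar{G}$-\emph{fixed} points, not $\bar{G}$-orbits of length at least $6$; so "any singular point of $F$ lies in an orbit of size $\ge 6$" yields no contradiction by itself. The paper closes this gap with two cited classification results. First, by \cite[Theorem~1.1]{MoriProkhorov}, if the scheme fibre is non-reduced then its support fails to be Cartier at between $1$ and $4$ points; a non-empty $\bar{G}$-invariant set of at most $4$ points would consist of fixed points, so $F$ is reduced (and Cartier). Second, by \cite{Manetti} and \cite{HackingProkhorov}, a normal degeneration of $\mathbb{P}^2$ with quotient singularities has at most $3$ singular points; the same orbit count then forces $F$ to be smooth, whence $F\cong\mathbb{P}^2$, and smoothness of $X$ along the smooth Cartier divisor $F$ follows. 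Without these two inputs, or genuine replacements for them, your argument does not close.
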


\begin{proof}
First we show that $\operatorname{Supp}(F)$ is irreducible, normal
and has at worst Kawamata log terminal singularities. This step is
similar to the~proof of \cite[Proposition~2.6]{MoriProkhorov}.

Take $\mu\in\mathbb{Q}$ such that $(X, \mu F)$ is strictly
log-canonical. Then there are a~$\bar{G}$-invariant $\pi$-ample
divisor $H$ and small positive $\delta_1$ and $\delta_2\in \mathbb
Q$ such that $(X, (\mu-\delta_1) F+\delta_2H)$ is log canonical
and
$$
\mathbb{LCS}\Big(X, \big(\mu-\delta_1\big) F+\delta_2H\Big)=\bigcup_{g\in \bar{G}} g\big(S\big),%
$$
where $S$ is a~minimal center in $\mathbb{LCS}(X, \mu F)$ (cf.
Lemma~\ref{lemma:Kawamata-Shokurov-trick} or the~proof of
\cite[Proposition~2.6]{MoriProkhorov}).

Put $D=(\mu-\delta_1) F+\delta_2H$. Then $\mathrm{LCS}(X, D)$ is
a~$\bar{G}$-orbit of the~center $S$.

By Theorem~\ref{theorem:connectedness}, the~locus $\mathrm{LCS}(X,
D)$ is connected. It follows from Lemma~\ref{lemma:centers} that
$\mathbb{LCS}(X, D)=S$, which implies that $S$ is not a~point by
Lemma~\ref{lemma:1}.

If $S$ is a~curve, then $S\cong\mathbb{P}^{1}$ by
Theorem~\ref{theorem:Kawamata}, which is again impossible by
Lemma~\ref{lemma:1}.

We see that $S$ is a~$\bar{G}$-invariant surface. Then
$S=\mathrm{Supp}(F)$, because $\pi$ is
a~$\bar{G}$-Mori~fibred~space.

By Theorem~\ref{theorem:Kawamata}, the~surface $S$ is normal and
has Kawamata log terminal singularities.

Let $\Sigma\subset S$ be a~subset consisting of points where $S$
is not Cartier. If $F$ is not reduced, then
$1\leqslant|\Sigma|\leqslant 4$ by
\cite[Theorem~1.1]{MoriProkhorov}. Since $\Sigma$ is $\bar
G$-invariant, we see that $\Sigma=\varnothing$ by
Lemma~\ref{lemma:1}.

We see that $F$ is a reduced normal surface having only quotient
singularities, which implies that $F$ is a degeneration of
$\mathbb{P}^2$ by Corollary~\ref{corollary:Belousov}. Hence,
the~inequality $|\mathrm{Sing}(F)|\leqslant 3$ holds
(see~\cite[Main~Theorem]{Manetti},
\cite[Corollary~1.2]{HackingProkhorov}). So, the surface $F$ is
smooth by Lemma~\ref{lemma:1}, which immediately implies that
$F\cong\mathbb{P}^2$.
\end{proof}

\begin{proof}[Proof of Theorem~{\textup{\ref{theorem:del-Pezzo-1}}}]
It follows from Lemma~\ref{lemma:2} that every scheme fiber of
the~morphism $\pi$~is isomorphic to $\mathbb{P}^{2}$. By
\cite[Proposition~V.4.1]{BPV}, there are integers $b\geqslant
a\geqslant 0$ such that
$$
X\cong\mathrm{Proj}\Big(\mathcal{O}_{{\mathbb{P}^1}}\oplus\mathcal{O}_{{\mathbb{P}^1}}\big(a\big)\oplus\mathcal{O}_{{\mathbb{P}^1}}\big(b\big)\Big).
$$

Recall that the~ action of the~group $\bar G$ on the~base $\mathbb
P^1$ is trivial.

By Lemma~\ref{lemma:1}, every fiber of the~morphism $\pi$ contains
no $\bar{G}$-fixed points nor $\bar{G}$-invariant lines, which
implies that $a=b=0$ and so
$X\cong\mathbb{P}^1\times\mathbb{P}^2$.
\end{proof}


\begin{thebibliography}{99}

\bibitem{Ad78}
A.\,Adler, \emph{On the~automorphism group of a~certain cubic threefold}\\
American Journal of Mathematics \textbf{100} (1978), 1275--1280

\smallskip

\bibitem{GrHaArbCor85}
E.\,Arbarello, M.\,Cornalba, P.\,Griffiths, J.\,Harris, \emph{Geometry of algebraic curves. Volume I}\\
Grundlehren der mathematischen Wissenschaften \textbf{267} (1985) Springer-Verlag, New York-Heidelberg %

\smallskip

\bibitem{BPV}
W.\,Barth, C.\,Peters, A.\,Van~de~Ven, \emph{Compact complex surfaces}\\
Springer-Verlag, Berlin, 1984%

\smallskip
\bibitem{Be11}
A.\,Beauville, \emph{Non-rationality of the symmetric sextic Fano threefold}\\
arXiv:math/1102.1255 (2011).

\smallskip

\bibitem{Magma}
W.\,Bosma, J.\,Cannon, C.\,Playoust, \emph{The Magma algebra system. I. The user language}\\
Journal of Symbolic Computation, \textbf{24} (1997), 235--265

\smallskip

\bibitem{Bre00}
T.\,Breuer, \emph{Characters and automorphism groups of compact Riemann surfaces}\\
London Mathematical Society Lecture Note Series \textbf{280}, Cambridge University Press, 2000%

\smallskip

\bibitem{Ch05umn}
I.\,Cheltsov, \emph{Birationally rigid Fano varieties}\\
Russian Mathematical Surveys \textbf{60} (2005), 875--965

\smallskip

\bibitem{Ch07b}
I.\,Cheltsov, \emph{Log canonical thresholds of del Pezzo surfaces}\\
Geometric and Functional Analysis, \textbf{18} (2008), 1118--1144%

\smallskip

\bibitem{ChSh08c}
I.\,Cheltsov, C.\,Shramov, \emph{Log canonical thresholds of smooth Fano threefolds}\\
Russian Mathematical Surveys \textbf{63} (2008), 73--180%

\smallskip

\bibitem{ChSh09}
I.\,Cheltsov, C.\,Shramov, \emph{On exceptional quotient singularities}\\
Geometry and Topology,  \textbf{15} (2011), 1843--1882%

\smallskip


\bibitem{ClGr72}
C.\,Clemens, P.\,Griffiths, \emph{The intermediate Jacobian of the~cubic threefold}\\
Annals of Mathematics \textbf{95} (1972), 73--100%

\smallskip

\bibitem{Co95}
A.\,Corti, \emph{Factorizing birational maps of threefolds after Sarkisov}\\
Journal of Algebraic Geometry \textbf{4} (1995), 223--254%


\smallskip

\bibitem{Co00}
A.\,Corti, \emph{Singularities of linear systems and 3-fold birational geometry}\\
L.M.S. Lecture Note Series \textbf{281} (2000), 259--312%

\smallskip

\bibitem{CKS04}
A.\,Corti, K.\,E.\,Smith, J.\,Koll\'ar, \emph{Rational and nearly rational varieties}\\
Cambridge Studies in Advanced Mathematics \textbf{92} (2004), Cambridge University Press%

\smallskip

\bibitem{Atlas}
J.\,Conway, R.\,Curtis, S.\,Norton, R.\,Parker, R.\,Wilson, \emph{Atlas of finite groups}\\
Clarendon Press, Oxford, 1985

\smallskip

\bibitem{DoIs06}
I.\,Dolgachev, V.\,Iskovskikh, \emph{Finite subgroups of the~plane Cremona group}\\
Progress in Mathematics, Birkhauser, Boston \textbf{269} (2009), 443--548%

\smallskip

\bibitem{Fe71}
W.\,Feit, \emph{The current situation in the~theory of finite simple groups}\\
Actes du Congr\`es International des Math\'ematiciens, Gauthier--Villars, Paris (1971), 55--93%

%
%
\smallskip

\bibitem{Finkelberg}
H.\,Finkelnberg, \emph{Small resolutions of the Segre cubic}\\
Indagationes Mathematicae \textbf{90} (1987), 261--277

\smallskip

\bibitem{FinkelbergWerner}
H.\,Finkelnberg, J.\,Werner, \emph{Small resolutions of nodal cubic threefolds}\\
Indagationes Mathematicae \textbf{92} (1989), 185-–198

\smallskip

\bibitem{HackingProkhorov}
P.\,Hacking, Yu.\,Prokhorov, \emph{Smoothable del Pezzo surfaces with quotient singularities}\\
Compositio Mathematica \textbf{146} (2010), 169--192

\smallskip

\bibitem{HMX10}
C.\,Hacon, J.\,McKernan, Ch.\,Xu,
\emph{On the birational automorphisms of varieties of general type}\\
arXiv:math/1011.1464 (2010)

\smallskip

\bibitem{Har77}
R.\,Hartshorne, \emph{Algebraic geometry}\\
Graduate Texts in Mathematics \textbf{52} (1977) Springer-Verlag, New York-Heidelberg%

\smallskip

\bibitem{Hu96}
B.\,Hunt, \emph{The geometry of some special arithmetic quotients}\\
Lecture Notes in Mathematics \textbf{1637} (Springer--Verlag, New York, 1996)%

\smallskip

\bibitem{IsMa71}
V.\,Iskovskikh, Yu.\,Manin, \emph{Three-dimensional quartics and counterexamples to the L\"uroth problem}\\
Matematiskii Sbornik \textbf{86} (1971), 140--166

\smallskip

\bibitem{IsPr99}
V.\,Iskovskikh, Yu.\,Prokhorov, \emph{Fano varieties}\\
Encyclopaedia of Mathematical Sciences \textbf{47} (1999) Springer, Berlin%

\smallskip

\bibitem{IsPu96}
V.\,Iskovskikh, A.\,Pukhlikov, \emph{Birational automorphisms of multidimensional algebraic manifolds}\\
Journal of Mathematical Sciences \textbf{82} (1996), 3528--3613

\smallskip

\bibitem{Kaw97}
Y.\,Kawamata, \emph{On Fujita's freeness conjecture for $3$-folds and $4$-folds}\\
Mathematische Annalen \textbf{308} (1997), 491--505%

\smallskip

\bibitem{Kaw98}
Y.\,Kawamata, \emph{Subadjunction of log canonical divisors II}\\
American Journal of Mathematics \textbf{120} (1998), 893--899

\smallskip

\bibitem{Ko97}
J.\,Koll\'ar, \emph{Singularities of pairs}\\
Proceedings of Symposia in Pure Mathematics \textbf{62} (1997), 221--287%

\smallskip

\bibitem{La04}
R.\,Lazarsfeld, \emph{Positivity in algebraic geometry} \textbf{II}\\
Springer-Verlag, Berlin, 2004%

\smallskip


\bibitem{LPR}
N.\,Lemire, V.\,Popov, Z.\,Rechstein, \emph{Cayley groups} \\
Journal of the American Mathematical Society \textbf{19} (2006), 921--967%

\smallskip

\bibitem{Manetti}
M.\,Manetti, \emph{Normal degenerations of the~complex projective plane} \\
Journal f\"ur die Reine und Angewandte Mathematik \textbf{419} (1991), 89--11%

\smallskip

\bibitem{Me03}
M.\,Mella, \emph{Birational geometry of quartic 3-folds II: the~importance of being $\mathbb{Q}$-factorial}\\ %
Mathematische Annalen \textbf{330} (2004), 107--126

\smallskip

\bibitem{MoriProkhorov}
S.\,Mori, Yu.\,Prokhorov, \emph{Multiple fibers of del Pezzo fibrations}\\
Proceedings of the~Steklov Institute of Mathematics \textbf{264} (2009), 131--145%


\smallskip

\bibitem{Muk92}
S.\,Mukai, \emph{Curves and symmetric spaces}\\ %
Proceedings of the~Japan Academy, Series A, Mathematical Sciences \textbf{68} (1992), 7--10%


\smallskip

\bibitem{Nie92}
I.\,Nieto, \emph{The normalizer of the~level $(2,2)$-Heisenberg group}\\
Manuscripta Mathematica \textbf{76} (1992), 257--267

\smallskip

\bibitem{Pet98}
K.\,Pettersen, \emph{On nodal determinantal quartic hypersurfaces in $\mathbb{P}^4$}\\
Thesis, University of Oslo (1998)

\smallskip

\bibitem{Pr09}
Yu.\,Prokhorov, \emph{Simple finite subgroups of the~Cremona group of rank $3$}\\
Journal of Algebraic Geometry, to appear

\smallskip

\bibitem{YPG}
M.\,Reid, \emph{Young person's guide to canonical singularities}\\
Proceedings of Symposia in Pure Mathematics \textbf{46} (1987), 345--414 %

\smallskip


\bibitem{Serre}
J.-P.\,Serre, \emph{A Minkowski-style bound for the~order of the~finite subgroups of the~Cremona group of rank 2\\ over an arbitrary field}\\
Moscow Mathematical Journal \textbf{9} (2009), 183--198

\smallskip

\bibitem{Sho93}
V.\,Shokurov, \emph{Three-fold log flips}\\
Russian Academy of Sciences, Izvestiya Mathematics \textbf{40} (1993), 95--202 %

\smallskip

\bibitem{Shr08}
C.\,Shramov, \emph{ Birational automorphisms of nodal quartic threefolds}\\
arXiv:math/0803.4348 (2008)

\smallskip

\bibitem{Ti87}
G.\,Tian, \emph{On K\"ahler--Einstein metrics on certain K\"ahler manifolds with $c_{1}(M)>0$}\\
Inventiones Mathematicae \textbf{89} (1987), 225--246%

\smallskip

\bibitem{TiYa87}
G.\,Tian, S.-T.\,Yau, \emph{K\"ahler--Einstein metrics metrics on complex surfaces with $\mathrm{C}_{1}>0$}\\
Communications in Mathematical Physics  \textbf{112} (1987), 175--203%

\smallskip

\bibitem{To33}
J.\,A.\,Todd, \emph{Configurations defined by six lines in space of three dimensions}\\
Mathematical Proceedings of the~Cambridge Philosophical Society \textbf{29} (1933), 52--68%

\smallskip

\bibitem{Va01}
D.\,Vazzana, \emph{Invariants and projections of six lines in projective space}\\
Transactions of the~American Mathematical Society  \textbf{353} (2001), 2673--2688%

\end{thebibliography}
\end{document}